\newcounter{alg}
\newcounter{e_alg}
\let\nomnopage\nomenclature
\renewcommand{\Re}{\mathrm{Re}\,}
\renewcommand{\Im}{\mathrm{Im}\,}
\newcommand{\Db}{\mathbf{D}}
\newcommand{\Vb}{\mathbf{V}}
\newcommand{\OO}{\mathrm{O}}
\newcommand{\oo}{\mathrm{o}}
\newcommand{\ri}{\mathrm{i}}
\newcommand{\dd}{\mathrm{d}}
\renewcommand{\vec}{\bm }
\newtheorem{theorem}{Theorem}[section]
\newtheorem{definition}{Definition}
\newtheorem{remark}{Remark}[section]
\newtheorem{lemma}[theorem]{Lemma}
\newtheorem{proposition}[theorem]{Proposition}
\newtheorem{corollary}[theorem]{Corollary}
\newtheorem{assum}{Assumption}
\numberwithin{equation}{section}
\newcommand{\LP}{\operatorname{L}}
\DeclareMathOperator{\e}{e}
\newcommand{\I}{\mathrm{i}}
\newcommand{\sd}{\mathrm{d}}
\newcommand{\prob}{\mathbb{P}}
\renewcommand{\Pr}{\prob}
\DeclareDocumentCommand \one { o }
{%
\ensuremath
\IfNoValueTF {#1}
{\mathbf{1}  }
{\ensuremath{\mathbf{1}\left\{ {#1} \right\} }}%
}
\DeclareDocumentCommand{\Prto} {o} {
\IfNoValueTF {#1}
 {\overset{\Pr}{\longrightarrow}}
 { \xrightarrow[ #1 \to \infty]{\Pr }}
}
\DeclareDocumentCommand{\Asto} {o} {
\IfNoValueTF {#1}
 {\overset{\operatorname{a.s.}}{\longrightarrow}}
 {
 \xrightarrow[ #1 \to \infty]{\operatorname{a.s.} }
 }
}
\DeclareDocumentCommand{\Mgfto} {o} {
\IfNoValueTF {#1}
{\overset{\operatorname{mgf}}{\longrightarrow}}
{ \xrightarrow[ #1 \to \infty]{\operatorname{mgf} }}
}
\DeclareDocumentCommand{\Wkto} {o} {
\IfNoValueTF {#1}
 {\overset{(d)}{\longrightarrow}}
 { \xrightarrow[ #1 \to \infty]{(d) }}
}
\DeclareDocumentCommand{\To} {o} {
\IfNoValueTF {#1}
 {\rightarrow}
 { \xrightarrow[]{#1 \to \infty }}
}
\DeclareDocumentCommand \LPto { O{1} }
{\overset{\operatorname{\LP^{#1}}}{\longrightarrow}}
\title{The conjugate gradient algorithm on a general class of spiked covariance matrices}
\author{Xiucai Ding}
\address{University of California, Davis}
\email{xcading@ucdavis.edu}
\author{Thomas Trogdon}
\address{University of Washington, Seattle, WA}
\email{trogdon@uw.edu}
\thanks{The authors gratefully acknowledge support from the US National Science Foundation under grant NSF-DMS-1945652 (TT). Any opinions, findings, and conclusions or recommendations expressed in this material are those of the authors and do not necessarily reflect the views of the funding sources.}
\keywords{Sample covariance matrices, conjugate gradient}
\subjclass[2010]{65F10, 60B20}
\begin{document}

\maketitle

\begin{abstract}
We consider the conjugate gradient algorithm applied to a general class of spiked sample covariance matrices.  The main result of the paper is that the norms of the error and residual vectors at any finite step concentrate on deterministic values determined by orthogonal polynomials with respect to a deformed Marchenko--Pastur law. The first-order limits and fluctuations are shown to be universal.  Additionally, for the case where the bulk eigenvalues lie in a single interval we show a stronger universality result in that the asymptotic rate of convergence of the conjugate gradient algorithm only depends on the support of the bulk, provided the spikes are well-separated from the bulk. In particular, this shows that the classical condition number bound for the conjugate gradient algorithm is pessimistic for spiked matrices.
\end{abstract}


\nomnopage{$\mathbb C$}{The field of complex numbers}  
\nomnopage{$\mathbb F^{N}$}{The space of all $N$-dimensional vectors with entries in $\mathbb F$}
\nomnopage{$\mathbb F^{N \times M}$}{The space of all $N \times M$ matrices with entries in $\mathbb F$}
\nomnopage{$A_{j:k,\ell:p}$}{The submatrix of $A$ consisting of all entries in rows $j$ through $k$ and columns $\ell$ through $p$.}
\nomnopage{$A_{j,\ell:p}$}{$A_{j:j,\ell:p}$}
\nomnopage{$A_{j:k,\ell}$}{$A_{j:k,\ell:\ell}$}


\section{Introduction}
Large-dimensional covariance matrices are fundamental objects in high-dimensional statistics and applied mathematics. For example, many statistical methodologies, including
 principal component analysis (PCA), clustering analysis, and regression analysis, require the knowledge of the covariance structure. Moreover, in applied mathematics, especially manifold learning, the kernel affinity matrix and graph Laplacian matrix are closely related to covariance matrices. We refer the readers to \cite{DW, NEK, MR2722294,MR3468554} for more details.

Sample covariance matrices play important roles in estimating and inferring population covariance matrices. Even though high-dimensional sample covariance matrices themselves cannot be applied directly, one can construct consistent estimators and useful statistics for inference  based on them.  In particular, researchers are often interested in understanding the asymptotics of the following random matrix
\begin{equation}\label{eq_generalmodel}
W=\Sigma^{1/2} XX^* \Sigma^{1/2},
\end{equation}
where $\Sigma$ is  the population covariance matrix and $X$ is an $N \times M$ random matrix with centered independent and identically distributed (iid) entries. In the literature, a popular, and quite delicate,
model is the spiked covariance matrix model \cite{DRMTA,Johnstone2001},
where a finite number of spikes (i.e., eigenvalues detached from the bulk of
the spectrum) are added to the spectrum of $\Sigma$; for a precise definition, we refer the readers to Section \ref{sec_model}.  Significant efforts have been made to understand the statistical properties of $W$ in (\ref{eq_generalmodel}) in the high-dimensional setting when $N$ is comparably large to $M$. For a comprehensive review, we refer the readers to \cite{BDWW, MR3449395,DRMTA,Johnstone2001,MR2399865,PAUL20141,MR3468554}.  



Despite the wide applications of sample covariance matrices within data science, most of the existing literature focuses on the study of the asymptotic statistical properties of $W$, and less is known on the algorithmic properties. More specifically, substantially less is known about how algorithms from numerical linear algebra and optimization act on sample covariance matrices. For the numerical solution of linear systems involving $W,$ when both $N$ and $M$ are large, Gaussian elimination is computationally expensive, and supposing exact arithmetic, the accuracy of the result may be entirely unnecessary.  Instead, iterative methods are often preferred.

Before proceeding to our main focus,  we pause to discuss some of the history of the analysis of algorithms on random matrices.  The first such analysis that we are aware of was that of Goldstine and von Neumann \cite{Goldstine1951} when they studied the conditioning of random matrices (see \cite{Trefethen1990} and \cite{Spielman2004} for more recent developments).  Subsequently, many authors (see, for example, \cite{Trotter1984,Silverstein1985,Edelman1988,Dumitriu2002}) analyzed the way in which classical factorization algorithms act on Gaussian matrices.  The analysis of fundamentally iterative methods applied to random matrices began with the work of Pfrang et al.~\cite{DiagonalRMT} and continued in \cite{Deift2014}.  Rigorous results were first obtained in \cite{DT18,DT17} for eigenvalue algorithms. For example, in \cite{DT17}, the authors analyzed the numerical performance of power iteration methods applied to calculate the largest eigenvalue of $W$ when $\Sigma=I.$ They prove that the halting time, i.e., the minimal number of iterations before the power method satisfies a given stopping rule, is universal and its distributional limit can be expressed  in terms of functionals of the limiting distribution of the largest eigenvalues of $W.$ The iteration errors and residuals can be analyzed similarly.

The main focus of the current work is towards the understanding of the solution of 
\begin{equation}\label{eq_system}
W \bm{x}=\bm{b}, 
\end{equation}
where $W$ is given in \eqref{eq_generalmodel}. In the applied mathematics literature, there exist many useful iterative algorithms for positive definite matrices (of which (\ref{eq_generalmodel}) is one such random model). One such algorithm is the conjugate gradient algorithm (CGA, c.f. Algorithm \ref{a:cg} below), which is one of the most important Krylov subspace methods \cite{MR1444820}. The CGA \cite{Hestenes1952} is an iterative method designed to solve (\ref{eq_system}). 
We highlight that when $\bm{b}$ is random, solving \eqref{eq_system}, can be related to high-dimensional regression via the normal equations  \cite[Section 2.3]{MR2722294}. More specifically, consider $\bm{a}=(a_1, \cdots, a_M),$ and set
\begin{equation}\label{eq_plus_noise}
a_i=\bm{x}^* \bm{y}_i+\epsilon_i,  \  1 \leq i \leq M,  
\end{equation}
where $\epsilon_i, \ 1 \leq i \leq M,$ are iid random noise and $\bm{y}_i=\Sigma^{1/2}X_i \in \mathbb{R}^N.$ Here $X_i$ refers to the $i$th column of $X.$ Then to obtain the ordinary least square estimator of $\bm{x}$ is equivalent to solving the normal equations
\begin{equation*}
W\bm{x}=Y \bm{a},    
\end{equation*}
where $Y$ collects the samples $\bm{y}_i$ and $W$ is the design matrix as in (\ref{eq_generalmodel}). 
In \cite{Deift2019b} the authors presented rigorous results for the halting time of the CGA for solving \eqref{eq_system}, when $\Sigma = I$ and $X$ has iid centered Gaussian entries.  The main result concerns the first-order limit of the norms of the error and residual vectors as $N \to \infty$.  This analysis was expanded in \cite{Paquette2020}, removing the Gaussian assumption, and providing the same results, i.e., proving universality, and determining the structure of the fluctuations.  These probabilistic results have strong connection to the deterministic results of \cite{Beckermann2001}.
We remark that since the methods employed in both \cite{Deift2019b,Paquette2020} rely on the Golub-Kahan bidiagonalization procedure as given in \cite{MR1936554}, they cannot be applied to $W$ in (\ref{eq_generalmodel}) when $\Sigma$ is not a scalar multiple of the identity matrix.    

\begin{figure}[ht]
    \centering
    \includegraphics[width=.49\linewidth]{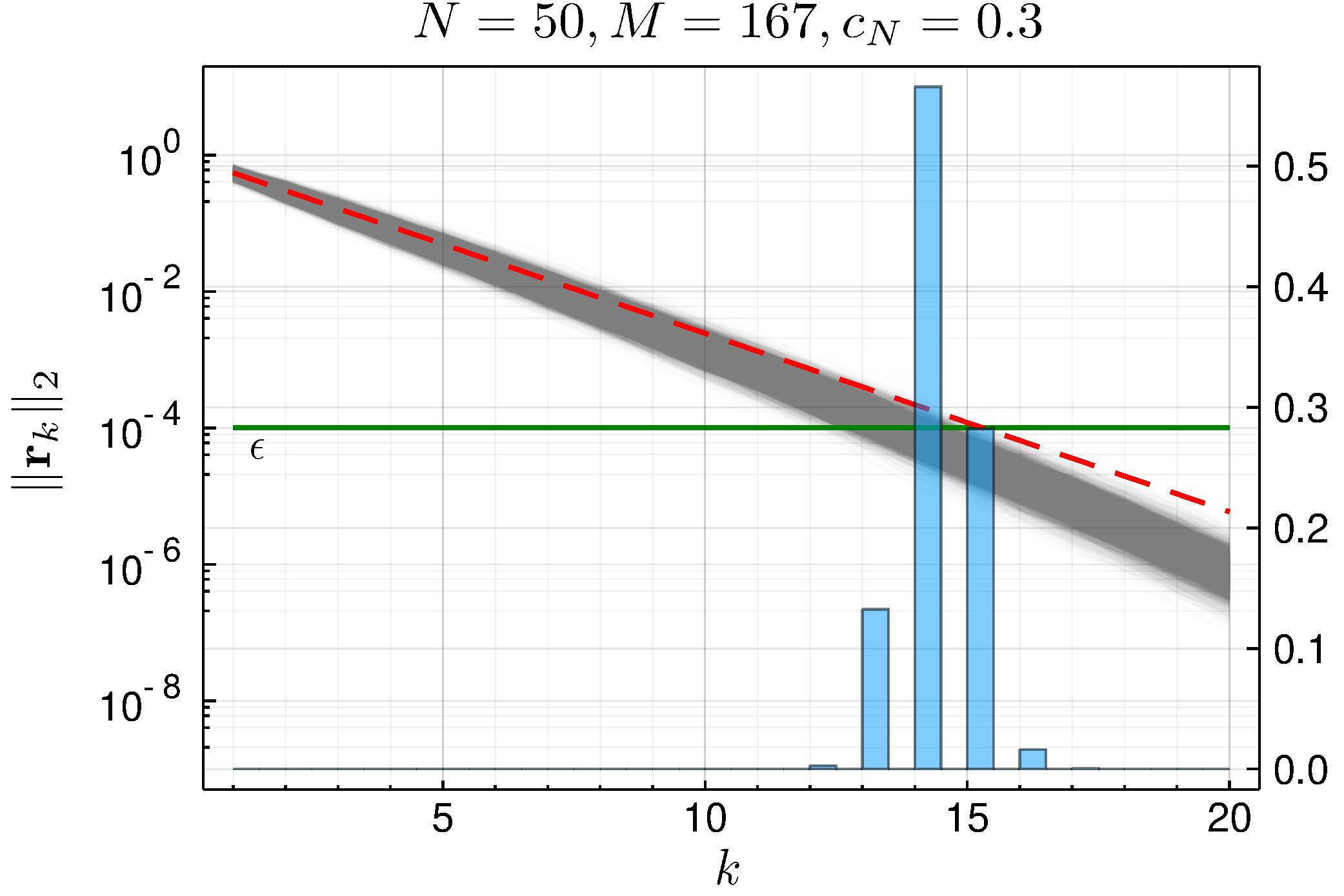}
    \includegraphics[width=.49\linewidth]{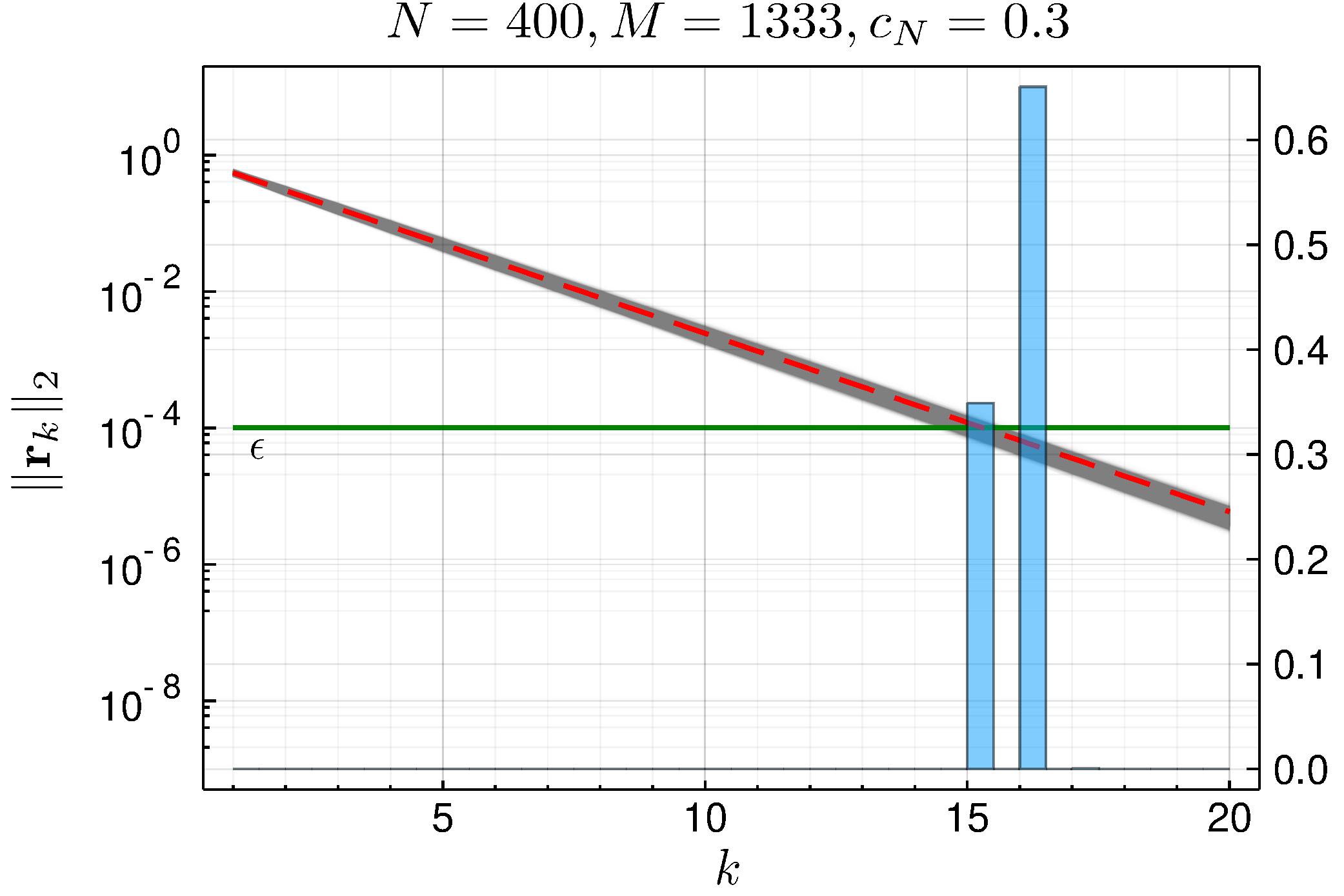}
    \includegraphics[width=.49\linewidth]{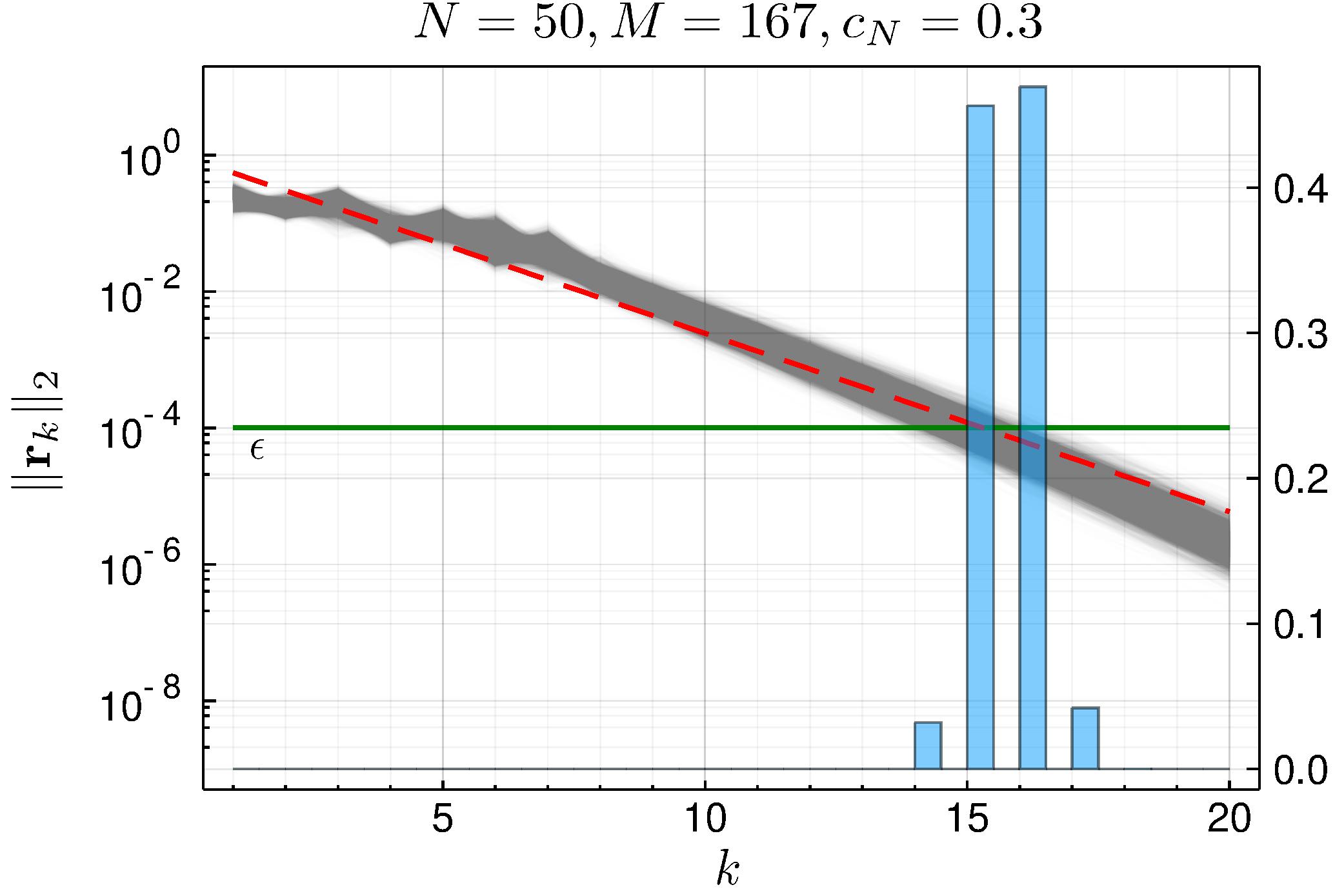}
    \includegraphics[width=.49\linewidth]{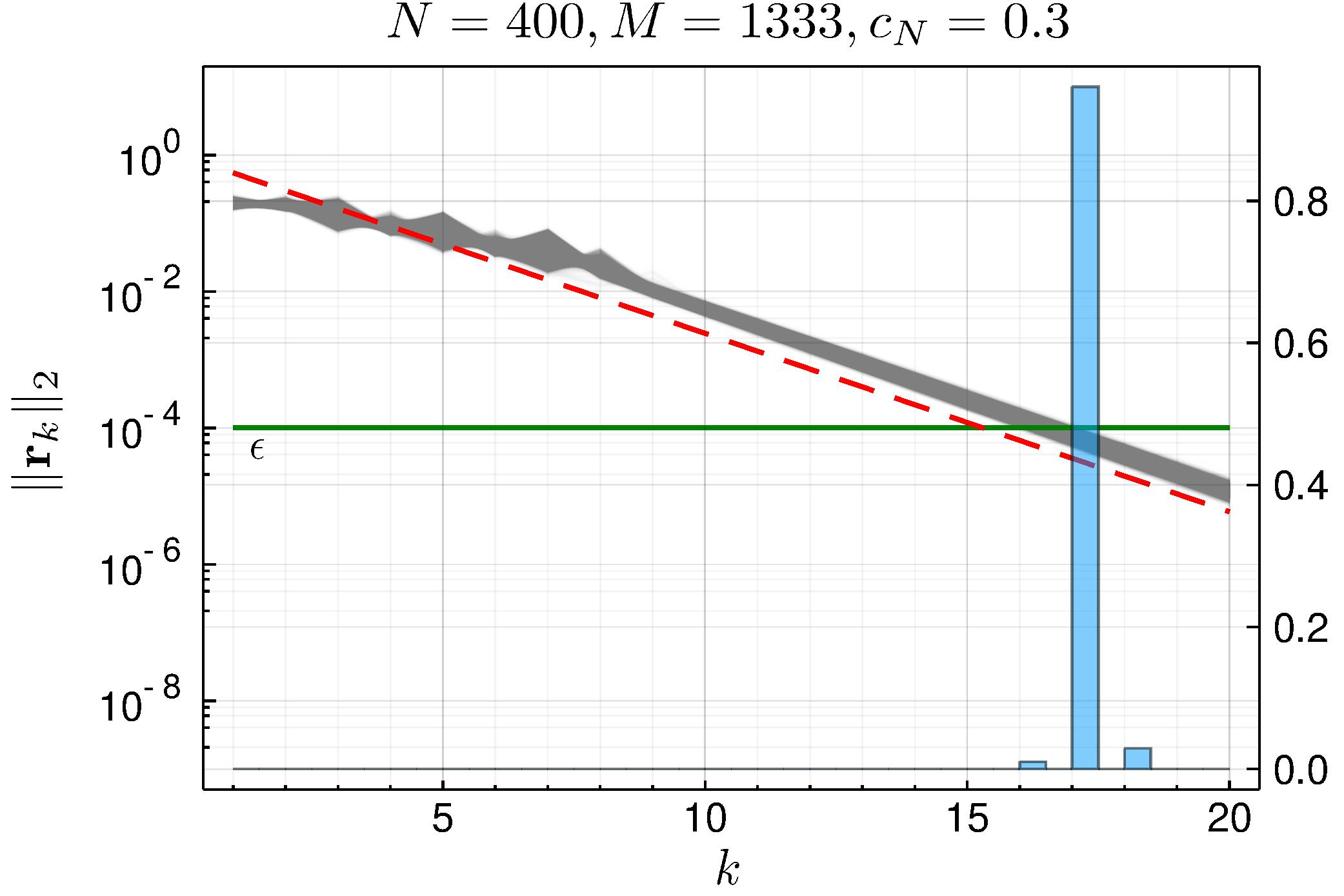}
    \caption{Top row:  A demonstration of the results in \cite{Deift2019b,Paquette2020}.  Shaded region consists of sampling 5000 matrices from the Wishart distribution (with $\Sigma = I$) and plotting the 2-norm of the residual versus $k$, the number of iterations in the CGA.  The dashed red line gives the asymptotic prediction from \cite{Deift2019b,Paquette2020}.  The blue histogram tallies the relative frequency of the halting time with $\epsilon = 10^{-4}$, i.e., the statistics of the number of iterations required to realize a residual with norm less than $\epsilon$.  Bottom row:  The same calculations as the top row but with $\Sigma^{1/2} = \mathrm{diag}(4,4,4,3.5,3.5,1,1,\ldots,1)$.  While the spikes induce a transient disturbance to the iteration, the asymptotic rate of convergence, for $k$ in a scaling region, is the same as when $\Sigma = I$.  The dashed red curve in the second row is the same as in the first, for comparison.}
    \label{fig:wishart_and_many_spikes}
\end{figure}

Motivated by the above applications and challenges, in the current paper, we develop a new strategy to analyze the first-order limits (including rates) of the residuals and errors in the CGA when $W$ is of the form (\ref{eq_generalmodel}); see Figure \ref{fig:wishart_and_many_spikes} for an illustration.  By using deterministic formulas (c.f. Proposition \ref{p:chol_inv_moment} and Lemma \ref{t:deterministic}), the residuals and errors of the CGA can be characterized using the entries of the Cholesky factorization of an associated semi-infinite Jacobi matrix (c.f. (\ref{eq_defnTnno})). It turns out that this Jacobi matrix coincides with the one produced from the well-known Lanczos iteration (c.f. Algorithm \ref{a:lanczos}). Moreover, we point out that the entries of the Jacobi matrix can be described as the three-term recurrence coefficients of the orthogonal polynomials generated by a spectral measure which is the \emph{eigenvector empirical spectral distribution} (VESD)\cite{MR2567175} (c.f. (\ref{eq_defnvesd})), which played a crucial role in \cite{DT17,DT18,Deift2019a,Paquette2020}.

\begin{remark}
The classical Chebyshev error bound for the CGA applied to $W \bm x = \bm b$ \cite{Hestenes1952} is
\begin{align*}
    \| \bm x - \bm x_k\|_W \leq 2 \left(\frac{ \sqrt{\lambda_{\max}} - \sqrt{\lambda_{\min}}}{ \sqrt{\lambda_{\max}} + \sqrt{\lambda_{\min}}}\right)^k \| \bm x - \bm x_0\|_W,
\end{align*}
where $\|\cdot\|_W$ is the $W$-norm, see \eqref{eq_krylovspace} below. The results of \cite{Deift2019a,Paquette2020} give that as $N \to \infty$
\begin{align*}
    \| \bm x - \bm x_k\|_W  = \left(\frac{ \sqrt{\lambda_{\max}} - \sqrt{\lambda_{\min}}}{ \sqrt{\lambda_{\max}} + \sqrt{\lambda_{\min}}}\right)^k \| \bm x - \bm x_0\|_W + o(1) =  (0.5477\ldots)^k \| \bm x - \bm x_0\|_W + o(1),
\end{align*}
when $\Sigma = I$ in \eqref{eq_generalmodel} demonstrating that the classical bound is quite good.  But this is no longer true in the presence of spikes as in the bottom row of Figure~\ref{fig:wishart_and_many_spikes}. The classical bound gives
\begin{align*}
    \| \bm x - \bm x_k\|_W \lessapprox 2 (0.8)^k \| \bm x - \bm x_0\|_W,
\end{align*}
since $\lambda_{\min}(W) = ( 1 - \sqrt{c_N})^2 + o(1)$ and $\lambda_{\max}(W) = 16.32 + o(1)$  ( see Lemma~\ref{lem_spikedmodeloutliereigenvalue} below), all as $N \to \infty$. Our estimates, see Theorem~\ref{thm:mainthree} give a better estimate
\begin{align*}
\| \bm x - \bm x_k\|_W = (0.5477\ldots) \| \bm x - \bm x_{k-1}\|_W + o(1),
\end{align*}
for sufficiently large $k$, i.e., after a transient period induced by the presence of spikes.
\end{remark}

When $\Sigma$ has no spikes, the concentration and convergence of the VESD can be established using the so-called anisotropic local laws \cite{Knowles2017} from random matrix theory.  Based on those results, we establish the concentration of the VESD for the spiked model (c.f. Lemma \ref{lem_connectionspikednonspiked}). Finally, as is well known and as was observed in \cite{Paquette2020}, since orthogonal polynomials can be fully constructed by its associated Hankel moment matrix of the VESD (c.f. Section \ref{sec_bascis} and \cite{DeiftOrthogonalPolynomials} for more detail), we can obtain our main results by only analyzing the convergence of the moments of the VESD.           

We emphasize that the aforementioned strategy can handle general spiked covariance matrices $W$ in (\ref{eq_generalmodel}). However, when $\Sigma$ in (\ref{eq_generalmodel}) does not contain spikes or when $\bm{b}$ satisfies certain conditions (c.f. (\ref{eq_conditionb})) , we simply the procedure and obtain simple asymptotic expressions: (1) The simplification first utilizes the asymptotic relation of the three-term recurrence coefficients that is most simply derived using the Riemann-Hilbert approach as in \cite{MR2087231}. It turns out that asymptotically, the associated Jacobi semi-infinite matrix has a very simple structure that can be described by the edges of the limiting VESD (c.f. Theorem \ref{thm_relationOP}). (2) Then a straightforward calculation for the Cholesky factorization will result in simple expressions. (c.f. Theorem \ref{thm:mainone}).  (3) The edges of the limiting VESD can be calculated using the critical points of an analytic function as in (\ref{eq_defnstitlesjtransform}).

Finally, we mention that the main focus on this manuscript is to develop a new strategy and novel formulas for the first order limits and rates of the CGA. However, we also establish the second order universality on the distributions of the residuals and errors. More specifically, we show that they only depend on the first four moments of the entries of $X$ in (\ref{eq_generalmodel}). The universality indicates that we can construct useful statistics based on the algorithms to infer the population covariance matrix $\Sigma$ in (\ref{eq_generalmodel}). This opens a new door for high-dimensional statistical inference; see Remark \ref{rmk_statistics} for more details.  To have a complete description of the performance of the CGA applied to (\ref{eq_generalmodel}), we still need to consider the second
order asymptotics, i.e. the limiting distribution of the residuals and errors. This will be included in our future works, for example, see \cite{DT2}. 






%

This paper is organized as follows. In Section \ref{sec_background}, we introduce the conjugate gradient algorithm and the general spiked covariance matrix model. In Section \ref{sec_mainresults}, we state our main results. In Section \ref{s.example}, we provide some examples and conduct some numerical simulations for illustration. In Section \ref{sec_theoryoforthogonalpolynomial}, we provide the theory of orthogonal polynomials and prove some essential asymptotics of the three-term recurrence relations. In Section \ref{sec_rmt}, we provide and prove the key ingredients regarding eigenvector empirical spectral distribution. The main technical proofs are summarized in Sections \ref{sec_mainproof} and \ref{sec_proofuniversality}. Some formulas, additional technical proofs and auxiliary lemmas are collected in Appendices \ref{sec_appa}, \ref{sec_appb} and \ref{sec_appc}.     

\vspace{3pt}

\noindent{\bf Conventions.} We denote by $\{\bm{f}_k\}_{k \geq 1} \subset \mathbb{R}^N$ the standard Euclidean basis of $\mathbb{R}^N. $ We denote $\mathbb{C}_+:=\{z=E+\mathrm{i} \eta \in \mathbb{C}: \eta>0 \}. $
The fundamental large parameter is $M$ and we always assume that $N$ is comparable to and depends on $M$. All quantities that are not explicitly constant may depend on $M$, and we usually omit $M$ from our notations. We use $C$ to denote a generic large positive constant, whose value may change from one line to the next. Similarly, we use $\epsilon$, $\tau$, $c$, etc. to denote generic small positive constants. If a constant depends on a quantity $a$, we use $C(a)$ or $C_a$ to indicate this dependence.  For two quantities $a_N$ and $b_N$ depending on $N$, the notation $a_N = \OO(b_N)$ means that $|a_N| \le C|b_N|$ for some constant $C>0$, and $a_N=\oo(b_N)$ means that $|a_N| \le c_N |b_N|$ for some positive sequence $c_N\downarrow 0$ as $N\to \infty$.  We use capital letters to refer to matrices and boldface to refer to vectors.  Lower-case letter will be used to refer to individual entries of a matrix, for example, $x_{ij}$ denotes the $(i,j)$ entry of a matrix $X$.  We use $X_{i:j,\ell:k}$ to denote the subblock of a matrix $X$ consisting of all entries in rows $i$ through $j$ and columns $\ell$ through $k$. If either $j$ or $k$ are absent then this notation refers to all entries in rows $\geq i$ or columns $\geq \ell$, respectively. \\

\noindent{\bf Disclaimer.}  All of our results concern running algorithms with exact arithmetic.  It is well-known that the Lanczos iteration and the CGA suffer from instabilities due finite-precision arithmetic \cite{Greenbaum1989,PAIGE1976}.  So, in the current paper, to simulate full precision arithmetic, we, when necessary, use an appropriately modified Householder reflection-based tridiagonalization because of its superior numerical stability.  In general, we notice that for spiked random matrices, the Lanczos iteration, and hence the CGA,  loses accuracy.  When no spikes are present and there is only bulk spectrum, the Lanczos iteration closely tracks the Householder-based algorithm.

\section{The conjugate gradient algorithm and the model}\label{sec_background}
This section is devoted to introducing the necessary background. In Section \ref{sec:lanczos-detail}, the CGA is stated and its connection with Lanczos iteration is discussed. In Section \ref{sec_model}, we introduce the spiked covariance matrix model that will be used throughout the current paper.

\subsection{The conjugate gradient algorithm and Lanczos iteration}\label{sec:lanczos-detail} In this subsection, we provide the background on the CGA. The actual CGA is given by Algorithm \ref{a:cg} below. The CGA can also be characterized in its varational form. Define the Krylov space 
\begin{equation}\label{eq_krylovspace}
\mathcal{K}_k=\operatorname{span}\left\{ \bm{b}, W \bm{b}, \cdots, W^{k-1} \bm{b} \right\}. 
\end{equation} 
Starting with $\bm{x}_0=\bm{0},$ the $k$th iterate, $\bm{x}_k,$ of  the CGA satisfies (see \cite[Chapter 11]{MR3024913} or \cite[Lecture 38]{MR1444820})
\begin{equation}\label{eq_optimizationproblem}
\bm{x}_k=\operatorname{argmin}_{\bm{y} \in \mathcal{K}_k} \| \bm{x}-\bm{y} \|_W. 
\end{equation}
Here we use the notation that for any vector $\bm{z}$ and positive definite matrix $A,$
\begin{equation*}
\| \bm{z} \|_A^2=\bm{z}^* A \bm{z}. 
\end{equation*}  

\noindent\fbox{%
\refstepcounter{alg}
    \parbox{\textwidth}{%
\flushright \boxed{\text{Algorithm~\arabic{alg}: Conjugate Gradient Algorithm (CGA) \label{a:cg}}}
\begin{enumerate}
    \item $\bm{x}_0$ is the initial guess. 
    \item Set $\bm{r}_0 = \bm{b}-W \bm{x}_0$, $\bm{p}_0=\bm{r}_0.$
    \item For $k = 1,2,\ldots,n$, $n \leq N$ is the maximum steps of iterations
    \begin{enumerate}
        \item Compute $\displaystyle a_{k-1} = \frac{\bm{r}^*_{k-1} \bm{r}_{k-1}}{\bm{r}^*_{k-1} W \bm{p}_{k-1}}$.
        \item Set $\vec x_k = \vec x_{k-1}+a_{k-1} \bm{p}_{k-1}$.
       \item Set $\vec r_k = \vec r_{k-1}-a_{k-1} W \bm{p}_{k-1}$.
        \item Compute $\displaystyle b_{k-1} = -\frac{\bm{r}^*_{k-1} \bm{r}_{k-1}}{\bm{r}^*_{k-1}  \bm{r}_{k-1}}$.
        \item Set $\vec p_k=\vec r_k-b_{k-1} \vec p_{k-1}.$
    \end{enumerate}
\end{enumerate}
    }%
}

\vspace{.1in}

The primary goal of the analysis of the CGA is to analyze the residual and error vectors, denoted by $\bm{r}_k(W, \bm{b})$ and $\bm{e}_k(W, \bm{b}),$ respectively, and defined as
\begin{equation*}
\bm{r}_k(W, \bm{b}):=\bm{b}-W \bm{x}_k, \ \bm{e}_k(W, \bm{b}):=\bm{x}-\bm{x}_k.
\end{equation*}

It can be seen from (\ref{eq_optimizationproblem}) that the Krylov subspace plays a central role in the analysis of the CGA. It is well-known that the Lanczos iteration \cite[Lecture 36]{MR1444820}  can be used to produce an orthonormal basis for the Krylov subspace. As a result, the CGA is closely related to Lanczos iteration \cite[Section 11.3.5]{MR3024913}. In fact, as discussed in Theorem \ref{t:deterministic} (reproduced from \cite{Paquette2020} for the reader's convenience), the residuals and errors can be represented based on the outputs of Lanczos iteration. The Lanczos iteration can be applied to any symmetric or Hermitian matrix $W$ and it takes the following form:\\

\noindent\fbox{%
\refstepcounter{alg}
    \parbox{\textwidth}{%
\flushright \boxed{\text{Algorithm~\arabic{alg}: Lanczos Iteration\label{a:lanczos}}}
\begin{enumerate}
    \item $\vec q_1$ is the initial vector.  Suppose $\|\vec q_1\|_2^2 = \vec q_1^* \vec q_1 = 1$
    \item Set $b_{-1} = 1$, $\vec q_{0} = 0$
    \item For $k = 1,2,\ldots,n$, $n \leq N$
    \begin{enumerate}
        \item Compute $\displaystyle a_{k-1} = (W \vec q_k - b_{k-2} \vec q_{k-1})^* \vec q_k$.
        \item Set $\vec v_k = W \vec q_k - a_{k-1} \vec q_k - b_{k-2} \vec q_{k-1}$.
        \item Compute $b_{k-1} = \|\vec v_k\|_2$ and if $b_{k-1} \neq 0$, set $\vec q_{k+1} = \vec v_k/b_{k-1}$.
    \end{enumerate}
    \item Return $a_0,\ldots,a_{n-1},b_0,\ldots,b_{n-2}$
\end{enumerate}
    }%
}

\vspace{.1in}
The Lanczos algorithm at step $k \leq N$  produces a Jacobi matrix $T_k$ and vectors $\vec q_1,\ldots,\vec q_k,$ denoted as 
\begin{align*}
    Q_k &= \begin{bmatrix} \vec q_1 & \vec q_2 & \cdots &\vec q_k \end{bmatrix}, \quad T_k = T_k(W,\vec q_1) = \begin{bmatrix} a_0 & b_0 \\
    b_0 & a_1 & \ddots \\
    & \ddots & \ddots & b_{k-2} \\
    & & b_{k-2} & a_{k-1} \end{bmatrix},\ 
    a_j \in \mathbb R,\quad b_j > 0,
\end{align*}
such that
\begin{align}\label{eq:Tk}
 W Q_k = Q_k T_k + b_{k-1} \vec q_{k+1} \vec f_k^*.
\end{align}
We use the notation $T = T(W,\vec q_1) = T_n(W,\vec q_1)$ for the matrix produced when the Lanczos iteration runs for its maximum of $n$ steps. We point out that the columns of $Q_k$ provide an orthonormal basis for the Krylov subspace $\operatorname{span}\{\bm{q}_1, W \bm{q}_1, \cdots, W^{k-1} \bm{q}_1\}$ \cite[Lecture 36]{MR1444820}.


\begin{remark}
In this paper, we focus on the analysis of the CGA. However, the arguments can be easily generalized to many other numerical algorithms involving large dimensional random matrices. For example, in Section \ref{sec_otheralgo}, we provide the results for another iteration algorithm MINRES. 
Additionally, our results provide the existence of first-order limits for the algorithms discussed in \cite{Paquette2020a}.
\end{remark}

\subsection{General spiked covariance matrix model}\label{sec_model}
In this paper, we are interested in the setting when $W$ is random and  the high dimensional  scenario when $M$ is comparably large to $N$ such that for some small constant $0<\tau<1,$
\begin{equation}\label{eq_dimensionality}
\tau \leq c_N:=\frac{N}{M} \leq \tau^{-1}.
\end{equation} 

In this subsection, we introduce the model for $W$ as in (\ref{eq_generalmodel}) .  Moreover, we assume that $X=(x_{ij})$ is an $N \times M$ random matrix whose entries $x_{ij}, 1 \leq i \leq N, 1 \leq j \leq M,$ are real or random variables satisfying
\begin{equation}\label{eq_meanvariancexij}
\mathbb{E} x_{ij}=0, \ \mathbb{E} x_{ij}^2=\frac{1}{M}.
\end{equation}
For definiteness, in this paper, we focus on the real case, i.e., the random variables $x_{ij}$ are real.  However, we remark that our proof can be applied to the complex case after minor modifications if we assume in addition
that $\Re x_{ij}$ and $\Im x_{ij}$ are independent centered random variables with variance $(2M)^{-1}. $ We also assume that the random variables $x_{ij}$ have arbitrarily high moments, in the sense that for any fixed $k \in \mathbb{N}$, there is
a constant $C_k>0$ such that 
\begin{equation}\label{eq_momentassumption1}
\max_{i,j} \left( \mathbb{E} |x_{ij}|^k \right)^{1/k} \leq C_k M^{-1/2}.  
\end{equation}
The assumption that (\ref{eq_momentassumption1}) holds for all $k \in \mathbb{N}$ may be easily relaxed. For instance, it
is easy to check that our results and their proofs remain valid, after minor adjustments using some suitable truncation and comparison techniques, if we only
require that (\ref{eq_momentassumption1}) holds for all $k \leq C$ for some finite constant $C$. As this is not the main focus of our current paper, we do not pursue such
generalizations. 

For the population covariance matrix,  we consider the spiked covariance matrix model following the setting of \cite{DRMTA}. Let $\Sigma$ be a spiked population covariance matrix that admits the following spectral decomposition 
\begin{equation}\label{eq_defnsigma}
\Sigma=\sum_{i=1}^N \widetilde{\sigma}_i \bm{v}_i \bm{v}_i^*, \ \widetilde{\sigma}_i=(1+d_i) \sigma_i,
\end{equation}
where $\sigma_1 \geq \sigma_2 \geq \cdots \geq \sigma_N>0$ and for some fixed integer $r \geq 0,$
\begin{equation*}
d_i>0, \ i \leq r; \  d_i=0,\ i>r.  
\end{equation*} 
The first $r$ eigenvalues of $\Sigma$ are the \emph{spikes} which may result in outlying eigenvalues of $W$. Throughout the paper, we will call (\ref{eq_generalmodel}) the \emph{spiked covariance matrix model}. Except for a few outliers, the limiting empirical spectral distribution of $W$ coincides with the associated \emph{non-spiked covariance matrix model}, which is defined as follows
\begin{equation}\label{eq_definitioncovariance}
W_0=\Sigma_0^{1/2} XX^* \Sigma_0^{1/2},
\end{equation}     
where
%
%
%
%
%
%
%
%
%
%
$\Sigma_0$ has the following spectral decomposition
\begin{equation}\label{eq_defnsigma0}
\Sigma_0=\sum_{i=1}^N \sigma_i \bm{v}_i \bm{v}_i^*.  
\end{equation}
Note that $\Sigma_0$ is the non-spiked version of $\Sigma$ in (\ref{eq_defnsigma}) with $r=0.$ 

\begin{remark}
We distinguish $\Sigma_0$ from $\Sigma$ because if a limit is desired for certain spectral statistics of \eqref{eq_generalmodel}, then $\Sigma_0$ will require some additional assumptions to be placed on it.  Specifically, one might want to take the $\sigma_i$'s to be the quantiles of some sufficiently regular distribution.  This aspect is discussed further in \eqref{eq_sigma_tau} and \eqref{eq_regularedgeandbulk} below.
\end{remark}

When $\sigma_i \equiv 1, 1 \leq i \leq N,$  it is well-known that the eigenvalues of $W_0$ obey the Marchenko-Pastur (MP) law \cite{Marcenko1967} and for general $\Sigma_0,$ they are governed by the \emph{deformed Marchenko-Pastur law} \cite{MR2567175, Knowles2017}.  When $r>0$ and $d_i, i \leq r,$ in (\ref{eq_defnsigma}) are above some critical values, the corresponding eigenvalues of $W$ will detach from the  bulk (or the support of the deformed MP law) and become outliers; see  Lemma \ref{lem_spikedmodeloutliereigenvalue} below for a more precise statement. 

In this paper, we consider both the non-spiked and spiked covariance matrix models. As we will see later, the discussion of the spiked model $W$ is based on that of the non-spiked model $W_0.$

\begin{remark}
In \cite{Paquette2020}, the authors studied the CGA for the non-spiked model under a specific setting when $\Sigma_0=I$ in (\ref{eq_defnsigma0}). Their arguments are based on (\ref{eq:Tk}), which implies that
\begin{equation}\label{eq_calculationreduced}
Q_k^* W Q_k=T_k.
\end{equation}
Since $Q_k$ is orthonormal, when $X$ is invariant (e.g. $X$ is a Gaussian matrix), the spectral distribution of $W$ can be studied via those of $T_k.$ However, when $\Sigma_0 \neq I,$ even when $X$ is Gaussian, this method fails. 

While we focus on the covariance type random matrix model  (\ref{eq_generalmodel}) we note that our framework and results can be generalized to other types of random matrix models, for example, the separable covariance matrix model in \cite{DYAOS} when $W=A^{1/2}XBX^* A^{1/2}$ for two positive definite matrices $A$ and $B.$ We will consider such generalizations in the future works. 

\end{remark}

\section{Main results}\label{sec_mainresults}

In this section, we state our main results. We first provide an overview of this section.  Section \ref{sec_sub_notationandassumption} is devoted to introducing some notations and the technical assumptions. In Section \ref{sec_lanczosanalysis}, we analyze the Lanczos algorithm. In Section \ref{sec_generalrecipe}, we conduct the error analysis for the CGA when $\bm{b}$ is deterministic. First, we propose a general algorithm, Estimation Algorithm \ref{a:generalaccurate},  to calculate some essential quantities. Armed with these quantities,  we establish the first-order limits and rates for norms of $\bm{e}_k$ and $\bm{r}_k$. Second, under additional regularity assumptions, we can push the calculation further and obtain simple formulas, see, for example, Theorem \ref{thm:mainone}. 

 In Section \ref{sec_sub_normalequation}, we give results when $\bm{b}$ is random such that the linear system becomes the normal equations $YY^* \bm{x}=Y^* \bm{a}, Y=\Sigma^{1/2}X$ for the spiked model and $Y=\Sigma^{1/2}_0 X$ for the non-spiked model. It turns out that the residuals and errors for the normal equation have the same asymptotics regardless of whether $\Sigma$ is spiked or not; see Theorem \ref{thm_normalequation} for more details. In Section \ref{s.universality}, we study the second-order fluctuations and prove that the results are universal --- they depend only on the first four moments of $x_{ij}.$ Finally, in Section  \ref{sec_otheralgo}, we discuss implications of the results and  and apply the results to another iterative Krylov subspace algorithm, the minimal residual method (MINRES) \cite{MR383715} to illustrate the generality of our proposed error analysis framework.


\subsection{Notations and assumptions}\label{sec_sub_notationandassumption} We provide some necessary notation and assumptions in this subsection. For any $N \times N$ Hermitian matrix $Z,$  denote its empirical spectral distribution (ESD) as 
\begin{equation}\label{defn_esd}
\mu_Z=\frac{1}{N} \sum_{i=1}^N \delta_{\lambda_i(Z)}. 
\end{equation}
Denote by $m_{\mu_Z}(z)$ the Stieltjes transform of $\mu_Z,$ i.e.,  
\begin{equation*}
m_{\mu_Z}(z)=\int \frac{1}{x-z} \mu_Z(\dd x), \ z \in \mathbb{C}_+. 
\end{equation*} 
We then denote the companion of $W_0$ in (\ref{eq_definitioncovariance}) as 
\begin{equation*}
\mathcal{W}_0=X^* \Sigma_0 X. 
\end{equation*} 
Note that $W_0$ and $\mathcal{W}_0$ have the same non-zero eigenvalues. 

It is well known that \cite{MR2567175}, in general, the asymptotic density function of the ESD of $\mathcal{W}_0$ follows the deformed Marchenko--Pastur law, denoted as $\varrho.$ The deformed MP law is best characterized by its Stieltjes transform. Let $z \in \mathbb{C}_+,$ the Stieltjes transform $m(z) \equiv m_{\varrho}(z)$ of $\varrho$ can be characterized as the unique solution of the following equation \cite[Lemma 2.2]{Knowles2017}
\begin{equation}\label{eq_inverserelation}
z=f(m), \ \Im m(z) \geq 0, 
\end{equation}
where $f(x)$ is defined as
\begin{equation}\label{eq_defnstitlesjtransform}
f(x)=-\frac{1}{x}+\frac{1}{M} \sum_{k=1}^N \frac{1}{x+\sigma_k^{-1}}.
\end{equation}
Based on $\varrho,$ we denote the density function $\varrho_{\bm{b}}$ as
\begin{equation}\label{eq_varrhob}
\varrho_{\bm{b}}(x)
= \frac{\varrho(x)}{x} \bm{b}^* \Sigma_0 \left[I +2 \Re m(x+ \I 0^+) \Sigma_0+|m(x + \I 0^+)|^2 \Sigma_0^2 \right]^{-1} \bm{b}. 
\end{equation} 
Moreover, we define the moments of $\varrho_{\bm{b}}$ as 
\begin{equation}\label{eq_momentdefinition}
 \mathfrak{m}_{k,\bm{b}}:=\int_{\mathbb{R}} \lambda^k \varrho_{\bm{b}} (\lambda) \mathrm{d} \lambda. 
\end{equation}
For any integer $n \leq N,$ denote the Hankel moment matrix of $\varrho_{\bm{b}}$ by
\begin{equation} \label{eq_hankeldeterminant}
    D_n = \det M_n, \quad (M_n)_{ij} = \mathfrak{m}_{i+j-2, \bm{b}}, \quad n \geq 0, \quad 1 \leq i,j \leq n+1,\quad D_{-1} = 1,
\end{equation}
and, since $\varrho_{\bm{b}}$ does not vanish identically if $\bm{b} \neq 0$, define the associated quantities
\begin{align}\label{eq:sandl}
    \ell_n = \sqrt{\frac{D_{n-1}}{D_{n}}}, \quad s_n = - \frac{\det \mathtt{M}_n}{\sqrt{D_n D_{n-1}}},
\end{align}
where $\mathtt{M}_n$ is the matrix formed by removing the last row and second-to-last column of $M_n.$ Similarly, we  define the relevant quantities for the spiked model. Specifically, we set 
\begin{equation} \label{eq_hankeldeterminantspiked}
    \widetilde{D}_n = \det \widetilde{M}_n, \quad (\widetilde{M}_n)_{ij} = \widetilde{\mathfrak{m}}_{i+j-2,\bm{b}}, \quad 1 \leq i,j \leq n+1,
\end{equation}
where $\widetilde{\mathfrak{m}}_{k, \bm{b}}$ is defined by 
\begin{equation}\label{eq_pertubedmoment}
\widetilde{\mathfrak{m}}_{k, \bm{b}}:=\sum_{i=1}^N \frac{\mathtt{b}_i^2}{1+d_i}\left( \mathfrak{m}_{k,\bm{v}_i}- \mathbf{1}(i \leq r)\frac{f'(-\widetilde{\sigma}_i^{-1}) \left( f(-\widetilde{\sigma}_i^{-1})\right)^{k-1}}{\sigma_i} \right),
\end{equation}
with the convention that 
\begin{equation}\label{eq_defnbi}
\mathtt{b}_i=\left\langle \bm{b}, \bm{v}_i \right\rangle, \ 1 \leq i \leq N. 
\end{equation}
We analogously define $\widetilde{\mathtt{M}}_n$, $\widetilde{\ell}_n$ and $\widetilde{s}_n$ using $\widetilde{\mathfrak{m}}_{k, \bm{b}}.$

For the ease of the statement of our results, we use the following notion of \emph{stochastic domination} which provides precise meaning to a statement of the form ``$x_N$ is bounded by $y_N$ up to a small power of $N$ with high probability".  
\begin{definition}
(i) Let
\[\xi=\left(\xi^{(N)}(u):N \in \mathbb{N}, u\in U^{(N)}\right),\hskip 10pt \zeta=\left(\zeta^{(N)}(u):N \in\mathbb{N}, u\in U^{(N)}\right)\]
be two families of nonnegative random variables defined on the same probability space, where $U^{(N)}$ is a possibly $n$-dependent parameter set. We say $\xi$ is stochastically dominated by $\zeta$, uniformly in $u$, if for any fixed (small) $\epsilon>0$ and (large) $D>0$, 
\[\sup_{u\in U^{(N)}}\mathbb{P}\left(\xi^{(N)}(u)>N^\epsilon\zeta^{(N)}(u)\right)\le n^{-D}\]
for large enough $N \ge N_0(\epsilon, D)$, and we shall use the notation $\xi\prec\zeta$. Throughout this paper, the stochastic domination will always be uniform in all parameters that are not explicitly fixed (such as matrix indices, and $z$ that takes values in some compact set). Note that $N_0(\epsilon, D)$ may depend on quantities that are explicitly constant, such as $\tau$ in Assumption \ref{assum_summary}. If for some complex family $\xi$ we have $|\xi|\prec\zeta$, then we will also write $\xi \prec \zeta$ or $\xi=\OO_\prec(\zeta)$.


(ii) We say an event $\Xi$ holds with high probability if for any constant $D>0$, $\mathbb P(\Xi)\ge 1- N^{-D}$ for sufficiently large $N$.
\end{definition}

Then we summarize the main technical assumptions which will be used throughout this paper.

\begin{assum}\label{assum_summary} We assume that the following assumptions hold:
\begin{enumerate}
\item{\bf On dimensionality} We consider the high-dimensional regime and  assume that (\ref{eq_dimensionality}) holds. 
\item{\bf On $X$ in (\ref{eq_generalmodel}).} For $X=(x_{ij}),$ we assume that $x_{ij}, 1 \leq i \leq N,  1 \leq j \leq M,$ are iid real random variables such that (\ref{eq_meanvariancexij}) and (\ref{eq_momentassumption1}) hold.  

\item{\bf On $\Sigma_0$ in (\ref{eq_defnsigma0}).} We assume that for some small constant $0<\tau_1<1,$ the following holds
\begin{equation}\label{eq_sigma_tau}
\tau_1 \leq \sigma_N \leq \sigma_{N-1} \leq \cdots \leq \sigma_1 \leq \tau_1^{-1}. 
\end{equation}
For definiteness,  we also assume that $\varrho$ is supported on a single bulk component such that
$\operatorname{supp} \varrho =[\gamma_-, \gamma_+]$ and that there exists $\tau_2 > 0$ such that, for a choice of the sign $\pm$, $w(x) := \varrho(x)(\gamma_+ - x)^{-1/2}(x - \gamma_-)^{\pm 1/2}$ and $1/w(x)$ have analytic extensions to $\{z \in \mathbb C : \min_{x \in [\gamma_+,\gamma_-]} |x - z| < \tau_2\}$.  
Moreover, we assume that 
\begin{equation}\label{eq_regularedgeandbulk}
 \gamma_+ \geq \tau_1,\ |\sigma_1^{-1}+m(\gamma_\pm)| \geq \tau_1,
\end{equation}
where, as above, $m(\cdot)$ is the Stieltjes transform of $\varrho.$

\item{\bf On the spikes in (\ref{eq_defnsigma}).} For some fixed integer $r$ and $i \leq r,$ we assume that there exists some constant $\varpi$ such that 
\begin{equation}\label{eq_outlierassumption}
\widetilde{\sigma}_i>-\frac{1}{m(\gamma_+)}+\varpi, \ i \leq r.  
\end{equation}
We also assume that $\widetilde{\sigma}_i, 1 \leq i \leq r,$ are bounded. 
\end{enumerate}
\end{assum}


%
%

The assumption (1) states that we consider the high dimensional regime which is commonly used in the random matrix theory literature. The assumption (2) imposes some conditions for the random matrix $X$. We refer the readers to the discussion below (\ref{eq_momentassumption1}) for more details. 
The assumption (3) is relatively standard in random matrix theory literature. These conditions rule out the existence of spikes in $\Sigma_0$ so that all the possible spikes are generated by those of $\Sigma,$ and also guarantee that $\varrho$ has a regular square root behavior near the edges $\gamma_{\pm}$. These conditions are satisfied by many commonly used examples. We refer the readers to \cite[Definition 2.7]{Knowles2017} for more details and Section \ref{s.example} for examples. Moreover, we mention that $\gamma_{\pm}$ can be fully calculated via $f(x)$ defined in (\ref{eq_defnstitlesjtransform}) as follows. Let $x_-<x_+$ be the critical points of $f(x).$ Then we have that $\gamma_{\pm}=f(x_{\pm}).$

 Finally, assumption (4) imposes the condition that  $\widetilde{\sigma}_i, 1 \leq i \leq r,$ are the spikes (c.f. (\ref{eq_outlierassumption})) which are well-separated from the upper edge with $\OO(1)$ distance. We remark that we can replace $\varpi$ with $\OO(M^{-1/3})$ and allow $\widetilde{\sigma}_i \equiv \widetilde{\sigma}_i(M)$ to diverge with $M$. Since these technical generalizations are not the main focus of the current paper, we do not pursue these generalizations here and leave it as future work. For more details on this aspect, we refer the readers to \cite{BDWW, MR3449395, DRMTA, DYAOS}.

\begin{remark}\label{rmk_assumption}
In this paper, for definiteness and convenience of statement, we assume that the support of $\varrho$ is a single interval. On one hand, a general class of $\Sigma_0$ satisfy this requirement. For example, this condition will be satisfied when the limiting spectral distribution of $\Sigma_0$ is supported on some interval $[a,b] \subset (0, \infty)$ and its density function is bounded from both above and below; see \cite[Example 2.9]{Knowles2017} or \cite[Corollary 3]{MR2308592} for more details. One the other hand, this constraint is expected to be removed in the future. In fact, as stated in  \cite[Lemma 2.4]{DY}, in general,  the support of $\varrho$ is a union of connected components on $\mathbb{R}_+,$ i.e., $\operatorname{supp} \varrho =\bigcup_{k=1}^q [\mathfrak{a}_{2k}, \mathfrak{a}_{2k-1}] \subset (0, \infty), $ where $q$ depends on the ESD of $\Sigma_0.$ As we will see later (c.f. Section \ref{sec_theoryoforthogonalpolynomial}), our arguments rely on the asymptotics of three-term recurrence relation of the orthogonal polynomials associated with $\varrho.$ These asymptotic formulae can only be established for $\varrho$ supported on a single interval (see  \cite{MR2087231}) and do not hold more generally. The generalization to multiple bulk components requires a substantial treatments using the Riemann-Hilbert approach \cite{DeiftOrthogonalPolynomials, MR1702716,MR1469319,MR2022855,PEHERSTORFER2011814,YATTSELEV201573}, which is out of the scope of the current paper. We will pursue this direction in the future, for example see \cite{DT2}.      

\end{remark}


%

\subsection{Lanczos for high-dimensional matrices: deterministic $\bm b$}\label{sec_lanczosanalysis}

We begin with our most critical result concerning the leading-order behavior of the matrix that results from the Lanczos iteration. The results are summarized in Estimation Algorithm \ref{a:lan_generalaccurate} and Theorem \ref{thm:lan_general} below.

\begin{theorem}\label{thm:lan_general}
Fix some small constant $\tau_1>0.$ and suppose Assumption \ref{assum_summary} holds, $\gamma_- \geq \tau_1$, $N \leq M$, and $\| \bm{b} \|_2=1$. Let  $T_k(W,\bm{b})$ and $\mathcal T_k$ denote the upper-left $k \times k$ subblocks the matrices calculated from Steps (1) and (3) of Estimation Algorithm~\ref{a:lan_generalaccurate}, respectively.

Then there exists some constant $\mathtt{C}_{l,k}>0$ such that   
   
    \begin{align}
        T_k(W,\bm{b}) &= \mathcal T_k+{\OO_{\prec}( \mathtt{C}_{l, k} M^{-1/2})},
    \end{align}
    where the approximation is in the sense of operator norm.
    Additionally, 
    \begin{align*}
        \bm{b}^* W^{-1} \bm b = \mathtt{m} +{\OO_{\prec}(  M^{-1/2})},
    \end{align*}
    where $\mathtt{m}=\mathfrak{m}_{-1, \bm{b}}$ for the non-spiked model and $\mathtt{m}=\widetilde{\mathfrak{m}}_{-1, \bm{b}}$ for the spiked model.
\end{theorem}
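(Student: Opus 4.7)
The plan is to reduce the random Jacobi matrix $T_k(W,\bm b)$ to a deterministic object built from the moments of the eigenvector empirical spectral distribution (VESD), and then to import the concentration of those moments from the anisotropic local law. Recall that the Lanczos iteration applied to $(W,\bm b)$ produces the three-term recurrence coefficients for the orthonormal polynomials with respect to the spectral measure
\[ \mu_{W,\bm b}=\sum_{i=1}^N |\langle \bm b,\bm u_i\rangle|^2 \delta_{\lambda_i(W)}, \]
whose power moments are exactly $\bm b^* W^j \bm b$. By the orthogonal-polynomial machinery to be developed in Section \ref{sec_theoryoforthogonalpolynomial} (based on the Hankel matrices in \eqref{eq_hankeldeterminant}), the entries of $T_k(W,\bm b)$ are rational functions of these moments whose denominators are Hankel determinants. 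The deterministic matrix $\mathcal T_k$ is defined by the same rational formulas, with $\bm b^* W^j \bm b$ replaced by $\mathfrak m_{j,\bm b}$ in the non-spiked case and by $\widetilde{\mathfrak m}_{j,\bm b}$ in the spiked case. Hence the claim reduces to showing that each of the finitely many moments appearing in the formulas converges to its deterministic counterpart at rate $M^{-1/2}$.

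The key analytic input is moment concentration, namely
\[ \bm b^* W^j \bm b = \mathtt m_j + \OO_{\prec}(M^{-1/2}), \qquad -1 \leq j \leq 2k, \]
with $\mathtt m_j$ equal to $\mathfrak m_{j,\bm b}$ or $\widetilde{\mathfrak m}_{j,\bm b}$ according to the model. I would obtain this by writing $\bm b^* W^j \bm b$ as a contour integral of the resolvent quadratic form $\bm b^*(W-zI)^{-1}\bm b$ against $z^j/(2\pi\I)$ over a contour enclosing the spectrum, and then invoking the anisotropic local law of \cite{Knowles2017} for the non-spiked case. In the spiked case, I would combine the rigidity of the $r$ outlier eigenvalues with the eigenvector concentration asserted by Lemma \ref{lem_connectionspikednonspiked}: each outlier $\widetilde\sigma_i$ contributes a rank-one residue in the resolvent near $f(-\widetilde\sigma_i^{-1})$ which, after contour integration, produces exactly the subtracted $f'(-\widetilde\sigma_i^{-1}) f(-\widetilde\sigma_i^{-1})^{k-1}/\sigma_i$ term in \eqref{eq_pertubedmoment}. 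The $j=-1$ moment required for $\bm b^*W^{-1}\bm b$ is handled identically, using $\gamma_- \geq \tau_1$ and smallest-eigenvalue rigidity to place the integration contour around zero safely inside the resolvent set.

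The propagation from moment bounds to the Jacobi matrix is purely deterministic. Under Assumption \ref{assum_summary} the density $\varrho_{\bm b}$ is a genuine positive measure on the interval $[\gamma_-,\gamma_+]$, so its Hankel determinants $D_0,\ldots,D_{2k}$ are bounded below by positive constants independent of $M$; the bounded-rank perturbation entering $\widetilde{\mathfrak m}_{j,\bm b}$ does not destroy this. Consequently the map from moments to recurrence coefficients is Lipschitz on a neighborhood of the deterministic moment vector, with Lipschitz constant $\mathtt C_{l,k}$ depending only on $k$ and the constants in Assumption \ref{assum_summary}. Substituting the moment bound into these formulas and taking operator norms over the $k\times k$ band yields the claimed estimate $T_k(W,\bm b) = \mathcal T_k + \OO_{\prec}(\mathtt C_{l,k}M^{-1/2})$, and the last assertion is the instance $j=-1$.

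The hardest step is the spiked contribution to the $j=-1$ (and more generally small negative) moment: one must isolate each outlier, show that its eigenvector projects onto $\bm v_i$ up to $\OO_\prec(M^{-1/2})$, and verify algebraically that the resulting residue matches the explicit $f(-\widetilde\sigma_i^{-1})$ correction in \eqref{eq_pertubedmoment}. A secondary obstacle is demonstrating the uniform (in $M$) positive definiteness of $M_{2k}$ needed to control $\mathtt C_{l,k}$; this follows because $\varrho_{\bm b}$ is a nondegenerate positive measure whenever $\bm b\neq 0$, but the argument requires care to ensure the lower bound does not collapse with the choice of $\bm b$ on the unit sphere.
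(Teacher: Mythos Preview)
Your proposal is correct and follows essentially the same route as the paper: reduce the Lanczos entries to rational functions of VESD moments, control those moments via contour-integral representations of the resolvent combined with the anisotropic local law (non-spiked) and the spiked--non-spiked resolvent identity of Lemma~\ref{lem_connectionspikednonspiked} plus a residue computation (spiked), and then propagate the $M^{-1/2}$ moment error through the Hankel-determinant formulas. The paper packages the last step as the determinant perturbation identity \eqref{eq_elecontrol} plus Hadamard's inequality rather than a Lipschitz argument, and invokes Gershgorin to pass from entrywise to operator-norm bounds on the banded matrix, but these are the same ideas you describe.
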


{ \vspace{0.1in}
\noindent\fbox{%
\refstepcounter{e_alg}
    \parbox{\textwidth}{%
\flushright \boxed{\text{Estimation Algorithm~\arabic{e_alg}: Analysis of the Lanczos iteration \label{a:lan_generalaccurate}}}
\begin{enumerate}[(1)]
    \item Suppose that the Lanczos iteration Algorithm \ref{a:lanczos} applied to the pair $(W, \bm{b})$  runs until step $n \leq N$ in the sense that $b_{n-1} = 0$.  Set $a_k = 1, b_k = 0$ for $k \geq n$.  Let $T(W,\bm{b})$ denote the associated Jacobi matrix.
    \item    \begin{enumerate}
    \item If $W$ is a spiked model as in (\ref{eq_generalmodel}), construct the sequence of $\mathfrak{a}_k$ and $\mathfrak{b}_k$ following
    \begin{equation}\label{eq_lnsn0}
    \mathfrak{b}_k=\frac{\widetilde{\ell}_k}{\widetilde{\ell}_{k+1}}, \ \mathfrak{a}_k=\frac{\widetilde{s}_k}{\widetilde{\ell}_k}-\frac{\widetilde{s}_{k+1}}{\widetilde{\ell}_{k+1}}, \quad k = 0,1,\ldots.
    \end{equation}

   \item Otherwise, if $W \equiv W_0$ is a non-spiked model as in (\ref{eq_definitioncovariance}),  
    construct the sequence of $\mathfrak{a}_k$ and $\mathfrak{b}_k$ following
    \begin{equation}\label{eq_lnsn}
    \mathfrak{b}_k=\frac{{\ell}_k}{{\ell}_{k+1}}, \ \mathfrak{a}_k=\frac{{s}_k}{{\ell}_k}-\frac{{s}_{k+1}}{{\ell}_{k+1}}, \quad k = 0,1,\ldots.
    \end{equation}
    \end{enumerate}
 \item   Build the  Jacobi matrix 
 \begin{equation}\label{eq_defnTnno}
 \mathcal T :=
     \begin{bmatrix} \mathfrak{a}_0 & \mathfrak{b}_0 \\
   \mathfrak{b}_0 & \mathfrak{a}_1 & \mathfrak{b}_1 \\
   & \mathfrak{b}_1 & \mathfrak{a}_2 &\ddots \\
    && \ddots & \ddots  \end{bmatrix}.
    \end{equation}       
\end{enumerate}
    }%
}

\vspace{0.1in}
}

\subsection{The CGA for high-dimensional linear systems: deterministic $\bm{b}$ in (\ref{eq_system})}\label{sec_generalrecipe}  In this subsection, we provide a framework to analyze the residuals and errors of the CGA when applied to (\ref{eq_system}) for some deterministic vector $\bm{b}$ for both spiked and non-spiked covariance matrices.

The framework contains three steps. First, we build up a tridiagonal Jacobi matrix $\mathcal T$ (c.f. (\ref{eq_defnTnno})) utilizing the Hankel moment matrix as in (\ref{eq_hankeldeterminant}). Second, we apply the Jacobi matrix Cholesky factorization algorithm,  Algorithm \ref{a:chol}, to obtain the Cholesky factorization of $\mathcal T$, denoted $\mathcal L$ (c.f. (\ref{eq_cholesky})). Third, we provide the limits and rates based on the entries of $\mathcal L.$  We summarize  the above procedure in Estimation Algorithm \ref{a:generalaccurate}.

{ \vspace{0.1in}
\noindent\fbox{%
\refstepcounter{e_alg}
    \parbox{\textwidth}{%
\flushright \boxed{\text{Estimation Algorithm~\arabic{e_alg}: Error analysis of the CGA \label{a:generalaccurate}}}
\begin{enumerate}[(1)]
    \item Suppose that the Lanczos iteration Algorithm \ref{a:lanczos} applied to the pair $(W, \bm{b})$  runs until step $n \leq N$ and $\bm{r}_n=0.$  Set $\bm{r}_k = 0$ for $k > n$. 
    \item 
    \begin{enumerate}
        \item If $W$ is a spiked model as in (\ref{eq_generalmodel}), construct the sequence of $\mathfrak{a}_k$ and $\mathfrak{b}_k$ following (\ref{eq_lnsn0}), for $k=0,1,\cdots, n-1.$

   \item Otherwise, if $W \equiv W_0$ is a non-spiked model as in (\ref{eq_definitioncovariance}),  
    construct the sequence of $\mathfrak{a}_k$ and $\mathfrak{b}_k$ following (\ref{eq_lnsn}).
 \end{enumerate}
 \item   Build the  Jacobi matrix following (\ref{eq_defnTnno}). 
    
    \item Apply the Jacobi matrix Cholesky factorization (c.f. Algorithm \ref{a:chol}) to $\mathcal T$ to obtain 
     \begin{equation}\label{eq_cholesky}
    \mathcal L = \begin{bmatrix} \alpha_0 \\
   \beta_0 & \alpha_1 \\
   & \beta_1 & \alpha_2 \\
   && \ddots & \ddots
   \end{bmatrix}.
\end{equation}
    \item Based on $\mathcal L$ from Step (4),  employ Theorem \ref{thm:general} below to obtain estimates of the errors encountered in the CGA. 
\end{enumerate}
    }%
}

\vspace{0.1in}
}

Based on Algorithm \ref{a:generalaccurate}, we prove the first order convergence limits and rates for the residuals and errors of the CGA in Theorem \ref{thm:general}.  Denote 
\begin{equation}\label{defn_sk}
\mathcal S_{k} = \mathcal L_{k+1:,k+1:} ~. 
\end{equation}
\begin{theorem}\label{thm:general}
Fix some small constant $\tau_1>0$ and suppose Assumption \ref{assum_summary} holds, $\gamma_- \geq \tau_1$, $N \leq M$, and $\| \bm{b} \|_2=1$. Let $\{\alpha_i\}$ and $\{\beta_j\}$ be the outputs calculated from Step (4) of Algorithm  \ref{a:generalaccurate}. Then we have that with $\vec x_0 = 0$, for $k <n$, there exists some constant $\mathtt{C}_{r,k}>0$ such that   
    \begin{align}\label{eq_residualexpression}
        \|\vec r_k\|_2 &= \prod_{j=0}^{k-1} \frac{\beta_j}{\alpha_j}+{\OO_{\prec}( \mathtt{C}_{r, k} M^{-1/2})}.
    \end{align}    
Recall (\ref{defn_sk}). Moreover, for some constant $\mathtt{C}_{e,k}>0,$ we have that   
  \begin{align}\label{eq_errornorm_Sk}
        \|\vec e_k\|_{W} &= \|\vec r_k\|_2\sqrt{\vec f_1^* (\mathcal S_{k} \mathcal S_{k}^*)^{-1} \vec f_1}+{\OO_{\prec}( \mathtt{C}_{e, k} M^{-1/2})}.
    \end{align}
Recall (\ref{eq_momentdefinition}). Equivalently, we have 
\begin{equation}\label{eq_errornormgeneral}
\|\vec e_k\|_W^2=\mathtt{m}-\frac{1}{\alpha_0^2} \sum_{\ell=0}^{k-1} \prod_{j=1}^\ell \frac{\beta_{j-1}^2}{\alpha_j^2}+{\OO_{\prec}( \mathtt{C}_{e, k} M^{-1/2})}, 
\end{equation}    
where $\mathtt{m}=\mathfrak{m}_{-1, \bm{b}}$ for the non-spiked model and $\mathtt{m}=\widetilde{\mathfrak{m}}_{-1, \bm{b}}$ for the spiked model. 
\end{theorem}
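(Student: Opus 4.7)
The strategy combines three inputs in sequence. First, the deterministic representations from Proposition~\ref{p:chol_inv_moment} and Lemma~\ref{t:deterministic} express $\|\bm r_k\|_2$ and $\|\bm e_k\|_W$ exactly in terms of the Cholesky factorization of the \emph{random} Jacobi matrix $T(W,\bm b)$ produced by Lanczos iteration: writing $T(W,\bm b) = \widehat L \widehat L^*$ with diagonal and subdiagonal entries $\widehat\alpha_j,\widehat\beta_j$, one has $\|\bm r_k\|_2 = \prod_{j=0}^{k-1}\widehat\beta_j/\widehat\alpha_j$ and $\|\bm e_k\|_W = \|\bm r_k\|_2 \sqrt{\bm f_1^*(\widehat{\mathcal S}_k\widehat{\mathcal S}_k^*)^{-1}\bm f_1}$. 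The whole task then reduces to replacing the hats by the deterministic quantities of Estimation Algorithm~\ref{a:generalaccurate}.

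Second, Theorem~\ref{thm:lan_general} supplies $\|T_k(W,\bm b) - \mathcal T_k\|_{\operatorname{op}} = \OO_{\prec}(\mathtt C_{l,k} M^{-1/2})$. Because $\mathcal T_k$ encodes the three-term recurrence for the positive measure $\varrho_{\bm b}$ supported in $[\gamma_-,\gamma_+]\subset(0,\infty)$ with $\gamma_-\geq \tau_1>0$, the deterministic Cholesky pivots $\alpha_j$ are uniformly bounded below by a positive constant for $j<k$, and the Cholesky map is real-analytic on the corresponding open set of positive definite matrices. This propagates the operator-norm bound entrywise: each $\widehat\alpha_j,\widehat\beta_j$ with $j<k$ equals its deterministic counterpart $\alpha_j,\beta_j$ up to $\OO_{\prec}(M^{-1/2})$, with a constant depending on $k$. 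Taking the product of $k$ uniformly bounded ratios yields \eqref{eq_residualexpression}; the same perturbation argument, applied to the lower-right (semi-infinite) block, gives \eqref{eq_errornorm_Sk} once the tail of $\widehat{\mathcal S}_k\widehat{\mathcal S}_k^*$ is truncated using that the corresponding deterministic block is bounded below by a multiple of $\gamma_-$ (via the spectral-measure interpretation of $\mathcal T$), so that entries beyond index $k + C\log M$ contribute super-polynomially small amounts to the scalar $\bm f_1^*(\widehat{\mathcal S}_k \widehat{\mathcal S}_k^*)^{-1}\bm f_1$.

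For \eqref{eq_errornormgeneral} the argument is algebraic. Since $\bm x_0=0$ and $\bm e_k\perp_W \bm x_k$, one has $\|\bm e_k\|_W^2 = \bm b^* W^{-1}\bm b - \|\bm x_k\|_W^2$. The standard Lanczos identity $\bm x_k = Q_k T_k^{-1}\bm f_1$ (for $\|\bm b\|_2=1$) together with $Q_k^*W Q_k = T_k$ yields $\|\bm x_k\|_W^2 = \bm f_1^* T_k^{-1}\bm f_1$, and solving the lower-bidiagonal system $\widehat L_k \bm y = \bm f_1$ gives the explicit closed form $\bm f_1^* T_k^{-1}\bm f_1 = \widehat\alpha_0^{-2}\sum_{\ell=0}^{k-1}\prod_{j=1}^\ell \widehat\beta_{j-1}^2/\widehat\alpha_j^2$. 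Combining this with the Cholesky approximation established above and with the estimate $\bm b^* W^{-1}\bm b = \mathtt m + \OO_{\prec}(M^{-1/2})$ (the second statement of Theorem~\ref{thm:lan_general}) produces \eqref{eq_errornormgeneral}.

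The main obstacle is the quantitative stability of Cholesky factorization: propagating an operator-norm perturbation of size $\OO_{\prec}(M^{-1/2})$ to entrywise perturbations of the random pivots and off-diagonals requires a uniform positive lower bound on $\widehat\alpha_j$ holding with high probability. For $j<k$ this is inherited from the corresponding bound on the deterministic $\alpha_j$ (which itself uses that $\varrho_{\bm b}$ is a non-trivial positive measure compactly supported away from $0$) together with $\OO_{\prec}(M^{-1/2})$-closeness. For the tail needed in \eqref{eq_errornorm_Sk}, the truncation above reduces the semi-infinite block to a finite-dimensional problem where the same argument applies. The fact that the constants $\mathtt C_{r,k}$ and $\mathtt C_{e,k}$ are allowed to depend on $k$ makes the accumulation of errors across the $k$ Cholesky steps harmless.
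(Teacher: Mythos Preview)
Your approach is essentially the paper's: use the deterministic formulae of Lemma~\ref{t:deterministic} to express $\|\bm r_k\|_2$ and $\|\bm e_k\|_W$ in terms of the Cholesky factor of the random Jacobi matrix, establish entrywise closeness of that Jacobi matrix to $\mathcal T_k$, and propagate through the (smooth) Cholesky map. One caveat on packaging: in the paper's logical order, Theorem~\ref{thm:lan_general} is proved \emph{after} Theorem~\ref{thm:general} and explicitly cites equations \eqref{eq_closebk}--\eqref{eq_closeak} established inside the latter's proof, so invoking Theorem~\ref{thm:lan_general} here as a black box would be circular. The content you need is just the moment convergence from Remark~\ref{remark_relationbetweenrandomanddeterministic} (and Theorem~\ref{lem_momentsconnectionspikednonspiked} in the spiked case) pushed through the Hankel-determinant formulae \eqref{eq:sandl}--\eqref{eq_defngeneralb}, which is precisely what the paper does inline; unpacking your reference to Theorem~\ref{thm:lan_general} in this way makes the argument self-contained and identical to the paper's.
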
 

\begin{remark}\label{rmk_explicit}
Employing Proposition~\ref{prop:inv_moment_minus} below to \eqref{eq_errornormgeneral} gives the following expression
\begin{align*}
    \mathtt{m}-\frac{1}{\alpha_0^2} \sum_{\ell=0}^{k-1} \prod_{j=1}^\ell \frac{\beta_{j-1}^2}{\alpha_j^2} = \left( \prod_{j=0}^{k-1} \frac{\beta_j}{\alpha_j}\right)^2\frac{1}{\alpha_k^2} \sum_{\ell=0}^{k-1} \prod_{j=1}^\ell \frac{\beta_{j+k-1}^2}{\alpha_{j+k}^2},
\end{align*}
which is then used to derive \eqref{eq_errornorm_Sk} by computing $\vec f_1^* (\mathcal S_{k} \mathcal S_{k}^*)^{-1} \vec f_1$ using forward substitution.
\end{remark}

\begin{remark}
Theorem \ref{thm:general} provides a first order description for the CGA applied to the linear system with deterministic $\bm{b}.$ The assumption that $\bm{b}$ is a unit vector is just to ease the statement of the results and can be removed by minor modification. 
The constants $\mathtt{C}_{e,k}$ and $\mathtt{C}_{r,k}$ crucially depend on $k.$ As we can see in the proof of Theorem \ref{thm:general},  these constants can be trivially bounded by $\mathsf{a}^k,$ for some constant $\mathsf{a}>1.$ In this sense, the error becomes negligible for $k \leq  C\log N$ where $C>0$ is some universal constant. The discussion of the optimal choices of these constants are out of the scope of the current paper. We will pursue this direction in the future work; for example, see \cite{DT2}.    
\end{remark}

Theorem \ref{thm:general} provides us the general error analysis for CGA with a general covariance matrix. As we can see from Steps (1)--(3) of Algorithm \ref{a:generalaccurate}, it requires a large amount of non-trivial  computations in order to obtain the Jacobi matrix. However, under certain conditions of $W$ and $\bm{b}$, we can simplify Algorithm \ref{a:generalaccurate} and provide a simpler but less exact estimate. We find closed-form estimates for $\{\alpha_i\}$, $\{\beta_j\}$, $\| \bm{r}_k\|_2$ and $\|\bm{e}_k\|_W$ in the rest of this subsection.  
The framework is summarized in Estimation Algorithm \ref{a:simple}.  

\vspace{0.1in}

\noindent\fbox{%
\refstepcounter{e_alg}
    \parbox{\textwidth}{%
\flushright \boxed{\text{Estimation Algorithm~\arabic{e_alg}: Asymptotic analysis of the CGA for general model \label{a:simple}}}
\begin{enumerate}
    \item Calculate the support of $\varrho$ using $f$ in (\ref{eq_defnstitlesjtransform}). More specifically, calculate the critical points of $f$ as $x_{\pm}$ and the corresponding edges $\gamma_{\pm}=f(x_{\pm}).$ 
    \item Based on (1), set 
    \begin{equation}\label{eq_aabb}
    \mathfrak{a}=\frac{\gamma_++\gamma_-}{2}, \ \mathfrak{b}=\frac{\gamma_+-\gamma_-}{4}.
    \end{equation}
     Build the Jacobi matrix $\mathcal T$ as in (\ref{eq_defnTnno}) by setting
 \begin{equation}\label{eq_defnTnn}
\mathfrak{a}_k \equiv \mathfrak{a}, \  \mathfrak{b}_k \equiv \mathfrak{b}, \ k \geq 0.  
    \end{equation}      
     \item Apply Jacobi matrix Cholesky factorization (c.f. Algorithm \ref{a:chol}) to $\mathcal T$ obtained from Step (2) and get the Cholesky factorization $\mathcal L$ as in (\ref{eq_cholesky}).  
     \item  Based on $\mathcal L$ from Step (3),  employ Theorem \ref{thm:mainone} below to obtain estimates of the errors encountered in the CGA. 
\end{enumerate}
    }%
}
\vspace{0.1in}








Compared to Estimation Algorithm \ref{a:generalaccurate}, the simplified algorithm, Estimation Algorithm \ref{a:simple} does not required the calculations of Hankel moment matrices and the related quantities. Instead, it only relies on the edges of the support of the deformed MP law, which can be easily calculated using the function in (\ref{eq_defnstitlesjtransform}). The calculation workload is significantly reduced.  Based on Estimation Algorithm \ref{a:simple}, we can establish Theorem \ref{thm:mainone} for the non-spiked covariance matrix or the spiked covariance matrix with certain choices of $\bm{b}$, which gives an asymptotic convergence rate for both the residual and error vectors.
%
\begin{theorem}\label{thm:mainone} Fix some small constant $\tau_1>0.$ Suppose Assumption \ref{assum_summary}(1-3) hold, $\gamma_-\geq \tau_1$, $N \leq M$ and $\| \bm{b} \|_2=1$. Let $\{\alpha_i\}$ and $\{\beta_j\}$ be the outputs calculated from Step (3) of Algorithm  \ref{a:simple}. Then we have that with $\vec x_0 = 0$, for $1 \leq k$: \\

\noindent (1)  For some constants $\mathtt{C}_{r,k}>0$, $c > 0$
    \begin{align*}
        \frac{\|\vec r_k(W_0, \bm{b})\|_2}{\|\vec r_{k-1}(W_0, \bm{b})\|_2} = \frac{\sqrt{\gamma_+} - \sqrt{\gamma_-}}{\sqrt{\gamma_+} +  \sqrt{\gamma_-}}+\OO_{\prec}( \mathtt{C}_{r,k} M^{-1/2})+\OO(e^{-ck}).
    \end{align*}

    \noindent (2)  For some constants $\mathtt{C}_{e,k}>0$, $c > 0$
    \begin{align*}
        \frac{\|\vec e_k(W_0, \bm{b})\|_W}{\|\vec e_{k-1}(W_0, \bm{b})\|_W} = \frac{\sqrt{\gamma_+} - \sqrt{\gamma_-}}{\sqrt{\gamma_+} +  \sqrt{\gamma_-}}+\OO_{\prec}( \mathtt{C}_{e,k} M^{-1/2})+\OO(e^{-ck}).
    \end{align*} 
    
\noindent In addition, suppose Assumption \ref{assum_summary}(4) holds and suppose for each $i = 1,2,\ldots,r$ that either
\begin{align}\label{eq_conditionb}
    \langle \bm b, \bm{v}_i \rangle = 0 \quad \text{or} \quad |\langle \bm b, \bm{v}_i \rangle| \geq \tau_1.
\end{align}
Then: \\

\noindent (3)  For some constants $\mathtt{C}_{r,k}>0$, $c > 0$
    \begin{align*}
        \frac{\|\vec r_k(W, \bm{b})\|_2}{\|\vec r_{k-1}(W, \bm{b})\|_2} = \frac{\sqrt{\gamma_+} - \sqrt{\gamma_-}}{\sqrt{\gamma_+} +  \sqrt{\gamma_-}}+\OO_{\prec}( \mathtt{C}_{r,k} M^{-1/2})+\OO(e^{-ck}).
    \end{align*}

    \noindent (4)  For some constants $\mathtt{C}_{e,k}>0$, $c > 0$
    \begin{align*}
        \frac{\|\vec e_k(W, \bm{b})\|_W}{\|\vec e_{k-1}(W, \bm{b})\|_W} = \frac{\sqrt{\gamma_+} - \sqrt{\gamma_-}}{\sqrt{\gamma_+} +  \sqrt{\gamma_-}}+\OO_{\prec}( \mathtt{C}_{e,k} M^{-1/2})+\OO(e^{-ck}).
    \end{align*} 


\end{theorem}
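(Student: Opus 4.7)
The plan is to reduce the convergence-rate claims to the behavior of the Cholesky factorization of the constant Jacobi matrix $\mathcal{T}$ produced by Estimation Algorithm \ref{a:simple}, combined with the observation that the true three-term recurrence coefficients $\mathfrak{a}_k,\mathfrak{b}_k$ generated by Estimation Algorithm \ref{a:generalaccurate} converge exponentially fast to the Chebyshev values $\mathfrak{a} = (\gamma_+ + \gamma_-)/2$ and $\mathfrak{b} = (\gamma_+ - \gamma_-)/4$. For the constant Jacobi matrix, the Cholesky relations $\alpha_0^2 = \mathfrak{a}$, $\alpha_{k-1}\beta_{k-1} = \mathfrak{b}$, and $\alpha_k^2 + \beta_{k-1}^2 = \mathfrak{a}$ collapse to a scalar fixed-point iteration $\alpha_k^2 = \mathfrak{a} - \mathfrak{b}^2/\alpha_{k-1}^2$. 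Its unique attracting fixed point is $\alpha_*^2 = (\mathfrak{a} + \sqrt{\mathfrak{a}^2 - 4\mathfrak{b}^2})/2$; using the identity $\mathfrak{a}^2 - 4\mathfrak{b}^2 = \gamma_+\gamma_-$, this simplifies to $\alpha_* = (\sqrt{\gamma_+}+\sqrt{\gamma_-})/2$ and $\beta_* = (\sqrt{\gamma_+}-\sqrt{\gamma_-})/2$, so $\beta_*/\alpha_* = (\sqrt{\gamma_+}-\sqrt{\gamma_-})/(\sqrt{\gamma_+}+\sqrt{\gamma_-})$. The derivative of $x \mapsto \mathfrak{a} - \mathfrak{b}^2/x$ at $\alpha_*^2$ equals $(\beta_*/\alpha_*)^2 < 1$, so a linearization gives $|\alpha_k - \alpha_*| + |\beta_k - \beta_*| = O(e^{-ck})$.

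For parts (1) and (2) (the non-spiked case), the true coefficients are the recurrence coefficients of the orthogonal polynomials associated with $\varrho_{\bm b}$, a measure supported on the single interval $[\gamma_-,\gamma_+]$. Under Assumption \ref{assum_summary}(1--3) this measure satisfies the hypotheses of Theorem \ref{thm_relationOP}, and the Riemann--Hilbert analysis of \cite{MR2087231} yields $|\mathfrak{a}_k - \mathfrak{a}| + |\mathfrak{b}_k - \mathfrak{b}| = O(e^{-ck})$. A Gronwall-type perturbation argument for the Cholesky recursion, propagating an exponentially decaying input perturbation through the strict contraction $x \mapsto \mathfrak{a} - \mathfrak{b}^2/x$, then gives $|\alpha_k - \alpha_*| + |\beta_k - \beta_*| = O(e^{-ck})$ for the true Cholesky factors as well. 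Substituting into the residual formula \eqref{eq_residualexpression} of Theorem \ref{thm:general} gives part (1). For part (2), I use the identity in Remark \ref{rmk_explicit} to write $\|\vec e_k\|_W^2 = (\prod_{j=0}^{k-1}\beta_j/\alpha_j)^2 \cdot T_k$, where the tail factor $T_k = \alpha_k^{-2}\sum_{\ell=0}^{k-1}\prod_{j=1}^\ell \beta_{j+k-1}^2/\alpha_{j+k}^2$ converges, as $k$ grows, to the constant $\alpha_*^{-2}\sum_{\ell=0}^{\infty}(\beta_*/\alpha_*)^{2\ell}$. Hence $T_k/T_{k-1} = 1 + O(e^{-ck})$, and the ratio of consecutive error norms inherits the same geometric rate as $\beta_{k-1}/\alpha_{k-1}$.

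For parts (3) and (4) (the spiked case under \eqref{eq_conditionb}), the relevant orthogonality structure is determined by the perturbed moments $\widetilde{\mathfrak{m}}_{k,\bm b}$ in \eqref{eq_pertubedmoment}, which differ from the non-spiked moments by contributions from finitely many outliers at the locations $f(-\widetilde{\sigma}_i^{-1})$, each strictly outside $[\gamma_-,\gamma_+]$ by Assumption \ref{assum_summary}(4). The hypothesis \eqref{eq_conditionb} ensures that each such outlier contribution either vanishes or has magnitude bounded below by a positive constant, so the modified orthogonal-polynomial problem still has a one-interval essential support plus finitely many well-separated mass points. Classical perturbation theory for orthogonal polynomials under a finite number of mass points outside the essential support (a one-cut plus outliers Riemann--Hilbert setting) then delivers the same exponential asymptotics $|\mathfrak{a}_k - \mathfrak{a}| + |\mathfrak{b}_k - \mathfrak{b}| = O(e^{-ck})$, with a possibly larger prefactor absorbed into the constant $c$ in the final bound. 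The remainder of the argument is identical to parts (1)--(2). The main technical obstacle is precisely this last step: establishing that the presence of finitely many point masses outside $[\gamma_-,\gamma_+]$ does not destroy the exponential rate of convergence of the recurrence coefficients, and tracking the constants well enough that the Cholesky perturbation remains smaller than the random fluctuations $O_\prec(\mathtt{C}_{r,k}M^{-1/2})$ delivered by Theorem \ref{thm:general} for all $k$ of interest. The strict contraction $(\beta_*/\alpha_*)^2 < 1$ from the first paragraph furnishes the uniform-in-$k$ stability needed to propagate the perturbation.
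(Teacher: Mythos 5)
Your proposal follows essentially the same route as the paper: identify $\varrho_{\bm b}$ (resp.\ its spiked analogue) as a one-cut measure with square-root edges plus finitely many mass points, invoke the Riemann--Hilbert asymptotics of Theorem~\ref{thm_relationOP} to get exponential convergence of $\mathfrak a_k,\mathfrak b_k$ to the Chebyshev values, push this through the Cholesky recursion, and conclude via Theorem~\ref{thm:general} and Remark~\ref{rmk_explicit}; your fixed-point computation of $\alpha_*,\beta_*$ and the treatment of the tail factor for the $W$-norm match the paper's Lemma~\ref{l:En} and Proposition~\ref{prop:inv_moment_minus}. The one step you assert rather than prove is the stability of the Cholesky recursion: the map $x\mapsto \mathfrak a-\mathfrak b^2/x$ is a contraction only \emph{near} the stable fixed point $\alpha_*^2$, while it also has the unstable fixed point $\beta_*^2$, and since the early recurrence coefficients are $O(1)$ perturbations of the Chebyshev values (only the tail decays exponentially) a purely local Gronwall argument does not by itself guarantee the iterates reach the stable basin. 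The paper closes this gap in Theorem~\ref{t:Asymptotic_Cholesky} by using positive definiteness of $\mathcal T$ to get a uniform lower bound on the diagonal Cholesky entries and a CGA residual-growth argument to exclude the unstable fixed point, after which Corollary~\ref{c:Cholesky_rate} delivers the exponential rate you need; with that ingredient supplied, your argument is complete.
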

\begin{remark}
In the case that $\bm b \in \mathrm{span}\{\bm v_1, \ldots, \bm v_r\}$ the calculations can be made more explicit in the sense that the Jacobi matrix $\mathcal T$ determined by $W$ and $\bm b$ \eqref{eq_defnTnno} can be written explicitly in terms quantities used in the analysis of the CGA applied to $W_0 \bm x = \bm b$.
\end{remark}

\begin{remark}
The formulas in Theorem \ref{thm:mainone} are explicit and only need the edges of the support of $\varrho.$ In fact, in many examples, the edges also have known formulas. For example, when $\Sigma_0=I,$ we have that $\gamma_{\pm}=(1\pm \sqrt{c_N})^2.$ Moreover, when the limiting spectral distribution of $\Sigma_0$ follows Marchenko--Pastur law with the same parameter $c_N,$ we have that (c.f. Lemma \ref{lem_expcilitformula})
\begin{equation}\label{eq_invariantformula}
\gamma_{\pm}=\frac{-1+20 c_N^{-1} +8 c_N^{-2} \pm (1+8 c_N^{-1})^{3/2}}{8 c_N^{-2}}.
\end{equation}  
For more general settings, we employ $f$ in (\ref{eq_defnstitlesjtransform}) to calculate the support using Newton's method. We refer the readers to Section \ref{s.example} for more examples.  
\end{remark}

\begin{remark}
    In the statement of Theorem \ref{thm:mainone} the potential vanishing of $\bm{r}_{k-1}$ appears to be ignored.  But, indeed, Theorem~\ref{thm:general} establishes that it does not vanish with high probability.
\end{remark}

Based on the formulas in Theorems ~\ref{thm:general} and \ref{thm:mainone}  we can derive expressions for the halting times of the CGA for the non-spiked model. Similar results hold for spiked model when $\bm{b}$ satisfies (\ref{eq_conditionb}). Define two CGA halting times as 
\begin{equation*}
t^{\bm{e}}(W_0, \bm{b}, \epsilon)=\min\{k: \| \bm{e}_k(W_0, \bm{b}) \|_{W_0}<\epsilon\}, \ t^{\bm{r}}(W_0, \bm{b}, \epsilon)=\min\{k: \| \bm{r}_k(W_0, \bm{b}) \|_2 <\epsilon\}. 
\end{equation*}
We summarize the results in the following theorem. Define deterministic halting times
\begin{align*}
    \tau^{\bm{e}}(\mathcal L, \epsilon) &= \min\left\{ k: \mathtt{e}_k(\mathcal L) < \epsilon \right\}, \quad \mathtt{e}_k(\mathcal L) : = \left( \prod_{j=0}^{k-1} \frac{\beta_j}{\alpha_j}\right)\frac{1}{\alpha_k} \sum_{\ell=0}^{k-1} \prod_{j=1}^\ell \frac{\beta_{j+k-1}}{\alpha_{j+k}}\\
    \tau^{\bm{r}}(\mathcal L, \epsilon) &= \min\left\{ k:  \mathtt{r}_k(\mathcal L)  < \epsilon \right\}, \quad \mathtt{r}_k(\mathcal L) : = \prod_{j=0}^{k-1} \frac{\beta_j}{\alpha_j}.
\end{align*}

\begin{theorem}\label{thm_haltingtime}
Suppose the assumptions of Theorem \ref{thm:mainone} hold.   Let $\mathcal L$ be as in \eqref{eq_cholesky}. \\ 
\noindent (1) If $\mathtt{r}_k(\mathcal L) \neq \epsilon$ for all $k$ then
\begin{align*}
    \lim_{M \to \infty} \mathbb P \left( t^{\bm{r}}(W_0, \bm{b}, \epsilon) = \tau^{\bm{r}}(\mathcal L, \epsilon) \right) =  1.
\end{align*}

\noindent (2) If $\mathtt{e}_k(\mathcal L) \neq \epsilon$ for all $k$ then
\begin{align*}
    \lim_{M \to \infty}\mathbb P \left( t^{\bm{e}}(W_0, \bm{b}, \epsilon) = \tau^{\bm{e}}(\mathcal L, \epsilon) \right) = 1.
\end{align*}

\noindent Since $\mathtt{e}_k(\mathcal L)$ is strictly decreasing, if $\mathtt{e}_K(\mathcal L) = \epsilon$ for some $K$ then as $M \to \infty$
\begin{align*}
    \mathbb P &\left( t^{\bm{e}}(W_0, \bm{b}, \epsilon) = \tau^{\bm{e}}(\mathcal L, \epsilon) \right) = p_M + o(1) \\
    \mathbb P &\left( t^{\bm{e}}(W_0, \bm{b}, \epsilon) = \tau^{\bm{e}}(\mathcal L, \epsilon)  + 1\right) = 1-p_M + o(1).
\end{align*}

\end{theorem}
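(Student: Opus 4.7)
The plan is to reduce each randomized halting time to its deterministic counterpart by combining the sharp quantitative concentration in Theorem~\ref{thm:general} with strict monotonicity of the deterministic sequences $\{\mathtt{r}_k(\mathcal L)\}$ and $\{\mathtt{e}_k(\mathcal L)\}$. The first preliminary step is to verify this monotonicity. For $\mathtt{r}_k$, because $\mathcal T$ is a positive definite Jacobi matrix and $\mathcal L$ is its Cholesky factor, each ratio $\beta_j/\alpha_j$ lies in $(0,1)$, so $\mathtt{r}_k$ is strictly decreasing. For $\mathtt{e}_k$, one invokes the classical strict monotonicity $\|\bm e_k\|_W < \|\bm e_{k-1}\|_W$ of the exact CGA (immediate from the nested optimization \eqref{eq_optimizationproblem}) together with the identity in Remark~\ref{rmk_explicit}, which is in fact already asserted by the theorem statement.

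For part~(1), set $k^\star = \tau^{\bm r}(\mathcal L,\epsilon)$. Under the hypothesis $\mathtt{r}_k(\mathcal L) \neq \epsilon$ for all $k$, the two gaps
\begin{equation*}
\delta_- := \mathtt{r}_{k^\star-1}(\mathcal L) - \epsilon, \qquad \delta_+ := \epsilon - \mathtt{r}_{k^\star}(\mathcal L)
\end{equation*}
are strictly positive and $M$-independent. Applying \eqref{eq_residualexpression} at $k = k^\star - 1$ and at $k = k^\star$ and taking a union bound, one obtains $\|\bm r_{k^\star - 1}\|_2 > \epsilon$ and $\|\bm r_{k^\star}\|_2 < \epsilon$ with probability tending to one, forcing $t^{\bm r}(W_0,\bm b,\epsilon) = k^\star$. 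Part~(2) is analogous: one rewrites $\|\bm e_k\|_W$ via the identity in Remark~\ref{rmk_explicit} to obtain $\|\bm e_k\|_W = \mathtt{e}_k(\mathcal L) + \OO_{\prec}(\mathtt{C}_{e,k} M^{-1/2})$ for each fixed $k$ (taking a square root of \eqref{eq_errornormgeneral} is harmless because $\mathtt{e}_k(\mathcal L)$ is bounded away from $0$ at the relevant values of $k$), and then repeats the same gap argument.

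For the boundary case $\mathtt{e}_K(\mathcal L) = \epsilon$, strict monotonicity of the deterministic sequence gives $\mathtt{e}_{K-1}(\mathcal L) > \epsilon > \mathtt{e}_{K+1}(\mathcal L)$, so the gap argument applied at $k \in \{K-1, K+1\}$ still yields $\|\bm e_{K-1}\|_W > \epsilon$ and $\|\bm e_{K+1}\|_W < \epsilon$ with probability tending to one. Combined with the strict decrease of $k \mapsto \|\bm e_k\|_W$ in the exact CGA, this forces $t^{\bm e}(W_0,\bm b,\epsilon) \in \{K, K+1\}$, with the first outcome occurring exactly when $\|\bm e_K\|_W < \epsilon$. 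Setting
\begin{equation*}
p_M := \prob\!\left( \|\bm e_K\|_W < \epsilon \right)
\end{equation*}
then yields both displayed identities up to the $o(1)$ error produced by the two high-probability events above.

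The main obstacle (beyond the routine monotonicity verifications) is that the $\OO_{\prec}(M^{-1/2})$ error in Theorem~\ref{thm:general} carries a $k$-dependent constant $\mathtt{C}_{\ast,k}$. However, $k^\star$ and $K$ are fixed integers determined by $\epsilon$ and the limiting spectral data alone, and in particular are $M$-independent, so these constants do not interfere with the limit $M \to \infty$. A more genuine obstacle, explicitly deferred by the authors to \cite{DT2}, appears only if one wishes to replace the $M$-dependent $p_M$ in the boundary case by an explicit limit in $[0,1]$: that would require a second-order, CLT-type fluctuation result of the form $M^{1/2}(\|\bm e_K\|_W - \mathtt{e}_K(\mathcal L)) \Rightarrow G$ around the critical value, which is not developed in the present paper.
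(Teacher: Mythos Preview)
Your approach is correct and is precisely the argument the paper's one-line proof (``follows directly from Theorem~\ref{thm:mainone}'') leaves to the reader: use the pointwise concentration of $\|\bm r_k\|_2$ and $\|\bm e_k\|_W$ around $\mathtt r_k(\mathcal L)$ and $\mathtt e_k(\mathcal L)$ at the finitely many indices $k\le \tau^{\bm r}(\mathcal L,\epsilon)$ (resp.\ $k\le \tau^{\bm e}(\mathcal L,\epsilon)+1$), together with the non-degeneracy hypothesis $\mathtt r_k\neq\epsilon$ (resp.\ $\mathtt e_k\neq\epsilon$), to pin down the random halting time.

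There is, however, one genuine slip. Your claim that positive definiteness of $\mathcal T$ forces $\beta_j/\alpha_j\in(0,1)$ is false in general (e.g.\ the $2\times 2$ positive definite Jacobi matrix with diagonal $(1,5)$ and off-diagonal $2$ has Cholesky ratio $\beta_0/\alpha_0=2$), and correspondingly CGA residual $2$-norms are not monotone. So in part~(1) it is not enough to apply \eqref{eq_residualexpression} only at $k^\star-1$ and $k^\star$: you must invoke it at every $k\in\{0,1,\dots,k^\star\}$, using that by the very definition of $k^\star=\tau^{\bm r}(\mathcal L,\epsilon)$ one has $\mathtt r_k(\mathcal L)>\epsilon$ for all $k<k^\star$, and then take a union bound over this fixed finite set. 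The same remark applies to part~(2) for $k<\tau^{\bm e}(\mathcal L,\epsilon)$, though there your alternative route via strict monotonicity of $\|\bm e_k\|_W$ (which \emph{is} valid for the $W$-norm) also works. With this correction the argument goes through unchanged, and your treatment of the boundary case $\mathtt e_K(\mathcal L)=\epsilon$ is exactly right: concentration at $K-1$ and $K+1$ plus monotonicity of $\|\bm e_k\|_W$ confines $t^{\bm e}$ to two consecutive values with high probability, and setting $p_M=\mathbb P(\|\bm e_K\|_W<\epsilon)$ gives the stated dichotomy.
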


We note that it is conjectured that one can take $p_M = \frac 1 2$ in the above theorem.  This will be established in a future work.

\begin{remark}
Often, in our numerical experiments, the estimate in Theorem~\ref{thm:mainone} appears to set in almost immediately in the sense that the finite-size matrix effects dominate the deviation from the first-order limit. Thus one might expect that
\begin{align*}
    \|\bm{r}_k(W_0,\bm b)\|_2 = \prod_{j=0}^{k-1} \left[ \frac{\sqrt{\gamma_+} - \sqrt{\gamma_-}}{\sqrt{\gamma_+} + \sqrt{\gamma_-}} ( 1 + E_j)\right]
\end{align*}
where $\mathtt{M_k} : = \prod_{j = 0}^{k-1} (1 + E_j)$ converges rapidly, or may even be nearly one.  Set $\mathtt{J} = \frac{\sqrt{\gamma_+} - \sqrt{\gamma_-}}{\sqrt{\gamma_+} + \sqrt{\gamma_-}}$ and then following is a very good first approximation to the halting time
\begin{align*}
    t^{\bm r}(W_0,\bm b,\epsilon) \approx \left\lceil \frac{\log \epsilon - \log (\lim_{k \to \infty}\mathtt{M_k})}{\log \mathtt{J}} \right\rceil.
\end{align*}
And even dropping $\log \lim_{k \to \infty} M_k$ contribution entirely often only effects the halting time estimate by an iteration or two, or maybe not at all.
\end{remark}

\subsection{The CGA for high-dimensional regression: random $\bm{b}$ in (\ref{eq_system})}\label{sec_sub_normalequation} In this subsection, we consider the scenario for the CGA  when applied to (\ref{eq_system}) for a specific random vector $\bm{b},$ which concerns the high dimensional linear regression via the normal equation. More specifically, denote $Y=\Sigma_0^{1/2} X$ or $\Sigma^{1/2} X$, for some deterministic vector $\bm{a} \in \mathbb{R}^M,$ and consider
\begin{equation}\label{eq_normalequationform}
YY^* \bm{x}=\bm{b}, \ \ \bm{b}=Y \bm{a}.
\end{equation}

As we will see in Theorem \ref{thm_normalequation},
the main difference between this random scenario and the deterministic case in Section \ref{sec_generalrecipe} is that, the spikes of $\Sigma$ will not affect the errors and residuals generated by the CGA. We first propose an algorithm analogous to Algorithm \ref{a:generalaccurate}. Denote
\begin{equation}\label{eq_momentnormalequation}
\mathtt{m}_k= \frac{1}{\sqrt{\mathtt{w}}} \int_{\mathbb{R}} \lambda^{k+1} \varrho(\lambda) \dd \lambda,  
\end{equation} 
where we recall that $\varrho$ is the asymptotic density function of the deformed MP law and
\begin{equation}\label{eq_averagetrace}
\mathtt{w}=\frac{1}{M} \sum_{i=1}^N \sigma_i. 
\end{equation}

Similar to (\ref{eq_hankeldeterminant}) and (\ref{eq:sandl}), we can define analogous quantities $\mathsf{l}_n$ and $\mathsf{s}_n$ using $\mathtt{m}_k$ as in (\ref{eq_momentnormalequation}).  The CGA for high-dimensional linear regression is summarized in the following algorithm. 

\vspace{0.1in}
\noindent\fbox{%
\refstepcounter{e_alg}
    \parbox{\textwidth}{%
\flushright \boxed{\text{Estimation Algorithm~\arabic{e_alg}: Analysis of the CGA for high-dimensional linear regression \label{a:generalnormalequation}}}
\begin{enumerate}[(1)]
    \item  Calculate the sequence $\{\mathfrak{a}_n\}$ and $\{\mathfrak{b}_n\}$ following 
        \begin{equation*}
    \mathfrak{b}_n=\frac{{\mathsf{l}}_n}{{\mathsf{l}}_{n+1}}, \ \mathfrak{a}_n=\frac{{\mathsf{s}}_n}{{\mathsf{l}}_n}-\frac{{\mathsf s}_{n+1}}{{\mathsf l}_{n+1}}.
    \end{equation*}
      
    \item Follow Steps (3)--(4) of Estimation Algorithm \ref{a:generalaccurate} to obtain the matrix $\mathcal L$ in \eqref{eq_cholesky}.
    \item Apply Theorem  \ref{thm_normalequation} to obtain estimates. 
\end{enumerate}
    }%
}

\vspace{0.1in}

\begin{remark}
Compared to Estmation Algorithm \ref{a:generalaccurate}, Estimation Algorithm \ref{a:generalnormalequation} has two major differences. First, the Hankel moment matrices are constructed using the deformed MP law directly (c.f. (\ref{eq_momentnormalequation})) whereas Algorithm \ref{a:generalaccurate} utilizes the density (\ref{eq_varrhob}). It can be seen that $\varrho_{\bm{b}}$ depends on the explicit form of $\bm{b}$ in (\ref{eq_varrhob}) but $\varrho$ is independent of the choice of $\bm{a}$ as in (\ref{eq_normalequationform}). Second, in Estimation Algorithm \ref{a:generalaccurate}, we need to use different Hankel moment matrices for the spiked and non-spiked  models. In contrast, when the CGA is applied to the normal equations, we always use the same moment regardless of the spikes. For a more precise statement, see (\ref{eq_closeone}) and (\ref{eq_closetwo}). 
\end{remark}

Based on Estimation Algorithm \ref{a:generalnormalequation}, we establish the theoretical results in Theorem \ref{thm_normalequation}. 
\begin{theorem}\label{thm_normalequation}
Fix some small constant $\tau_1>0.$
Suppose Assumption \ref{assum_summary} holds, $\gamma_- \geq \tau_1$ and $\| \bm{a}\|_2=1$. Let $\{\alpha_i\}$ and $\{\beta_j\}$ be the outputs calculated from Step (2) of Algorithm  \ref{a:generalnormalequation}.  Denote $Y=\Sigma^{1/2}X$ and $Y_0=\Sigma^{1/2}_0 X.$ Then for the non-spiked model,  there exist some constants $\mathtt{C}_{r,k}, \mathtt{C}_{e,k}>0$ such that 
\begin{equation*}
\| \bm{r}_k(W_0, Y_0 \bm{a})\|_2=\sqrt{\mathtt{w}}\prod_{j=0}^{k-1} \frac{\beta_j}{\alpha_j}+{\OO_{\prec}( \mathtt{C}_{r, k} M^{-1/2})},
\end{equation*}
and for $\mathcal S_k = \mathcal S_k(\mathcal)$ defined in (\ref{defn_sk})
\begin{equation*}
\| \bm{e}_k(W_0, Y_0 \bm{a})\|_{W_0}= \|\bm{r}_k(W_0, Y_0 \bm{a})\|_2\sqrt{\vec f_1^* (\mathcal S_{k} \mathcal S_{k}^*)^{-1} \vec f_1} +{\OO_{\prec}( \mathtt{C}_{e, k} M^{-1/2})},
\end{equation*}
or equivalently 
\begin{equation}\label{eq_simplenormal}
\| \bm{e}_k(W_0, Y_0 \bm{a})\|_{W_0}^2= \mathtt{w} \left( 1- \frac{1}{\alpha_0^2} \sum_{\ell=0}^{k-1} \prod_{j=1}^\ell \frac{\beta_{j-1}^2}{\alpha_j^2}\right)  +{\OO_{\prec}( \mathtt{C}_{e, k} M^{-1/2})},
\end{equation} 
where $\mathtt{w}$ is defined in (\ref{eq_averagetrace}).

Additionally,  for the spiked model, we have that 
\begin{equation}\label{eq_closeone}
\| \bm{r}_k(W, Y \bm{a})\|_2=\|\bm{r}_k(W_0, Y_0 \bm{a})\|_2+\OO_{\prec}(\mathtt{C}_{r,k} M^{-1/2}), 
\end{equation}
and 
\begin{equation}\label{eq_closetwo}
 \|\bm{e}_k(W, Y \bm{a})\|_W=\|\bm{e}_k(W_0, Y_0 \bm{a})\|_{W_0}+\OO_{\prec}(\mathtt{C}_{e,k} M^{-1/2}). 
\end{equation}
\end{theorem}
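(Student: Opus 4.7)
The plan is to reduce Theorem~\ref{thm_normalequation} to the deterministic-$\bm{b}$ analysis of Theorem~\ref{thm:general} via the factorization identity
\begin{equation*}
(Y\bm{a})^{*} W^{k} (Y\bm{a}) = \bm{a}^{*}(Y^{*}Y)^{k+1}\bm{a} = \bm{a}^{*}\mathcal{W}^{k+1}\bm{a},
\end{equation*}
and its analogue with $W_0, \mathcal{W}_0, Y_0$. Setting $\bm{b} = Y_0\bm{a}$ and $\hat{\bm{b}} = \bm{b}/\|\bm{b}\|_2$, every VESD moment $\hat{\bm{b}}^{*}W_0^{k}\hat{\bm{b}}$ that drives the Lanczos and Cholesky analysis becomes a quadratic form $\bm{a}^{*}\mathcal{W}_0^{k+1}\bm{a}/\bm{a}^{*}\mathcal{W}_0\bm{a}$ against the \emph{deterministic} unit vector $\bm{a}$. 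Although $\bm{b}$ is correlated with $W_0$, this identity decouples the randomness at the moment level and is what permits the deterministic-$\bm{b}$ framework to be applied.

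\textbf{Non-spiked case.} Using the anisotropic local law for the companion matrix $\mathcal{W}_0$ (parallel to Lemma~\ref{lem_connectionspikednonspiked} and the tools of Section~\ref{sec_rmt}), I would first establish
\begin{equation*}
\bm{a}^{*}\mathcal{W}_0^{k+1}\bm{a} = \int_{\mathbb{R}} \lambda^{k+1}\varrho(\lambda)\,d\lambda + \OO_{\prec}(M^{-1/2})
\end{equation*}
for each fixed $k$. Specializing to $k=0$ yields $\|\bm{b}\|_2 = \sqrt{\mathtt{w}} + \OO_{\prec}(M^{-1/2})$, using that $\mathtt{w}$ is bounded below by \eqref{eq_sigma_tau}. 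Because the coefficients $\mathfrak{a}_k, \mathfrak{b}_k$ generated by \eqref{eq_lnsn} are homogeneous of degree zero in the moments, the $1/\sqrt{\mathtt{w}}$ rescaling in \eqref{eq_momentnormalequation} is inconsequential for the Jacobi matrix $\mathcal{T}$: up to $\OO_{\prec}(M^{-1/2})$ it coincides with the Jacobi matrix that Estimation Algorithm~\ref{a:generalaccurate} would produce for $(W_0,\hat{\bm{b}})$. Applying Theorem~\ref{thm:general} to $(W_0,\hat{\bm{b}})$ and using linearity in the right-hand side, $\|\bm{r}_k(W_0,\bm{b})\|_2 = \|\bm{b}\|_2\,\|\bm{r}_k(W_0,\hat{\bm{b}})\|_2$ and $\|\bm{e}_k(W_0,\bm{b})\|_{W_0} = \|\bm{b}\|_2\,\|\bm{e}_k(W_0,\hat{\bm{b}})\|_{W_0}$, produces the claimed $\sqrt{\mathtt{w}}$ prefactor. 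The equivalent expression \eqref{eq_simplenormal} follows from Remark~\ref{rmk_explicit} combined with the exact identity $\bm{b}^{*} W_0^{-1}\bm{b} = \|\bm{a}\|_2^2 = 1$, which forces $\hat{\bm{b}}^{*} W_0^{-1}\hat{\bm{b}} = 1/\mathtt{w} + \OO_{\prec}(M^{-1/2})$.

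\textbf{Spiked case and main obstacle.} For \eqref{eq_closeone}--\eqref{eq_closetwo} I would compare $\bm{a}^{*}\mathcal{W}^{k+1}\bm{a}$ with $\bm{a}^{*}\mathcal{W}_0^{k+1}\bm{a}$ directly. Decomposing $\mathcal{W}$ into bulk and outlier spectral parts, the bulk contribution matches that of $\mathcal{W}_0$ up to $\OO_{\prec}(M^{-1/2})$ by the same anisotropic local law, while the outlier contribution $\sum_{i\le r}\lambda_i^{k+1}|\langle\bm{a},\bm{u}_i\rangle|^{2}$ is $\OO_{\prec}(M^{-1})$ because the outlier eigenvalues are $\OO(1)$ under Assumption~\ref{assum_summary}(4) and the outlier eigenvectors satisfy the delocalization bound $|\langle\bm{a},\bm{u}_i\rangle|^{2} = \OO_{\prec}(M^{-1})$ for deterministic $\bm{a}$. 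Hence $\bm{a}^{*}\mathcal{W}^{k+1}\bm{a} = \bm{a}^{*}\mathcal{W}_0^{k+1}\bm{a} + \OO_{\prec}(M^{-1/2})$ for every fixed $k$, and the Hankel--Cholesky construction propagates this moment-level closeness to the residual and error norms, yielding \eqref{eq_closeone}--\eqref{eq_closetwo}. The main technical difficulty is the anisotropic moment bound for $\bm{a}^{*}\mathcal{W}_0^{k+1}\bm{a}$ with the required $\OO_{\prec}(M^{-1/2})$ rate against an arbitrary deterministic $\bm{a}$: an averaged local law is not sharp enough to survive both the Hankel-determinant ratios and the subsequent Cholesky recursion, and since the error constants depend on $k$, the bounds are meaningful only for $k \le C\log N$, as already flagged in the remark following Theorem~\ref{thm:general}.
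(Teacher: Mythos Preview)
Your non-spiked argument is essentially the paper's: the identity $(Y_0\bm a)^* W_0^k (Y_0\bm a)=\bm a^*\mathcal W_0^{k+1}\bm a$ (equivalently $\bm a^*Y_0^*G_1(z)Y_0\bm a=1+z\,\bm a^*G_2(z)\bm a$) reduces everything to the anisotropic local law for $G_2$ against the deterministic vector $\bm a$, and then Theorem~\ref{thm:general} applied to the normalized right-hand side gives the $\sqrt{\mathtt w}$ scaling. The paper also uses the exact relation $\bm b^*W_0^{-1}\bm b=\|\bm a\|_2^2=1$ implicitly, so this part matches.

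For the spiked part you take a genuinely different route. The paper does \emph{not} decompose $\mathcal W$ spectrally or invoke delocalization of outlier right singular vectors. Instead it proves the resolvent-level statement \eqref{eq_rightpertub} of Lemma~\ref{lem_connectionspikednonspiked},
\[
\bm a^*\widetilde G_2(z)\bm a=\bm a^*G_2(z)\bm a+\OO_\prec(M^{-1/2}),
\]
via the Woodbury identity and the block structure of $\Pi$: the cross term $\widetilde{\bm u}^*\Pi\widehat{\Vb}_r$ vanishes identically, so the spike correction to $G_2$ is $\OO_\prec(M^{-1/2})$ uniformly on $\widetilde{\mathcal D}_o$. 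Combined with $\bm a^*Y^*\widetilde G_1 Y\bm a=1+z\,\bm a^*\widetilde G_2\bm a$ and a contour integral, this yields moment closeness without ever touching outlier eigenvectors. Your spectral-decomposition route is valid in principle, but it imports an extra fact (isotropic delocalization of the right singular vectors associated to left spikes) that has to be proved or cited separately, whereas the paper's Woodbury argument is self-contained within the local-law framework already set up in Section~\ref{sec_rmt}. The payoff of the paper's approach is that it explains structurally \emph{why} spikes in $\Sigma$ are invisible to the companion resolvent $G_2$: it is the block-diagonal form of $\Pi$, not a delocalization miracle.
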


\begin{remark}\label{rek_normalequation}
We remark that compared to Theorem \ref{thm:general}, where the CGA is applied for a deterministic $\bm{b},$ Theorem \ref{thm_normalequation} exhibits several differences. First, an extra normalization constant $\mathtt{w}$ is used. In fact, $\mathtt{w}=\mathbb{E}\| Y_0 \bm{a} \|_2$ is used to scale $Y_0 \bm{a}$ such that the Lanczos Iteration, Algorithm \ref{a:lanczos} can be applied properly. Second, compared to (\ref{eq_errornormgeneral}), (\ref{eq_simplenormal}) has a simpler form  due to (\ref{eq_normalequationform}).  Third, (\ref{eq_closetwo})  implies that if we examine the performance of the CGA using the error norm $\|\cdot \|_{W},$ the spikes will be ignored. Therefore, even though this measurement is standard in numerical analysis,   for statisticians who are interested in understanding the performance of the estimation of high dimensional ordinary least square (OLS) coefficients, a better norm (i.e., loss function) should be considered and studied. We will pursue this direction in the future works.  

We point out that when $\Sigma_0=I,$  \cite{Paquette2020} used another approach to obtain a weak convergence formula. Their method relies on exploring the structure of the error. However, this method was not extended to give expressions for quantities beyond the $W$-norm of the error. 
Our methods amount to a combination of the generality of the distributions considered in \cite{Paquette2020} with the generality of the norms considered in \cite{Deift2019a} while extending it to general spiked covariance matrices.
Additionally, we can construct similar results based on asymptotic relations of the orthogonal polynomials as in Algorithm \ref{a:simple} and Theorem \ref{thm:mainone} as the Jacobi matrix $\mathcal T$ that is used to construct $\mathcal L$ in step (2) of Estimation Algorithm~\ref{a:generalnormalequation} is just the Jacobi matrix associated to the modified density $\frac{\lambda}{\sqrt{\mathtt w}} \varrho(\lambda)$. We omit the details here. 
\end{remark}

\begin{remark}
We have now demonstrated a guiding principle. We know that for $\|\bm b\|_2 =1$
\begin{align*}
    \bm{b}^* W^k \bm{b} = \int_{\mathbb R} \lambda^k \varrho_{\bm b}(\lambda) \sd \lambda +{\OO_{\prec}( \mathtt{C}_{k} M^{-1/2})},,
\end{align*}
and hence the performance of the CGA on $W\bm{x} = \bm b$ will be, up to some error, determined by the three-term recurrence for the orthogonal polynomials for $\varrho_{\bm b}(\lambda) \sd \lambda$. 

Theorem~\ref{thm_normalequation} relies on the fact that for $\|\bm a\|_2 =1$
\begin{align*}
    \bm{a}^* Y^* W^k Y \bm{a} = \bm{a}^* \mathcal W^{k+1} \bm{a} = \int_{\mathbb R} \lambda^{k} \frac{\lambda \varrho(\lambda)}{\sqrt{\mathtt w}} \sd \lambda +{\OO_{\prec}( \mathtt{C}_{k} M^{-1/2})}, \quad \mathcal W = Y^* Y.
\end{align*}
Combining these two facts allows one to analyze the classical regression problem \eqref{eq_plus_noise}. With
\begin{align*}
    \bm{b} = Y(Y^* \bm{z} + \bm{\epsilon}), \quad \|\bm z \|_2 =1, \quad \|\bm \epsilon\|_2 = 1 +{\OO_{\prec}(M^{-1/2})},
\end{align*}
one sees
\begin{align*}
    \bm{b}^* W^k \bm{b} = \bm{z}^* W^{k+2} \bm{x} + \bm{\epsilon}^* \mathcal W^{k+1} \bm{\epsilon} + 2 \bm{x}^* W^{k+1} Y \bm{\epsilon}.
\end{align*}
Supposing $\bm{\epsilon}$ is isotropic and independent of $W$, the last term has expectation zero and the asymptotic performance of the CGA on this regression problem will be determined by the three-term recurrence for the orthogonal polynomials for
\begin{align*}
    \left(\lambda^2 \varrho_{\bm b}(\lambda) + \frac{\lambda \varrho(\lambda)}{\sqrt{\mathtt w}} \right) \sd \lambda.
\end{align*}
This observation was previously made  in \cite{Paquette2020a}.  And by Theorem~\ref{thm_relationOP} the asymptotics of this three-term recurrence is determined by the support of the measure alone when the supports of $\varrho_{\bm b}$ and $\varrho$ coincide
\end{remark}


\subsection{Universality}\label{s.universality}
In this subsection, we establish the universality of the fluctuations of the norms of the error and residual vectors for the CGA. It demonstrates that the second order fluctuations of the residuals and errors of the CGA depend only on the first four moments of the entries $(x_{ij})$ for both spiked and non-spiked models.  

\begin{theorem}\label{thm:mainthree}
Suppose Assumption \ref{assum_summary} holds. 
Let $W$ be as in (\ref{eq_definitioncovariance}) and let $\widetilde{W}^Y$ be defined similarly by replacing $X$ with another random matrix $Y=(y_{ij})$ which satisfies (2) of Assumption \ref{assum_summary}. Moreover, assume that
\begin{equation}\label{assum_momentmatching}
\mathbb{E}x_{ij}^l=\mathbb{E} y_{ij}^l, \ 1 \leq l \leq 4,  \ 1 \leq i \leq N,  1 \leq j \leq M. 
\end{equation}
Then we have that for all $s_{i1}, s_{i2} \in \mathbb{R}$, $1 \leq i \leq k$,
\begin{align*}
&\lim_{N \rightarrow \infty}\left[ \mathbb{P}^X \left(  \left(M^{1/2}\left[ \| \bm{r}_i(W, \bm{b}) \|_2 - \mathtt{r}_i(\mathcal L) \right] \leq s_{i1},   M^{1/2} \left[\| \bm{e}_i(W, \bm{b}) \|_W - \mathtt{e}_i(\mathcal L) \right]\leq s_{i2} \right)_{1 \leq i \leq k}  \right) \right. \\
&- \left. \mathbb{P}^Y \left(  \left(M^{1/2}\left[ \| \bm{r}_i(W^Y, \bm{b}) \|_2 - \mathtt{r}_i(\mathcal L) \right] \leq s_{i1},   M^{1/2} \left[\| \bm{e}_i(W^Y, \bm{b}) \|_{W^Y} - \mathtt{e}_i(\mathcal L) \right]\leq s_{i2} \right)_{1 \leq i \leq k}  \right) \right] = 0
\end{align*} 

where $\mathbb{P}^X$ and $\mathbb{P}^Y$ denote the laws of $(x_{ij})$ and $(y_{ij}),$ respectively, and $\mathcal L$ is defined in \eqref{eq_cholesky}.
\end{theorem}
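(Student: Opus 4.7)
The plan is to reduce the universality of the CGA quantities to joint universality of the bilinear moments $m_j := \bm b^* W^j \bm b$ for $j = 0, 1, \ldots, 2k$, and then establish the latter via a four-moment Lindeberg swapping argument. By Proposition~\ref{p:chol_inv_moment} and Lemma~\ref{t:deterministic}, the Lanczos coefficients $\{a_{i-1}, b_{i-1}\}_{i=1}^{k}$, and hence the Cholesky entries $\{\alpha_i, \beta_i\}_{i=0}^{k-1}$, the residual norms $\|\bm r_i\|_2$, and the error energies $\|\bm e_i\|_W$ are all rational functions of the tuple $\bm m := (m_0, m_1, \ldots, m_{2k})$, with denominators equal to Hankel-type determinants $D_n$. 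In a fixed neighborhood $U$ of the deterministic limit $\bm m^\infty$ (with components $\mathfrak{m}_{j,\bm b}$, or $\widetilde{\mathfrak{m}}_{j,\bm b}$ in the spiked case), these denominators are bounded away from zero by the same inputs used in the proof of Theorem~\ref{thm:general}, so the maps are $C^\infty$ with bounded derivatives on $U$.

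Using the anisotropic local law from \cite{Knowles2017}, which underlies the concentration in Theorem~\ref{thm:general}, one has $m_j - m_j^\infty = \OO_\prec(M^{-1/2})$ uniformly for $j \leq 2k$. A first-order Taylor expansion on the high-probability event $\{\bm m \in U\}$ then yields deterministic constants $\alpha^{(r)}_{i,j}, \alpha^{(e)}_{i,j}$, depending only on $\bm m^\infty$ and thus identical under $\mathbb P^X$ and $\mathbb P^Y$, such that
\begin{equation*}
M^{1/2}\bigl(\|\bm r_i\|_2 - \mathtt{r}_i(\mathcal L)\bigr) = \sum_{j=0}^{2k} \alpha^{(r)}_{i,j} V_j + \OO_\prec(M^{-1/2}), \qquad V_j := M^{1/2}(m_j - m_j^\infty),
\end{equation*}
and analogously for $\|\bm e_i\|_W - \mathtt{e}_i(\mathcal L)$. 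It therefore suffices to establish joint universality of $\bm V = (V_0, \ldots, V_{2k})$ under the matching condition \eqref{assum_momentmatching}.

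Since each $m_j$ is a fixed polynomial of degree $2j$ in the entries $(x_{ab})$, for any smooth $\Phi \colon \mathbb R^{2k+1} \to \mathbb R$ with bounded derivatives of all orders the composition $\Phi(\bm V)$ is a smooth function of $X$. A standard Lindeberg swap enumerates an interpolation of matrices agreeing with $X$ on the first $r$ entries and with $Y$ on the rest; Taylor expanding in the single swapped entry $x_{ab}$ up to fourth order, the first four contributions cancel by \eqref{assum_momentmatching}, while the fifth-order remainder is $\OO(M^{-5/2})$ per swap because $x_{ab}, y_{ab} = \OO_\prec(M^{-1/2})$. Summing over the $NM = \OO(M^2)$ entries, $\mathbb E^X[\Phi(\bm V)] - \mathbb E^Y[\Phi(\bm V)] = \OO(M^{-1/2}) \to 0$. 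To transfer this from smooth test functions to the joint CDF, note that the linearization identifies the asymptotic joint law of $(\|\bm r_i\|_2, \|\bm e_i\|_W)_{i=1}^k$ with a fixed linear functional of $\bm V$, whose limit is a non-degenerate centered Gaussian (inherited from the explicitly-computable Gaussian case); continuity of the limit combined with Portmanteau applied to smooth mollifications of the joint indicators $\prod_i \one_{(-\infty,s_{i1}]}\one_{(-\infty,s_{i2}]}$ then yields pointwise CDF equality at every $(s_{i1}, s_{i2})$.

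The main technical obstacle lies in reconciling the high-probability validity of the Taylor expansion (on $\{\bm m \in U\}$) with the Lindeberg swap, which naturally operates on the unconditional law of $\bm V$. The resolution is to smoothly truncate the rational functions outside $U$ so as to produce globally $C^\infty$ functionals with uniformly bounded derivatives of all orders, which agree with the actual CGA quantities except on an event of probability at most $N^{-D}$ for any $D > 0$. The contribution of this exceptional event is absorbed into the $\oo(1)$ swap error, and one must verify that the chain-rule derivatives through the composition $X \mapsto \bm m \mapsto \Phi(\bm V)$ remain of order $\OO(1)$ at the fifth level, which relies crucially on the degree bound $\deg m_j \leq 2k$ being independent of $M$.
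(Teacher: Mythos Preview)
Your overall reduction---CGA quantities $\to$ moments of the VESD $\to$ four-moment comparison---matches the paper's strategy, but the Lindeberg step has a genuine gap. You assert that the fifth-order remainder is $\OO(M^{-5/2})$ per swap and that the fifth chain-rule derivative of $\Phi(\bm V)$ is $\OO(1)$, justified by the degree bound $\deg m_j \leq 2k$. This is not enough: since $V_j = M^{1/2}(m_j - m_j^\infty)$, the chain rule produces up to five factors of $M^{1/2}$, so what you actually need is $\partial_{x_{ab}} m_j = \OO_\prec(M^{-1/2})$, not merely $\OO(1)$. The degree bound ensures the Taylor series terminates but says nothing about the \emph{size} of the coefficients, which are themselves polynomials in the remaining entries. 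The required smallness does hold, but it is a structural consequence of the anisotropic local law, not of the degree bound: in the paper's linearized resolvent formulation the leading term $\widehat{\bm b}^* \Pi (\mathbf{U}\mathbf{D}\mathbf{U}^* \Pi)^k \widehat{\bm b}$ vanishes for odd $k$ (equation~\eqref{eq_meanzeroornot}) because $\Pi$ is block-diagonal, $\mathbf{U}\mathbf{D}\mathbf{U}^*$ is block-off-diagonal, and $\widehat{\bm b}$ lives entirely in the first block. The paper packages this as Lemma~\ref{lem_firstordercorrection} and runs a continuous interpolation \`a la \cite{Knowles2017} rather than a discrete swap (and explicitly remarks that the discrete comparison used for $\Sigma_0=I$ does not carry over directly to general $\Sigma_0$). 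Your polynomial-moment route would have to reproduce this cancellation by hand, e.g.\ by showing $(X^*\Sigma_0^{1/2}W^{l}\bm b)_b = \OO_\prec(M^{-1/2})$ via a column-removal and large-deviation argument; without it the summed swap error is $\OO(M^{2})$, not $o(1)$.

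A second issue: your passage from smooth test functions to CDFs invokes a ``non-degenerate centered Gaussian limit inherited from the explicitly-computable Gaussian case.'' The paper explicitly notes in the remark following the theorem that this limiting distribution is \emph{not} known for general $\Sigma_0$; it has only been established for $\Sigma_0=I$. The paper therefore states and proves universality at the level of admissible test functions (Proposition~\ref{thm_unversality}) and treats the step to indicators as standard, whereas your argument as written relies on an input that is not available.
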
 

\begin{remark}
Theorem \ref{thm:mainthree} proves the universality for the distributions of the errors and residuals. 
We point out that the exact distributions for the residuals and errors are generally unknown even when $X$ is Gaussian. To our best knowledge, these results are only established in the null case when $\Sigma=I$  in \cite{Paquette2020}. For general covariance matrix and spiked model, it requires more careful treatment and is beyond the scope of the current paper. We will consider this problem in the future work (c.f. \cite{DT2}). 
\end{remark}

\begin{remark}\label{rmk_statistics}
We remark that Theorem \ref{thm:mainthree} can be used to conduct statistical inference on the structure of population covariance matrix. For example, in the literature \cite{MR3468554}, researchers are particularly interested in testing 
\begin{equation*}
\mathbf{H}_0: \Sigma=\Lambda_0,
\end{equation*} 
where $\Lambda_0$ is some given positive definite matrix. We focus on our explanation on the non-spiked model. Many statistics can be constructed based on Theorems \ref{thm:general} and \ref{thm:mainone}, or Theorem \ref{thm_haltingtime}. Even though the distributions of the halting times are unknown, according to Theorem \ref{thm:mainthree}, when the fourth moment is assumed to be $3$, we can always simulate their distributions using Gaussian random variables. In this sense, Theorems \ref{thm:general} and \ref{thm:mainthree} can be combined to provide new statistics for high-dimensional inference. This opens a new door for high-dimensional statistics and demonstrates that in contrast to the standard testing procedure where testing statistics are mostly based on the estimation procedure, we can also propose useful statistics based on the computational and algorithmic viewpoint. We will pursue this direction in the future works.  
\end{remark}

\subsection{Some extensions and discussion}\label{sec_otheralgo} 

We employ the error analysis framework established in Section \ref{sec_generalrecipe} to analyze the minimal residual algorithm (MINRES) \cite{MR383715}.  The actual algorithm is recorded in Algorithm \ref{a:minres} in Appendix \ref{appendix_minres}. Similar to the CGA, MINRES is applied to solve linear systems of the form $W \bm{x} = \bm{b}$, $W \in \mathbb R^{N \times N}$ but for MINRES $W$ need not be definite.  MINRES  can also be described in its varational form. Recalling (\ref{eq_krylovspace}), MINRES, at iteration $k$, gives the solution of 
\begin{align*}
\bm{x}_k=\operatorname{argmin}_{\bm{y} \in \mathcal{K}_k} \| \bm{b}-W\bm{y} \|_2. 
\end{align*} 
For simplicity, we focus on analyzing the residuals of MINRES using Estimation Algorithm \ref{a:generalaccurate}. The results are collected in Theorem \ref{thm_minres}. 
\begin{theorem}\label{thm_minres} 
Fix some small constant $\tau_1>0.$ Suppose Assumption \ref{assum_summary} holds, $\gamma_- \geq \tau_1$ and $\| \bm{b} \|_2=1.$ Let $\{\alpha_i\}$ and $\{\beta_j\}$ be the outputs calculated from Step (4) of Algorithm  \ref{a:generalaccurate}.  Then we have that with $\vec x_0 = 0$, for $k <n$, there exists some constant $\mathtt{C}_{r,k}>0$ such that   
    \begin{align*}
        \|\vec r_k\|_2 &= \left( \sum_{j=0}^k \prod_{\ell=0}^{j-1} \frac{\alpha^2_{\ell}}{\beta_{\ell}^2} \right)^{-1/2} +{\OO_{\prec}( \mathtt{C}_{r, k} M^{-1/2})}.
    \end{align*}    
\end{theorem}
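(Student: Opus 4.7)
The plan is to mirror the three-step structure of the proof of Theorem~\ref{thm:general}, substituting a MINRES-specific Cholesky identity for the CGA formula.

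First, I would establish the following deterministic identity: if $T^{(W)}_k := T_k(W, \bm b)$ has entries $a^{(W)}_j, b^{(W)}_j$ from the Lanczos iteration and $\tilde\alpha_\ell, \tilde\beta_\ell$ are the Cholesky entries of $T^{(W)}_{k+1}$, then
\begin{equation*}
\|\bm r_k\|_2 = \Bigl(\sum_{j=0}^k \prod_{\ell=0}^{j-1} \tilde\alpha_\ell^2 / \tilde\beta_\ell^2\Bigr)^{-1/2}.
\end{equation*}
To derive it, let $\tilde T^{(W)}_k \in \R^{(k+1) \times k}$ be the rectangular matrix from $W Q_k = Q_{k+1} \tilde T^{(W)}_k$ obtained by adjoining the row $b^{(W)}_{k-1} \bm f_k^*$ to $T^{(W)}_k$. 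Since $\bm b = Q_{k+1} \bm f_1$, the variational characterization of MINRES combined with the orthonormality of $Q_{k+1}$ yields $\|\bm r_k\|_2 = \min_{\bm y \in \R^k} \|\bm f_1 - \tilde T^{(W)}_k \bm y\|_2$. Since $\tilde T^{(W)}_k$ has one-dimensional cokernel, $\|\bm r_k\|_2 = 1/\|\bm u\|_2$, where $\bm u = (u_0, \ldots, u_k)^*$ spans $\ker(\tilde T^{(W)}_k)^*$ normalized by $u_0 = 1$. The kernel equations $(\tilde T^{(W)}_k)^* \bm u = 0$ form a three-term recurrence driven by $a^{(W)}_j, b^{(W)}_j$. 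Substituting the Cholesky relations $a^{(W)}_0 = \tilde\alpha_0^2$, $a^{(W)}_j = \tilde\alpha_j^2 + \tilde\beta_{j-1}^2$, $b^{(W)}_j = \tilde\alpha_j \tilde\beta_j$ and solving inductively yield $u_j = (-1)^j \prod_{\ell=0}^{j-1} \tilde\alpha_\ell / \tilde\beta_\ell$, from which the identity follows.

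Second, I would transfer this identity to the deterministic quantities $\alpha_\ell, \beta_\ell$ from Algorithm~\ref{a:generalaccurate}. By Theorem~\ref{thm:lan_general}, $T^{(W)}_{k+1} = \mathcal T_{k+1} + \OO_\prec(\mathtt C_{l,k+1} M^{-1/2})$ in operator norm. Under Assumption~\ref{assum_summary} with $\gamma_- \geq \tau_1$, the matrix $\mathcal T_{k+1}$ is uniformly positive definite, so its Cholesky entries $\alpha_\ell, \beta_\ell$ are bounded above and away from zero by constants depending on $\tau_1$ and $k$. Smoothness of the Cholesky factorization on this regime yields $|\tilde\alpha_\ell - \alpha_\ell| + |\tilde\beta_\ell - \beta_\ell| = \OO_\prec(\mathtt C\, M^{-1/2})$ for $0 \leq \ell \leq k$, while the map $(\alpha_\ell, \beta_\ell) \mapsto (\sum_{j=0}^k \prod_{\ell=0}^{j-1} \alpha_\ell^2 / \beta_\ell^2)^{-1/2}$ is Lipschitz on the relevant compact set. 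Combining these facts with Step~1 gives the claim with a suitable constant $\mathtt C_{r,k}$.

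The main obstacle is Step~1: one must carefully identify the cokernel of $(\tilde T^{(W)}_k)^*$ with the three-term recurrence and verify that the Cholesky substitution collapses this recurrence into the desired product form. This step is algebraic rather than probabilistic, and once it is done the remainder of the argument follows directly from Theorem~\ref{thm:lan_general} and standard perturbation theory for the Cholesky factorization of tridiagonal matrices bounded away from singular. An alternative route would invoke the classical Paige--Saunders identity relating the MINRES and CG residual norms, together with Theorem~\ref{thm:general}, but the direct derivation above is self-contained and produces the constant $\mathtt C_{r,k}$ with the same dependence on $k$ as in Theorem~\ref{thm:general}.
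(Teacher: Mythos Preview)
Your proposal is correct and follows essentially the same approach as the paper: the paper's proof simply cites the deterministic MINRES formula from Lemma~\ref{t:deterministic} (your Step~1 re-derives this identity from scratch rather than citing it) and then says to repeat the argument of Theorem~\ref{thm:general}, which is precisely your Step~2 via Theorem~\ref{thm:lan_general} and Cholesky perturbation. The only difference is packaging---you invoke Theorem~\ref{thm:lan_general} directly, whereas the paper's proof of Theorem~\ref{thm:general} re-establishes the same Jacobi-matrix approximation through the Hankel-determinant argument \eqref{eq_foundbound}--\eqref{eq_closeak}---but the underlying mechanism is identical.
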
  

We point out that even though Estimation Algorithm \ref{a:generalaccurate} is designed for the error analysis for the CGA, it can also be used to analyze the residuals of MINRES because MINRES is also closely connected to the Lanczos iteration. Compared to (\ref{eq_residualexpression}) for the CGA, the main difference lies in the leading order expression. These expressions are derived deterministically using the variational forms of these algorithms. In this sense, any numerical algorithm which is based on the Krylov space $\mathcal K_k$ and has errors that depend only on the matrix $\mathcal L$ constructed in \eqref{eq_cholesky} can be analyzed using our proposed framework. 


\section{Examples and numerical simulations}    \label{s.example} In what follows, we provide a few examples satisfying our assumptions, with accompanying numerical simulations, to better explain the calculations and illustrate our theoretical results. We focus on the discussion on $\Sigma_0$, the construction of $f(x)$ and the edges of $\varrho$ since they are the essential quantities.    We mention that there exist many other important examples of $\Sigma_0$, beyond which we discuss, having been used in applications that satisfy our assumptions. For instance, one can consider $\Sigma_0$ such that its limiting ESD satisfies either the truncated Gamma distribution in \cite{KML} or some Jacobi measure as in \cite{DJ}. All these cases can be analyzed using our methods.  For our numerical experiments we effectively keep $c_N$ fixed by setting $M = \lfloor N/r \rfloor$ for $r$ fixed.

In some situations, see \eqref{eq_JSM_formula}, we know the first-order limit of the norms of the residual and error vectors $\bm r_k$, $\bm e_k$.  In other situations, we do not.  When we do not we either estimate or derive the bulk edges $\gamma_\pm$ --- estimation involves rootfinding on $f'(x)$.  This then gives the large $k$ behavior of the first-order limits via Theorem~\ref{thm:mainone}. For small $k$ we take the following estimation approach:
\begin{itemize}
    \item Using a single sample with $N = 2000$, compute the Lanczos matrix $T_\ell(W,\bm b)$, for $\ell$ small (all all plots we use $\ell = 5$).
    \item Extend $T_k$ to an approximation of $\mathcal T$ by setting $\mathfrak a_k = \mathfrak a$, $\mathfrak b_{k-1} = \mathfrak b$ as in \eqref{eq_defnTnn} for $k \geq \ell$.
    \item Lastly, use Theorem~\ref{thm:general} to give an estimate of the first-order limits of $\|\bm r_k\|_2$ and $\|\bm e_k\|_W$.
\end{itemize}

%

\subsection{Johnstone's spiked covariance matrix model \cite{Johnstone2001}}  We consider the standard spiked covariance matrix model when $c_N<1$. In this case, $\Sigma_0=I$ and the rank-one spiked model
\begin{equation}\label{eq_JSM}
\Sigma=I+ \ell \bm{v} \bm{v}^*.
\end{equation} 
It is clear that (3) of Assumption \ref{assum_summary} is satisfied. Moreover,    according to (\ref{eq_defnstitlesjtransform}), we have that 
\begin{equation*}
f(x)=-\frac{1}{x}+\frac{c_N}{x+1}, \ f'(x)= \frac{1}{x^2}-\frac{c_N}{(x+1)^2}.
\end{equation*}
Consequently, we have that its critical points and the edges of the support are
\begin{equation*}
\gamma_+=(1+\sqrt{c_N})^2, \ x_+=-\frac{1}{\sqrt{c_N}+1}; \ \gamma_-=(1-\sqrt{c_N})^2, \ x_-=\frac{1}{\sqrt{c_N}-1}.
\end{equation*}
Therefore, it is easy to see that $\alpha_j \equiv 1 + O(e^{-c n})$ and $\beta_j = \sqrt{c_N} + O(e^{-c n}).$ According to the bidiagonalization in \cite{MR3078286} for the Gaussian case, if $\bm b = \bm v$ we have,
\begin{align*}
    \mathcal L = \begin{bmatrix} \sqrt{1 + \ell} \\ \sqrt{c_N} & 1 \\ & \sqrt{c_N} & 1 \\
    && \ddots & \ddots \end{bmatrix}.
\end{align*}
Supposing that $c_N \To[N] d$, this gives the formulae
\begin{align}\label{eq_JSM_formula}
\begin{split}
    \| \bm r_k(W,\bm v)\|_2 = \sqrt{\frac{d}{1 + \ell}} \begin{cases}  1 + {\OO_{\prec}( \mathtt{C}_{r, k} M^{-1/2})}& k = 1,\\
    d^{(k-1)/2} + {\OO_{\prec}( \mathtt{C}_{r, k} M^{-1/2})}& k > 1,\end{cases}\\
    \| \bm e_k(W,\bm v)\|_W = \sqrt{\frac{d}{(1 + \ell)(1 - d)}} \begin{cases}  1+ {\OO_{\prec}( \mathtt{C}_{r, k} M^{-1/2})} & k = 1,\\
    d^{(k-1)/2} + {\OO_{\prec}( \mathtt{C}_{r, k} M^{-1/2})}& k > 1.\end{cases}
    \end{split}
\end{align}
We demonstrate the convergence of the CGA in Figure~\ref{fig:johnstone}.  In Figure~\ref{fig:johnstone2} we modify the projection of $\bm b$ onto $\bm v$.  We demonstrate the case of two distinct spikes in Figure~\ref{fig:two-spike}.

\begin{figure}[tbp]
    \centering
    \includegraphics[width=.48\linewidth]{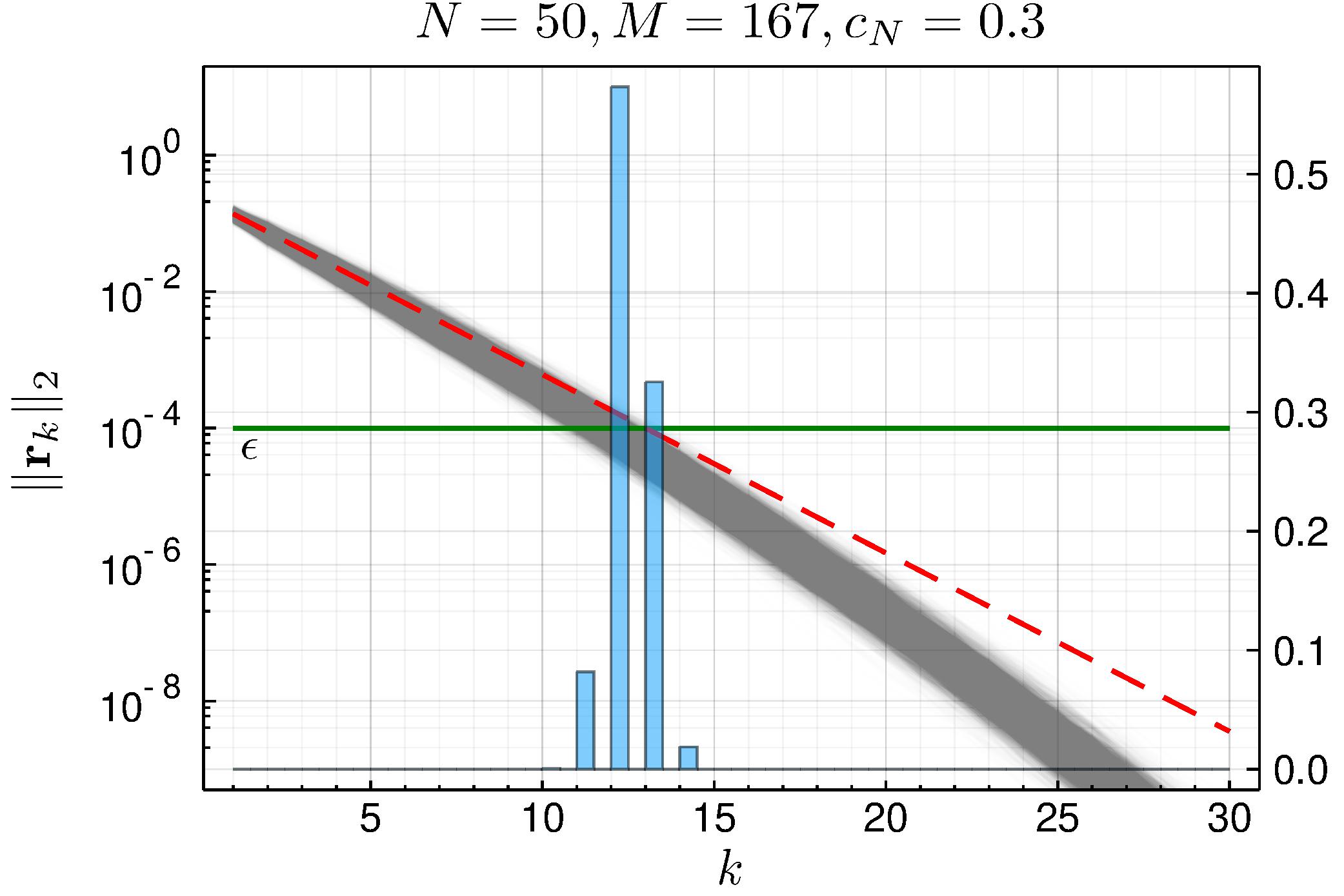}
    \includegraphics[width=.48\linewidth]{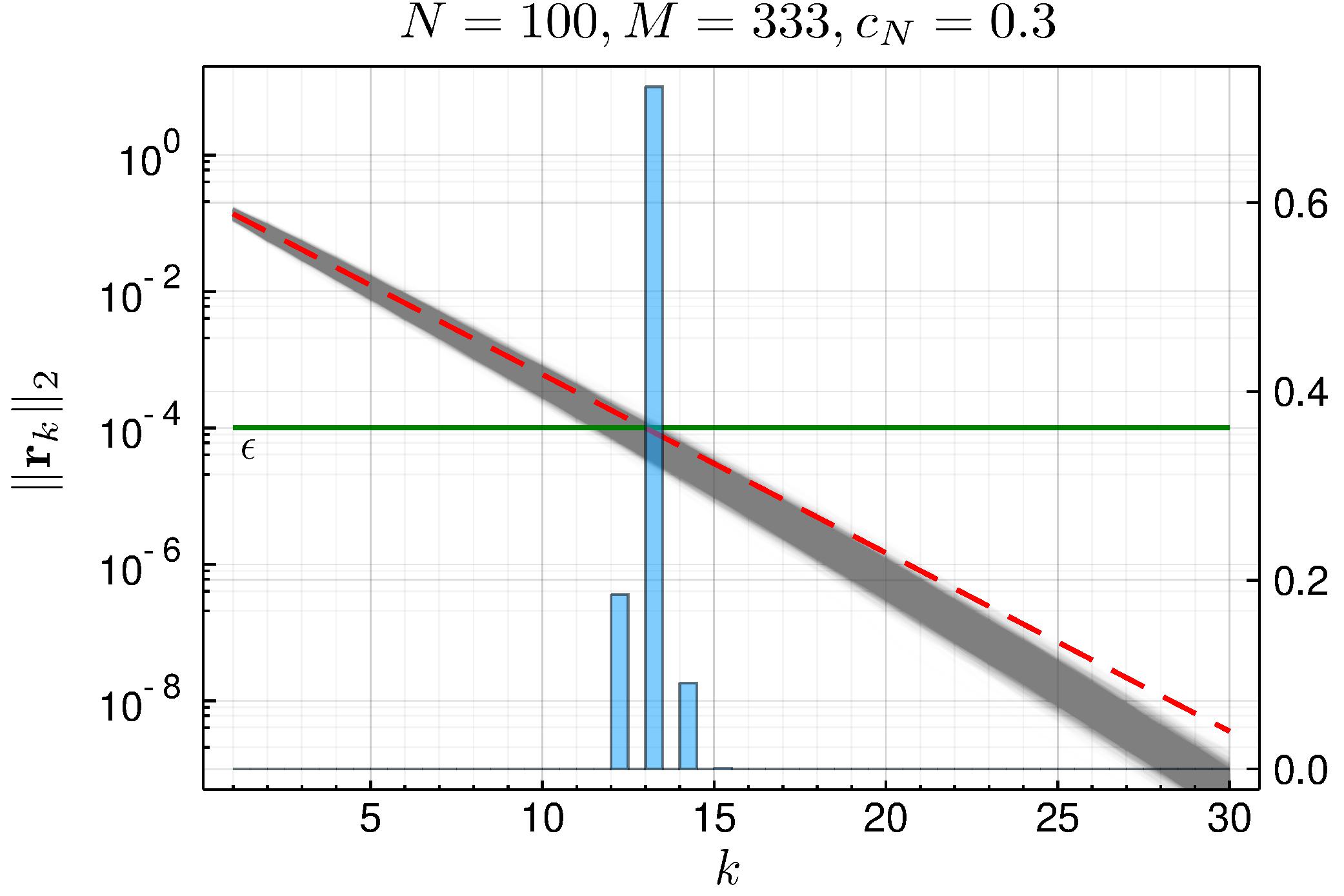}
    \includegraphics[width=.48\linewidth]{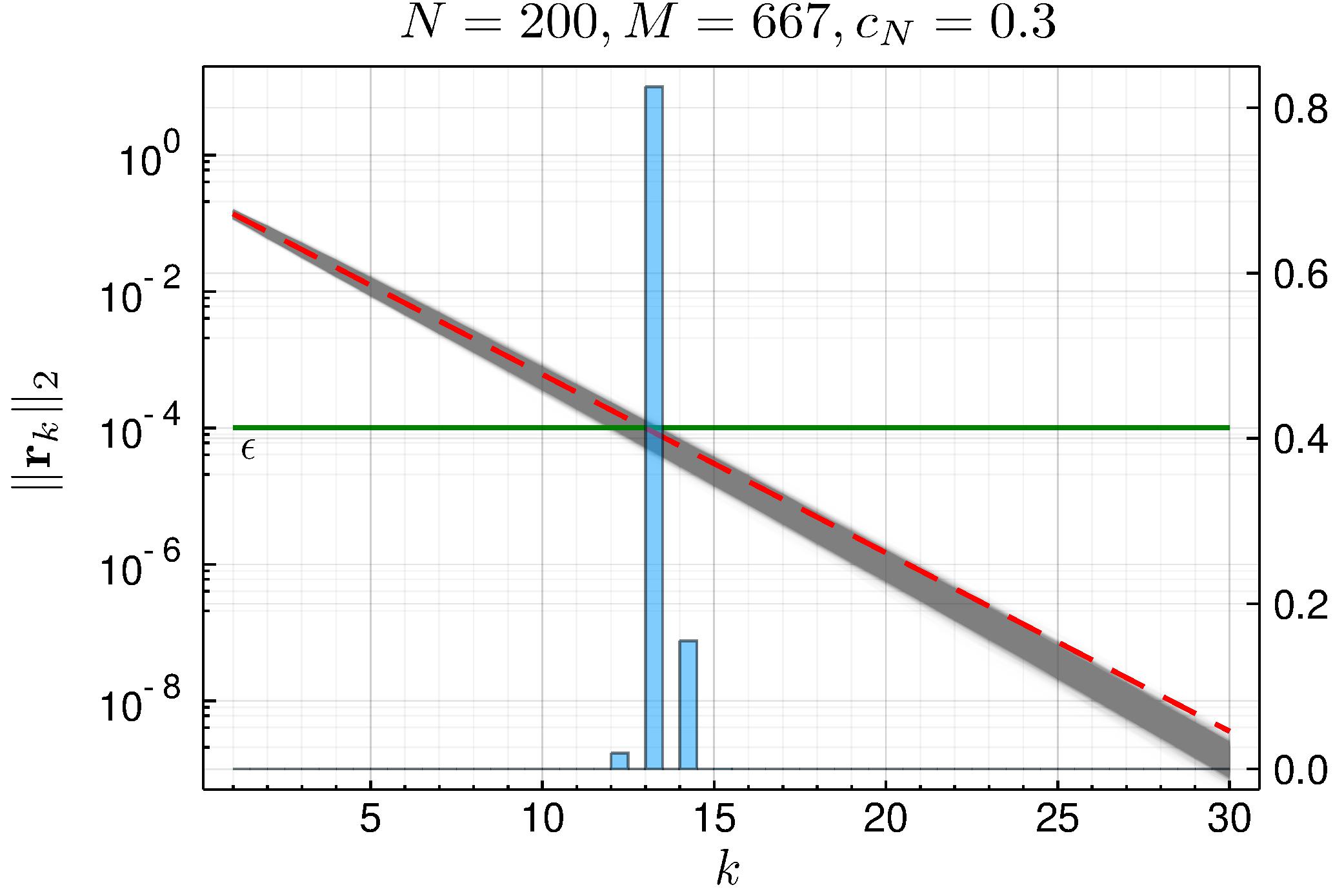}
    \includegraphics[width=.48\linewidth]{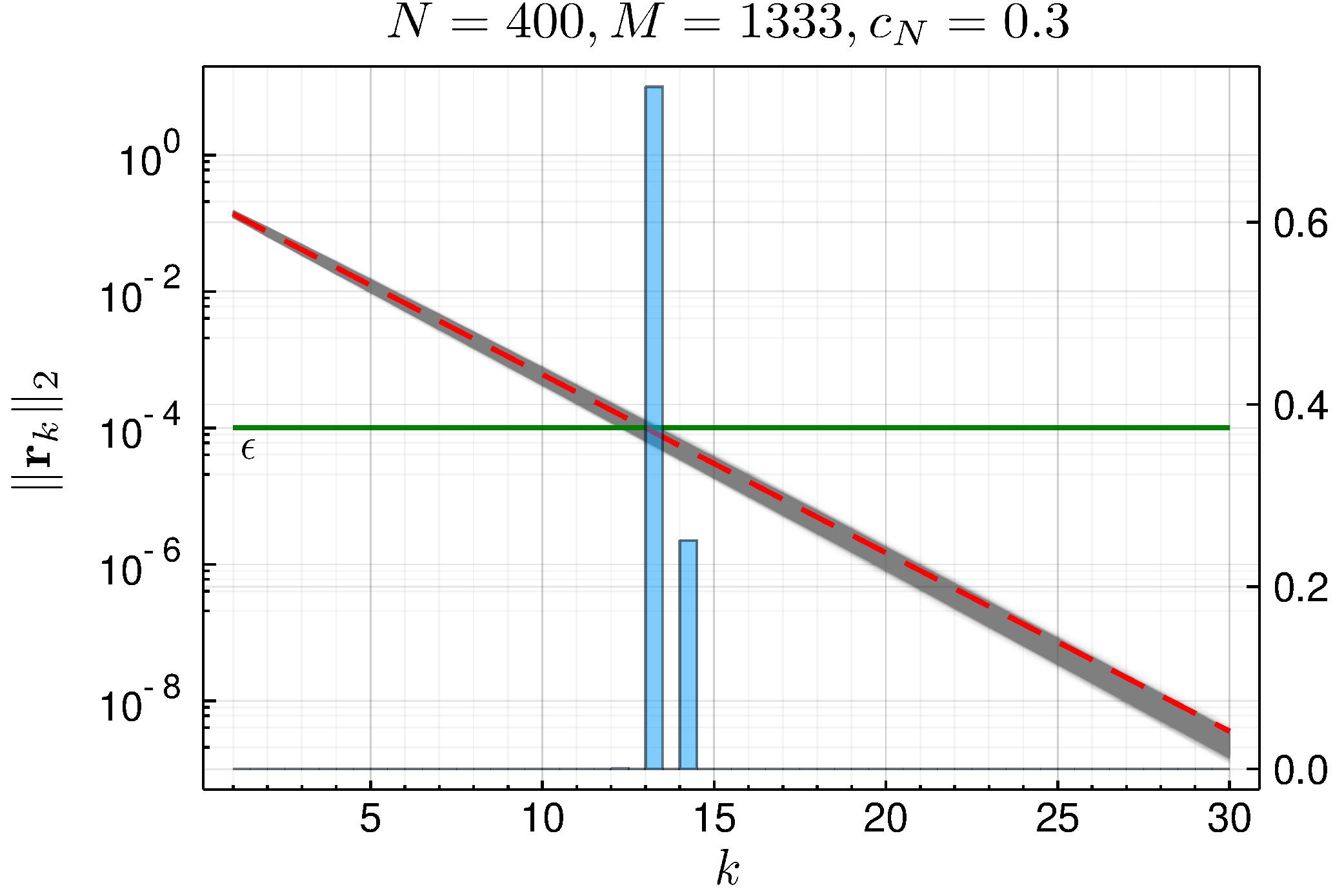}
    \caption{A numerical demonstration of the concentration of the residual in the CGA on Johnstone's spiked covariance model \eqref{eq_JSM} when $X$ is an iid Gaussian matrix. Here we take $\ell = 15, \bm v = \bm f_1$ and $\bm b = \bm v$.  In this case the bidiagonalization in  \cite{MR3078286} gives the matrix $\mathcal L$ in the large $M$ limit and the resulting predicted errors are given by the dashed curve. See Figure~\ref{fig:wishart_and_many_spikes} for a description of what these plots demonstrate. }
    \label{fig:johnstone}
\end{figure}

\begin{figure}[tbp]
    \centering
    \includegraphics[width=.48\linewidth]{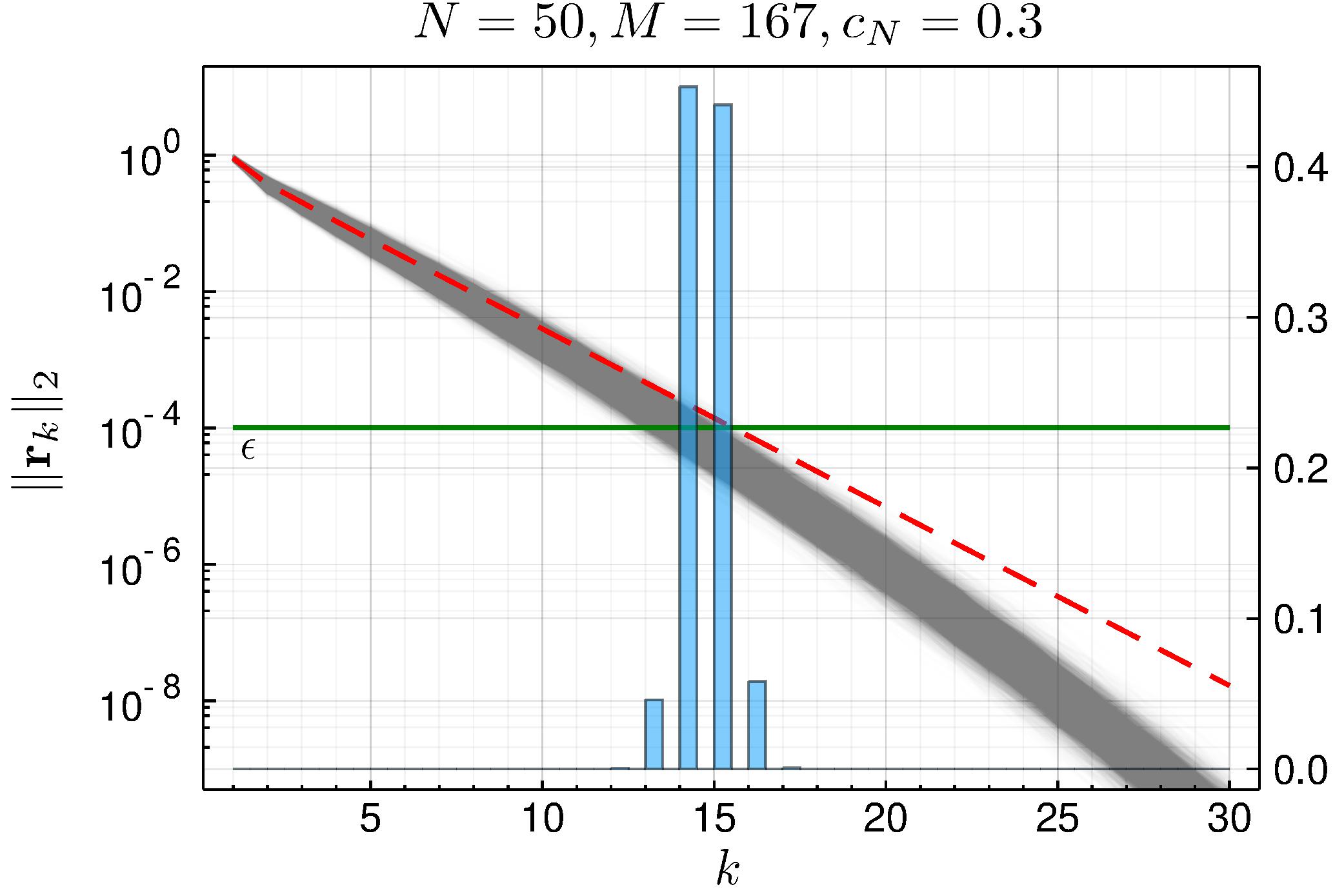}
    \includegraphics[width=.48\linewidth]{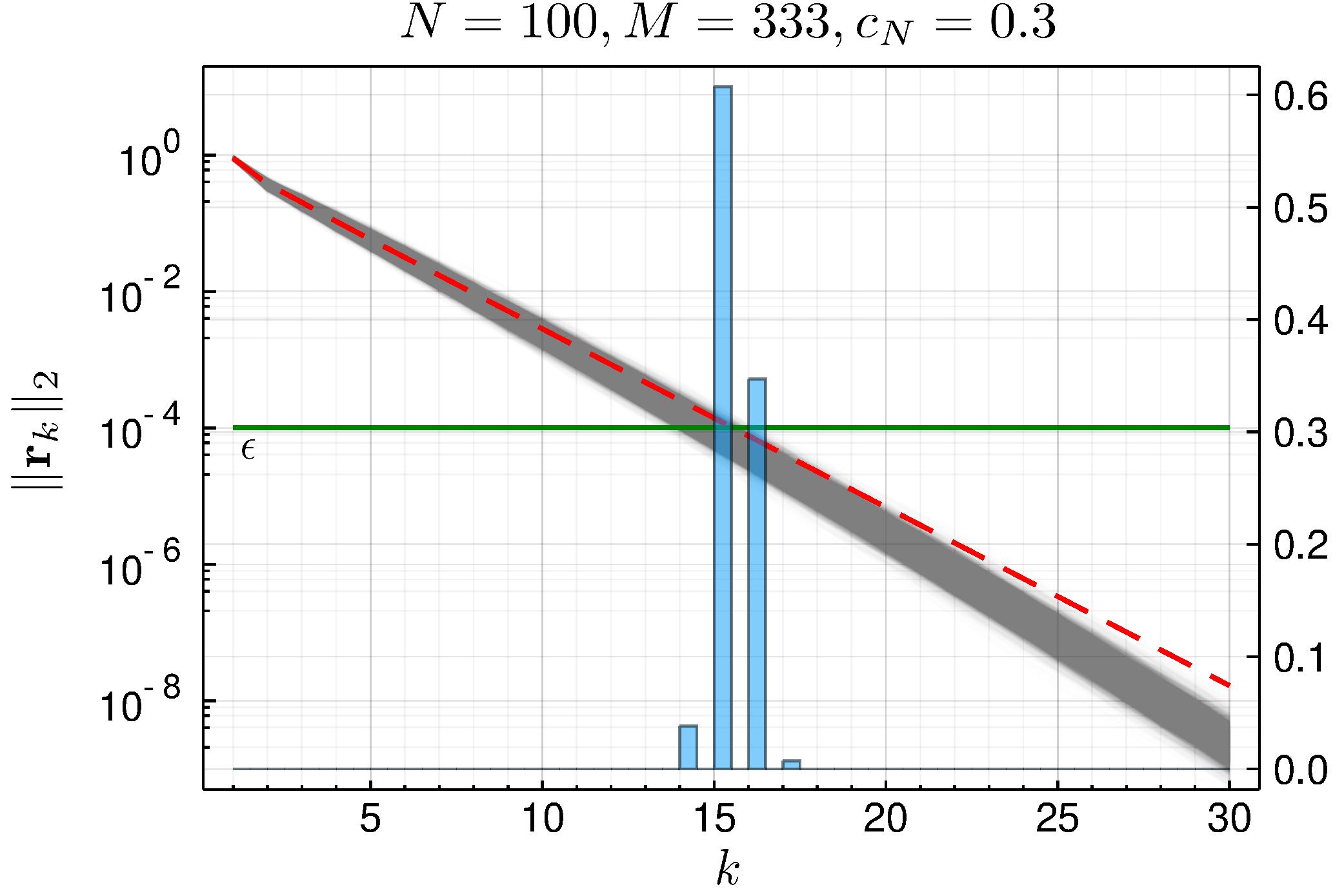}
    \includegraphics[width=.48\linewidth]{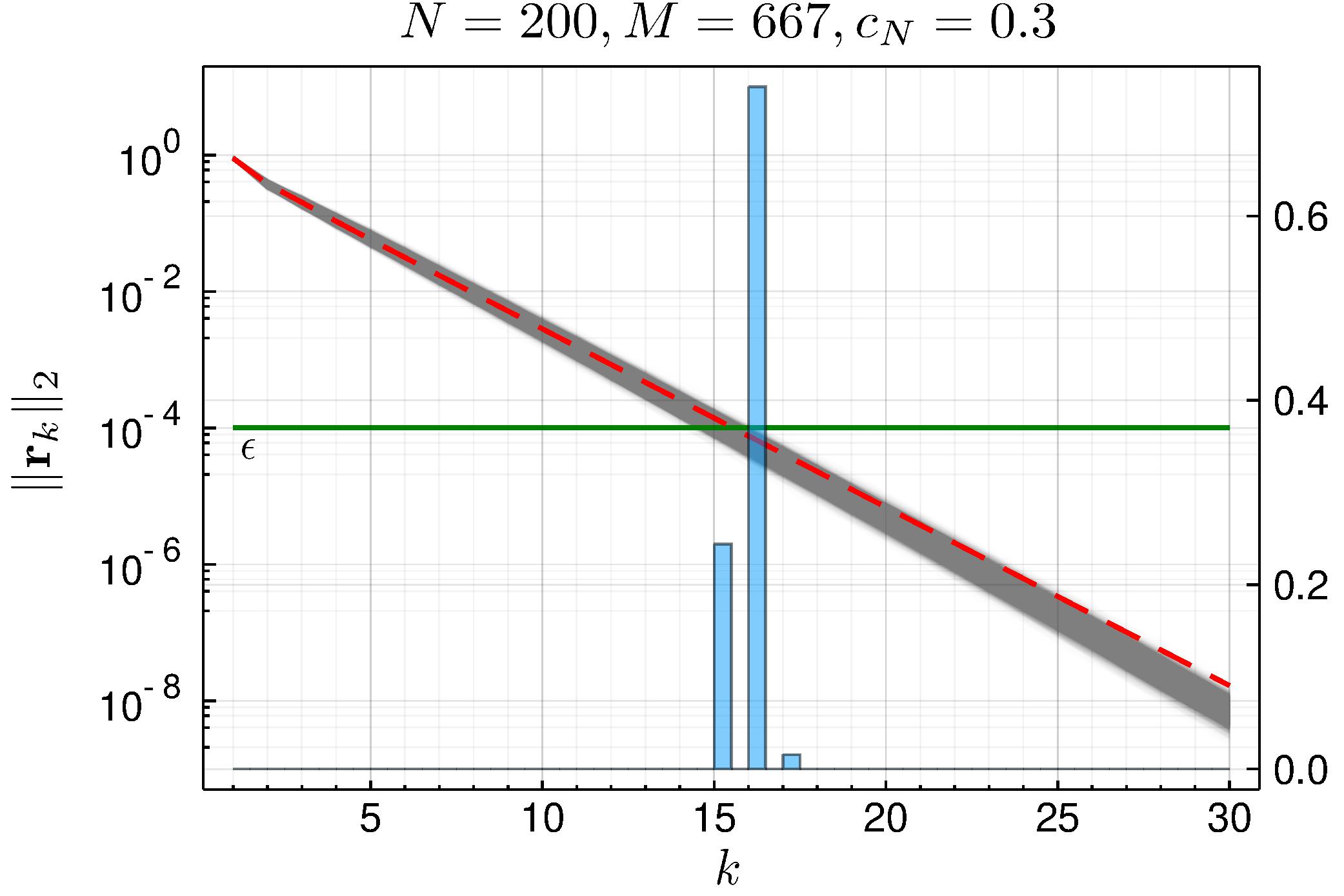}
    \includegraphics[width=.48\linewidth]{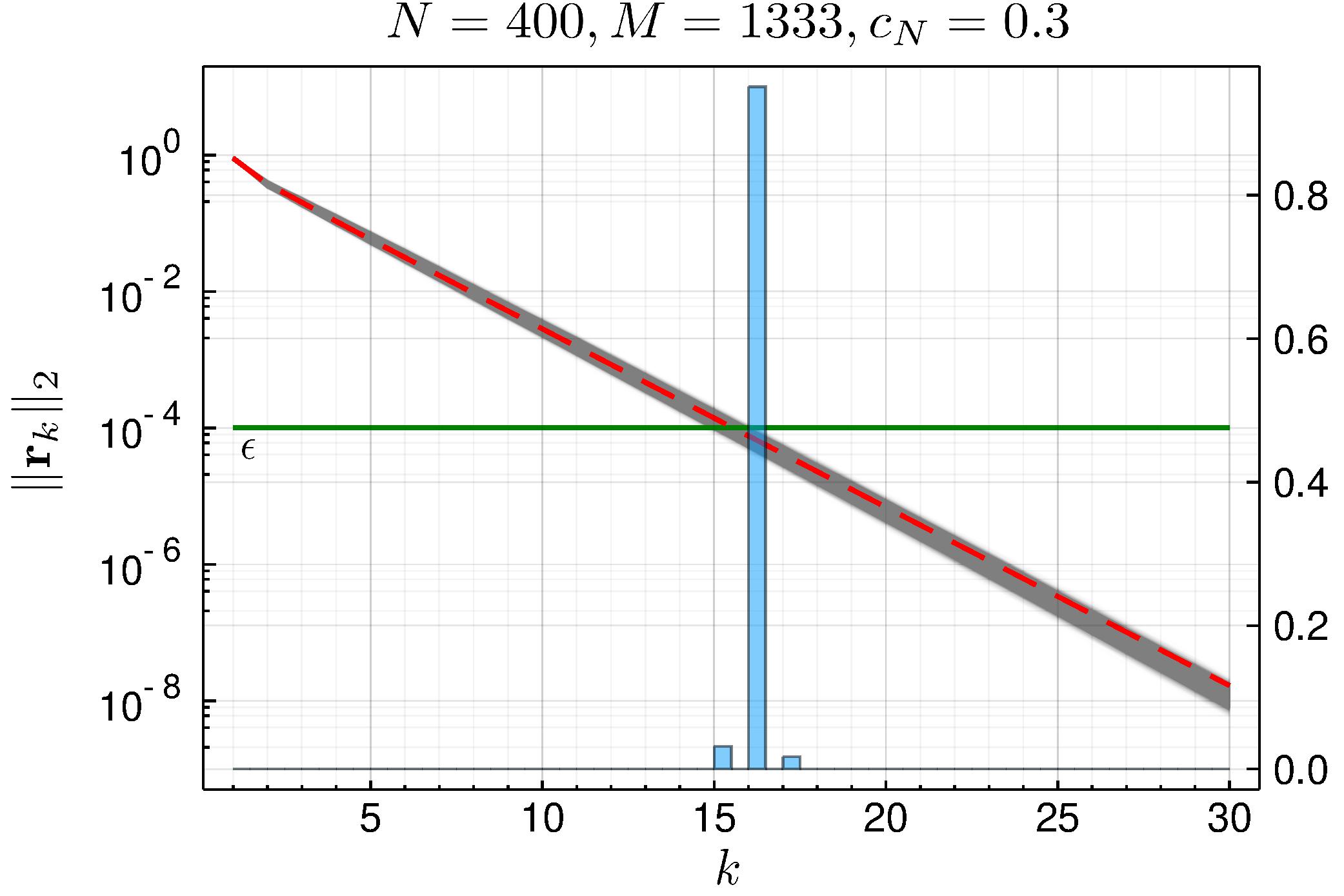}
    \caption{A numerical demonstration of the concentration of the residual in the CGA on Johnstone's spiked covariance model \eqref{eq_JSM} when $X$ is an iid Gaussian matrix. Here we take $\ell = 15, \bm v = \bm f_1$ and $\bm b = \frac{1}{\sqrt{2}} \bm f_1 +  \frac{1}{\sqrt{2}} \bm w$ where $\bm w = [0,\bm w']^T$, and $\bm w'$ is distributed uniformly on the hypersphere in $\mathbb R^{N-1}$.  Since we do not have a closed-form expression for the limiting dashed curve, we estimate it using the procedure outlined at the beginning of this section. The modification of $\bm b$, in comparision to Figure~\ref{fig:johnstone}, modifies the behavior of the first couple iterations --- but the same asymptotic rate of convergence persists. See Figure~\ref{fig:wishart_and_many_spikes} for a description of what these plots demonstrate.}
    \label{fig:johnstone2}
\end{figure}

%

\begin{figure}[tbp]
    \centering
    \includegraphics[width=.48\linewidth]{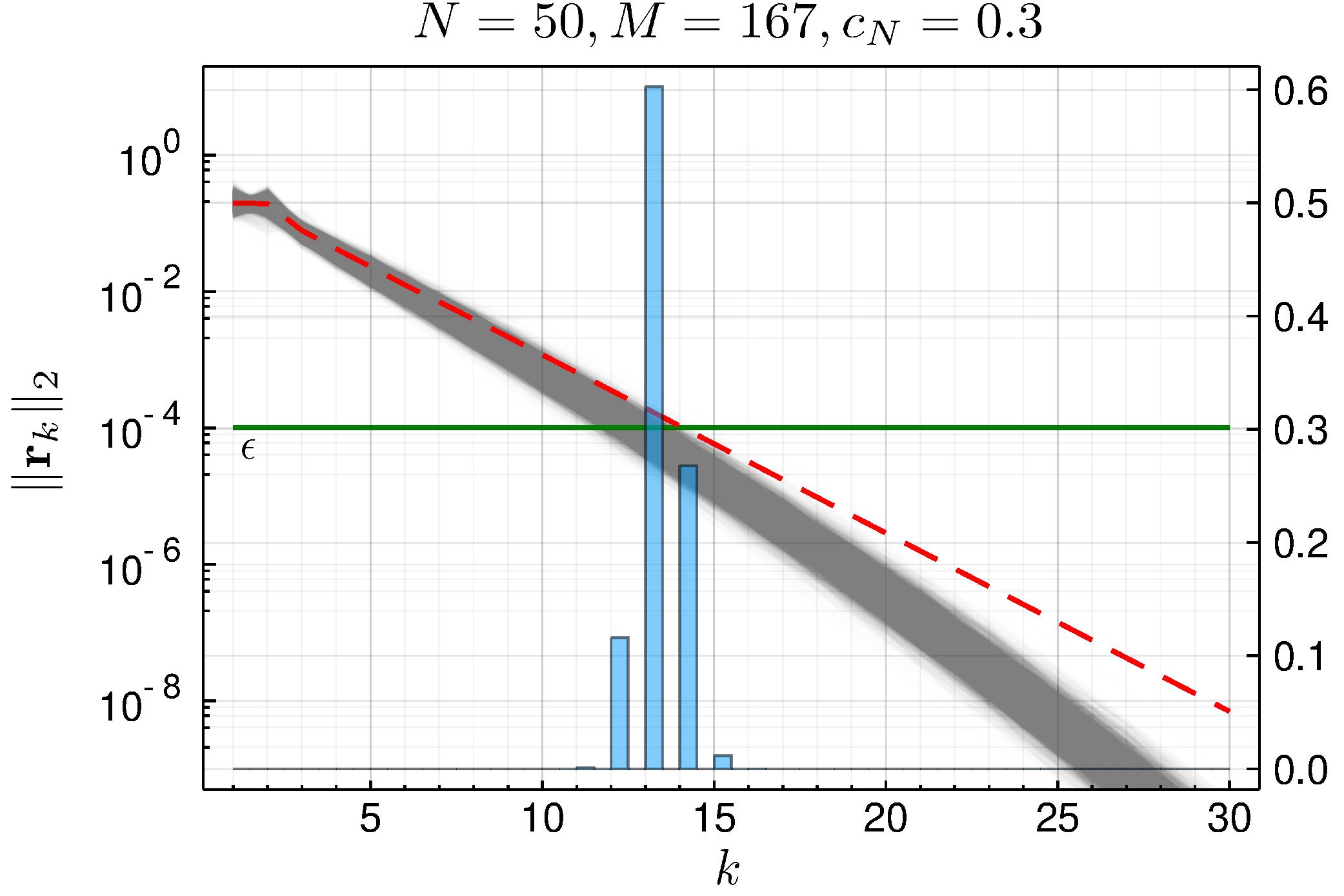}
    \includegraphics[width=.48\linewidth]{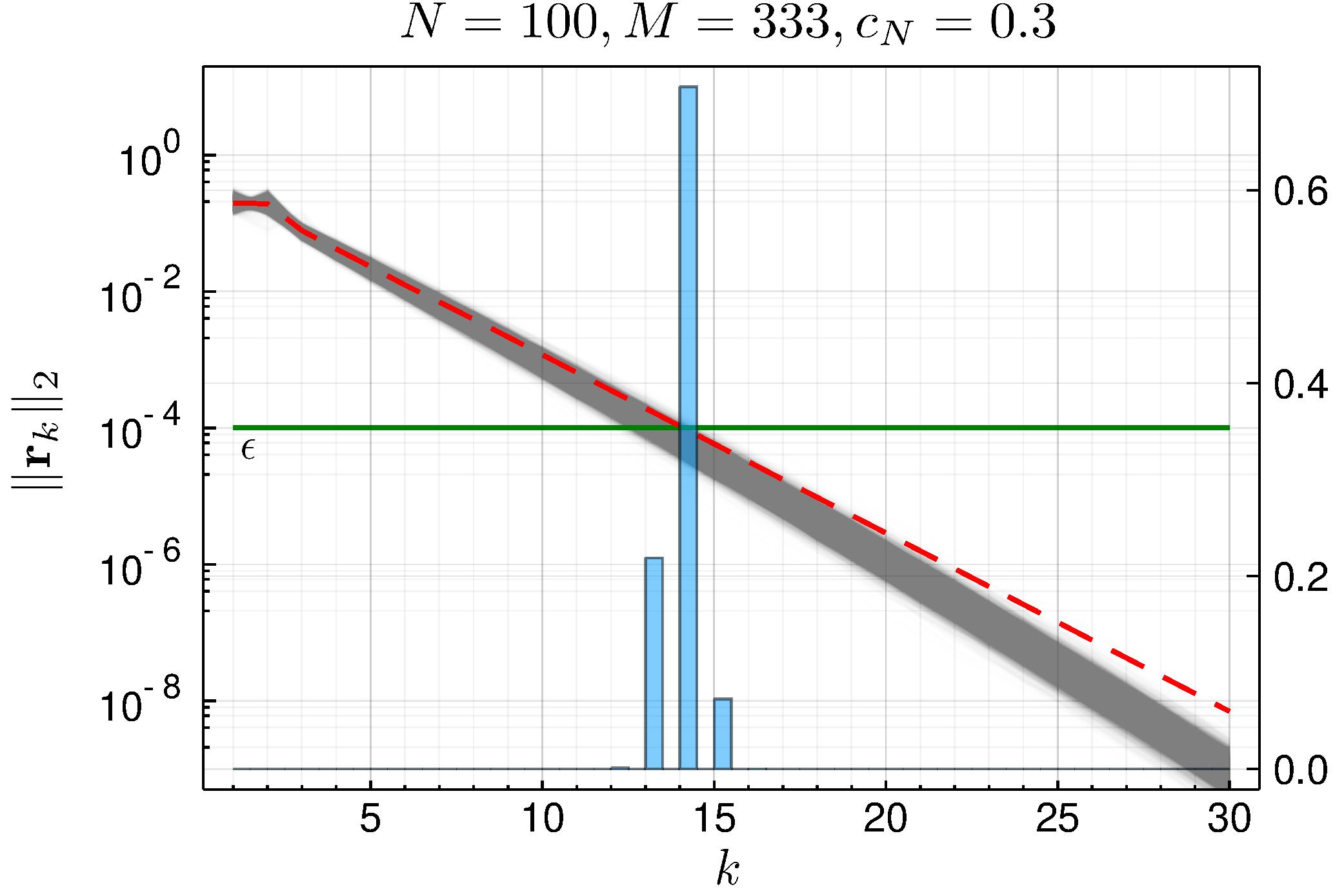}
    \includegraphics[width=.48\linewidth]{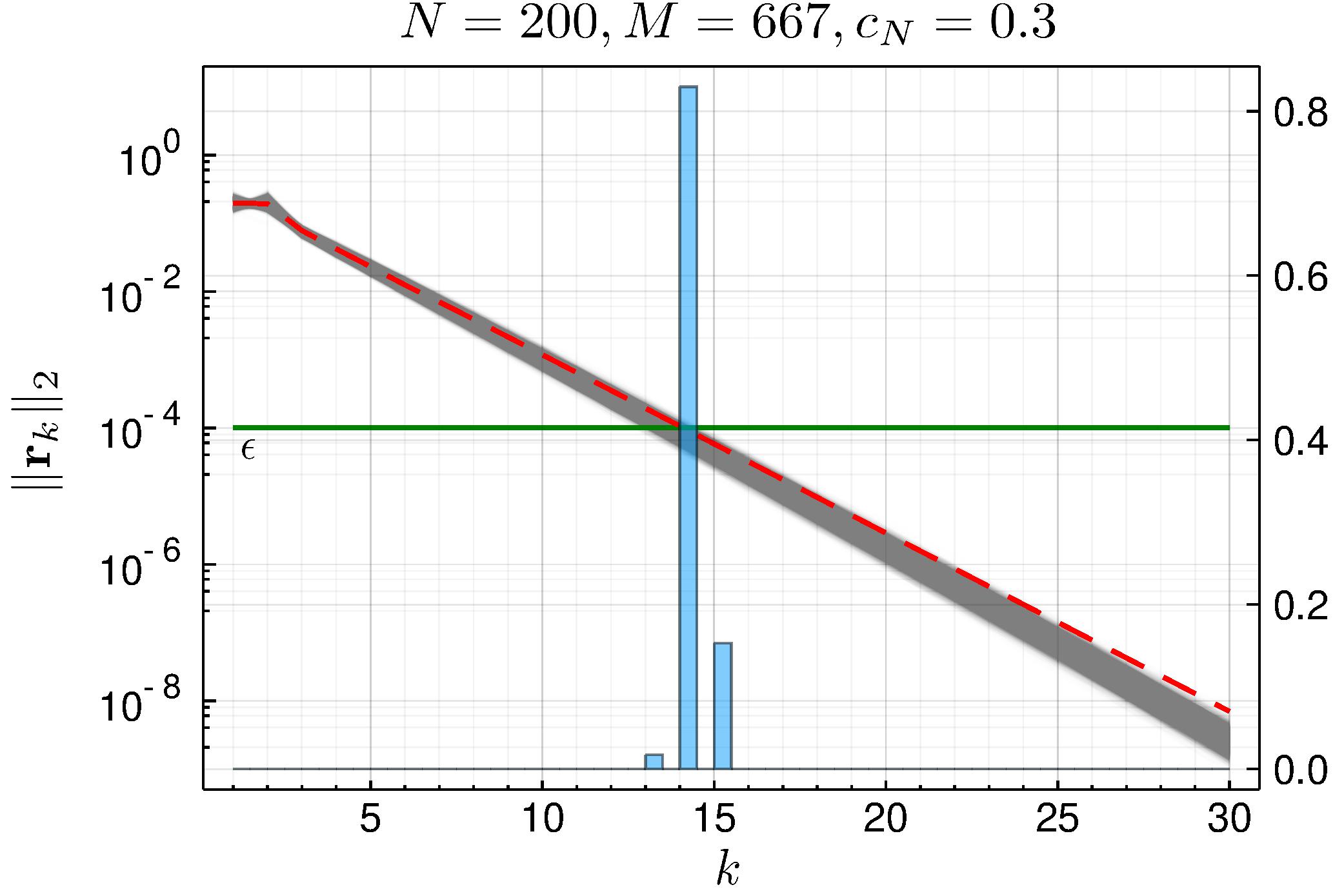}
    \includegraphics[width=.48\linewidth]{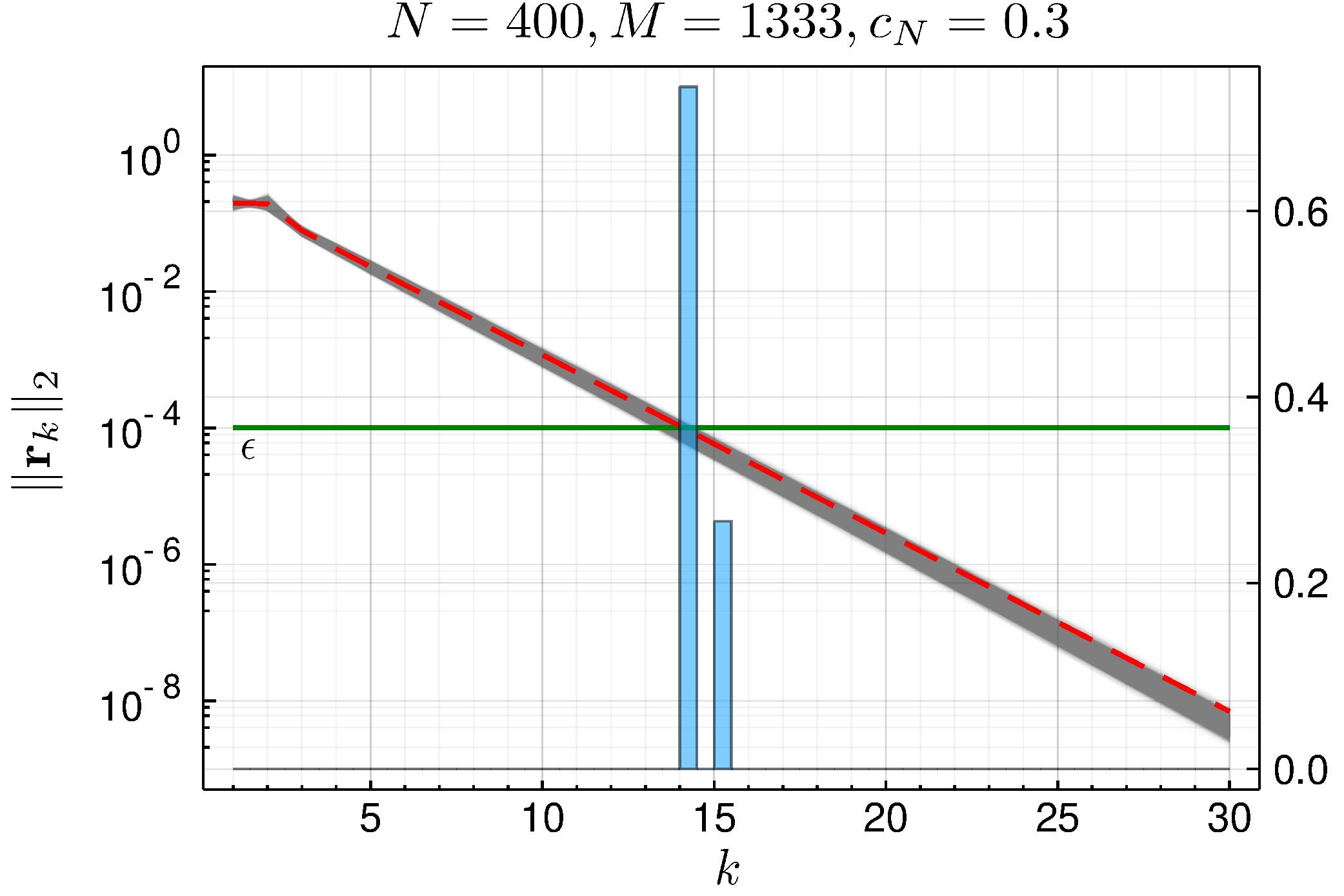}
    \caption{A numerical demonstration of the concentration of the residual in the CGA when $\Sigma^{1/2} = \mathrm{diag}(4,3.5,1,1,\ldots,1)$ and $X$ is an iid Gaussian matrix. Here we take $\bm b = \frac{1}{\sqrt{4}} \bm f_1 +  \frac{1}{\sqrt{4}} \bm f_2 + \frac{1}{\sqrt{2}} \bm w$ where $\bm w = [0,0,\bm w']^T$, and $\bm w'$ is distributed uniformly on the hypersphere in $\mathbb R^{N-2}$.  Since we do not have closed-form expression for the limiting dashed curve, we estimate it using the procedure outlined at the beginning of this section. See Figure~\ref{fig:wishart_and_many_spikes} for a description of what these plots demonstrate.}
    \label{fig:two-spike}
\end{figure}

\subsection{Spiked invariant model \cite{spikedinvariant, DJ}}

We consider the spiked invariant model where the ESD of $\Sigma_0$ converges to the standard MP law with parameter $c_N$ (c.f. (\ref{eq_mpdensity})). As discussed in Remark \ref{rmk_assumption}, (3) of Assumption \ref{assum_summary} is satisfied. It is well known that the asymptotic density $\varrho$ can be characterized as the free multiplicative convolution of two MP laws. In fact, the density function can be calculated explicitly as in Lemma \ref{lem_expcilitformula}. In this case, $f(x)$ can be replaced by
\begin{equation*}
 f(x)=-\frac{1}{x}+c_N \int \frac{1}{x+\lambda^{-1}} \mu_{\mathtt{MP}}(\dd \lambda),   
\end{equation*}
where $\mu_{\mathtt{MP}}$ is the standard MP law with parameter $c_N.$ Moreover, in this setting, $\gamma_{\pm}$ have closed form expressions, see (\ref{eq_invariantformula}). For the spiked model, we can calculate the essential quantities based on the above expressions.  See Figure~\ref{fig:invariant} for a demonstration. 

\begin{figure}[tbp]
    \centering
    \includegraphics[width=.48\linewidth]{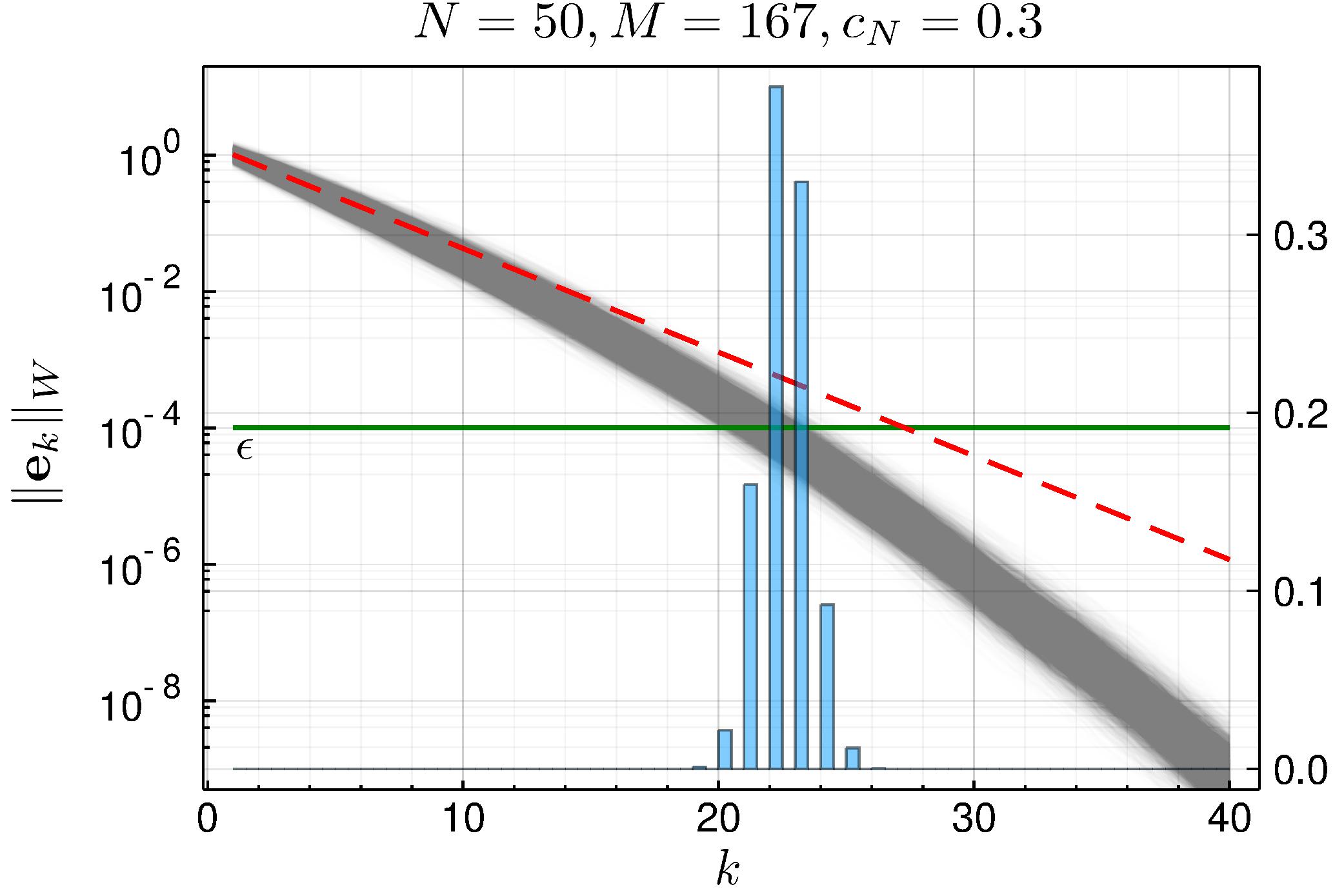}
    \includegraphics[width=.48\linewidth]{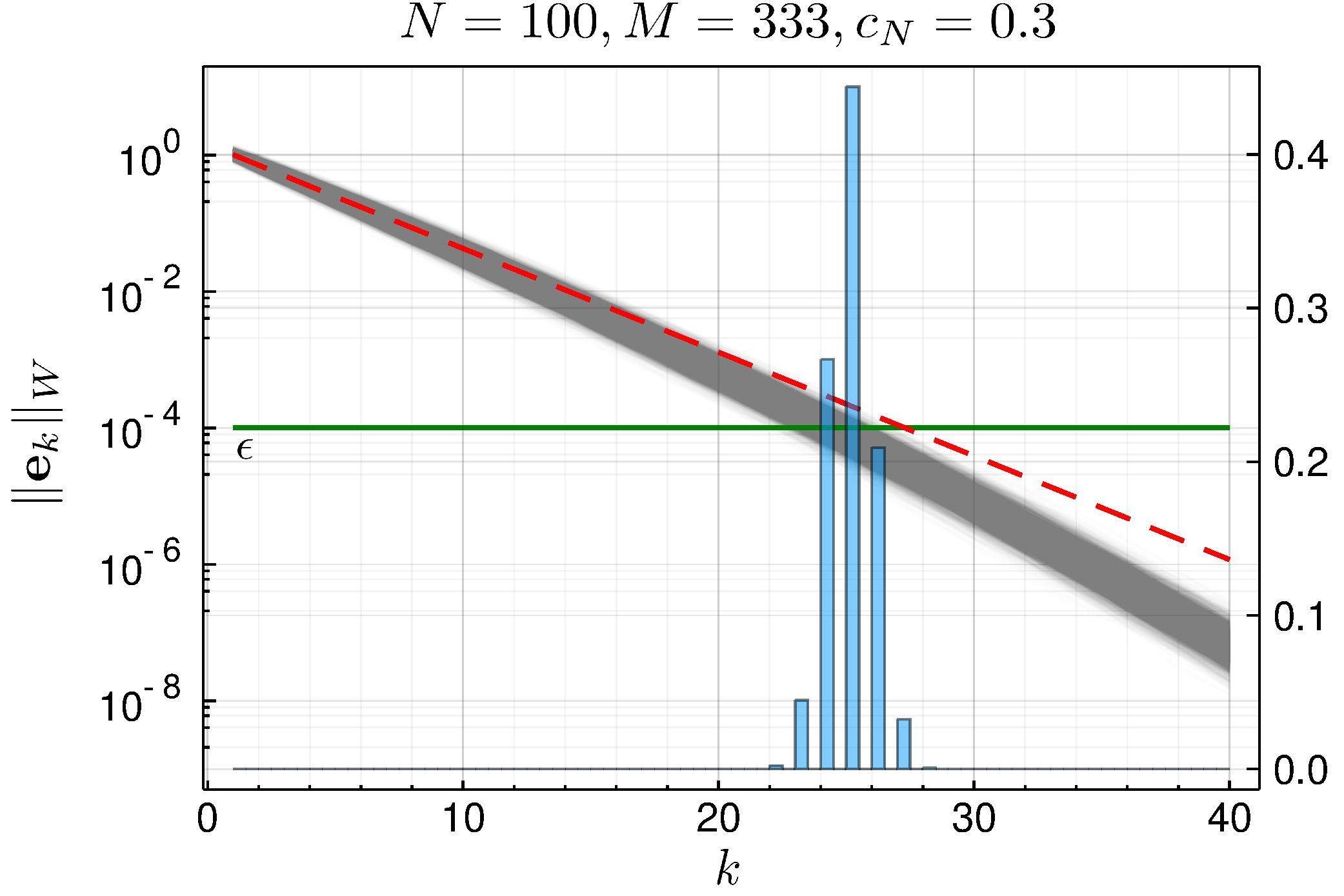}
    \includegraphics[width=.48\linewidth]{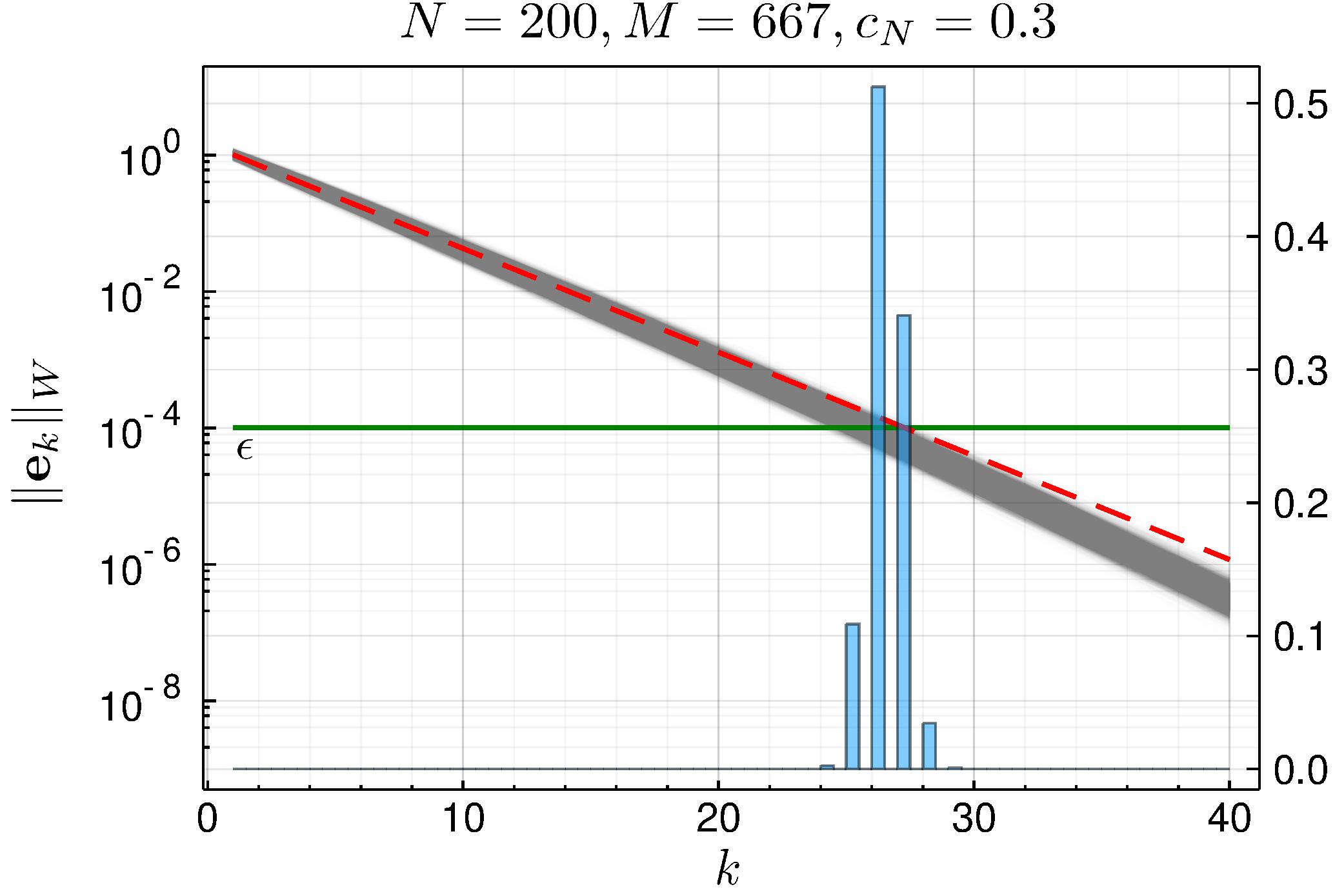} 
    \includegraphics[width=.48\linewidth]{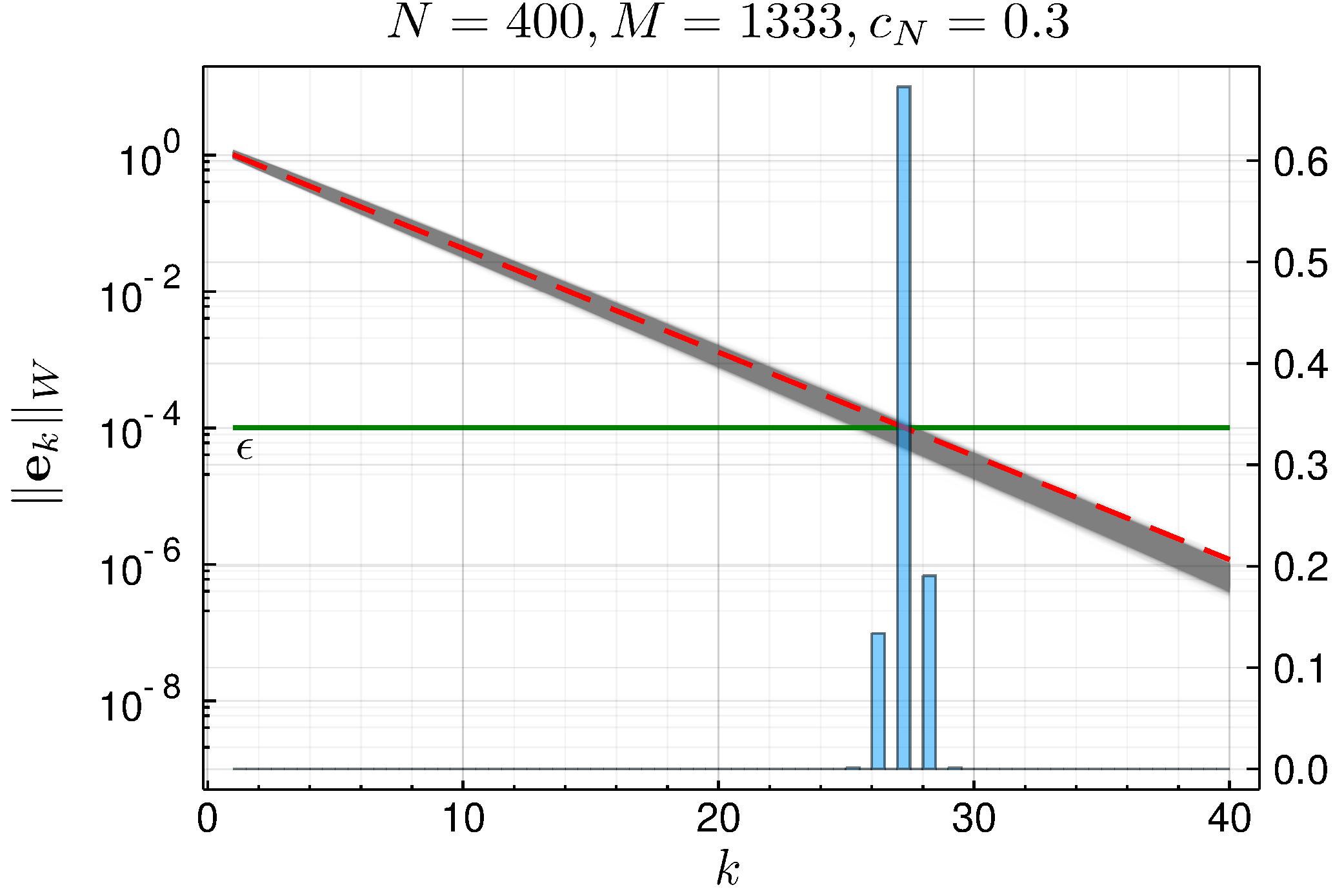}
    \caption{A demonstration of the concentration of $\|\bm e_k\|_W$ in the case of the spiked invariant model.  Since we do not have closed-form expression for the limiting dashed curve, we estimate it using the procedure outlined at the beginning of this section. See Figure~\ref{fig:wishart_and_many_spikes} for a description of what these plots demonstrate.  Note that these plots show the $W$-norm of $\bm e_k$, not the 2-norm of $\bm r_k$ as in Figure~\ref{fig:wishart_and_many_spikes}.}
    \label{fig:invariant}
\end{figure}

\subsection{Spiked covariance matrix with uniformly distributed eigenvalues \cite{DRMTA}}

We assume that the ESD of $\Sigma_0$ converges to the uniform distribution on $[a,b],$ where $a, b$ are some positive constants. As discussed in Remark \ref{rmk_assumption}, (3) of Assumption \ref{assum_summary} is satisfied.  In this case, $f(x)$ can be replaced by 
\begin{equation*}
f(x)=-\frac{1}{x}+\frac{c_N}{b-a}\left(\frac{b-a}{x}-\frac{1}{x^2} \ln \frac{bx+1}{ax+1} \right). 
\end{equation*}
Then the desired quantities can be calculated based on the above expressions. For a concrete example, we consider that $a=1, b=3$ and $c_N=0.5.$  The critical points $x_{\pm}$ can be calculated numerically using Newton's method and are approximately $-2, -0.25.$ Then the support of $\varrho$ only contains a single interval and the edges are approximately $0.15$ and $6.4,$ respectively. The essential quantities of the  spiked model can be calculated analogously; see Figure \ref{fig:uniform} for an illustration. 

\begin{figure}[tbp]
    \centering
    \includegraphics[width=.48\linewidth]{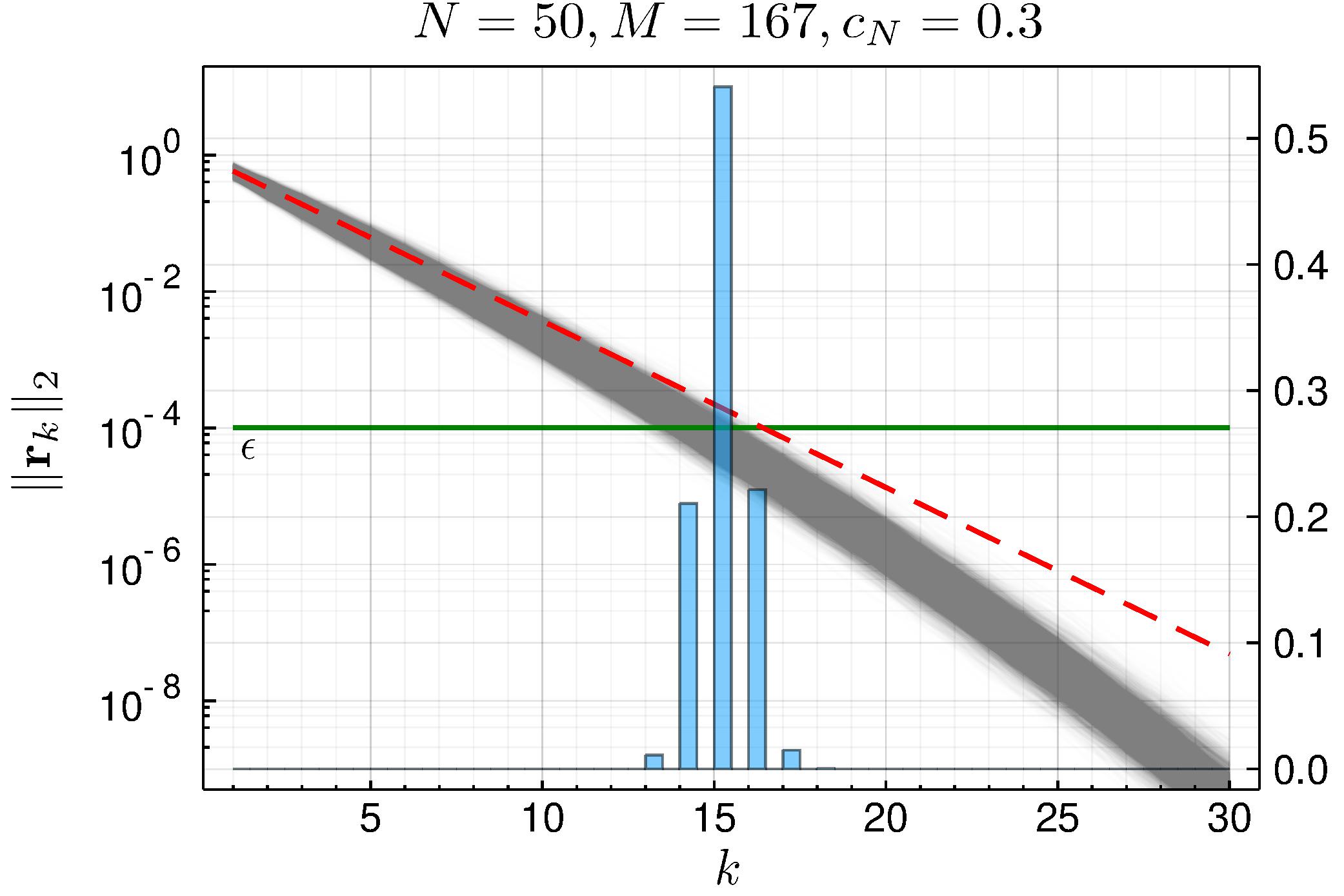}
    \includegraphics[width=.48\linewidth]{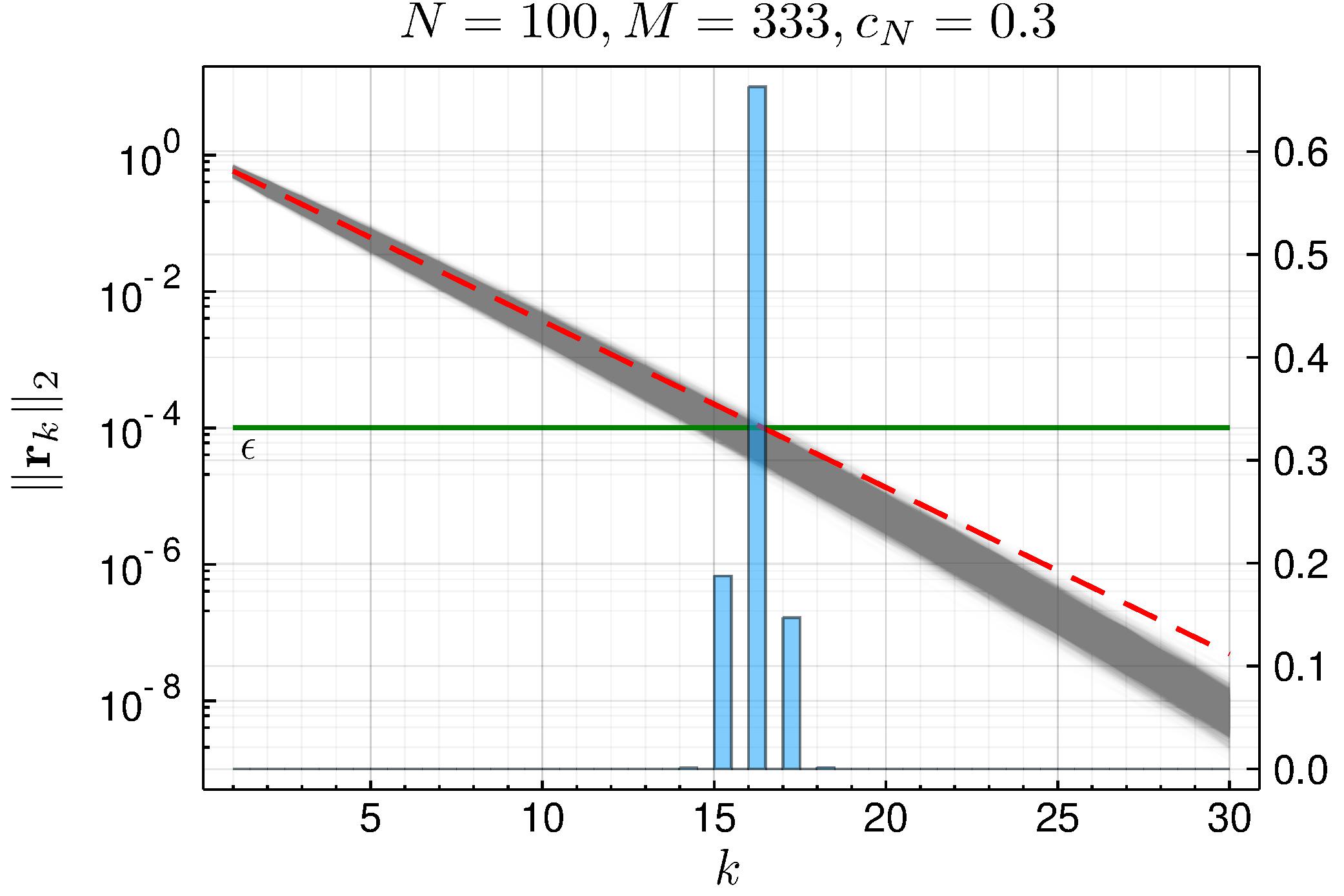}
    \includegraphics[width=.48\linewidth]{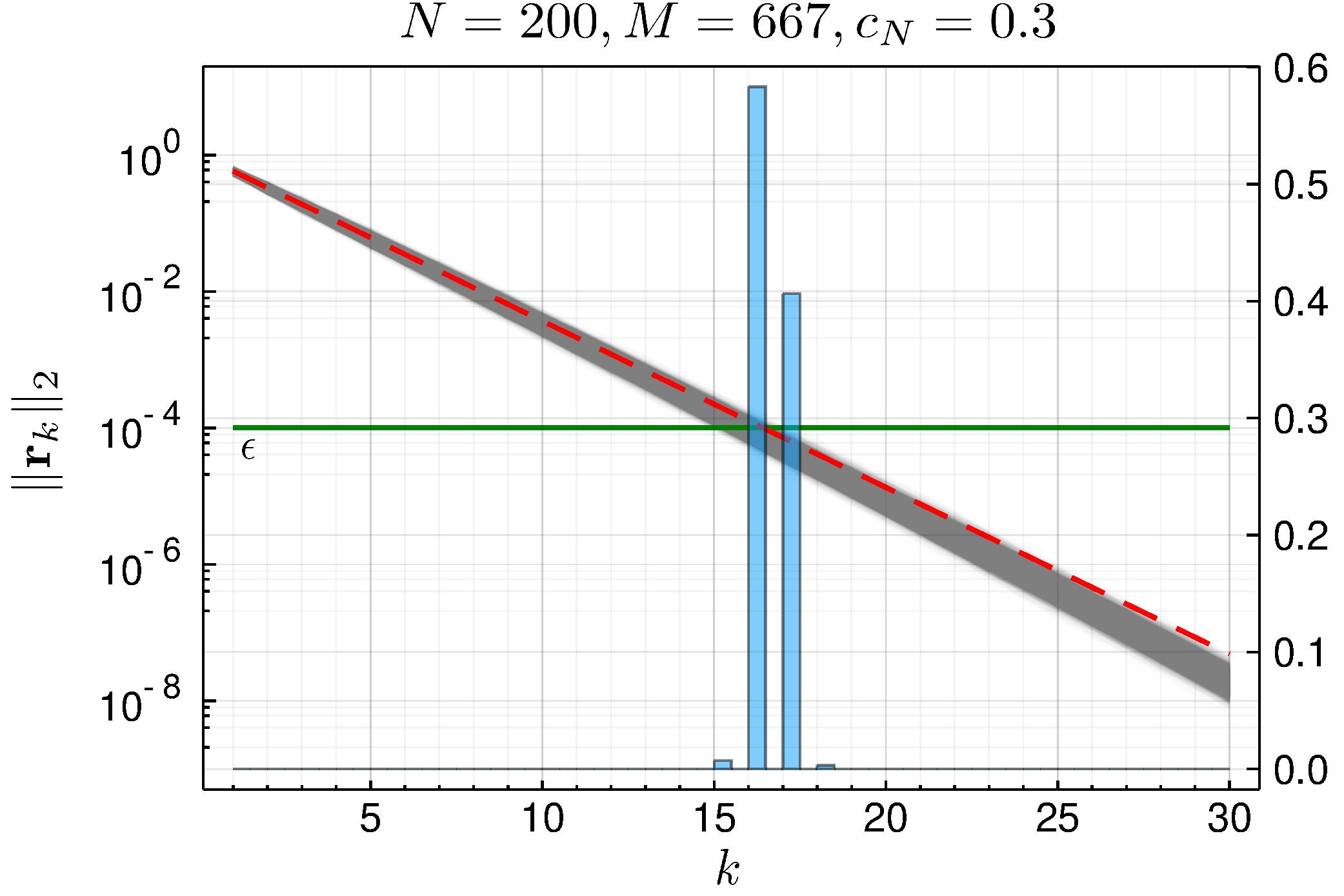} 
    \includegraphics[width=.48\linewidth]{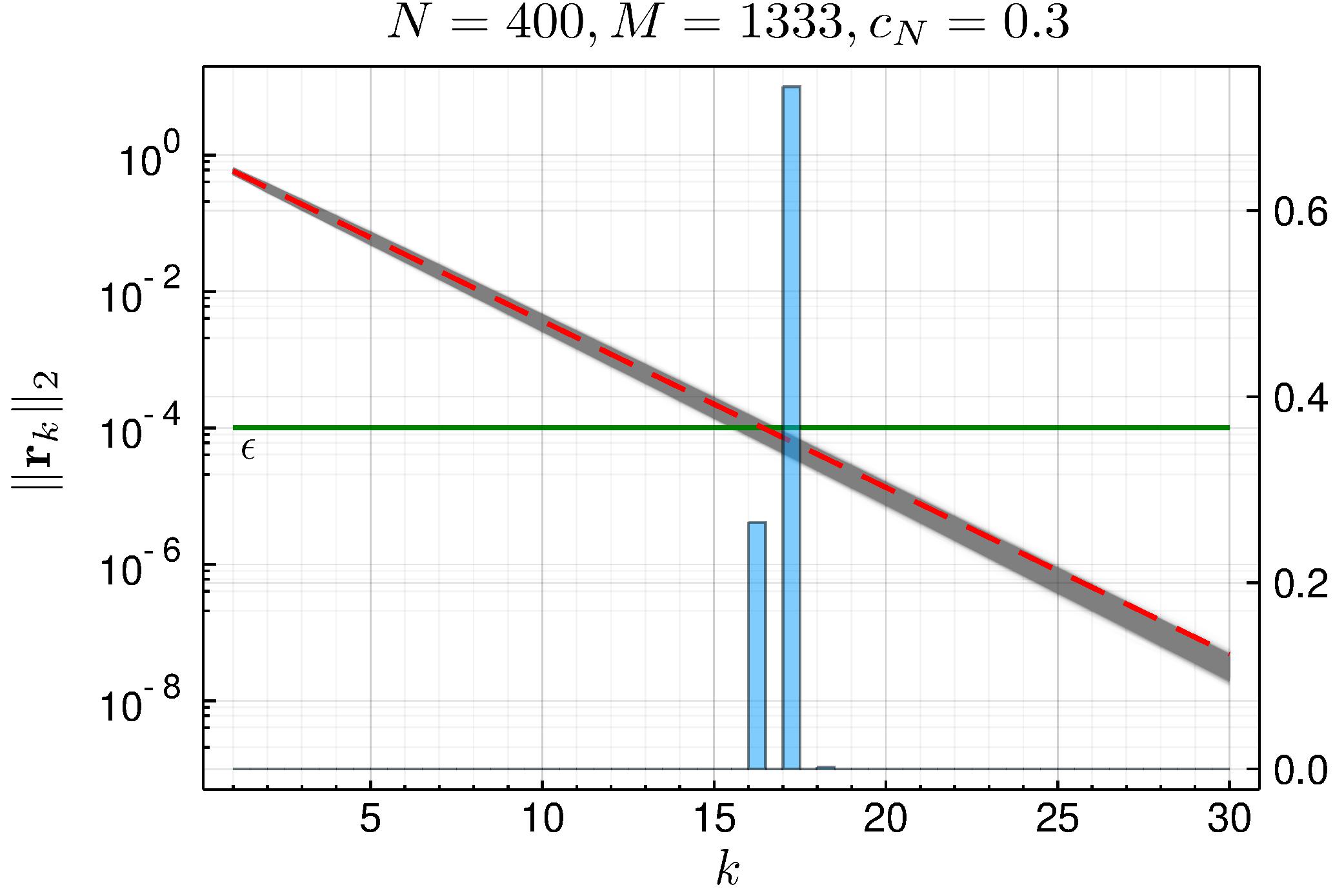}
    \caption{A demonstration of the concentration of $\|\bm r_k\|_2$ in  the uniformly deformed case.  Since we do not have closed-form expression for the limiting dashed curve, we estimate it using the procedure outlined at the beginning of this section. See Figure~\ref{fig:wishart_and_many_spikes} for a description of what these plots demonstrate.}
    \label{fig:uniform}
\end{figure}

\subsection{Spiked Toeplitz matrix \cite{MR2308592}}

Toeplitz matrices are a common object of study in time series analysis  since the covariance structure of a stationary
time-series is a Toeplitz matrix. Suppose that $\Sigma_0$ is a symmetric positive definite Toeplitz matrix satisfying the assumptions in \cite[Section A.3.4]{MR2308592}, then (3) of Assumption \ref{assum_summary} is satisfied. Since the eigenvalues of $\Sigma_0$ do not have closed-forms, in general, we need to numerically calculate calculate the eigenvalues of Topelitz matrix and the function $f$ in (\ref{eq_defnstitlesjtransform}). The other quantities can be calculated based on that. For a concrete example, let $\Sigma_0$ be the covariance matrix of an order one stationary autoregressive (AR) model such that the entries of $\Sigma_0$ satisfy 
\begin{equation}\label{eq_sigma_toep}
(\Sigma_0)_{i,j}=0.4^{|i-j|}.     
\end{equation}
For a concrete case when $c_N=1/2,$ according to \cite[Example 3.10]{DRMTA}, we use Newton's method to get the critical points of $f(x),$ which are $-0.33, -3.62$. As a result, $\gamma_- \approx 0.086, \gamma_+ \approx 4.385.$  Similarly, we can obtain the other quantities for the spiked Toeplitz matrix; see Figure \ref{fig:toeplitz} for a demonstration. 

\begin{figure}[tbp]
    \centering
    \includegraphics[width=.48\linewidth]{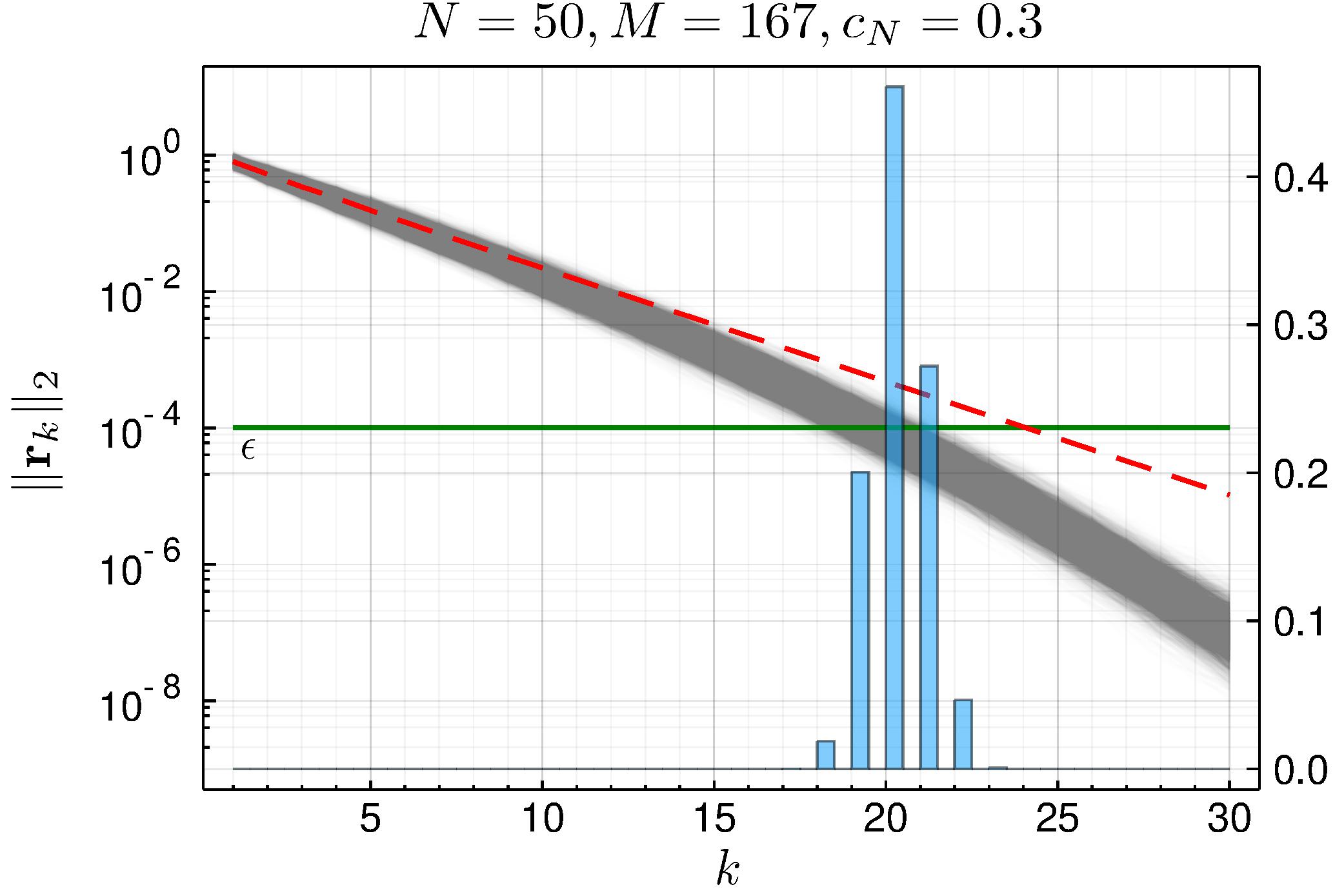} 
    \includegraphics[width=.48\linewidth]{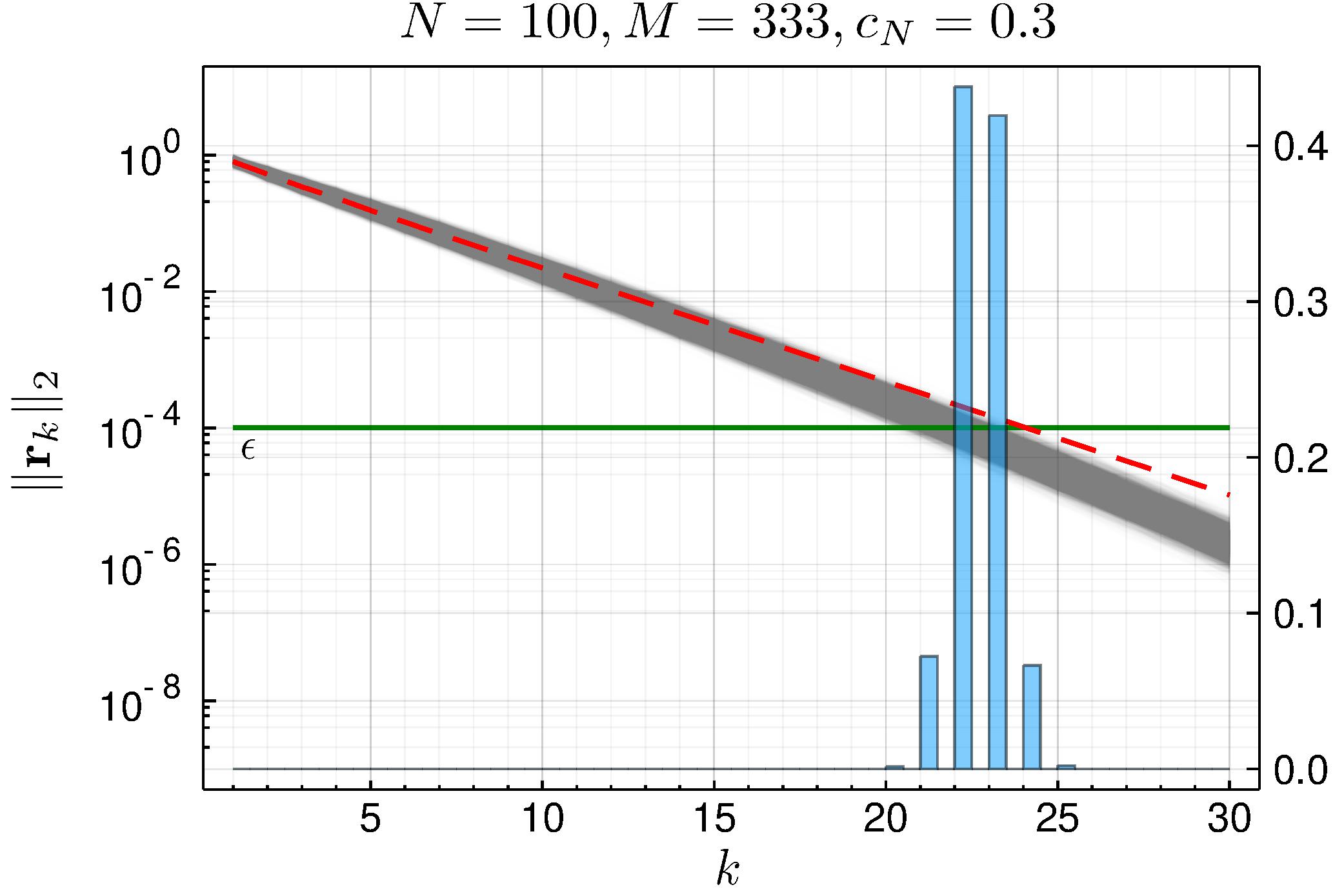}
    \includegraphics[width=.48\linewidth]{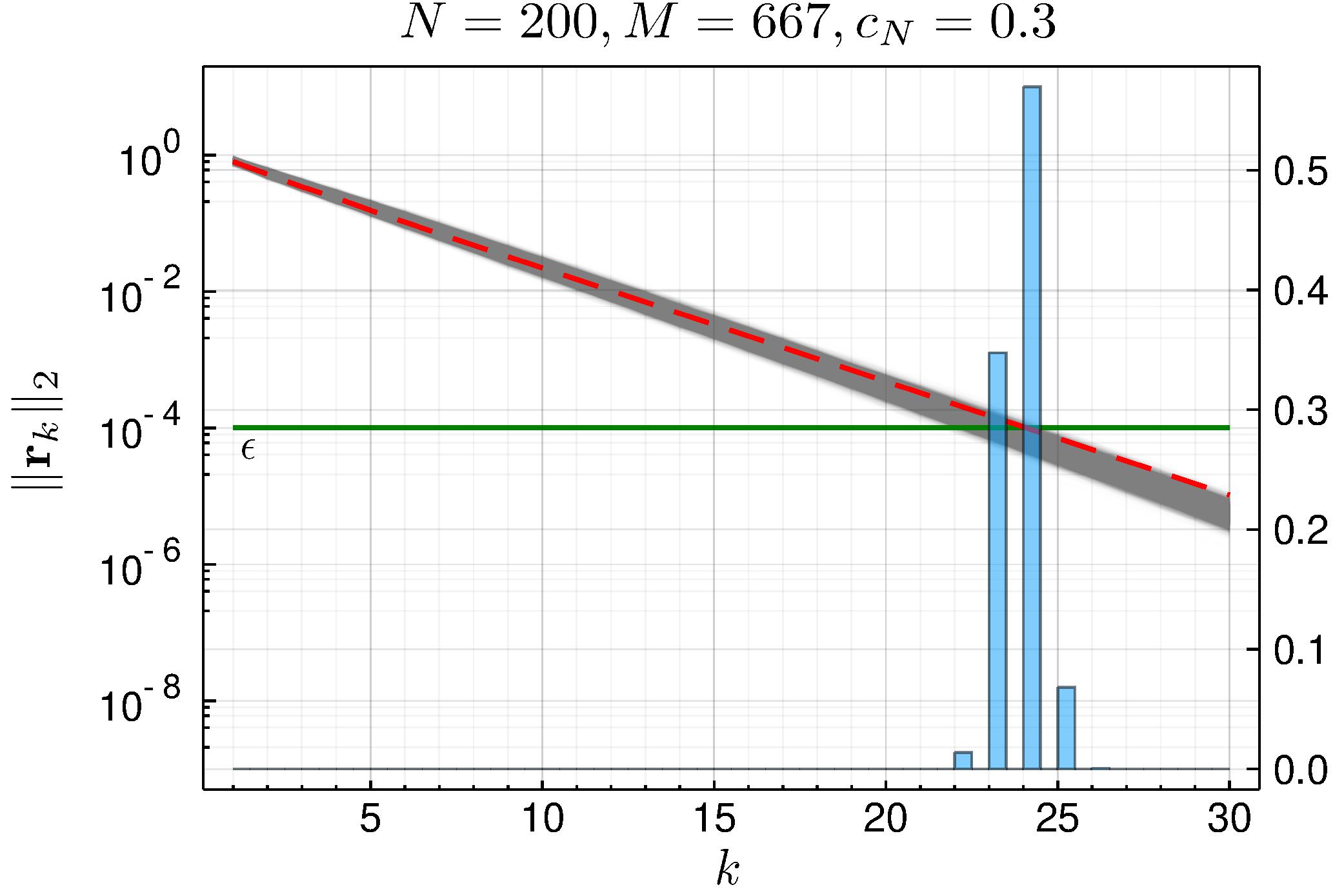} 
    \includegraphics[width=.48\linewidth]{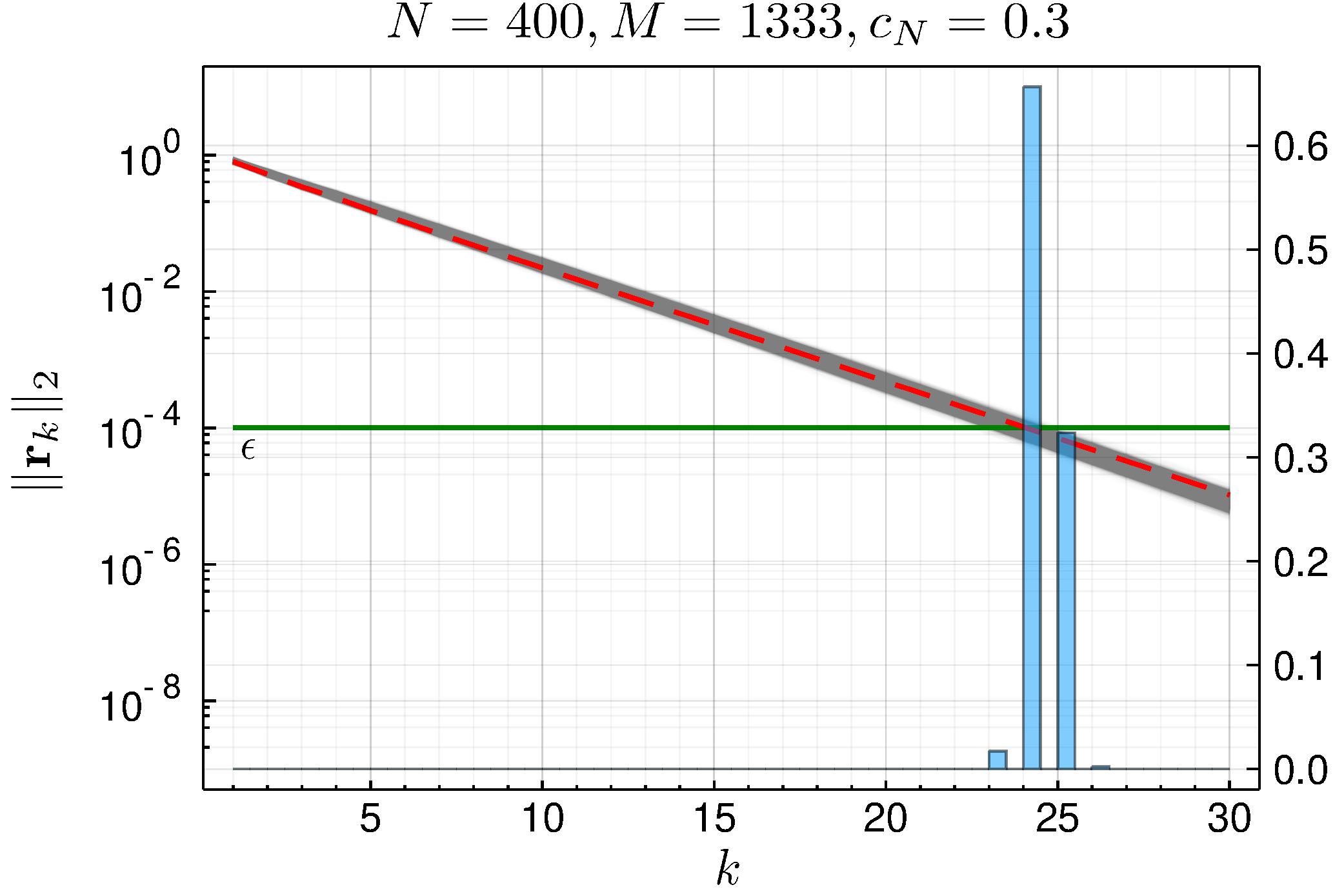}
    \caption{A demonstration of the concentration of $\|\bm r_k\|_2$ in the case of the unspiked Toeplitz case \eqref{eq_sigma_toep}.  Since we do not have closed-form expression for the limiting dashed curve, we estimate it using the procedure outlined at the beginning of this section. See Figure~\ref{fig:wishart_and_many_spikes} for a description of what these plots demonstrate.}
    \label{fig:toeplitz}
\end{figure}



%


%

\section{Asymptotics of orthogonal polynomials and Cholesky factorization}\label{sec_theoryoforthogonalpolynomial}
In this section, we provide results on the theory of orthogonal polynomials. 
\subsection{Hankel determinants, moments and the three-term recurrence relation}\label{sec_bascis}
In this subsection, we introduce the connection between Lanczos iteration and orthogonal polynomials \cite[Lecture 36]{MR1444820}. Let $T$ be the $N \times N$ Jacobi matrix generated from the Lanczos iteration for its maximum of $N$ steps. It produces a probability measure
\begin{align}\label{eq:muT}
    \mu_T = \sum_{j=1}^N \delta_{\lambda_j} \omega_j,
\end{align}
where $\lambda_j$'s are the eigenvalues of $T$ and $\omega_j$ is the squared modulus of the first component of the normalized eigenvector associated to $\lambda_j$.  For the $N \times N$ Hermitian matrix $W,$ denote its
eigenvectors as $\{\bm{u}_i\},$ and for any unit vector $\bm{b},$ denote the eigenvector empirical spectral distribution (VESD) as \cite{BMP}
\begin{equation}\label{eq_defnvesd}
\mu_{W,\bm{b}}=\sum_{i=1}^N |\langle \bm{u}_i, \bm{b} \rangle|^2 \delta_{\lambda_i(W)}.    
\end{equation}

The VESD $\mu_{W, \bm{b}}$ coincides with the spectral measure $\mu_T$. In fact, there is a bijection between such measures and Jacobi matrices \cite{DeiftOrthogonalPolynomials}. Moreover, Proposition \ref{prop_three} below indicates that universality and estimates for the spectral measure in an appropriate sense will translate to universality and estimates for the Lanczos matrix. 

Based on $\mu_{W, \bm{b}}$, we can construct a sequence of orthogonal polynomials $\{p_n(x)\}$ from the monomials via Gram-Schmidt. The polynomials obey the following three-term recurrence relation \cite{MR0372517}
\begin{equation}\label{eq_threeterm}
xp_n(x)=b_n p_{n+1}(x)+a_n p_n(x)+b_{n-1} p_{n-1}(x), \ n \geq 0, b_n>0,    
\end{equation}
with the convention $p_{-1}(x)=0$ and $b_{-1}=0.$ Here $a_n, b_n$ are called the recurrence coefficients. 
\begin{proposition}\label{prop_three}
The three-term recurrence coefficients for the orthogonal polynomials generated by the VESD of $\mu_{W, \bm{b}}$ coincide with the entries in the Lanczos matrix $T(W,\vec b)$.  
\end{proposition}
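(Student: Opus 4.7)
The plan is to identify the Lanczos vectors $\bm{q}_k$ with polynomial iterates $p_{k-1}(W)\bm{b}$, transfer the Euclidean inner product on $\mathbb{R}^N$ to an integral against the VESD $\mu_{W,\bm{b}}$ via the spectral theorem, and then invoke uniqueness of orthonormal polynomials to conclude that the Lanczos recurrence coefficients coincide with those in \eqref{eq_threeterm}.

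First, I would prove by induction on $k$ that the Lanczos vector $\bm{q}_k$, produced by Algorithm \ref{a:lanczos} applied to $(W,\bm{b})$ with $\bm{q}_1 = \bm{b}$, satisfies $\bm{q}_k = \pi_{k-1}(W)\bm{b}$ where $\pi_{k-1}$ is a polynomial of degree exactly $k-1$ with positive leading coefficient. The base case is $\pi_0 \equiv 1$, consistent with $\|\bm{b}\|_2 = 1$. Rearranging the Lanczos recurrence from Step (3) of Algorithm \ref{a:lanczos} as
\begin{equation*}
b_{k-1}\bm{q}_{k+1} = (W - a_{k-1}I)\bm{q}_k - b_{k-2}\bm{q}_{k-1},
\end{equation*}
and using the inductive hypothesis, I obtain $\pi_k(x) = b_{k-1}^{-1}[(x-a_{k-1})\pi_{k-1}(x) - b_{k-2}\pi_{k-2}(x)]$, so the $\pi_k$'s satisfy the three-term recurrence
\begin{equation*}
x\pi_{k-1}(x) = b_{k-1}\pi_k(x) + a_{k-1}\pi_{k-1}(x) + b_{k-2}\pi_{k-2}(x).
\end{equation*}
Since $b_{k-1} > 0$ whenever Lanczos has not terminated, $\pi_k$ has positive leading coefficient and the recurrence has exactly the form of \eqref{eq_threeterm}.

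Next, I would translate the orthonormality of the Lanczos basis into orthonormality of $\{\pi_{k-1}\}$ with respect to $\mu_{W,\bm{b}}$. Using the spectral decomposition $W = \sum_i \lambda_i \bm{u}_i \bm{u}_i^*$, one has $p(W)\bm{b} = \sum_i p(\lambda_i)\langle \bm{u}_i,\bm{b}\rangle \bm{u}_i$ for any polynomial $p$, so that
\begin{equation*}
\langle p(W)\bm{b}, q(W)\bm{b}\rangle = \sum_{i=1}^N |\langle \bm{u}_i,\bm{b}\rangle|^2 \, p(\lambda_i)q(\lambda_i) = \int_{\mathbb{R}} p(x)q(x)\,\mu_{W,\bm{b}}(\dd x).
\end{equation*}
Applying this identity to $p = \pi_{j-1}$ and $q = \pi_{k-1}$ gives $\int \pi_{j-1}\pi_{k-1}\,\dd\mu_{W,\bm{b}} = \langle \bm{q}_j,\bm{q}_k\rangle = \delta_{jk}$, so $\{\pi_{k-1}\}$ is the orthonormal polynomial sequence for $\mu_{W,\bm{b}}$ with positive leading coefficients. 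Since such a sequence is unique, $\pi_{k-1} = p_{k-1}$ and the three-term recurrence coefficients from Lanczos agree with those in \eqref{eq_threeterm}, which are by definition the entries of $T(W,\bm{b})$.

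The only subtle point, which I view as the main care in the argument, is the handling of early termination: if $b_{n-1} = 0$ at step $n < N$, then $\mu_{W,\bm{b}}$ is supported on at most $n$ points and the orthonormal polynomial family is finite, with $p_n$ not defined in the usual sense. One verifies that Lanczos terminates precisely when the polynomial Krylov space $\operatorname{span}\{\bm{b}, W\bm{b}, \ldots, W^{n-1}\bm{b}\}$ becomes $W$-invariant, which occurs exactly when $\mu_{W,\bm{b}}$ has $n$ atoms, and the identification $\pi_{k-1} = p_{k-1}$ persists up to $k = n$.
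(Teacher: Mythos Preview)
Your argument is correct and is precisely the standard proof: identify Lanczos vectors with polynomial iterates, transfer the Euclidean inner product to $L^2(\mu_{W,\bm b})$ via the spectral theorem, and invoke uniqueness of orthonormal polynomials with positive leading coefficients. The paper itself does not write out a proof at all but simply cites Deift's monograph \cite{DeiftOrthogonalPolynomials}; your write-up is exactly the argument one finds there, so you have supplied what the paper omits.
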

\begin{proof}
See \cite{DeiftOrthogonalPolynomials}.
\end{proof}

Recall the classical fact that the coefficients in a three-term recurrence relation can be recovered as a nearly rational function of the moments of the associated spectral measure.  We write $p_n(x) = \ell_n x^n + s_n x^{n-1} + \cdots$ and find by equating coefficients that
\begin{align*}
    \ell_n &= b_n \ell_{n+1},\\
    s_n &= a_n \ell_n + b_n s_{n+1},
\end{align*}
where $a_n$ and $b_n$ will be given in (\ref{eq_defngeneralb}) after necessary notations are introduced. 

Denote the Hankel moment matrix of $\mu_{W, \bm{b}}$ as $M_n$ and $D_n=\det M_n.$ Moreover, define $D_n(\lambda)$ by the determinants
\begin{align}
    D_n(\lambda) = \det M_n(\lambda), \label{eq:momentdef}
\end{align}
and $M_n(\lambda)$ is formed by replacing the last row of $M_n$ with the row vector $[1 ~\lambda ~ \lambda^2 \cdots \lambda^n]$.  Then, it is well-known that (see, e.g., \cite{DeiftOrthogonalPolynomials})
\begin{align*}
    p_n(\lambda) = \frac{D_n(\lambda)}{\sqrt{D_n D_{n-1}}}. 
\end{align*}
This gives
\begin{align}\label{eq_defngeneralb}
    b_n = \sqrt{\frac{D_{n-1} D_{n+1}}{D_n^2}}, \quad a_n = \frac{s_n - b_n s_{n+1}}{\ell_n} = \frac{s_n}{\ell_n} - \frac{s_{n+1}}{\ell_{n+1}}.
\end{align}
The above expression shows that $a_n$ and $b_n$ are infinitely differentiable functions of $m_0,m_1,m_2,\ldots,m_{2n+2}$ on the open set
\begin{align*}
    \{ D_j > 0, \quad j = 1,2,\ldots,n+1\}.
\end{align*}

\begin{remark}\label{rmk_moments}
Associated with the three-term recurrence (\ref{eq_threeterm}) is the following infinite-dimensional Jacobi matrix
\begin{equation*}
\mathcal T = \begin{bmatrix} a_0 & b_0 \\
    b_0 & a_1 & b_1 \\
    & b_1 & a_2 & \ddots \\
    & & \ddots & \ddots \end{bmatrix}.
\end{equation*}
Let $T_n$ be the upper left $n \times n$ subblock of $\mathcal T$. Then we readily see that $T_n$ is a differentiable function of $(m_0, m_1, \cdots, m_{2n}).$ We also note that \cite{DeiftOrthogonalPolynomials} 
\begin{equation*}
\bm{f}_1^* \mathcal T^k \bm{f}_1=\int \lambda^k \mu_{W, \bm{b}}(\sd \lambda).   
\end{equation*}
\end{remark}


\subsection{Asymptotics of three-term recurrence relations and the Cholesky
factorization
} 
In this subsection, we explore the asymptotic form of the Jacobi matrix and Cholesky decomposition when the VESD exhibits regular square root behavior near the edges. 

\begin{theorem}\label{thm_relationOP}
  Suppose $h: [a,b] \to \mathbb R$ is a positive real analytic function. Consider the measure $\mu$ defined by
  \begin{align*}
    \mu(\sd \lambda) = h(\lambda) \one_{[a,b]}(\lambda) (b - \lambda)^{\alpha} (\lambda - a)^\beta \sd \lambda + \sum_{j=1}^p w_j \delta_{c_j}(\sd \lambda)
  \end{align*}
  where $w_j > 0$ and $c_j \in \mathbb R \setminus [a,b]$, for all $1 \leq j \leq p$.  Suppose, in addition, that $\alpha = \pm \frac 1 2, \beta = \pm \frac 1 2$.  Then there exists $c > 0$ such that
  \begin{align*}
    a_n =  \frac{b + a}{2} + O(\e^{- c n}), \quad b_n = \frac{b - a}{4} + O(\e^{- c n}).
  \end{align*}
  Moreover, if there exists $0 < \tau < 1$ such that
  \begin{itemize}
      \item $\tau \leq w_j \leq \tau^{-1}$, for all $j = 1,2,\ldots,p$,
      \item $\tau \leq |h(z)| \leq \tau^{-1}$ and $h$ is analytic for all $z \in \mathbb C$ such that $\min_{\lambda \in [a,b]} |z - \lambda| < \tau$, and
      \item $\min\{|a - c_j|, |b - c_j|\} \geq \tau$ for all $j = 1,2,\ldots,p$,
  \end{itemize}
  then $c$ can be taken to be a function of $\tau$ alone.
\end{theorem}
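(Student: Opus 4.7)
The plan is to adapt the Deift–Zhou Riemann–Hilbert (RH) steepest descent analysis already carried out for the purely absolutely continuous case in \cite{MR2087231}, and to absorb the finitely many point masses as (exponentially small) additional pole/residue conditions. The absolutely continuous baseline in \cite{MR2087231} gives exactly the claimed limits $\tfrac{a+b}{2}$ and $\tfrac{b-a}{4}$ with an $O(\mathrm{e}^{-cn})$ rate, so the content is the stability of this conclusion under finite discrete additions outside $[a,b]$.

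First I would set up the $2 \times 2$ matrix RH problem for $Y_n(z)$ characterizing $p_n$ and the associated Cauchy transform, but with the following modification for the mass points: on a neighborhood of each $c_j$ one imposes a residue/jump condition encoding $w_j\delta_{c_j}$, in the spirit of the RH formulation for discrete orthogonal polynomials (Kuijlaars–Van Assche and subsequent work). Next I would carry out the familiar chain of transformations $Y \to T \to S \to R$. The $g$-function is built from the equilibrium measure of $[a,b]$ in the external field coming from $\log(1/h)$; since $c_j \notin [a,b]$, the mass points play no role in the variational problem. The opening of lenses on $[a,b]$ and the construction of the global parametrix proceed exactly as in \cite{MR2087231}, and the local parametrices at $a$ and $b$ use the explicit Bessel/modified-trigonometric model because $\alpha,\beta \in \{\pm\tfrac12\}$. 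Because the four sub-cases $(\alpha,\beta) \in \{\pm\tfrac12\}^2$ are standard, no new parametrix work is required.

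The decisive new ingredient is the treatment of each $c_j$. After the $g$-transformation the local contribution at $c_j$ carries a factor $\mathrm{e}^{-2n\, g(c_j)}$ (equivalently $\mathrm{e}^{-n\, G(c_j)}$ for the appropriate potential-theoretic quantity), and $\mathrm{Re}\,g(c_j) > 0$ precisely because $c_j$ lies outside the support $[a,b]$ of the equilibrium measure at positive distance $\tau$. Dressing each $c_j$ by a small circle $\Gamma_j$ of radius $\tfrac{\tau}{2}$ and using a diagonal local parametrix that removes the residue at $c_j$, the associated jump for the final matrix $R$ on $\Gamma_j$ is $I + O(w_j \mathrm{e}^{-c n})$. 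Together with the standard exponential smallness of the jumps on the lens boundary and on $\Gamma_{a}, \Gamma_{b}$, this yields a small-norm RH problem for $R$ with $\|R - I\|_{L^\infty \cap L^2} = O(\mathrm{e}^{-cn})$, uniformly in the parameters controlled by $\tau$ (the bounds $\tau \le h \le \tau^{-1}$, the analyticity strip of width $\tau$, the bounds $\tau \le w_j \le \tau^{-1}$, and the separation $\min_j \mathrm{dist}(c_j,[a,b]) \ge \tau$ together with a lower bound on $\mathrm{Re}\,g(c_j)$ in terms of $\tau$). Reading off the first two Laurent coefficients at $\infty$ of $R$ as in \cite{MR2087231}, one gets $a_n = \tfrac{a+b}{2} + O(\mathrm{e}^{-cn})$ and $b_n = \tfrac{b-a}{4} + O(\mathrm{e}^{-cn})$ with $c$ depending only on $\tau$.

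The main obstacle I would expect is not the exponential rate per se, which falls out of any clean small-norm RH argument, but the careful bookkeeping that shows the constant $c$ depends \emph{only} on $\tau$ and not on the individual $c_j$ or $w_j$. This requires (i) a lower bound $\mathrm{Re}\,g(c_j) \ge c(\tau) > 0$ uniform in the location of $c_j$ outside a $\tau$-neighborhood of $[a,b]$, which is a standard estimate for the Robin function of a compact set; and (ii) uniform invertibility of the Cauchy operator on the contour system, which follows from the uniform analyticity and lower bound on $h$ in the $\tau$-strip. A purely spectral-theoretic alternative would be to start from the conclusion of \cite{MR2087231} for $\mu_{ac}$ and apply Denisov–Kupin or Peherstorfer–Yuditskii type results on finite-rank perturbations preserving Szeg\H o asymptotics to graft the isolated eigenvalues back in; this avoids re-doing RH analysis but makes the $\tau$-uniformity of $c$ slightly less transparent, which is why I would favor the RH route.
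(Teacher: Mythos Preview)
Your proposal is correct and takes essentially the same route as the paper: both invoke the Riemann--Hilbert steepest descent of \cite{MR2087231,Kuijlaars2003} for the absolutely continuous part and handle the point masses by converting residue conditions at the $c_j$ into jump conditions on small circles, which after normalization are $I + O(e^{-cn})$ and fold into the small-norm problem for $R$ (the paper phrases this as ``turning residue conditions into rational jump conditions and then inverting exponential growing jumps''). One minor slip: for a fixed (non-varying) weight there is no external field in the equilibrium problem---the $g$-function is built from the unweighted equilibrium (arcsine) measure of $[a,b]$, and $h$ enters only via the Szeg\H{o} function in the global parametrix; the exponential decay at $c_j$ comes from $|\varphi(c_j)|^{-2n}$ with $\varphi$ the conformal map to the exterior of the unit disk, which is indeed bounded below in terms of $\tau$ as you require.
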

\begin{proof}
It has been proved in \cite{MR2087231,Kuijlaars2003} for the case $a=-1, b=1$ without discrete contributions. The result follows from that with a simple modification if $w_j = 0$ for all $j$.  With spikes, as noted in \cite{Kuijlaars2003}, the result follows from \cite{Geronimo1994} for $h$, $w_j$, $c_j$ fixed.  To obtain uniformity, one introduces poles into the Riemann--Hilbert formulation in \cite{Kuijlaars2003} (originally due to \cite{Its1994}), turning residue conditions in to rational jump conditions and then inverting exponential growing jumps so that they tend to the identity matrix at a uniform exponential rate, see \cite[Section 8.2.2]{TrogdonSOBook}, for example.  
\end{proof}
\begin{remark}\label{mplaw_example}
  For the Marchenko--Pastur law, we have $a = (1- \sqrt{c_N})^2$ and $b = (1 + \sqrt{c_N})^2$ so that
  \begin{align*}
\frac{b + a}{2} = 1 + c_N, \quad \frac{b - a}{4} = \sqrt{c_N}.
  \end{align*}
\end{remark}

  The proof of the following Lemma \ref{l:En} is a direct consequence of $\varphi(T)\varphi(T)^* = T$ where $\varphi(T)$ is defined in Algorithm~\ref{a:chol} and the fact that the diagonal entries in the Cholesky factorization must be positive. Note that $\alpha$ in Lemma \ref{l:En} is always real since 
    \begin{align*}
    \frac{(b + a)^2}{4} - 4 \frac{(b-a)^2}{16} = \frac{1}{4} \left( (b+ a)^2 - (b-a)^2 \right) = ab > 0.
    \end{align*}

\begin{lemma}\label{l:En}
  Let $\gamma \geq 2 \beta \geq 0$ and set $\alpha = \frac{\gamma + \sqrt{\gamma^2 - 4\beta^2}}{2}$.  Suppose
  \begin{align}\label{eq:l:En}
    T = \begin{bmatrix} \alpha (1 + E_0) & \beta (1 + f_\beta(0)) \\
      \beta (1 + f_\beta(0)) & \gamma (1 + f_{\gamma}(0)) & \beta (1 + f_\beta(1)) \\
      & \beta (1 + f_\beta(1)) & \gamma (1 + f_{\gamma}(1))& \beta (1 + f_\beta(2)) \\
      && \beta (1 + f_\beta(2)) & \gamma (1 + f_{\gamma}(2)) & \ddots \\
      &&& \ddots & \ddots \\
      &&&&& \beta (1 + f_\beta(N-2))\\
      &&&&  \beta (1 + f_\beta(N-2)) & \gamma (1 + f_{\gamma}(N-2))
      \end{bmatrix} 
  \end{align}
  for functions $f_\beta, f_\gamma : \mathbb N \cup \{0\} \to \mathbb (-1,\infty)$ and $E_0 > -1$.  Then if $T$ is invertible,
  \begin{align*}
    \varphi(T) = \begin{bmatrix}
      \sqrt{\alpha} \sqrt{1 + E_0} \\
      \frac{\beta}{\sqrt{\alpha}}  \frac{1 + f_{\beta}(0)}{\sqrt{1 + E_0}} & \sqrt{\alpha} \sqrt{1 + E_1} \\
      & \frac{\beta}{\sqrt{\alpha}} \frac{1 + f_{\beta}(1)}{\sqrt{1 + E_1}} & \sqrt{\alpha} \sqrt{1 + E_2}\\
      && \frac{\beta}{\sqrt{\alpha}} \frac{1 + f_{\beta}(2)}{\sqrt{1 + E_2}} & \sqrt{\alpha} \sqrt{1 + E_3}\\
      &&& \ddots & \ddots \\
      &&&& \frac{\beta}{\sqrt{\alpha}} \frac{1 + f_{\beta}(N-2)}{\sqrt{1 + E_{N-2}}} & \sqrt{\alpha} \sqrt{1 + E_{N-1}}
      \end{bmatrix},
  \end{align*}
  where $E_n >-1$ satisfies
  \begin{align*}
    E_{n+1} = f_{\gamma}(n) + \frac{\beta^2}{\alpha^2} \left[ 1 + f_{\gamma}(n) - \frac{(1 + f_{\beta}(n))^2}{1 + E_n} \right].
  \end{align*}
\end{lemma}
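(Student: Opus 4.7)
The plan is to exploit the fact that when $T$ is invertible and positive definite, its Cholesky factor $L$ with positive diagonal is unique; hence it suffices to posit the claimed bidiagonal form for $L$ and verify the matrix equation $LL^* = T$ entry-by-entry, extracting the recurrence for $E_n$ along the way.

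Concretely, I would write
\begin{align*}
L = \begin{bmatrix} \ell_0 \\ m_0 & \ell_1 \\ & m_1 & \ell_2 \\ && \ddots & \ddots \end{bmatrix}, \qquad \ell_n = \sqrt{\alpha}\sqrt{1+E_n}, \qquad m_n = \frac{\beta}{\sqrt{\alpha}}\frac{1+f_\beta(n)}{\sqrt{1+E_n}},
\end{align*}
where $E_0$ is as given and $E_1,E_2,\ldots$ are to be determined. The entries of $LL^*$ are $\ell_0^2$ at position $(0,0)$, $\ell_n m_n$ at position $(n+1,n)$, and $\ell_{n+1}^2 + m_n^2$ at position $(n+1,n+1)$ for $n\geq 0$. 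Matching against $T$ in \eqref{eq:l:En} gives three families of equations: the $(0,0)$ entry produces $\ell_0^2=\alpha(1+E_0)$, which is automatic; the off-diagonal match produces
\begin{align*}
\ell_n m_n = \sqrt{\alpha}\sqrt{1+E_n}\cdot\frac{\beta}{\sqrt{\alpha}}\frac{1+f_\beta(n)}{\sqrt{1+E_n}} = \beta(1+f_\beta(n)),
\end{align*}
which is an identity independent of $E_n$; and the diagonal match produces
\begin{align*}
\alpha(1+E_{n+1}) + \frac{\beta^2}{\alpha}\frac{(1+f_\beta(n))^2}{1+E_n} = \gamma(1+f_\gamma(n)).
\end{align*}

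Solving the last equation for $E_{n+1}$ yields the claimed recurrence once the key algebraic identity $\gamma - \alpha = \beta^2/\alpha$ is used. This identity comes straight from the definition $\alpha = \tfrac{\gamma+\sqrt{\gamma^2-4\beta^2}}{2}$: it satisfies $\alpha^2 - \gamma\alpha + \beta^2 = 0$, hence $\gamma/\alpha = 1 + \beta^2/\alpha^2$. Substituting, one gets
\begin{align*}
E_{n+1} &= \frac{\gamma}{\alpha}(1+f_\gamma(n)) - 1 - \frac{\beta^2}{\alpha^2}\frac{(1+f_\beta(n))^2}{1+E_n}\\
&= f_\gamma(n) + \frac{\beta^2}{\alpha^2}\bigl[1 + f_\gamma(n)\bigr] - \frac{\beta^2}{\alpha^2}\frac{(1+f_\beta(n))^2}{1+E_n},
\end{align*}
exactly as stated.

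The only subtlety, and therefore the item that deserves the most care, is justifying that $E_n > -1$ for all $n$, so that the square roots defining $\ell_n$ are real and positive and the recursion can be continued. Since $T$ is assumed invertible and of positive type (being of the form $LL^*$ arising from Algorithm~\ref{a:chol}), the standard Cholesky existence and uniqueness result guarantees a bidiagonal $L$ with strictly positive diagonal entries; positivity of these entries translates exactly to $1+E_n > 0$. Thus $E_n > -1$ is inherited from the hypothesis that Algorithm~\ref{a:chol} succeeds, and uniqueness of Cholesky with positive diagonal forces the ansatz above to coincide with $\varphi(T)$, completing the proof.
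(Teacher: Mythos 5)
Your proposal is correct and matches the paper's argument, which is stated in one line as "a direct consequence of $\varphi(T)\varphi(T)^*=T$ \ldots and the fact that the diagonal entries in the Cholesky factorization must be positive": you verify the factorization entry-by-entry, use $\alpha^2-\gamma\alpha+\beta^2=0$ to reduce the diagonal equation to the stated recurrence, and obtain $E_n>-1$ from positivity of the Cholesky diagonal. No further comment is needed.
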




\begin{theorem}\label{t:Asymptotic_Cholesky}
  Suppose $0 < a < b$ and set $\gamma = \frac{ a + b}{2}$, $\beta = \frac{b-a}{4}$ and $\alpha = \frac{\gamma + \sqrt{\gamma^2 - 4\beta^2}}{2} = \frac{(\sqrt{a} + \sqrt{b})^2}{4}$.  Suppose $T = T_N = \mathcal T_{1:N,1:N}$, the upper-left $N\times N$ block of a Jacobi operator $\mathcal T$ is of the form \eqref{eq:l:En} and satisfies the assumptions of Lemma~\ref{l:En} for every $N$.  Suppose, in addition, that there exists $\sigma > 0$ such that
  \begin{align*}
   \sigma^{-1} \leq \vec x^* \mathcal T \vec x \leq \sigma, \quad \|\vec x\|_2 = 1.
  \end{align*}
  If $\lim_{n\to \infty} f_{\gamma}(n) = 0 = \lim_{n\to \infty} f_{\beta}(n)$, then $\lim_{n\to \infty} E_n = 0$.
\end{theorem}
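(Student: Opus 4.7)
The plan is to show $E_n \to 0$ by combining three ingredients: uniform two-sided control of $E_n$ from positive definiteness, a dynamical analysis of the limit recurrence, and a Cholesky-inverse obstruction that excludes the repelling fixed point. Throughout, I will write $\rho := \beta^2/\alpha^2 \in (0,1)$ (strict because $0 < a < b$ forces $\beta < \alpha$) and abbreviate the Cholesky entries by $r_n := \sqrt{\alpha(1+E_n)}$ and $s_n := \beta(1+f_\beta(n))/r_n$.

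First I extract uniform bounds on $E_n$ from positive definiteness. Since $L L^* = \mathcal T$ and $\sigma^{-1} \leq \mathcal T \leq \sigma$, the Cholesky factor $L$ of $\mathcal T$ satisfies $\|L\|, \|L^{-1}\| \leq \sigma^{1/2}$. Because $L$ is lower bidiagonal, $L_{n,n} = r_n$ and $(L^{-1})_{n,n} = 1/r_n$, and these diagonal entries are bounded by the respective operator norms, so $\sigma^{-1/2} \leq r_n \leq \sigma^{1/2}$ and $E_n$ is confined to a fixed compact subset of $(-1,\infty)$. Rewriting the recurrence of Lemma~\ref{l:En} as $E_{n+1} = h(E_n) + \eta_n$ with $h(E) := \rho E/(1+E)$, one checks directly that $\eta_n = (1+\rho)f_\gamma(n) - \rho[2f_\beta(n)+f_\beta(n)^2]/(1+E_n) = O(|f_\gamma(n)|+|f_\beta(n)|)$ uniformly on this range, so $\eta_n \to 0$.

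Second, I analyze the limit map $h$ and the $\omega$-limit set $\Omega$ of $\{E_n\}$. On $(-1,\infty)$, $h$ has an attracting fixed point at $0$ (since $h'(0)=\rho<1$) and a repelling fixed point at $\rho-1$ (since $h'(\rho-1)=1/\rho>1$); orbits started in $(-1,\rho-1)$ are strictly decreasing and exit $(-1,\infty)$ in finitely many steps. Because $\Omega$ is compact in $(-1,\infty)$ and forward-invariant under $h$ (a standard consequence of continuity of $h$ together with $\eta_n\to 0$), it cannot meet $(-1,\rho-1)$, and any $E^* \in \Omega\cap(\rho-1,\infty)$ has $h^{(j)}(E^*) \to 0$, forcing $0 \in \Omega$ by closedness. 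This yields the dichotomy: either $\Omega = \{\rho-1\}$ (equivalently $E_n \to \rho-1$) or $0 \in \Omega$.

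The main obstacle is excluding the first alternative, and this is where positive definiteness enters a second time. If $E_n \to \rho-1$, then $r_n \to \sqrt{\alpha\rho}=\beta/\sqrt{\alpha}$ and $s_n \to \sqrt{\alpha}$, so $s_j/r_{j+1} \to \alpha/\beta > 1$. Using the lower-bidiagonal structure of $L$, the first column of $L^{-1}$ has closed form
\begin{align*}
(L^{-1})_{n,0} = \frac{(-1)^n}{r_0}\prod_{j=0}^{n-1}\frac{s_j}{r_{j+1}},
\end{align*}
which grows geometrically at rate approaching $\alpha/\beta > 1$, so $\|L^{-1}\vec e_0\|_2 = \infty$, contradicting $\|L^{-1}\| \leq \sigma^{1/2}$. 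Hence $0 \in \Omega$, and I conclude by the standard perturbation-of-contraction argument: choosing $\delta>0$ small enough that $|h(E)| \leq \tfrac{1+\rho}{2}|E|$ for $|E| \leq \delta$, and $N_0$ so that $|\eta_n| \leq \tfrac{1-\rho}{4}\delta$ for $n \geq N_0$, pick $n_k \geq N_0$ with $|E_{n_k}| \leq \delta/2$ (available because $0 \in \Omega$); induction traps $E_n$ in $[-\delta,\delta]$ for all $n \geq n_k$, and iterating the contraction gives $\limsup_n |E_n| \leq \tfrac{2}{1-\rho}\limsup_n |\eta_n| = 0$.
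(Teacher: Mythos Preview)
Your proof is correct. Both your argument and the paper's hinge on the same three ingredients --- uniform two-sided control of $E_n$ from the spectral bounds on $\mathcal T$, identification of the two fixed points $0$ and $\rho-1$ of the limiting recursion, and a positive-definiteness obstruction that rules out the repelling fixed point --- but the execution differs in two places. For the obstruction, the paper invokes the CGA residual formula (Lemma~\ref{t:deterministic}): if a run of $E_{k+i}$ stays near $\rho-1$, the residual ratios $\|\vec r_{k+j}\|_2/\|\vec r_k\|_2$ grow like $\sigma_0^j$ for some $\sigma_0 > 1$, which is then contradicted by comparing $\|\vec r_k\|_2$ to $\|\vec e_k\|_T$ via the spectral bounds. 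Your version bypasses the CGA entirely and reads off the blow-up of $(L^{-1})_{n,0}$ directly from forward substitution; this is more self-contained but less connected to the algorithmic theme of the paper. For the dynamics, the paper shows that $\rho-1$ cannot be \emph{any} limit point and concludes immediately, whereas you only exclude the case $\Omega = \{\rho-1\}$ (i.e.\ $E_n \to \rho-1$) and then need the closing contraction argument to pass from $0 \in \Omega$ to $E_n \to 0$. Both routes are sound; yours trades a slightly weaker exclusion step for a cleaner $\omega$-limit-set framework. One minor point: your bounds $\|L\|, \|L^{-1}\| \leq \sigma^{1/2}$ are most safely read on the finite truncations $L_N = \varphi(T_N)$ (which inherit $\sigma^{-1} \leq T_N \leq \sigma$), since boundedness of the infinite $L^{-1}$ on $\ell^2$ is what you are in effect proving in the obstruction step.
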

\begin{proof}
  Let $\varphi(T)$ be as in Lemma~\ref{l:En}.   Since $\sqrt{\alpha}\sqrt{1 + E_n}$ is an eigenvalue of $\varphi(T)$ we find that there exists a unit vector $\vec v$ such that
  \begin{align*}
    \| \varphi(T) \vec v\|_2^2 = \alpha (1 + E_n) = \vec v^T T \vec v \geq \sigma^{-1}.
  \end{align*}
  Thus
  \begin{align*}
    E_n \geq \frac{1}{\alpha \sigma} - 1, \quad \text{ for all } n \geq 0.
  \end{align*}
  Then, because $1/(1 + E_n) \geq 0$, we have
  \begin{align*}
    E_{n+1} \leq |f_{\gamma}(n)| + \frac{\beta^2}{\alpha^2} \left[ 1 + |f_{\gamma}| \right].
  \end{align*}
Thus $(E_n)_{n \geq n_0}$ forms a bounded sequence and any subsequence has a further subsequence that converges.  Supposing that $f_\gamma(n), f_{\beta}(n) \to 0$ as $n \to \infty$, we find that the limit $E_\infty$ along this subsequence satisfies
\begin{align*}
 E_{\infty} =  \frac{\beta^2}{\alpha^2} \left[ 1  - \frac{1}{1 + E_\infty} \right].
\end{align*}
Solving this relation gives $E_\infty = 0$ or $E_\infty = \frac{\beta^2}{\alpha^2} -1$.  So, it suffices to show that $E_\infty \neq  \frac{\beta^2}{\alpha^2} -1$ as this will then imply that every subsequence has a further subsequence that converges to a common limit.


  Suppose that $\delta = \frac{\beta^2}{\alpha^2} -1$ is a limit point of the sequence $E_n$.  Suppose that $| E_k - \delta| \leq \frac{\epsilon}{2} \Gamma^{-j}$ where $\Gamma = 4 \frac{\alpha^2}{\beta^2}$ and $\epsilon \leq \frac{\beta^2}{2 \alpha^2}$.
  Then it follows that $E_{k+1}$ satisfies
  \begin{align*}
    | E_{k+1} - \delta| \leq 2 |f_{\gamma}(k)| + 4 |f_{\beta}(k)| + 2 |f_{\beta}(k)|^2 + 4 \frac{\alpha^2}{\beta^2} |E_k - \delta|.
  \end{align*}
  And therefore
  \begin{align*}
    |E_{k+i} - \delta| \leq \left ( 4 \frac{\alpha^2}{\beta^2} \right)^i |E_k - \delta| + \max_{k \leq m \leq k+i} \left(2 |f_{\gamma}(m)| + 4 |f_{\beta}(m)| + 2 |f_{\beta}(m)|^2\right) \sum_{m = 1}^i \left ( 4 \frac{\alpha^2}{\beta^2} \right)^{i-m}.
  \end{align*}
  Then provided that
  \begin{align*}
    \max_{k \leq m \leq k+i} \left(2 |f_{\gamma}(m)| + 4 |f_{\beta}(m)| + 2 |f_{\beta}(m)|^2\right) \sum_{m = 1}^i \left ( 4 \frac{\alpha^2}{\beta^2} \right)^{-m} \leq \frac{\epsilon}{2} \Gamma^{-j},
  \end{align*}
    we find that $|E_{k+i} - \delta| < \epsilon$ for $i =1,2,\ldots,j$.  Next, we observe that
  \begin{align*}
    \sqrt{\alpha}\sqrt{1 + \delta}  = \frac{\beta}{\sqrt{\alpha}} = \frac{\sqrt b - \sqrt a}{2},\\
    \frac{\beta}{\sqrt{\alpha}\sqrt{1 + \delta}} = \sqrt{\alpha} = \frac{\sqrt b + \sqrt a}{2}.
  \end{align*}
  We then take the ratio of the elements in the $(k+i+1)$th column of $\varphi(T)$, giving
  \begin{align*}
    \frac{\alpha (1 + E_{k+i})}{\beta (1 + f_{\beta}(k+i))} = \frac{\sqrt b+ \sqrt a}{\sqrt b - \sqrt a} \frac{ 1 + E_{k+i}}{1 + \delta} \frac{1}{1 + f_{\beta}(k+i)} \geq \frac{\sqrt b+ \sqrt a}{\sqrt b - \sqrt a} \frac{ 1 + \delta  - \epsilon}{1 + \delta} \frac{1}{1 + \epsilon} \geq \sigma_0 > 1,
  \end{align*}
  by further reducing $\epsilon$, if necessary.  We then consider applying the conjugate gradient algorithm to $T \vec x = \vec f_1$.  By Theorem~\ref{t:deterministic} we have that, in particular
  \begin{align*} 
    \frac{\|\vec r_{k+j}\|_2}{\|\vec r_{k}\|_2} \geq \sigma_0^j.
  \end{align*} 
  But we know that for any $k$
  \begin{align*}
  \sigma^{-1} \|\vec e_k\|_T^2 =  \sigma^{-1}    \vec e_k^* T \vec e_k \leq \| \vec r_{k}\|_2^2 =  \vec e_k^* T^2 \vec e_k \leq \sigma  \vec e_k^* T \vec e_k = \sigma \|\vec e_k\|_T^2.
  \end{align*}
  This results in the string of inequalities
  \begin{align*}
    \sigma_0^j \leq \frac{\|\vec r_{k+j}\|_2}{\|\vec r_{k}\|_2} \leq \sigma \frac{\|\vec e_{k+j}\|_T}{\|\vec e_{k}\|_T} \leq \sigma,
  \end{align*}
  because $\frac{\|\vec e_{k+j}\|_T}{\|\vec e_{k}\|_T} \leq 1$.  Since $j$ can be made arbitrarily large, we see that $\delta$ cannot be a limit point of $(E_n)_{n \geq 0}$ and $\lim_{n \to \infty} E_n = 0$.
  
\end{proof}

This immediately implies the following.
\begin{corollary}\label{c:Cholesky_rate}
  Given the assumptions of Theorem~\ref{t:Asymptotic_Cholesky}, suppose there exists $C,c > 0$ such that $|f_\beta(n)| + |f_{\gamma}(n)| \leq C \e^{- cn}$ then there exists $C',c' > 0$ such that
  \begin{align*}
      |E_n| \leq C' e^{-c' n}.
  \end{align*}
\end{corollary}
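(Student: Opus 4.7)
The plan is to leverage Theorem~\ref{t:Asymptotic_Cholesky}, which already guarantees $E_n \to 0$, and to then linearize the recursion for $E_n$ about its stable fixed point $E = 0$. The local contraction rate at this fixed point is $\rho := \beta^2/\alpha^2 = \left(\frac{\sqrt b - \sqrt a}{\sqrt b + \sqrt a}\right)^2$, which is strictly less than $1$ because $a > 0$. Combined with exponentially decaying driving terms $f_\beta, f_\gamma$, this will yield exponential decay of $E_n$.

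Concretely, I would first rewrite the recursion in the form
\begin{align*}
E_{n+1} \;=\; \frac{\rho\, E_n}{1+E_n} \;+\; (1+\rho)\, f_\gamma(n) \;-\; \rho\,\frac{2 f_\beta(n)+f_\beta(n)^2}{1+E_n},
\end{align*}
obtained by grouping the $E_n$ and $f$-dependent pieces of the recursion given in Lemma~\ref{l:En}. Since $E_n \to 0$ by Theorem~\ref{t:Asymptotic_Cholesky}, for all $n \geq n_0$ sufficiently large one has $|E_n| \leq \epsilon$, where $\epsilon > 0$ is fixed so small that $\rho' := \rho/(1-\epsilon) < 1$ (possible because $\rho < 1$). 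Bounding the right-hand side then gives
\begin{align*}
|E_{n+1}| \;\leq\; \rho'\,|E_n| + K\bigl(|f_\gamma(n)| + |f_\beta(n)| + |f_\beta(n)|^2\bigr) \;\leq\; \rho'\,|E_n| + K' e^{-cn},
\end{align*}
for constants $K, K' > 0$ depending on $\rho$ and $C$ from the hypothesis.

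Second, I would iterate this linear inequality. For $n = n_0 + k$,
\begin{align*}
|E_{n_0+k}| \;\leq\; (\rho')^k\, |E_{n_0}| \;+\; K' \sum_{j=0}^{k-1} (\rho')^{k-1-j}\, e^{-c(n_0+j)}.
\end{align*}
The geometric sum is bounded by a constant multiple of $\bigl(\max(\rho', e^{-c})\bigr)^k$ in the nondegenerate case, and by $k\,(\rho')^{k-1}$ when $e^{-c} = \rho'$; in either situation one can absorb the $k$ into the exponential and conclude $|E_n| \leq C' e^{-c' n}$ for any $c' \in (0, -\log\max(\rho', e^{-c}))$ with a suitable constant $C'$. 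For $n < n_0$ the conclusion is automatic by adjusting $C'$, since only finitely many terms are involved.

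The main obstacle is the careful bookkeeping in the first step: one must verify that the quadratic error from linearizing $1/(1+E_n)$ and the contributions from $f_\beta(n)^2$ do not spoil the strict contraction $\rho' < 1$. This is resolved by choosing $\epsilon$ small enough, which is permissible precisely because Theorem~\ref{t:Asymptotic_Cholesky} already delivers $E_n \to 0$. Once inside this contractive regime, the exponential decay rate of $E_n$ follows from a standard Gr\"onwall-type estimate on the affine recurrence.
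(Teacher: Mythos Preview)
Your argument is correct and is precisely the standard linearization-around-the-stable-fixed-point computation that the paper has in mind when it says the corollary ``immediately'' follows from Theorem~\ref{t:Asymptotic_Cholesky}. The paper gives no explicit proof, so there is nothing to compare beyond noting that your rewriting of the recursion, the choice of $\rho' = \rho/(1-\epsilon) < 1$, and the iterated Gr\"onwall-type bound constitute exactly the intended details.
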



\begin{proposition}\label{p:chol_inv_moment}
Suppose $h: [a,b] \to \mathbb R$, $a > 0$ is a positive real analytic function. Consider a probability measure $\mu$ defined by
  \begin{align*}
    \mu(\sd \lambda) = h(\lambda) \one_{[a,b]}(\lambda) (b - \lambda)^{\alpha} (\lambda - a)^\beta \sd \lambda + \sum_{j=1}^p w_j \delta_{c_j}(\sd \lambda)
  \end{align*}
  where $w_j > 0$ and $c_j > b$ for all $1 \leq j \leq p$.  Suppose, in addition, that $\alpha = \pm \frac 1 2, \beta = \pm \frac 1 2$.  Let
  \begin{align*}
      \mathcal T = \begin{bmatrix} a_0 & b_0 \\
     b_0 & a_1 & b_1 \\
      & b_1 & a_2 & b_2 \\
      && b_2 & a_3 & \ddots \\
      &&& \ddots & \ddots
      \end{bmatrix} 
  \end{align*}
   be the associated Jacobi matrix of three-term recurrence coefficients.  Let $\mathcal L \mathcal L^T = \mathcal T$ be the Cholesky factorization of $\mathcal T$ with
   \begin{align*}
      \mathcal L = \begin{bmatrix} \alpha_0  \\
     \beta_0 & \alpha_1  \\
      & \beta_1 & \alpha_2 \\
      && \beta_2 & \alpha_3  \\
      &&& \ddots & \ddots
      \end{bmatrix} 
   \end{align*}
   then
\begin{align*}
\int_{\mathbb R} \frac{1}{\lambda} \mu(\sd \lambda) = \bm{f}_1^* \mathcal T^{-1} \bm{f}_1 = \frac{1}{\alpha_0^2} \sum_{\ell=0}^{\infty} \prod_{j=1}^\ell \frac{\beta_{j-1}^2}{\alpha_j^2}.
\end{align*}
\end{proposition}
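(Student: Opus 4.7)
The plan is to prove the two equalities separately. The first equality, $\int \lambda^{-1}\mu(\sd\lambda) = \bm{f}_1^*\mathcal{T}^{-1}\bm{f}_1$, is essentially the spectral theorem. By the standard correspondence between Jacobi matrices and probability measures on $\mathbb{R}$ (as recorded around \eqref{eq:muT} and in Proposition~\ref{prop_three}), $\mu$ is the spectral measure of the self-adjoint operator $\mathcal{T}$ with respect to $\bm{f}_1$, so $\bm{f}_1^* g(\mathcal{T})\bm{f}_1 = \int g(\lambda)\,\mu(\sd\lambda)$ whenever $g$ is bounded and continuous on $\operatorname{supp}\mu$. Because $\operatorname{supp}\mu \subset [a,b]\cup\{c_1,\ldots,c_p\}$ with $a>0$, the function $g(\lambda)=1/\lambda$ qualifies, and $\mathcal{T}$ is boundedly invertible with $\mathcal{T}^{-1}$ a bounded self-adjoint operator on $\ell^2(\mathbb{N})$.

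The second equality is purely algebraic once one can legitimately invert the Cholesky factor $\mathcal{L}$ as a bounded operator on $\ell^2$. Assuming that for now, the factorization $\mathcal{T}=\mathcal{L}\mathcal{L}^T$ gives
\[
\bm{f}_1^* \mathcal{T}^{-1}\bm{f}_1 = \bm{f}_1^* \mathcal{L}^{-T}\mathcal{L}^{-1}\bm{f}_1 = \|\mathcal{L}^{-1}\bm{f}_1\|_2^2.
\]
I would then solve $\mathcal{L}\bm{x}=\bm{f}_1$ by forward substitution on the bidiagonal structure of $\mathcal{L}$: from $\alpha_0 x_0 = 1$ and $\beta_{\ell-1}x_{\ell-1}+\alpha_\ell x_\ell = 0$ for $\ell\ge 1$, one obtains
\[
x_\ell = (-1)^\ell \frac{1}{\alpha_0}\prod_{j=1}^{\ell}\frac{\beta_{j-1}}{\alpha_j},
\]
with the empty product equal to $1$. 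Squaring and summing gives exactly the right-hand side of the proposition.

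The only real obstacle is justifying the boundedness of $\mathcal{L}^{-1}$ and the convergence of the series as the $\ell^2$-norm of $\mathcal{L}^{-1}\bm{f}_1$. Here I would invoke the edge asymptotics of Theorem~\ref{thm_relationOP} applied to $\mu$: since $\operatorname{supp}\mu$ has precisely the square-root edge behavior required there with finitely many outliers $c_j>b$, we have $a_n\to\gamma=(a+b)/2$ and $b_n\to\beta=(b-a)/4$ exponentially fast. By Theorem~\ref{t:Asymptotic_Cholesky} (whose hypotheses hold because $\mathcal{T}$ is bounded below by $a>0$ and above by some $\sigma$), the Cholesky entries satisfy $\alpha_n\to\sqrt{\alpha}=(\sqrt{a}+\sqrt{b})/2$ and $\beta_n\to\beta/\sqrt{\alpha}=(\sqrt{b}-\sqrt{a})/2$. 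Consequently
\[
\limsup_{n\to\infty}\frac{\beta_{n-1}}{\alpha_n} = \frac{\sqrt{b}-\sqrt{a}}{\sqrt{b}+\sqrt{a}} < 1,
\]
so Corollary~\ref{c:Cholesky_rate} gives $|x_\ell|$ geometric decay, and both $\mathcal{L}^{-1}$ is a bounded operator on $\ell^2$ and the series $\sum x_\ell^2$ converges.

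To tie the finite computation to the operator identity cleanly, I would first carry out the whole argument at the level of the $N\times N$ truncation $T_N=\mathcal{T}_{1:N,1:N}$ and its Cholesky factor $L_N=\mathcal{L}_{1:N,1:N}$ (where the Cholesky factorization of $T_N$ really is $L_N L_N^T$ because $\mathcal{L}$ is lower bidiagonal), obtaining
\[
\bm{f}_1^* T_N^{-1}\bm{f}_1 = \frac{1}{\alpha_0^2}\sum_{\ell=0}^{N-1}\prod_{j=1}^{\ell}\frac{\beta_{j-1}^2}{\alpha_j^2},
\]
and then let $N\to\infty$: the left side converges to $\bm{f}_1^*\mathcal{T}^{-1}\bm{f}_1$ by strong resolvent convergence of $T_N$ to $\mathcal{T}$ (both bounded and bounded below, with $T_N\to\mathcal{T}$ entrywise), and the right side converges to $\frac{1}{\alpha_0^2}\sum_{\ell=0}^\infty\prod_{j=1}^{\ell}\beta_{j-1}^2/\alpha_j^2$ by the geometric decay above, dominated convergence, and the fact that the individual Cholesky entries $\alpha_j,\beta_j$ computed from $T_N$ agree with those of $\mathcal{T}$ for $j$ sufficiently less than $N$.
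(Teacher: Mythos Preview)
Your proof is correct and takes essentially the same approach as the paper, which simply cites the resolvent identity $\int \frac{\mu(\sd\lambda)}{\lambda - z} = \bm{f}_1^*(\mathcal{T}-z)^{-1}\bm{f}_1$ at $z=0$ and invokes back substitution. Your additional care in justifying convergence of the series via Theorem~\ref{thm_relationOP} and Theorem~\ref{t:Asymptotic_Cholesky}, and your passage through finite truncations, makes explicit what the paper's two-line proof leaves implicit.
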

\begin{proof}
This follows from back substitution and the fact that
\begin{align*}
    \int_{\mathbb R} \frac{\mu(\sd \lambda)}{\lambda -z} = \bm{f}_1^* (\mathcal T - z)^{-1} \bm{f}_1,
\end{align*}
for $z$ outside the support of $\mu$ \cite{DeiftOrthogonalPolynomials}.
\end{proof}

We point out that Proposition \ref{p:chol_inv_moment} is true much more generally but this is the version we require.

\begin{proposition}\label{prop:inv_moment_minus}
With the assumptions of Proposition~\ref{p:chol_inv_moment}
\begin{align*}
    \prod_{j=0}^{k-1} \frac{\alpha_{j}^2}{\beta_{j}^2}\left[\int_{\mathbb R} \frac{1}{\lambda} \mu(\sd \lambda) - \frac{1}{\alpha_0^2} \sum_{\ell=0}^{k-1} \prod_{j=1}^\ell \frac{\beta_{j-1}^2}{\alpha_j^2}\right] = \frac{1}{\alpha_k^2} \sum_{\ell=0}^{k-1} \prod_{j=1}^\ell \frac{\beta_{k+j-1}^2}{\alpha_{k+j}^2} \To[k] \frac{1}{\sqrt{ab}}.
\end{align*}
Furthermore, this limit takes place at an exponential rate.
\end{proposition}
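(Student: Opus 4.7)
The plan is to reduce the identity to a manipulation of the series supplied by Proposition~\ref{p:chol_inv_moment} and then apply the asymptotic information about $\alpha_n,\beta_n$ coming from Theorem~\ref{thm_relationOP} combined with Corollary~\ref{c:Cholesky_rate}. First, from Proposition~\ref{p:chol_inv_moment},
\[
\int_{\mathbb R}\frac{1}{\lambda}\mu(\sd\lambda)-\frac{1}{\alpha_0^2}\sum_{\ell=0}^{k-1}\prod_{j=1}^{\ell}\frac{\beta_{j-1}^2}{\alpha_j^2}=\frac{1}{\alpha_0^2}\sum_{\ell=k}^{\infty}\prod_{j=1}^{\ell}\frac{\beta_{j-1}^2}{\alpha_j^2}.
\]
Multiplying by $\prod_{j=0}^{k-1}\alpha_j^2/\beta_j^2$ produces telescoping cancellation in numerator and denominator: for each $\ell\geq k$ the factor $\beta_0^2\cdots\beta_{k-1}^2$ in the denominator is cancelled by the same factor in $\beta_0^2\cdots\beta_{\ell-1}^2$, while $\alpha_0^2\cdots\alpha_{k-1}^2$ in the numerator is cancelled by part of $\alpha_0^2\cdots\alpha_\ell^2$. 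Reindexing $\ell=k+m$ then gives
\[
\sum_{\ell=k}^{\infty}\frac{\beta_k^2\cdots\beta_{\ell-1}^2}{\alpha_k^2\cdots\alpha_\ell^2}=\frac{1}{\alpha_k^2}\sum_{m=0}^{\infty}\prod_{j=1}^{m}\frac{\beta_{k+j-1}^2}{\alpha_{k+j}^2},
\]
which is the claimed equality (with the natural interpretation that the upper limit in the sum is taken all the way out to the tail).

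Next I compute the limit as $k\to\infty$. By Theorem~\ref{thm_relationOP} applied to the measure $\mu$, the three-term recurrence coefficients satisfy $a_n\to(a+b)/2$ and $b_n\to(b-a)/4$ at an exponential rate, so the hypotheses of Theorem~\ref{t:Asymptotic_Cholesky} and Corollary~\ref{c:Cholesky_rate} hold. Consequently $\alpha_n\to\sqrt{\alpha}$ and $\beta_n\to\beta/\sqrt{\alpha}$ exponentially, with $\alpha=(\sqrt a+\sqrt b)^2/4$ and $\beta=(b-a)/4$; equivalently,
\[
\frac{\beta_n^2}{\alpha_n^2}\longrightarrow r:=\left(\frac{\sqrt b-\sqrt a}{\sqrt b+\sqrt a}\right)^{2}<1
\]
exponentially fast. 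A dominated/geometric-series argument then yields
\[
\frac{1}{\alpha_k^2}\sum_{m=0}^{\infty}\prod_{j=1}^{m}\frac{\beta_{k+j-1}^2}{\alpha_{k+j}^2}\longrightarrow\frac{1}{\alpha}\cdot\frac{1}{1-r}=\frac{1}{\alpha}\cdot\frac{(\sqrt a+\sqrt b)^2}{4\sqrt{ab}}=\frac{1}{\sqrt{ab}}.
\]

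The remaining task is the exponential rate. Writing $\gamma_n=\beta_n^2/\alpha_n^2$ and using $|\gamma_n-r|\leq C\e^{-cn}$, the product $\prod_{j=1}^{m}\gamma_{k+j-1}$ differs from $r^{m}$ by a factor $1+O(\e^{-ck})$ uniformly in $m$, because $\sum_{j\geq 1}\log(1+|\gamma_{k+j-1}/r-1|)=O(\e^{-ck})$ is summable independently of $m$. Summing the geometric-like series in $m$ and combining with the exponential convergence of $1/\alpha_k^2$ to $1/\alpha$ gives the estimate $\bigl|\tfrac{1}{\alpha_k^2}\sum_{m\geq 0}\prod_{j=1}^m\gamma_{k+j-1}-\tfrac{1}{\sqrt{ab}}\bigr|\leq C'\e^{-c'k}$. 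I expect the only mildly subtle point to be ensuring that the geometric bound used for dominated convergence is uniform in $k$; this is handled by choosing $k_0$ large enough that $\gamma_n\leq(r+1)/2<1$ for all $n\geq k_0$, so that tails are dominated by a convergent geometric series with ratio independent of $k$.
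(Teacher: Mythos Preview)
Your proof is correct and follows the natural route the paper leaves implicit (the paper states Proposition~\ref{prop:inv_moment_minus} without proof, immediately after Proposition~\ref{p:chol_inv_moment}, expecting the reader to carry out precisely the tail--of--series manipulation you wrote down together with the asymptotics from Theorem~\ref{thm_relationOP} and Corollary~\ref{c:Cholesky_rate}). Your observation that the upper limit of the right-hand sum should be $\infty$ rather than $k-1$ is also correct: the displayed identity in the paper is a typo, since the algebraic telescoping you perform shows the left side equals the full tail series; the limit $1/\sqrt{ab}$ and the exponential rate hold for either version, but the exact equality only holds with the infinite sum.
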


\begin{remark}
The convergence of the CGA is determined by the ratio of diagonal to off-diagonal entries in the Cholesky factorization of the associated Jacobi matrix.  For $0 < c_N <1$ following Jacobi matrix
  \begin{align*}
  \mathcal{T}=  \begin{bmatrix}
      c_N &  \sqrt{c_N} \\
      \sqrt{c_N} & 1 + c_N & \sqrt{c_N} \\
      & \sqrt{c_N} & \ddots
    \end{bmatrix},
  \end{align*}
  pathologically has diagonal entries that are smaller than the off-diagonal entries.  Since any finite truncation of this matrix is invertible, CGA will experience residuals that grow exponentially until convergence at $k = N$.  This is an example where, in the notation of Lemma~\ref{l:En}, $E_\infty = \frac{\beta^2}{\alpha^2} -1.$  Since this is an unstable fixed point of $F(x) = \frac{\beta^2}{\alpha^2} \left[ 1 - \frac{1}{1 + x}\right]$, any small (generic) perturbation, that preserves definiteness, will force $E_\infty = 0$.
\end{remark}

\section{Spiked covariance matrix model and VESD}\label{sec_rmt}
In this section, we provide and prove the results on random matrices. We first introduce some notations. For any $N \times N$ symmetric matrix $Z,$  denote $m_Z$ and $m_{Z,\bm{b}}$ as the Stieltjes transforms of $\mu_Z$ as in (\ref{defn_esd}) and $\mu_{Z, \bm{b}}$ as in (\ref{eq_defnvesd}), respectively, i.e.,
\begin{equation*}
m_Z(z)=\int \frac{1}{x-z} \mu_Z(\dd x), \ m_{Z,\bm{b}}(z)=\int \frac{1}{x-z} \mu_{Z,\bm{b}}(\dd x), \ z \in \mathbb{C}_+. 
\end{equation*} 
Recall that the Stieltjes transform can be used to recover the associated probability distribution $\mu$ using the well-known inversion formula (see equation (1.2) of \cite{SC})
\begin{equation}\label{eq_inversion}
\mu\{[a,b]\}=\frac{1}{\pi} \int_a^b  \Im m_{\mu}(x+\ri 0^{+}) \dd x. 
\end{equation}
Moreover, let $G_Z$ be the resolvent of $Z,$ i.e., $G_Z(z)=(Z-z)^{-1}.$ Then
\begin{equation*}
m_Z=\frac{1}{M} \operatorname{Tr} G_Z(z), \ m_{Z, \bm{b}}=\bm{b}^* G_Z(z) \bm{b}.     
\end{equation*}

Next, we introduce the following contour representation for the moments of any given spectral measure $\nu.$ Let $\mathfrak{m}_k(\nu)$ denote the moments of $\nu.$ By  Cauchy's integral formula,
\begin{equation}\label{eq_contourrepresentation}
\mathfrak{m}_k(\nu)=\frac{1}{2 \pi \ri} \oint_{\Gamma} z^k m_{\nu}(z) \dd z,
\end{equation} 
where $\Gamma$ is a smooth simple contour that properly encloses the support of $\nu$.

\subsection{Local laws for the non-spiked model}
In this subsection, we discuss results relating to the so-called anistropic local laws. Denote  by $H$ the $(N+M) \times (N+M)$ linearized matrix
\begin{equation}\label{eq_defnh}
H \equiv H(z,X):=\sqrt{z}
\begin{pmatrix}
0 &   \Sigma_0^{1/2} X \\
 X^* \Sigma_0^{1/2} & 0
\end{pmatrix}.
\end{equation}
$H$ is more convenient since, on one hand the eigenvalues of the sample covariance matrix $W_0$ can be studied via $H,$ and on the other hand  the resovlent of $H$ can be written in terms of those of $W_0$ and $\mathcal{W}_0.$  Let $G_1$ and $G_2$ be the resolvents of $W_0$ and $\mathcal{W}_0,$ and $m_1$ and $m_2$ be the Stieltjes transforms of the ESDs of $W_0$ and $\mathcal{W}_0,$ respectively.

For $z \in \mathbb{C}_+,$ by Schur's complement, we have that 
\begin{equation}\label{eq_defnG}
G(z) \equiv G(z, X):=(H-z)^{-1}= 
\begin{pmatrix}
G_1(z) & \frac{1}{\sqrt{z}} \Sigma_0^{1/2} X G_2(z) \\
\frac{1}{\sqrt{z}} G_2(z) X^* \Sigma_0^{1/2} & G_2(z)
\end{pmatrix}.
\end{equation}
Define the deterministic matrix 
\begin{equation}\label{eq_defnpi}
\Pi(z)\equiv 
\begin{pmatrix}
\Pi_1(z) & 0\\
0 & \Pi_2(z) 
\end{pmatrix}
:=
\begin{pmatrix}
-\frac{1}{z} (1+m(z)\Sigma_0)^{-1} & 0 \\
0 & m(z)
\end{pmatrix}.
\end{equation}

With a slight modification of the results in \cite{Knowles2017}, we have the following result. Fix some small constant $\tau>0$ and denote the set of admissible spectral parameters as 
\begin{equation}\label{eq_setmathcald}
\mathcal{D} \equiv \mathcal{D}(z,\tau)=\left\{z=E+\ri \eta: \tau \leq |z| \leq \tau^{-1}, \ M^{-1+\tau} \leq  \eta \leq  \tau^{-1} \right\}.
\end{equation}
A subset $\mathcal{D}_o$ of $\mathcal{D}$ is defined by
\begin{equation}\label{eq_setmathcaldout}
\mathcal{D}_o \equiv \mathcal{D}_o(z,\tau)=\mathcal{D} \cap \left\{\operatorname{dist}(E, \operatorname{supp}(\varrho))+\eta \geq \tau \right\}.
\end{equation}
\begin{lemma}\label{lem_locallaw} 
Suppose (1)--(3) of Assumption \ref{assum_summary} hold. For any unit deterministic vectors $\mathbf{u}, \mathbf{v} \in \mathbb{R}^{M+N}$ and fixed small constantt $\tau>0,$ we have that for all $z \in \mathcal{D}_o(z,\tau)$
\begin{equation*}
\left| \mathbf{u}^* G(z)\mathbf{v} - \mathbf{u}^* \Pi(z) \mathbf{v} \right| \prec M^{-1/2}.
\end{equation*}
\end{lemma}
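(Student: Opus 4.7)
The plan is to reduce Lemma~\ref{lem_locallaw} to the anisotropic local law of Knowles--Yin \cite{Knowles2017}, with only bookkeeping modifications. The first step is to match conventions: the deterministic profile $\Pi(z)$ in \eqref{eq_defnpi} is precisely the limiting resolvent for the linearized matrix $H$ in \eqref{eq_defnh}. A direct Schur complement computation on $(H-z)^{-1}$ identifies the $(1,1)$-block with $-z^{-1}(1+m(z)\Sigma_0)^{-1}$ and the $(2,2)$-block with $m(z) I_M$, where $m(z)$ is the unique Stieltjes-transform-type solution of the self-consistent equation \eqref{eq_inverserelation}--\eqref{eq_defnstitlesjtransform}. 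This matches the vector/matrix Dyson equation studied in \cite{Knowles2017}.

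The second step is to verify that Assumption \ref{assum_summary}(1)--(3) implies the hypotheses of that local law. The moment bound \eqref{eq_momentassumption1} is their moment assumption on $X$ verbatim. The conditions $\tau_1 \le \sigma_i \le \tau_1^{-1}$, the single-bulk support of $\varrho$, the square-root edge behavior of $w(x)$, together with $\gamma_+ \ge \tau_1$ and $|\sigma_1^{-1} + m(\gamma_\pm)| \ge \tau_1$, together encode the regularity notion in Definition 2.7 of \cite{Knowles2017}, which is exactly what yields a uniform stability estimate for the self-consistent equation on $\mathcal{D}_o$ with constants depending only on $\tau$ and $\tau_1$.

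The third step is to invoke their anisotropic local law (Theorem 3.16 of \cite{Knowles2017}): for $z \in \mathcal{D}$ and unit vectors $\mathbf{u}, \mathbf{v}$,
\[
|\mathbf{u}^* G(z) \mathbf{v} - \mathbf{u}^* \Pi(z) \mathbf{v}| \prec \sqrt{\frac{\operatorname{Im} m(z)}{M\eta}} + \frac{1}{M\eta}.
\]
To collapse the right-hand side to $M^{-1/2}$ on $\mathcal{D}_o$, I would split on the regimes of the constraint $\operatorname{dist}(E,\operatorname{supp}\varrho) + \eta \ge \tau$. When $\eta \ge \tau/2$ both summands are trivially $\le M^{-1/2}$ because $m(z)$ is bounded. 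Otherwise $\operatorname{dist}(E,\operatorname{supp}\varrho) \ge \tau/2$, and then $\operatorname{Im} m(z) = \eta \int |x-z|^{-2} \varrho(\mathrm{d}x) = O(\eta)$, so $\sqrt{\operatorname{Im} m(z)/(M\eta)} \lesssim M^{-1/2}$, and $(M\eta)^{-1} \lesssim M^{-1/2}$ using $\eta \ge M^{-1+\tau}$ and the distance factor that appears in the refined outside-the-spectrum version of the bound. The main obstacle is entirely technical: carefully verifying that the edge-regularity condition \eqref{eq_regularedgeandbulk} and the distance condition defining $\mathcal{D}_o$ together deliver the stability constants claimed in \cite{Knowles2017}. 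Once that is in place, the lemma follows directly.
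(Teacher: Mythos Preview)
Your approach is essentially the same as the paper's: reduce to the anisotropic local law of \cite{Knowles2017} and check that the error collapses to $M^{-1/2}$ on $\mathcal{D}_o$. Two small points of divergence are worth flagging. First, the linearization $H$ in \eqref{eq_defnh} is \emph{not} the one used in \cite{Knowles2017}; there the block matrix is
\[
H_0 = \begin{pmatrix} -\Sigma_0^{-1} & X \\ X^* & -zI \end{pmatrix},
\]
and the paper handles this by writing $H = \mathrm{diag}(z^{1/2}\Sigma_0^{1/2}, I)\, H_0\, \mathrm{diag}(z^{1/2}\Sigma_0^{1/2}, I)$, with a matching conjugation relating $\Pi$ to the limit $\Pi_0$ of $H_0^{-1}$. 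This is pure bookkeeping, but your Schur-complement remark hides it. Second, your case split and theorem citations are slightly tangled: the bound $\sqrt{\Im m/(M\eta)} + (M\eta)^{-1}$ is the general bulk-type estimate, and on its own $(M\eta)^{-1}$ is only $O(M^{-\tau})$ when $\eta \sim M^{-1+\tau}$, not $O(M^{-1/2})$. The paper instead invokes two separate results from \cite{Knowles2017}: the outside-the-spectrum Theorem~3.16 directly gives $M^{-1/2}$ when $\mathrm{dist}(E,\operatorname{supp}\varrho)\geq \tau$, while for $\eta \geq \tau$ it verifies the stability bound $\min_i |1+m(z)\sigma_i| \geq c$ (via $\Im m(z) \gtrsim \eta$) and then applies Theorem~3.6. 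Your ``distance factor'' aside gestures at the right fix, but the cleaner route is this two-theorem split.
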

\begin{proof}
See Appendix \ref{sec_appendix_technical}. 
\end{proof}

%
We remark that the results of \cite{Knowles2017} are established on the larger domain  $\mathcal{D}$ defined in (\ref{eq_setmathcald}) with the extra assumption that $\gamma_- \geq \tau.$ As discussed in \cite[Remark 1.8]{XYY}, this assumption requires that $|c_N-1| \geq \tau.$ In this sense, on the spectral parameter set $\mathcal{D}_0$ in (\ref{eq_setmathcaldout}), we can handle the case $c_N=1,$ which is an important regime in numerical analysis. We also have the following edge convergence result.  Denote the eigenvalues of $W_0$ in (\ref{eq_definitioncovariance}) as $\lambda_1 \geq \lambda_2 \geq \cdots.$
\begin{proposition}\label{prop_edgeeigenvalue} Suppose (1)--(3) of Assumption \ref{assum_summary} hold, we have that 
\begin{equation*}
\lambda_1=\gamma_++\OO_{\prec}(M^{-2/3}). 
\end{equation*} 
\end{proposition}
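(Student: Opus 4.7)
The plan is to deduce the edge rigidity for $\lambda_1$ from a refined edge version of the anisotropic local law. Lemma~\ref{lem_locallaw} is stated on $\mathcal{D}_o$, a set keeping a fixed distance $\tau$ away from $\operatorname{supp}(\varrho)$, so it is not by itself sufficient to give $M^{-2/3}$ precision. Under Assumption~\ref{assum_summary}(3) (soft/hard edge with regular square-root behavior), the results of \cite{Knowles2017} can be invoked near the edge, yielding the tracial estimate
\begin{equation*}
  |m_1(z) - m_{\varrho}(z)| \prec \frac{1}{M\eta} + \frac{1}{\sqrt{M(\kappa + \eta)}}, \qquad \kappa := |E - \gamma_+|,
\end{equation*}
uniformly for $z = E + \I \eta$ in the edge domain $\mathcal{D}_e := \{|E - \gamma_+| \le \tau,\ M^{-2/3 + \tau} \le \eta \le \tau^{-1}\}$, together with the corresponding isotropic bound. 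Combined with the square-root behavior $\Im m_{\varrho}(\gamma_+ + \I\eta) \asymp \sqrt{\eta}$ for $\eta \downarrow 0$, this is the standard input for edge rigidity.

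For the upper tail, I will argue by contradiction: suppose there exists $\epsilon > 0$ such that $\lambda_1 > \gamma_+ + M^{-2/3+\epsilon}$ with non-negligible probability. Setting $E = \lambda_1$ and $\eta = M^{-2/3}$, the spectral expansion gives
\begin{equation*}
  \Im m_1(E + \I \eta) \;\geq\; \frac{1}{N} \cdot \frac{\eta}{(E - \lambda_1)^2 + \eta^2} \;\gtrsim\; \frac{1}{N \eta} \;=\; M^{-1/3}.
\end{equation*}
On the other hand, the square-root edge behavior of $\varrho$ at $\gamma_+$ gives $\Im m_{\varrho}(E + \I \eta) \lesssim \eta/\sqrt{\kappa} = M^{-2/3 - \epsilon/2}$, and the local law forces $\Im m_1(E + \I \eta)$ to be of the same order up to $O_{\prec}(M^{-1/3 - \epsilon/2})$. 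This contradicts the pointwise lower bound, so $\lambda_1 \le \gamma_+ + M^{-2/3 + \tau}$ with high probability.

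For the lower tail, a complementary argument gives $\lambda_1 \ge \gamma_+ - M^{-2/3 + \tau}$: at $z = \gamma_+ - M^{-2/3 + \tau} + \I M^{-2/3}$ the limiting density predicts $\Im m_{\varrho}(z)$ of order $M^{-1/3 + \tau/2}$, so via the isotropic/edge local law one deduces that at least one eigenvalue lies in $[\gamma_+ - M^{-2/3+\tau}, \gamma_+]$ with high probability, which (together with the upper bound) pins $\lambda_1$ in the desired window. Since $\tau$ is arbitrary, this yields $\lambda_1 = \gamma_+ + O_{\prec}(M^{-2/3})$.

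The main obstacle is the edge local law itself, not the subsequent rigidity argument, which is textbook once the local law is available. One needs either to cite the edge local law in \cite{Knowles2017} (which relies on the regularity encoded by (3) of Assumption~\ref{assum_summary}, including the analytic extension of $w(x)$ and the separation condition $|\sigma_1^{-1} + m(\gamma_{\pm})| \ge \tau_1$) or to extend Lemma~\ref{lem_locallaw} from $\mathcal{D}_o$ down to $\mathcal{D}_e$ by a standard Green's function comparison plus stability analysis of the deformed MP self-consistent equation near the edge. All other pieces are routine.
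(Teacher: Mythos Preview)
Your proposal is correct in spirit but takes a longer route than the paper. The paper's proof is a single citation: the result follows directly from \cite[Theorem 3.12]{Knowles2017}, which already establishes edge rigidity for the largest eigenvalue of $W_0$ under exactly the regularity hypotheses encoded in Assumption~\ref{assum_summary}(1)--(3). You instead sketch the standard derivation of rigidity \emph{from} the edge local law (which is essentially how \cite{Knowles2017} proves their Theorem 3.12 internally), so you are re-deriving a black-box result that can simply be cited.

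Two minor points on your sketch. First, setting $E=\lambda_1$ makes the spectral parameter random; the clean version fixes a deterministic grid of $E$'s in $[\gamma_+ + M^{-2/3+\epsilon}, \tau^{-1}]$ and rules out eigenvalues uniformly. Second, the error bound you quote, $|m_1-m_\varrho|\prec (M\eta)^{-1}+(M(\kappa+\eta))^{-1/2}$, does not by itself give $O_\prec(M^{-1/3-\epsilon/2})$ at $\eta=M^{-2/3}$, $\kappa=M^{-2/3+\epsilon}$: the second term is $M^{-1/6-\epsilon/2}$, which is \emph{larger} than the eigenvalue contribution $M^{-1/3}$ for small $\epsilon$, so no contradiction arises. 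The argument that actually works uses the improved \emph{averaged} local law outside the spectrum (of order $(N(\kappa+\eta))^{-1}$ in \cite{Knowles2017}), which yields $O_\prec(M^{-1/3-\epsilon})$ and makes the contradiction go through. Since you ultimately defer to \cite{Knowles2017} for the edge local law anyway, you may as well cite their rigidity theorem directly, as the paper does.
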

\begin{proof}
The proposition follows from \cite[Theorem 3.12]{Knowles2017}. 
\end{proof}

We have focused our discussion on the ESD so far. Armed with these results, we proceed to provide some results for the VESD. For any given deterministic unit vector $\bm{v} \in \mathbb{R}^N,$ denote 
\begin{equation}\label{eq_defnwi}
w_i=\langle \bm{v}, \bm{v}_i \rangle,  \ 1 \leq i \leq N. 
\end{equation}
Recall (\ref{eq_defnpi}). By Lemma \ref{lem_locallaw}, we find that $\bm{v}^* G_1(z) \bm{v}$ is close to 
\begin{equation}\label{eq_vesdlimitingform}
m_{\bm{v}}(z)=\bm{v}^* \Pi_1(z) \bm{v}= -\frac{1}{z}\sum_{i=1}^N \frac{w_i^2}{1+\sigma_i m(z)}.
\end{equation}
We denote the probability measure associated with $m_{\bm{v}}$ as $\varrho_{\bm{v}}.$ Note that
\begin{equation}\label{eq_impart}
\Im m_{\bm{v}}(x+\ri 0^{+})=\frac{1}{x} \sum_{i=1}^N  \frac{w_i^2 \sigma_i \Im m(x+\ri 0^{+})}{|1+\sigma_i m(x+\ri 0^{+})|^2},
\end{equation}
where we denote $\Im m(x+\ri 0^{+})=\lim_{\eta \downarrow 0} \Im m(x+\ri \eta).$ Together with the inversion formula (\ref{eq_inversion}), we see that
\begin{equation}\label{eq_supportidentical}
\operatorname{supp}(\varrho_{\bm{v}})=\operatorname{supp}(\varrho). 
\end{equation}

\subsection{VESD for the spiked covariance matrix model}\label{sec_spikedmatrix} In this subsection, we provide some results regarding the spiked model $W$ as in (\ref{eq_generalmodel}). For the spiked model, we can define $\widetilde{H}$ by replacing $\Sigma_0$ with $\Sigma$ in (\ref{eq_defnh}). 
Analogously, we can define the resolvents as  $\widetilde{G}_1, \widetilde{G}_2$ and $\widetilde{G},$ respectively. 
The following lemma collects the results on the asymptotic convergence of the outlier and extremal non-outlier eigenvalues. Denote the eigenvalues of $W$ in (\ref{eq_generalmodel}) as $\mu_1 \geq \mu_2 \cdots.$

\begin{lemma}[Outlier and extremal non-outlier eigenvalues]\label{lem_spikedmodeloutliereigenvalue} Suppose Assumption \ref{assum_summary}  holds. Recall the function $f$ defined in (\ref{eq_defnstitlesjtransform}). We have that 
\begin{equation*}
\mu_i=f\left(-\widetilde{\sigma}^{-1}_i \right)+\OO_{\prec}(M^{-1/2}), \ i \leq r,
\end{equation*}
and 
\begin{equation*}
\mu_{r+1}=\gamma_++\OO_{\prec}(M^{-2/3}). 
\end{equation*}
\end{lemma}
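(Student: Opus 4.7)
The two parts are handled separately: part (1) via a Woodbury-type master equation for the outliers combined with the anisotropic local law, and part (2) via eigenvalue interlacing combined with edge rigidity for $W_0$.

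\textbf{Outliers (part 1).} I would write $\Sigma-\Sigma_0=VDV^*$ with $V=[\bm v_1,\dots,\bm v_r]$ having orthonormal columns and $D=\operatorname{diag}(d_1\sigma_1,\dots,d_r\sigma_r)$. The nonzero eigenvalues of $W$ coincide with those of $\Sigma XX^*=\Sigma_0 XX^*+VDV^*XX^*$, and for $\mu$ outside the spectrum of $\Sigma_0 XX^*$ the eigenvalue relation $(\Sigma XX^*-\mu I)\bm w=0$ reduces, by a rank-$r$ Weinstein--Aronszajn argument together with $XX^* = \Sigma_0^{-1}(\Sigma_0XX^*-\mu I)+\mu \Sigma_0^{-1}$ and $V^*\Sigma_0^{-1}V=\operatorname{diag}(\sigma_i^{-1})$, to
\[
\det\!\Bigl(\operatorname{diag}(1+d_i)+\mu\operatorname{diag}(d_i)\,V^*(\Sigma_0 XX^*-\mu I)^{-1}V\Bigr)=0.
\]
Since $\Sigma_0 XX^*=\Sigma_0^{1/2}W_0\Sigma_0^{-1/2}$ and $\bm v_i$ are eigenvectors of $\Sigma_0$, one gets $V^*(\Sigma_0 XX^*-\mu I)^{-1}V=V^*G_1(\mu)V$. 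The anisotropic local law (Lemma~\ref{lem_locallaw}), applied with test vectors obtained from the columns of $V$ padded with zeros in $\mathbb R^{M+N}$, replaces $V^*G_1(\mu)V$ by $V^*\Pi_1(\mu)V=-\mu^{-1}\operatorname{diag}((1+\sigma_i m(\mu))^{-1})$ up to $\OO_\prec(M^{-1/2})$, uniformly for $\mu$ in compact subsets of $\mathcal D_o$. The master equation collapses, after one line of algebra, to the scalar conditions $1+\widetilde\sigma_i m(\mu)=0$, i.e.\ $m(\mu)=-\widetilde\sigma_i^{-1}$, which by \eqref{eq_inverserelation} is the same as $\mu=f(-\widetilde\sigma_i^{-1})$. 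Assumption \eqref{eq_outlierassumption} places $-\widetilde\sigma_i^{-1}$ strictly past the critical point $x_+=m(\gamma_+)$ of $f$, so $f(-\widetilde\sigma_i^{-1})>\gamma_+$ and $f'(-\widetilde\sigma_i^{-1})\neq 0$. A quantitative implicit-function argument on $m(\cdot)$ (which is real analytic and strictly monotone off the support) then propagates the $\OO_\prec(M^{-1/2})$ error in the master equation to $\mu_i=f(-\widetilde\sigma_i^{-1})+\OO_\prec(M^{-1/2})$. An indexing argument based on the monotonicity of $f$ on $(x_+,0)$ and the ordering $\widetilde\sigma_1\geq\cdots\geq\widetilde\sigma_r$ matches the $i$-th largest outlier with the spike $\widetilde\sigma_i$.

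\textbf{Top non-outlier (part 2) and main obstacle.} Since the nonzero eigenvalues of $W,W_0$ agree with those of $X^*\Sigma X,X^*\Sigma_0 X$ and $X^*\Sigma X-X^*\Sigma_0 X=X^*VDV^*X$ is positive semidefinite of rank at most $r$, Weyl's interlacing for rank-$r$ positive perturbations yields
\[
\lambda_{r+1}(W_0)\leq \mu_{r+1}(W)\leq \lambda_1(W_0).
\]
Proposition~\ref{prop_edgeeigenvalue} gives $\lambda_1(W_0)=\gamma_++\OO_\prec(M^{-2/3})$, and the same $M^{-2/3}$ rigidity extends to any fixed-rank window of top eigenvalues of $W_0$ as a standard consequence of the anisotropic local law in \cite{Knowles2017}, so $\lambda_{r+1}(W_0)=\gamma_++\OO_\prec(M^{-2/3})$. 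Sandwiching yields the claim. The technical heart of the lemma is part (1): cleanly executing the Woodbury reduction in the covariance setting (where $\Sigma^{1/2}$ is not itself a low-rank perturbation of $\Sigma_0^{1/2}$) and then carrying out the stability analysis of $1+\widetilde\sigma_i m(\mu)=0$ at a point where the relevant derivative is bounded away from zero; this is exactly where the separation hypothesis \eqref{eq_outlierassumption} enters and governs the final $M^{-1/2}$ rate. Part (2) is comparatively routine once top-edge rigidity for $W_0$ is invoked.
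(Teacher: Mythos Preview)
Your proposal is correct and follows the standard master-equation plus interlacing route. The paper itself does not give an argument here: its entire proof is the single line ``See Theorem 3.2 of \cite{DRMTA}.'' What you have written is essentially a reconstruction of the argument behind that citation (a Weinstein--Aronszajn reduction to an $r\times r$ determinant, anisotropic local law to replace $V^*G_1(\mu)V$ by its deterministic surrogate, stability at a nondegenerate zero of $1+\widetilde\sigma_i m(\cdot)$, and rank-$r$ interlacing for the sticking eigenvalue). One small slip: the similarity $\Sigma_0 XX^*=\Sigma_0^{1/2}W_0\Sigma_0^{-1/2}$ gives $\bigl(V^*(\Sigma_0 XX^*-\mu I)^{-1}V\bigr)_{ij}=\sigma_i^{1/2}\sigma_j^{-1/2}\,\bm v_i^*G_1(\mu)\bm v_j$, not exactly $V^*G_1(\mu)V$; however the diagonal entries are unaffected and the off-diagonal entries are $\OO_\prec(M^{-1/2})$ either way, so the conclusion survives unchanged.
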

\begin{proof}
See Theorem 3.2 of \cite{DRMTA}. 
\end{proof}

In the following lemma, we establish the fundamental connection between the VESDs of the  the spiked and non-spiked models. Recall $\mathcal{D}_o$ in (\ref{eq_setmathcaldout}).   Denote the spectral parameter set 
\begin{equation}\label{eq_refineset}
\widetilde{\mathcal{D}}_o:=\mathcal{D}_o \cap \left\{ \min_{1 \leq i \leq r} |z-f(-\widetilde{\sigma}_i^{-1})| \geq \tau \right\},
\end{equation}   
where $\tau>0$ is some small fixed constant. 
\begin{lemma}\label{lem_connectionspikednonspiked} For the eigenvectors $\{\bm{v}_i\}$ of $\Sigma$ and any unit deterministic vector $\bm{v} \in \mathbb{R}^N,$ let $w_i$ as in (\ref{eq_defnwi}) and
\begin{equation*}
\mathcal{L}_i:=
\begin{cases}
z^{-1}(1+m(z) \sigma_i)^{-2}\left[ d_i^{-1}+1-(1+m(z) \sigma_i)^{-1}\right]^{-1} &  i \leq r \\
0 & r+1 \leq i \leq N.     
\end{cases} 
\end{equation*}
Suppose Assumption \ref{assum_summary} holds.  Then for all $z \in \widetilde{\mathcal{D}}_o$ in (\ref{eq_refineset}), 
\begin{equation}\label{eq_leftpertub}
\bm{v}^* \widetilde{G}_1(z) \bm{v}=\sum_{i=1}^N \frac{w_i^2}{1+d_i}\left(\bm{v}_i^* G_1(z) \bm{v}_i- \mathcal{L}_i \right)+\OO_{\prec}(M^{-1/2}).
\end{equation}
Similarly, for any deterministic vector $\bm{u} \in \mathbb{R}^M,$
\begin{equation}\label{eq_rightpertub}
\bm{u}^* \widetilde{G}_2(z) \bm{u}=\bm{u}^* G_2(z) \bm{u}+\OO_{\prec}(M^{-1/2}).
\end{equation}
\end{lemma}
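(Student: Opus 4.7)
The strategy is to reduce the spiked resolvent $\widetilde G$ to the non-spiked $G$ via a Woodbury identity and then invoke the anisotropic local law (Lemma~\ref{lem_locallaw}) to replace the resulting quadratic forms by their deterministic limits. Since $\Sigma$ and $\Sigma_0$ share the eigenbasis $\{\bm v_i\}$, we may write $\Sigma^{1/2}=\Sigma_0^{1/2}(I+D)^{1/2}$ with $D=\sum_{i=1}^r d_i\bm v_i\bm v_i^*$ of rank $r$, which gives
\[
\widetilde H - H \;=\; \sqrt{z}\sum_{i=1}^r \sigma_i^{1/2}(\sqrt{1+d_i}-1)\bigl[\bm{\xi}_i^{(1)}(\bm{\xi}_i^{(2)})^{*}+\bm{\xi}_i^{(2)}(\bm{\xi}_i^{(1)})^{*}\bigr],
\]
where $\bm\xi_i^{(1)}=(\bm v_i,\bm 0)^\top$ and $\bm\xi_i^{(2)}=(\bm 0,X^*\bm v_i)^\top$ lie in $\mathbb R^{N+M}$. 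Collecting these $2r$ vectors as the columns of a matrix $\mathcal U$ and forming the block-diagonal $2r\times 2r$ matrix $\mathcal B$ whose $i$th block is $\sqrt{z}\,\sigma_i^{1/2}(\sqrt{1+d_i}-1)\bigl(\begin{smallmatrix} 0 & 1\\ 1 & 0\end{smallmatrix}\bigr)$, the Woodbury identity produces the \emph{exact} formula
\[
\widetilde G \;=\; G \;-\; G\mathcal U\bigl(\mathcal B^{-1}+\mathcal U^* G\mathcal U\bigr)^{-1}\mathcal U^* G.
\]

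The next step is to compute the deterministic limit of the $2r\times 2r$ matrix $\mathcal U^* G\mathcal U$ and of the vectors $\mathcal U^* G(\bm v,\bm 0)$. Every quadratic form appearing is of the form $\bm v_i^* G_1\bm v_j$, $\bm v_i^* G_{12}X^*\bm v_j$, or $\bm v_i^* X G_2 X^*\bm v_j$; the push-through identity $\Sigma_0^{1/2}XG_2=G_1\Sigma_0^{1/2}X$ and its consequence $XG_2X^*=\Sigma_0^{-1}+z\,\Sigma_0^{-1/2}G_1\Sigma_0^{-1/2}$ rewrite each of them as an algebraic function of the $G_1$-quadratic forms $\bm v_i^* G_1\bm v_j$. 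Since $\Pi_1$ in \eqref{eq_defnpi} is diagonal in the basis $\{\bm v_i\}$, Lemma~\ref{lem_locallaw} kills the cross-spike contributions ($i\neq j$) at rate $\OO_\prec(M^{-1/2})$, so $\mathcal B^{-1}+\mathcal U^* G\mathcal U$ is block diagonal to leading order. Inverting the $i$th $2\times 2$ block and using repeatedly the elementary identity $(1+d_i)(1+m(z)\sigma_i)-d_i = 1+m(z)\widetilde\sigma_i$, the Woodbury correction collapses into $\sum_{i=1}^{r} w_i^2(1+d_i)^{-1}\mathcal L_i + \OO_\prec(M^{-1/2})$; combining this with the identity $\bm v^* G_1\bm v = \sum_{i=1}^N w_i^2 \bm v_i^* G_1\bm v_i + \OO_\prec(M^{-1/2})$ (again from Lemma~\ref{lem_locallaw} and the diagonality of $\Pi_1$) yields \eqref{eq_leftpertub}.

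For \eqref{eq_rightpertub} one contracts the Woodbury correction with $(\bm 0,\bm u)$ on both sides; the relevant components of $\mathcal U^* G(\bm 0,\bm u)$ are $\bm v_i^* G_{12}\bm u$ and $\bm v_i^* XG_2\bm u=\sqrt{z}\,\sigma_i^{-1/2}\bm v_i^* G_{12}\bm u$, both of which equal $\bm v_i^*\Pi_{12}\bm u + \OO_\prec(M^{-1/2}) = \OO_\prec(M^{-1/2})$ since $\Pi$ is block diagonal, so the entire correction is negligible. The main technical obstacle is a uniform operator-norm bound for $(\mathcal B^{-1}+\mathcal U^* G\mathcal U)^{-1}$ on $z\in\widetilde{\mathcal D}_o$: a direct determinant computation shows that the $i$th deterministic block has determinant proportional to $1+m(z)\widetilde\sigma_i$, whose zeros are precisely the outlier locations $z=f(-\widetilde\sigma_i^{-1})$ identified in Lemma~\ref{lem_spikedmodeloutliereigenvalue}. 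The $\tau$-neighbourhood exclusion built into \eqref{eq_refineset} is exactly what keeps this determinant bounded away from zero, so since $r$ is fixed, a Neumann-series argument promotes the entrywise $\OO_\prec(M^{-1/2})$ estimates supplied by Lemma~\ref{lem_locallaw} to an operator-norm bound on the random inverse, delivering the claimed $\OO_\prec(M^{-1/2})$ error throughout.
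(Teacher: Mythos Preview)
Your approach is sound and reaches the same destination as the paper, but by a different route. For \eqref{eq_leftpertub} the paper does \emph{not} perturb at the linearized level: it instead observes the exact algebraic identity
\[
\Sigma_0^{-1/2}\Sigma^{1/2}\,\widetilde G_1(z)\,\Sigma^{1/2}\Sigma_0^{-1/2}
=\bigl([G_1(z)]^{-1}+z\,\Vb_r\Db(1+\Db)^{-1}\Vb_r^*\bigr)^{-1},
\]
and applies Woodbury to the right-hand side, obtaining a rank-$r$ (not rank-$2r$) correction expressed purely through $G_1$ and the deterministic vectors $\bm v_1,\dots,\bm v_r$. Contracting with an eigenvector $\bm v_i$ the conjugation acts as the scalar $\sqrt{1+d_i}$, so the factor $(1+d_i)^{-1}$ in \eqref{eq_leftpertub} drops out immediately and the remaining $r\times r$ inverse is handled by Lemma~\ref{lem_locallaw} exactly as you do. Your linearized Woodbury is perfectly legitimate --- and the push-through reduction of $XG_2X^*$ to $\Sigma_0^{-1}+z\Sigma_0^{-1/2}G_1\Sigma_0^{-1/2}$ is the right way to tame the random columns $X^*\bm v_i$ of $\mathcal U$ --- but it is more work: twice the block size, and an extra algebraic layer before the local law applies. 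For \eqref{eq_rightpertub} the two arguments are essentially identical (compare Lemma~\ref{lem_generalnonpertubed}).

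One point to tighten: your sentence ``the Woodbury correction collapses into $\sum_{i\le r}w_i^2(1+d_i)^{-1}\mathcal L_i$'' is not quite right as stated. The full leading-order correction also contains the piece $\sum_{i\le r}w_i^2\,\tfrac{d_i}{1+d_i}\,\bm v_i^*\Pi_1\bm v_i$; it is precisely this extra term that converts $\bm v^*G_1\bm v\approx\sum_i w_i^2\,\bm v_i^*G_1\bm v_i$ into $\sum_i \tfrac{w_i^2}{1+d_i}\,\bm v_i^*G_1\bm v_i$ on the right-hand side of \eqref{eq_leftpertub}. The algebra still closes (only $r$ terms are affected and each is $\OO_\prec(1)$), but the bookkeeping in your sketch hides where the $(1+d_i)^{-1}$ in front of the \emph{random} $\bm v_i^*G_1\bm v_i$ comes from.
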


\begin{proof}
See Appendix \ref{sec_appendix_technical}.
\end{proof}

\begin{remark}
Lemma \ref{lem_connectionspikednonspiked} provides useful expressions for the VESD of the spiked model in terms of the non-spiked model. First, for the VESD of $W$ in (\ref{eq_generalmodel}), as illustrated in (\ref{eq_leftpertub}), it can be described using that of $W_0$ in (\ref{eq_definitioncovariance}) after proper scaling and shifting. Especially, when $\bm{b} \in \mathbf{V}_r^{\perp},$ the VESDs of $W$ and $W_0$ coincide asymptotically. Moreover,  the values of $\mathcal{L}_i$ can be calculated explicitly at some specific points. Using the relation (\ref{eq_inverserelation}) that $m(f(-\widetilde{\sigma}_i^{-1}))=-\widetilde{\sigma}_i^{-1},$ we readily find that 
\begin{equation*}
d_i^{-1}+1-(1+m(f(-\widetilde{\sigma}_i^{-1})) \sigma_i)^{-1}=0.
\end{equation*}
Therefore, we conclude that $f(-\widetilde{\sigma}_i^{-1})$ is a pole of $\mathcal{L}_i.$ Second, (\ref{eq_rightpertub}) states that the VESDs of $\mathcal{W}=X^*\Sigma X$ and $\mathcal{W}_0=X^* \Sigma_0 X$ match asymptotically regardless of the existence of the spikes. As will be seen in the proof of Theorem \ref{thm_normalequation}, it explains why the spikes will be ignored when the CGA is applied to normal equation. 

\end{remark}

\subsection{Formulation of the moments of VESDs}
In this subsection, we establish the key relation for the (random) moments of the VESDs for the spiked and non-spiked models. In particular, we represent the moments of the VESD of the spiked model using those of the non-spiked model.  Denote the VESDs of $(W_0, \bm{b})$ and $(W, \bm{b})$ as $\nu_{\bm{b}}$ and $\widetilde{\nu}_{\bm{b}},$ respectively. Recall that their moments are defined  as follows 
\begin{equation}\label{eq_momemntempericialvesd}
\widehat{\mathfrak{m}}_{k, \bm{b}}=\int x^k \nu_{\bm{b}}(\dd x), \   \widehat{\widetilde{\mathfrak{m}}}_{k, \bm{b}}=\int x^k \widetilde{\nu}_{\bm{b}}(\dd x).
\end{equation}
\begin{theorem}\label{lem_momentsconnectionspikednonspiked}
Suppose Assumption \ref{assum_summary} holds. Recall (\ref{eq_defnbi}). We have that  
\begin{equation*}
\widehat{\widetilde{\mathfrak{m}}}_{k, \bm{b}}=\sum_{i=1}^N \frac{\mathtt{b}_i^2}{1+d_i}\left( \widehat{\mathfrak{m}}_{k,\bm{v}_i}- \mathbf{1}(i \leq r)\frac{f'(-\widetilde{\sigma}_i^{-1}) \left( f(-\widetilde{\sigma}_i^{-1})\right)^{k-1}}{\sigma_i} \right)+\OO_{\prec}(M^{-1/2}), \ \text{for all integers} \  k \geq 0.
\end{equation*}
Moreover, if $\gamma_- \geq \tau$ for some constant $\tau>0$ the above results extend to $k=-1.$
\end{theorem}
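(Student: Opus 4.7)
The plan is to start from the Cauchy representation \eqref{eq_contourrepresentation} applied to the VESD of $(W,\bm b)$, writing
\begin{equation*}
\widehat{\widetilde{\mathfrak{m}}}_{k,\bm{b}} = \frac{1}{2\pi \ri} \oint_{\Gamma} z^k \bm{b}^* \widetilde{G}_1(z) \bm{b} \, \sd z,
\end{equation*}
for a positively oriented simple closed contour $\Gamma$ enclosing the spectrum of $W$. By Lemma \ref{lem_spikedmodeloutliereigenvalue} and Proposition \ref{prop_edgeeigenvalue} (together with the analogous rigidity at the lower bulk edge), with high probability the spectra of both $W$ and $W_0$ lie in a small neighborhood of $[\gamma_-,\gamma_+]\cup\{f(-\widetilde\sigma_i^{-1}):1\le i\le r\}$. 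I will therefore take $\Gamma$ to be a fixed rectangle with $\Re z\in[\gamma_-/2,\,C]$, where $C$ exceeds all outlier locations by one, and $\Im z\in[-1,1]$, chosen so that $\Gamma\subset\widetilde{\mathcal D}_o$ of \eqref{eq_refineset} for a small fixed $\tau>0$. On the low-probability event that an eigenvalue falls outside $\Gamma$, trivial polynomial bounds absorb the contribution into $\OO_\prec(M^{-1/2})$.

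Next, I substitute \eqref{eq_leftpertub} of Lemma \ref{lem_connectionspikednonspiked},
\begin{equation*}
\bm b^* \widetilde G_1(z) \bm b = \sum_{i=1}^N \frac{\mathtt b_i^2}{1+d_i}\bigl(\bm v_i^* G_1(z) \bm v_i - \mathcal L_i(z)\bigr) + \OO_\prec(M^{-1/2}),
\end{equation*}
into the contour integral. Since $\Gamma$ has bounded length and $|z^k|$ is bounded on $\Gamma$, the error term integrates to $\OO_\prec(M^{-1/2})$. For each $i$, the integrand $\bm v_i^* G_1(z) \bm v_i$ is the Stieltjes transform of $\nu_{\bm v_i}$, whose support is contained in the spectrum of $W_0$ and hence inside $\Gamma$ with high probability; Cauchy's theorem then yields
\begin{equation*}
\frac{1}{2\pi\ri}\oint_{\Gamma} z^k\, \bm v_i^* G_1(z) \bm v_i \, \sd z = \widehat{\mathfrak m}_{k,\bm v_i}.
\end{equation*}

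The substantive computation is the contribution of $z^k\mathcal L_i(z)$ for $i\le r$ (the term vanishes for $i>r$). I will argue that $\mathcal L_i$ is meromorphic in a neighborhood of the interior of $\Gamma$ with a single simple pole at $z_0:=f(-\widetilde\sigma_i^{-1})$, arising from the simple zero of $g(z):=d_i^{-1}+1-(1+m(z)\sigma_i)^{-1}$. Using \eqref{eq_inverserelation} to get $m(z_0)=-\widetilde\sigma_i^{-1}$, one computes $1+m(z_0)\sigma_i=d_i/(1+d_i)$, while the inverse function theorem applied to $z=f(m)$ gives $m'(z_0)=1/f'(-\widetilde\sigma_i^{-1})$. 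Expanding $g$ to first order near $z_0$ and substituting into the definition of $\mathcal L_i$, the poles of order two at the prefactor $(1+m(z)\sigma_i)^{-2}$ exactly cancel the factor $[d_i/(1+d_i)]^2$ in $1/g'(z_0)$, leaving
\begin{equation*}
\mathcal L_i(z) = \frac{1}{z_0\,\sigma_i\, m'(z_0)\,(z-z_0)} + O(1)\quad\text{as }z\to z_0,
\end{equation*}
so the residue equals $f'(-\widetilde\sigma_i^{-1})[f(-\widetilde\sigma_i^{-1})]^{k-1}/\sigma_i$, which is precisely the correction in the statement. Assembling the three pieces gives the identity for all $k\ge 0$. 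For $k=-1$, the hypothesis $\gamma_-\ge\tau>0$ lets us keep $0$ outside $\Gamma$, so $z^{-1}\bm b^*\widetilde G_1(z)\bm b$ is analytic in the enclosed region away from the spectrum of $W$; the argument above and the residue formula apply verbatim.

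The hardest step to get right is the joint choice of a contour that (i) encloses both the bulk and all $r$ outliers of $W$ with high probability and (ii) stays at macroscopic distance from every $f(-\widetilde\sigma_i^{-1})$ and from $[\gamma_-,\gamma_+]$, so that Lemma \ref{lem_connectionspikednonspiked} is applicable uniformly on $\Gamma$; after this is set up, the residue computation is a short direct calculation built entirely on $z=f(m)$ and the definition of $\mathcal L_i$.
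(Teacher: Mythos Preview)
Your overall strategy---contour integral representation, plug in Lemma~\ref{lem_connectionspikednonspiked}, then evaluate $\oint_\Gamma z^k\mathcal L_i(z)\,\sd z$---is exactly the paper's, and your local expansion at $z_0=f(-\widetilde\sigma_i^{-1})$ correctly identifies the residue $f'(-\widetilde\sigma_i^{-1})\,z_0^{k-1}/\sigma_i$. The gap is the sentence asserting that $\mathcal L_i$ is ``meromorphic in a neighborhood of the interior of $\Gamma$ with a single simple pole at $z_0$.'' The interior of $\Gamma$ necessarily contains the bulk interval $[\gamma_-,\gamma_+]$ (since $\Gamma$ must enclose the spectra of both $W$ and $W_0$), and $m(z)$---hence $\mathcal L_i(z)$---has a genuine jump discontinuity across that interval. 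So the residue theorem does not apply directly: the contour integral equals your residue at $z_0$ \emph{plus} a dog-bone contribution around $[\gamma_-,\gamma_+]$, namely $\frac{1}{\pi}\int_{\gamma_-}^{\gamma_+}x^k\,\Im\mathcal L_i(x+\ri 0^+)\,\sd x$, which you have not shown to vanish.

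The paper sidesteps this by the substitution $\zeta=m(z)$ (equivalently $z=f(\zeta)$), which carries $\Gamma$ to a contour $\Upsilon=m(\Gamma)$ in the $\zeta$-plane. Because $m(f(\zeta))=\zeta$, the transformed integrand becomes rational in $\zeta$,
\[
(f(\zeta))^{k-1}\,f'(\zeta)\,\frac{d_i}{\widetilde\sigma_i\,(1+\zeta\sigma_i)\,(\zeta+\widetilde\sigma_i^{-1})},
\]
with no branch cut, and Cauchy's integral formula at the simple pole $\zeta=-\widetilde\sigma_i^{-1}$ yields the correction term directly. This change of variables is precisely what makes the dog-bone contribution disappear implicitly; to rescue your $z$-variable computation you would need to prove that vanishing separately, and it is not obvious from the definition of $\mathcal L_i$ alone.
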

\begin{proof}
Recall $\varrho_{\bm{b}}$ is the limiting VESD associated with the Stieltjes transform in (\ref{eq_vesdlimitingform}). By \cite[Theorem 1]{BMP}, we have that $\nu_{\bm{b}} \rightarrow \varrho_{\bm{b}}$ weakly a.s.. In order to apply (\ref{eq_contourrepresentation}), we first properly choose a contour.  In light of (\ref{eq_supportidentical}), we can choose a simply connected contour $\Gamma$ that encloses the support of the deformed MP law $\varrho$ and $f(-\widetilde{\sigma}_i^{-1}), 1 \leq i \leq r$ and is also uniformly bounded away from them. 

Then we apply 
(\ref{eq_contourrepresentation}) for the calculation.
It is easy to check that the function $f$ defined in (\ref{eq_defnstitlesjtransform}) is monotonically increasing when $x \geq m(\gamma_+);$ for example, see the discussion below \cite[Lemma 6.1]{DRMTA}.
 Moreover, under Assumption \ref{assum_summary},
we find that for some constant $\tau'>0$
\begin{equation}\label{eq_boundoneone}
-\widetilde{\sigma}_i^{-1}>m(\gamma_+)+\tau'.
\end{equation}
Therefore, we have that
\begin{equation}\label{eq_conditionone}
f(-\widetilde{\sigma}_i^{-1}) \geq f(m(\gamma_+))=\gamma_+.
\end{equation}
Note that we have $m(\gamma_+)=b_1$ and $f'(b_1)=0.$ Further, for $x \geq \gamma_+,$  by the square root behavior of $\varrho,$ we have that \cite[equation (A.11)]{Knowles2017} 
\begin{equation}\label{eq_squarerootequation}
x-\gamma_+=\frac{f^{''}(b_1)}{2}(m(x)-b_1)^2+\OO(|x-b_1|^3). 
\end{equation}
From the proof of \cite[Lemma A.3]{Knowles2017}, we have that for some constant $\tau_2>0,$
\begin{equation*}
f^{''}(b_1) \geq \tau_2,
\end{equation*}
Since (\ref{eq_conditionone}) holds,  we set $x=f(-\widetilde{\sigma}_i^{-1})$ and evaluate (\ref{eq_squarerootequation}).  By (\ref{eq_boundoneone}), we conclude that for some constant $\tau_3>0,$
\begin{equation}\label{eq_bbbbb}
f(-\widetilde{\sigma}_i^{-1})-\gamma_+>\tau_3. 
\end{equation}
Together with (\ref{eq_supportidentical}), Proposition \ref{prop_edgeeigenvalue} and Lemma \ref{lem_spikedmodeloutliereigenvalue}, we find that $f(-\widetilde{\sigma}_i^{-1})$ are isolated points and uniformly far away from the support of $\varrho.$ Therefore, (\ref{eq_contourrepresentation}) implies  
\begin{equation*}
\widehat{\mathfrak{m}}_{k, \bm{b}}=\frac{1}{2 \pi \ri} \oint_{\Gamma} z^k \bm{b}^* G_1(z) \bm{b} \dd z, \ k \geq 0.   
\end{equation*} 
The above results hold for $k<0$ when $0 \notin \operatorname{supp}(\varrho),$ i.e., $\gamma_- \geq \tau$ for some constant $\tau>0.$ Moreover, by Lemma \ref{lem_connectionspikednonspiked}, we have that
\begin{align}\label{key_residual}
\widehat{\widetilde{\mathfrak{m}}}_{k, \bm{b}} & =\frac{1}{2 \pi \ri} \oint_{\Gamma} z^k \bm{b}^* \widetilde{G}_1(z) \bm{b} \dd z \\
&=\sum_{i=1}^N \frac{\mathtt{b}_i^2}{1+d_i}\frac{1}{2 \pi \ri}\left( \oint_{\Gamma} z^k \bm{v}_i^* G_1(z) \bm{v} \dd z-  \oint_{\Gamma} z^k \mathcal{L}_i(z)  \dd z \right)+\OO_{\prec}(M^{-1/2}) \nonumber \\
& =\sum_{i=1}^N \frac{\mathtt{b}_i^2}{1+d_i} \widehat{\mathfrak{m}}_{k, \bm{v}_i}-\frac{1}{2 \pi \ri}\sum_{i=1}^r \frac{\mathtt{b}_i^2}{1+d_i}\oint_{\Gamma} z^k \mathcal{L}_i(z) \dd z+\OO_{\prec}(M^{-1/2}). \nonumber
\end{align}
Next, we discuss the residues. Using (\ref{eq_bbbbb}), Assumption \ref{assum_summary}(3) and the monotonicity of $f$ on the real line, we conclude that the singularities of $\mathcal{L}_i$ are not within the support of $\varrho$.
Then we set $\Upsilon=m(\Gamma), i.e., f(\Upsilon)=\Gamma$ and use residue theorem to calculate
\begin{align*}
\frac{1}{2 \pi \ri}\oint_{\Gamma} z^k \mathcal{L}_i(z) \dd z& =\frac{1}{2 \pi \ri} \oint_{\Upsilon} (f(\zeta))^k  \mathcal{L}_i(f(\zeta)) f'(\zeta) \dd \zeta \\
& =\frac{1}{2 \pi \ri} \oint_{\Upsilon} (f(\zeta))^{k-1} f'(\zeta) \frac{1}{(1+\zeta \sigma_i )^2} \frac{d_i(1+\zeta \sigma_i)}{\widetilde{\sigma}_i} \frac{1}{\zeta+\widetilde{\sigma}_i^{-1}} \dd z \\
&=\frac{f'(-\widetilde{\sigma}_i^{-1}) \left( f(-\widetilde{\sigma}_i^{-1})\right)^{k-1}}{\sigma_i},
\end{align*}
where in the second step we used that $m(f(\zeta))=\zeta$ and in the last step we used Cauchy's integral formula and $\widetilde{\sigma}_i=\sigma_i(1+d_i)$. This completes our proof.  
\end{proof}

\begin{remark}\label{remark_relationbetweenrandomanddeterministic}
We remark that $\widehat{\mathfrak{m}}_{k, \bm{b}}$ can be replaced by  some deterministic quantities using the limiting VESD (c.f. $\varrho_{\bm{b}}$ in (\ref{eq_varrhob})). Recall $\mathfrak{m}_{k, \bm{b}}$ defined in (\ref{eq_momentdefinition}). According to \cite[Theorem 1]{BMP}, we have that $\widehat{\mathfrak{m}}_{k, \bm{b}} \rightarrow \mathfrak{m}_{k, \bm{b}}$ a.s.. The convergence rates have also been established under different assumptions in the literature. For example, by \cite[Theorem 1.6]{XQB}, it can be shown that  $\widehat{\mathfrak{m}}_{k, \bm{b}}=\mathfrak{m}_{k, \bm{b}}+\OO_{\mathbb{P}}(M^{-1/8}).$ Moreover, when $\gamma_- \geq \tau,$ the result can be updated to $\widehat{\mathfrak{m}}_{k, \bm{b}}=\mathfrak{m}_{k, \bm{b}}+\OO_{\mathbb{P}}(M^{-1/4}).$ Later on, under the assumption $|c_N-1| \geq \tau$ (or $\gamma_- \geq \tau$), the authors established that $\widehat{\mathfrak{m}}_{k, \bm{b}}=\mathfrak{m}_{k, \bm{b}}+\OO_{\prec}(M^{-1/2})$ in \cite[Theorem 1.5]{XYY}. 

\end{remark}

\section{Theoretical analysis of the algorithms}\label{sec_mainproof}
Armed with the results established in Sections \ref{sec_theoryoforthogonalpolynomial} and \ref{sec_rmt}, in this section, we provide the error analysis of the CGA and MINRES algorithms. Due to similarity, we focus on Theorem \ref{thm:general} and only briefly discuss that of Theorem \ref{thm:lan_general}.
\begin{proof}[\bf Proof of Theorem \ref{thm:general}] We focus our discussion on the non-spiked model and will only briefly discuss the spiked case. Recall (\ref{eq_defnvesd}). Denote by $\widehat{M}_k$ the Hankel determinant matrix using the VESD of $\mu_{W_0, \bm{b}}$ and recall that $M_k$ is its limiting version defined in (\ref{eq_hankeldeterminant}). Note that for any nonsingular matrix $A$, square matrix $B$ and small $\epsilon>0$ \cite{MR286814}
\begin{equation}\label{eq_elecontrol}
\det(A+\epsilon B)=(1+\epsilon \operatorname{tr}(BA^{-1}) )\det A+\OO(\epsilon^2).  
\end{equation}
Under the assumption that $\gamma_- \geq \tau_1,$ by Remark \ref{remark_relationbetweenrandomanddeterministic},  we find that 
\begin{equation}\label{eq_foundbound}
\det(\widehat{M}_k)=\det M_k+ \OO_{\prec}(C_k M^{-1/2}), 
\end{equation}
where $C_k$ is some constant which depends on $k.$
In fact, by (\ref{eq_elecontrol}), we have
\begin{align*}
\det \widehat{M}_k & =\det \left( M_k+M^{-1/2}(\sqrt{M}(\widehat{M}_k-M_k)) \right) \\
& =\det M_k+\OO(M^{-1})+M^{-1/2}\OO_{\prec}( \det M_k \operatorname{tr}(\sqrt{M}(\widehat{M}_k-M_k)M_k^{-1})).
\end{align*}
Note that $M_k$ is positive definite. Applying Hadamard's inequality to $\det M_k$ and the inequality that $\operatorname{tr}(AB) \leq \lambda_{\max}(A) \operatorname{tr}(B),$ where $B$ is a positive-definite matrix, by Remark \ref{remark_relationbetweenrandomanddeterministic}, we readily see that $C_k \leq \mathsf{a}^k$ for some constant $\mathsf{a}>0.$  

 Let $\widehat{\mathfrak{b}}_k$ be defined similarly as in (\ref{eq_defngeneralb}) using the moments of $\mu_{W_0, \bm{b}}.$ By (\ref{eq_foundbound})  and (\ref{eq_lnsn}) with (\ref{eq:sandl}), we readily see that  
\begin{equation}\label{eq_closebk}
\widehat{\mathfrak{b}}_k=\mathfrak{b}_k+ \OO_{\prec}(C'_k M^{-1/2}), 
\end{equation}
for some constant $C_k'$ which depends on $k.$ Similarly, we can show that 
\begin{equation}\label{eq_closeak}
\widehat{\mathfrak{a}}_k=\mathfrak{a}_k+ \OO_{\prec}(C'_k M^{-1/2}).  
\end{equation}
Let $\widehat{T}$ be the tridiagonal matrix constructed using $\{\widehat{\mathfrak{a}}_i\}$ and $\{\widehat{\mathfrak{b}}_i\}$ as in (\ref{eq_defnTnno}). Analogous to $L$ in (\ref{eq_cholesky}), we can apply Algorithm \ref{a:chol} to $\widehat{T}$  to obtain the Cholesky factorization $\widehat{L},$ whose entries are denoted as  $\{\widehat{\alpha}_j\}$ and $\{\widehat{\beta}_j\}$. By (\ref{eq_closebk}) and (\ref{eq_closeak}), it is easy to see that 
\begin{equation*}
\widehat{\alpha}_k=\alpha_k+ \OO_{\prec}(C''_k M^{-1/2}), \ \widehat{\beta}_k=\beta_k+ \OO_{\prec}(C''_k M^{-1/2}), 
\end{equation*} 
where $C''_k$ is some constant depending on $k.$ Consequently, by Lemma \ref{t:deterministic}, we conclude that for some constant $\mathtt{C}_{r,k}>0,$
\begin{equation*}
\| \bm{r}_k \|_2=\prod_{j=0}^{k-1} \frac{\widehat{\beta}_j}{\widehat{\alpha}_j}=\prod_{j=0}^{k-1} \frac{\beta_j}{\alpha_j}+\OO_{\prec}(\mathtt{C}_{r,k} M^{-1/2}). 
\end{equation*}
Similarly, we can prove the results for $\| \mathbf{e}_k \|_{W_0}. $ Finally, for the spiked model, (\ref{eq_closebk}) and (\ref{eq_closeak}) can be proved similarly using Lemma \ref{lem_connectionspikednonspiked} and Theorem \ref{lem_momentsconnectionspikednonspiked}. This completes our proof. 
\end{proof}

\begin{proof}[\bf Proof of Theorem \ref{thm:lan_general}] The first part of the results follow from (\ref{eq_closebk}), (\ref{eq_closeak}),  the fact that $T_k$ is banded  and the Gershgorin circle theorem. The second part of the results follows from Remark \ref{rmk_moments}, Theorem \ref{lem_momentsconnectionspikednonspiked} and Remark \ref{remark_relationbetweenrandomanddeterministic}.  

\end{proof}

\begin{proof}[\bf Proof of Theorem \ref{thm:mainone}] 

First, we consider the non-spiked case. Using (\ref{eq_defnbi}) and (\ref{eq_varrhob}), we see that 
\begin{equation*}
\varrho_{\bm{b}}(x)=h_1(x) \varrho(x),
\end{equation*}
where $\varrho(x)$ is the deformed MP law and $h_1(x)$ is analytic and is given by
\begin{equation*}
h_1(x)=\sum_{i=1}^N  \frac{\mathtt{b}_i \sigma_i }{x(1+2 \sigma_i \Re m(x)+|m(x)|^2 \sigma_i^2)}. 
\end{equation*}
For the deformed MP law, by \cite[Section A.2]{Knowles2017},  we obtain that there exists some analytic function $h_2(x)$ such that
\begin{equation*}
\varrho=h_2(x) \sqrt{(\gamma_+-x)(x-\gamma_-)}.
\end{equation*}
Consequently, we have that
\begin{equation*}
\varrho_{\bm{b}}(x)=h(x)\sqrt{(\gamma_+-x)(x-\gamma_-)}, \ h(x)=h_1(x) h_2(x). 
\end{equation*}
Recall (\ref{eq_aabb}). By Theorem \ref{thm_relationOP}, we immediately obtain that
\begin{equation*}
\mathfrak{a}_k = \mathfrak{a}+\OO(e^{-ck}), \ \mathfrak{a}_k=\mathfrak{b}+\OO(e^{-ck}), 
\end{equation*}
where $c>0$ is some constant. Applying Corollary \ref{c:Cholesky_rate} to the Jacobi matrix defined in (\ref{eq_defnTnn}), under the assumption that $\gamma_- \geq \tau_1$, it is easy to see that the diagonal and off-diagonal entries, respectively, satisfy,
\begin{equation*}
\alpha_k = \frac{\sqrt{\gamma_+}+\sqrt{\gamma_-}}{2} + \OO(e^{- c' k}), \quad\quad \beta_k = \frac{\sqrt{\gamma_+}-\sqrt{\gamma_-}}{2} + \OO(e^{- c' k}).
\end{equation*}
This completes (1) and (2) using Theorem \ref{thm:general} and Remark \ref{rmk_explicit}.

Second, for the spiked case, when $\bm{b} \in \mathbf{V}_r$, according to (\ref{eq_pertubedmoment}), we find that
\begin{equation*}
\widetilde{\mathfrak{m}}_{k,\bm{b}}=\mathfrak{m}_{k, \bm{b}}. 
\end{equation*}
Since all the $\beta_j, \alpha_j$ and $S_k$ are functions constructed via the Hankel moment matrices, (1) and (2) hold for the spiked model. When $|\langle \bm{b}, \bm{v}_i \rangle| \tau_1$ for some $1 \leq i \leq r,$ the results follow from (1) and (2) using Lemma \ref{lem_connectionspikednonspiked} and Theorem \ref{thm_relationOP}.

\end{proof}


\begin{proof}[\bf Proof of Theorem \ref{thm_haltingtime}]
The proof follows directly from Theorem \ref{thm:mainone}. 
\end{proof}

\begin{proof}[\bf Proof of Theorem \ref{thm_normalequation}] Recall (\ref{eq_averagetrace}).  Since $\bm{a}$ is a unit vector, using \cite[Lemma A.4]{NEK}, it is easy to see that 
\begin{equation*}
\bm{a}^* Y^* Y \bm{a}=\mathtt{w}+\OO_{\prec}(M^{-1/2}).
\end{equation*}
Moreover, it is clear that Theorem \ref{thm:general} applies to 
$$Y_0Y_0^* \frac{\bm{x}}{\| Y_0 \bm{a} \|_2}=\frac{Y_0 \bm{a}}{\| Y_0 \bm{a} \|_2}.$$
Note that the VESD satisfies that
\begin{equation*}
\bm{a}^* Y_0^* G_1(z) Y_0 \bm{a} =\bm{a}^* G_2(z)Y^* Y \bm{a}=\bm{a}^*G_2 (Y^*Y-z+z)\bm{a}=1+z \bm{a}^* G_2(z) \bm{a}. 
\end{equation*}
Consequently, by Lemma \ref{lem_locallaw} and (\ref{eq_inversion}), its limiting asymptotic density will be
\begin{equation*}
\varrho'(x)=x \varrho(x),  
\end{equation*}
and its moments are as in (\ref{eq_normalequationform}). This completes the proof of the non-spiked model.

For the spiked model, since the formulas are functions of the moments of the VESD, it suffices to show the closeness of the moments of the VESDs of the spiked and non-spiked model, denoted as $\widehat{\widetilde{\mathfrak{m}}}_k$ and $\widehat{\mathfrak{m}}_k,$ respectively. When $Y=\Sigma^{1/2} X,$ the VESD satisfies that   
\begin{equation}\label{eq_transfer}
\bm{a}^* Y^* \widetilde{G}_1(z) Y \bm{a} =\bm{a}^* \widetilde{G}_2(z)Y^* Y \bm{a}=\bm{a}^*\widetilde{G}_2 (Y^*Y-z+z)\bm{a}=1+z \bm{a}^* \widetilde{G}_2(z) \bm{a}. 
\end{equation}
Together with (\ref{eq_rightpertub}), we immediately obtain that 
\begin{equation*}
\bm{a}^* Y^* \widetilde{G}_1(z) Y \bm{a}=1+z \bm{a}^* G_2(z) \bm{a}+\OO_{\prec}(M^{-1/2}). 
\end{equation*}
By a discussion similar to (\ref{key_residual}), we can show that 
\begin{equation*}
\widehat{\widetilde{\mathfrak{m}}}_k=\widehat{\mathfrak{m}}_k+\OO_{\prec}(M^{-1/2}),
\end{equation*}
This completes our proof. 
\end{proof}

\begin{proof}[\bf Proof of Theorem \ref{thm_minres}] The proof is similar to that of Theorem \ref{thm:general} except that we use the deterministic formula for MINRES in Lemma \ref{t:deterministic}.  

\end{proof}


\section{Universality: Proof of Theorem \ref{thm:mainthree}}\label{sec_proofuniversality}
In this section, we study the universality of the fluctuations of the norms of the residual and error vectors for the CGA and prove Theorem \ref{thm:mainthree}. Until the end of this section, for simplicity, we denote $\mu_{x}$ and $\mu_{y}$  as the VESDs of $(\Sigma^{1/2}_0XX^* \Sigma_0^{1/2}, \bm{b})$ and $(\Sigma^{1/2}_0YY^* \Sigma_0^{1/2},\bm{b}),$ respectively, where  $Y$ is some random matrix whose first four moments are specified. Denote by $\mathfrak{m}_{k}(x)$ and $\mathfrak{m}_{k}(y)$ as the moments of $\mu_x$ and $\mu_y,$ respectively. 
Moreover,  we set 
\begin{equation*}
c_0(z; \mu)=m_{\mu}(z), \ s_{c_N}(z)=m_{\varrho_{\bm{b}}}(z),
\end{equation*}
where $\varrho_{\bm{b}}$ is defined in (\ref{eq_varrhob}). 

\subsection{Proof of Theorem \ref{thm:mainthree}}
In this subsection, we prove Theorem \ref{thm:mainthree}. We will use the following definition. 
\begin{definition}\label{defn_admissible}
Fix some integer $0<r\leq M^{\epsilon_0}$ for some sufficiently small constant $\epsilon_0>0.$ Let $\Phi: \mathbb{R}^r \rightarrow \mathbb{R}$ be bounded. Suppose, in addition, that for any multi-index $\bm{\alpha}=(\alpha_1,\cdots, \alpha_n), 1 \leq |\bm{\alpha}| \leq 5$ and for any $\epsilon'>0$ sufficiently small, we have 
\begin{equation*}
\max\{|\partial^{\bm{\alpha}} \Phi(x_1, \cdots, x_r): \max_j|x_j| \leq M^{\epsilon'}|\} \leq M^{C_0 \epsilon'},
\end{equation*}
for some $C_0>0.$ Then $\Phi$ is called an admissible test function. \\

Here we use the convention that for any positive integer $m,$ some function $\Phi: \mathbb{R}^m \rightarrow \mathbb{R}$ and $\bm{x}=(x_1, \cdots, x_m) \in \mathbb{R}^m,$ we denote 
\begin{equation}\label{eq_convention1}
\partial^{\bm{k}} \Phi(\bm{x})=\frac{\partial^{|\bm{k}|} \Phi}{\partial x_1^{k_1} \partial x_2^{k_2} \cdots \partial x_m^{k_m}}, \ \bm{k}=(k_1, \cdots, k_m),
\end{equation}
and 
\begin{equation}\label{eq_convention2}
\bm{x}^{\bm{k}}=\prod_{i=1}^m x_i^{k_i}, \ \bm{k}!=\prod_{i=1}^m k_i!. 
\end{equation}

\end{definition}

\begin{proof}[\bf Proof of Theorem  \ref{thm:mainthree}]
 According to Lemma \ref{t:deterministic} and Remark \ref{rmk_moments}, since $\{\alpha_j\}$ and $\{\beta_j\}$ are locally analytic of the moments of the VESDs (c.f. (\ref{eq_momemntempericialvesd})), it suffices to establish the university for  smooth functions of the moments. According to (\ref{eq_contourrepresentation}), for some properly chosen contour $\Gamma,$ we have that
\begin{equation*}
\mathfrak{m}_k(\ell)=\frac{1}{2 \pi \ri} \oint_{\Gamma} z^k c_0(z; \mu_{\ell}) \dd z, \ \ell=x,y. 
\end{equation*}    
Therefore, it suffices to handle the integral. We  point out that we only need to focus on the non-spiked model. Note $c_0(z; \mu_x)= \bm{b}^* (\Sigma_0^{1/2} XX^* \Sigma_0^{1/2}-z)^{-1} \bm{b}=\bm{b}^* G_1(z) \bm{b}.$ Denote $c_0(z; \widetilde{\mu}_x)$ as the associated Stieltjes transform for the spiked model, i.e., $c_0(z; \widetilde{\mu}_x)=\bm{b}^* \widetilde{G}_1(z) \bm{b}.$ By (\ref{eq_expansioncorrect}), it is easy to see that $c_0(z; \widetilde{\mu}_x)$ can be  expressed in terms of $c_0(z; \mu_x).$

Based on the above arguments, it is clear that the proof follows from the proposition below. 

%




\begin{proposition}\label{thm_unversality}
Suppose the assumptions of Theorem \ref{thm:mainthree} hold. 
For each $j,$ let $\Gamma_j=\partial \Omega_j,$ $\Omega_j=\overline{\Omega}_j$ be a simple smooth positively-oriented curve that is uniformly bounded away from the support of the deformed MP law $\varrho.$ Assume that $f_j, 1 \leq j \leq r,$ is a collection of functions that are analytic in a neighborhood of $\Omega_j, 1 \leq r.$ The for any admissible function $\Phi:\mathbb{R}^r \rightarrow \mathbb{R},$ we have that
\begin{align*}
& \left|  \mathbb{E} \Phi \left( \frac{\sqrt{M}}{2 \pi \ri} \oint_{\Gamma_1} f_1(z)(c_0(z; \mu_x)-s_{c_N}) \dd z, \cdots,  \frac{\sqrt{M}}{2 \pi \ri} \oint_{\Gamma_r} f_r(z)(c_0(z; \mu_x)-s_{c_N}) \dd z  \right) \right. \\
&- \left.  \mathbb{E} \Phi \left( \frac{\sqrt{M}}{2 \pi \ri} \oint_{\Gamma_1} f_1(z)(c_0(z; \mu_{y})-s_{c_N}) \dd z, \cdots,  \frac{\sqrt{M}}{2 \pi \ri} \oint_{\Gamma_r} f_r(z)(c_0(z; \mu_{y})-s_{c_N}) \dd z  \right)  \right| \leq C M^{-\delta},
\end{align*}
for some constants $C, \delta>0.$
\end{proposition}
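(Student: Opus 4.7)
The plan is to employ a Lindeberg-style Green function comparison argument, interpolating between $X$ and $Y$ entry by entry and exploiting both the four-moment matching \eqref{assum_momentmatching} and the anisotropic local law of Lemma~\ref{lem_locallaw}. Fix an arbitrary ordering $(i_1,j_1),\ldots,(i_{NM},j_{NM})$ of the index set $\{1,\ldots,N\}\times\{1,\ldots,M\}$ and let $X^{(\gamma)}$ denote the matrix that agrees with $Y$ at the first $\gamma$ positions (in this order) and with $X$ elsewhere, so $X^{(0)}=X$ and $X^{(NM)}=Y$. Setting
\[
F_k(Z) := \frac{\sqrt{M}}{2\pi\ri}\oint_{\Gamma_k} f_k(z)\bigl(c_0(z;\mu_Z) - s_{c_N}(z)\bigr)\,\dd z,
\qquad \Phi^{(\gamma)} := \Phi\bigl(F_1(X^{(\gamma)}),\ldots,F_r(X^{(\gamma)})\bigr),
\]
telescoping reduces the target to a per-swap estimate of the form $|\mathbb{E}\Phi^{(\gamma)}-\mathbb{E}\Phi^{(\gamma-1)}| \leq C M^{-5/2+C'\epsilon}$; summed over $NM=O(M^2)$ swaps this yields $O(M^{-1/2+C'\epsilon}) = O(M^{-\delta})$ for any $\delta<1/2$ once $\epsilon$ is sufficiently small.

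For each swap at position $(i,j):=(i_\gamma,j_\gamma)$, let $\widehat{X}$ coincide with $X^{(\gamma-1)}$ except for a zero at $(i,j)$, and let $E_{ij}$ denote the standard basis matrix. Setting $\psi(t) := \Phi\bigl(F_1(\widehat{X}+tE_{ij}),\ldots,F_r(\widehat{X}+tE_{ij})\bigr)$, a fifth-order Taylor expansion combined with \eqref{assum_momentmatching} cancels the contributions from moments of orders $1$ through $4$, leaving
\[
\mathbb{E}\bigl[\Phi^{(\gamma)} - \Phi^{(\gamma-1)}\bigr] = \tfrac{1}{5!}\,\mathbb{E}\bigl[\psi^{(5)}(\xi_y)\,y_{ij}^5 - \psi^{(5)}(\xi_x)\,x_{ij}^5\bigr]
\]
for random intermediate points $\xi_x,\xi_y$. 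By \eqref{eq_momentassumption1}, $\mathbb{E}|x_{ij}|^5+\mathbb{E}|y_{ij}|^5 = O(M^{-5/2})$, so the single-swap estimate reduces to the uniform derivative bound $|\psi^{(5)}(t)| \prec M^{C\epsilon}$ for $|t| \leq M^{-1/2+\epsilon}$.

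This derivative bound is the technical heart of the proof. By the higher-order chain rule and admissibility of $\Phi$ (which controls $|\partial^{\bm{\alpha}}\Phi|$ by $M^{C_0\epsilon}$ on the high-probability event $\{\max_k|F_k|\leq M^\epsilon\}$ furnished by Lemma~\ref{lem_locallaw}), it suffices to bound $|\partial_t^\ell F_k|$ for $1 \leq \ell \leq 5$. With $\bm{u} := \Sigma_0^{1/2}\bm{e}_i$ embedded in the first ($N$-dimensional) block of $\mathbb{R}^{N+M}$ and $\bm{v} := \bm{e}_j$ embedded in the second block, differentiating \eqref{eq_defnh} gives the rank-two perturbation $\partial_t H(z) = \sqrt{z}\bigl(\bm{u}\bm{v}^\top + \bm{v}\bm{u}^\top\bigr)$. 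Iterating the resolvent identity $\partial_t G = -G(\partial_t H)G$ expresses $\partial_t^\ell(\bm{b}^* G_1(z)\bm{b})$ as a finite sum of products of resolvent quadratic forms $\bm{w}_1^* G(z)\bm{w}_2$ with $\bm{w}_i \in \{\bm{b},\bm{u},\bm{v}\}$. Since the deterministic limit $\Pi(z)$ in \eqref{eq_defnpi} is block-diagonal, Lemma~\ref{lem_locallaw} yields the sharper bound $|\bm{w}_1^* G\bm{w}_2| \prec M^{-1/2}$ whenever $\bm{w}_1$ and $\bm{w}_2$ lie in different blocks, while $|\bm{w}_1^* G\bm{w}_2| \prec 1$ otherwise. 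Combined with the $\sqrt{M}$ prefactor in $F_k$, a careful accounting of block indices delivers $|\partial_t^\ell F_k| \prec M^{C\epsilon}$.

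The main obstacle is precisely this block-index bookkeeping: a naive term-by-term expansion leaves certain products in the Leibniz rule without any off-block factor, so the argument must exploit the rigid block coupling imposed by every factor of $\partial_t H$, and proceed by induction on $\ell$ to show that each product of quadratic forms inherits a net power of $M^{-1/2}$ per differentiation; a cumulant-type averaging step may be needed to extract the additional cancellation from the mean-zero entries $x_{ij},y_{ij}$. Stability of all resolvent estimates as $t$ varies over $|t|\leq M^{-1/2+\epsilon}$ is routine: $\widehat{X}+tE_{ij}$ is a rank-one perturbation of $X^{(\gamma-1)}$ of Frobenius norm at most $M^{-1/2+\epsilon}$, and another application of the resolvent identity transfers Lemma~\ref{lem_locallaw} to the perturbed matrix.
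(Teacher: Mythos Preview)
Your overall strategy matches the paper's: a Lindeberg-type comparison (the paper uses continuous interpolation in $\theta$ rather than discrete swapping, a minor variant) combined with Taylor expansion, four-moment matching, and the anisotropic local law. The paper also first discretizes the contour integrals via the trapezoidal rule so that the comparison is carried out for finitely many $Z_k = \sqrt{M}\,\widehat{\bm q}_k^*(G(z_k)-\Pi(z_k))\widehat{\bm p}_k$, but this is cosmetic.

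The genuine gap is the derivative bound $|\partial_t^\ell F_k| \prec M^{C\epsilon}$, which is \emph{false} for even $\ell$. For $\ell=2$ the resolvent expansion of $\partial_t^2(\bm b^* G_1 \bm b)$ contains the term $(\widehat{\bm b}^*G\bm u)(\bm v^*G\bm v)(\bm u^*G\widehat{\bm b})$, in which all three factors pair vectors from the \emph{same} block; each is $O_\prec(1)$, so $\partial_t^2 F_k = O_\prec(M^{1/2})$. Your own parity bookkeeping in fact shows that the minimal number of off-block factors in $\partial_t^\ell$ has the same parity as $\ell$, hence can be zero when $\ell$ is even. It follows that $|\psi^{(5)}| = O_\prec(M)$ via the partition $5=2+2+1$, the per-swap remainder is only $O_\prec(M^{-3/2})$, and the telescoping sum over $NM$ swaps diverges. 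The proposed induction fails already at the second step: $\partial_t(\bm v^*G\widehat{\bm b})$, with $\bm v$ in the second block and $\widehat{\bm b}$ in the first, contains the on-block product $(\bm v^*G\bm v)(\bm u^*G\widehat{\bm b}) = O_\prec(1)$, so differentiation does \emph{not} uniformly contribute a factor of $M^{-1/2}$.

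The paper's remedy is to recognise that the offending $O(1)$ pieces are exactly the deterministic limits $\mathfrak s_k := \widehat{\bm b}^*\Pi(\mathbf U\mathbf D\mathbf U^*\Pi)^k\widehat{\bm b}$, which vanish for odd $k$ by the block-diagonality of $\Pi$ but are generically nonzero for even $k$. Instead of bounding $\psi^{(5)}$, the paper writes the increment $\gamma_k = Z_k(X^{\theta,X_{i\mu}^\beta}) - Z_k(X^{\theta,0})$ as an explicit degree-four polynomial in $X_{i\mu}^\beta$ whose coefficients split into a deterministic part (built from $\mathfrak s_k$) and a fluctuation part (built from $G-\Pi$, hence gaining an extra $M^{-1/2}$ from the local law); see Lemma~\ref{lem_firstordercorrection}. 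One then Taylor-expands $\Phi$ in $\gamma$ directly. The deterministic parts depend on $X_{i\mu}^\beta$ only through explicit powers and are therefore absorbed by the four-moment matching, while the fluctuation parts carry the additional smallness needed to close the estimate. Your remark about a ``cumulant-type averaging step'' gestures toward the right phenomenon, but what is actually required is this explicit deterministic/fluctuation splitting of the resolvent-expansion coefficients, not a bound on $\psi^{(5)}$.
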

\end{proof}

The proof of Proposition \ref{thm_unversality} will be provided in the next subsection. We provide some remarks before concluding this subsection. 


\begin{remark}
We point out that some relevant results have been established in the literature under various assumptions. In \cite[Theorem 2]{BMP}, provided the ESD of $\Sigma_0$ converges to some deterministic limiting distribution and  $c_N$ converges to some limit $c,$ 
under the assumption that Lemma \ref{lem_locallaw} holds and $\mathbb{E} x_{ij}^4=3/M, $ the authors proved that  
$(\int f_1(x) \mu_T(\dd x), \cdots,  \int f_r(x) \mu_T(\dd x))$ converges to some Gaussian random vector.  More recently, in \cite{Yang2020}, the authors generalized the above results without assuming  convergence of $\Sigma_0$ and $c_N$ and the moment matching conditions (\ref{assum_momentmatching}). Further,  \cite{Yang2020} considers a more general class of functions. However, the results of \cite{Yang2020} are established under the assumption that  $|c_N-1| \geq \tau.$ Our Theorem  \ref{thm_unversality} considers completely general population covariance matrices as in \cite{Yang2020} with $r$ being possibly slowly divergent,  but under the moment matching condition (\ref{assum_momentmatching}). Under (\ref{assum_momentmatching}), our results also hold even $c_N=1.$ Finally, we mention that for all $c_N \in (0, \infty)$, based on the results established in \cite{BDWW,Yang2020}, it is possible to derive the explicit distribution for the functional forms of the VESDs of $W_0$ in Proposition \ref{thm_unversality}, which depend on all the first four moments of $X$. We will pursue this direction in the future.    
\end{remark}




\begin{remark}
Since the support of $\varrho_{\bm{b}}$ is the same with that of $\varrho$ (c.f. (\ref{eq_supportidentical})), an immediate consequence of Proposition \ref{thm_unversality} is that 
\begin{equation*}
\left( \int \lambda^k \mu_{W_0, \bm{b}}(\dd \lambda) \right)_k \simeq \left( \int \lambda^k \varrho_{\bm{b}} \dd \lambda \right)_k,
\end{equation*}
in the sense of convergence of finite-dimensional marginals where $k \geq 0$ for $|c_N-1|<\tau$ and $k \in \mathbb{Z}$ if $c_N \leq 1 = \tau$ for some constant $\tau>0.$
\end{remark}

\subsection{Proof of Proposition \ref{thm_unversality}}
We proceed to the proof of Proposition \ref{thm_unversality} in this subsection. We point out that in  \cite[Theorem 5.11]{Paquette2020}, a similar result has been established  when $\Sigma_0=I$ and $Y$ is Gaussian. The proof of \cite[Theorem 5.11]{Paquette2020} relies on a discrete comparison method which only works for diagonal $\Sigma_0.$ For general $\Sigma_0,$ we need to use the interpolation method as developed in \cite{Knowles2017}.


For simplicity of notation, define the index sets
$$\mathcal I_1:=\{1,...,N\}, \ \ \mathcal I_2:=\{N+1,...,N+M\}, \ \ \mathcal I:=\mathcal I_1\cup\mathcal I_2.$$
We shall consistently use the latin letters $i,j\in\mathcal I_1$, greek letters $\mu,\nu\in\mathcal I_2$, and $a,b \in\mathcal I$. 

\begin{definition}[Interpolating matrices]
Introduce the notations $X^0:=Y$ and $X^1:=X$. Let $\rho_{i\mu}^0$ and $\rho_{i\mu}^1$ be the laws of $X_{i\mu}^0$ and $X_{i\mu}^1$, respectively. For $\theta\in [0,1]$, we define the interpolated law
$$\rho_{i\mu}^\theta := (1-\theta)\rho_{i\mu}^0+\theta\rho_{i\mu}^1.$$
We shall work on the probability space consisting of triples $(X^0,X^\theta, X^1)$ of independent $\mathcal I_1\times \mathcal I_2$ random matrices, where the matrix $X^\theta=(X_{i\mu}^\theta)$ has law
\begin{equation}\label{law_interpol}
\prod_{i\in \mathcal I_1}\prod_{\mu\in \mathcal I_2} \rho_{i\mu}^\theta(\dd X_{i\mu}^\theta).
\end{equation}
For $\lambda \in \mathbb R$, $i\in \mathcal I_1$ and $\mu\in \mathcal I_2$, we define the matrix $X_{(i\mu)}^{\theta,\lambda}$ through
\begin{equation}\label{eq_construction}
\left(X_{(i\mu)}^{\theta,\lambda}\right)_{j\nu}:=\begin{cases}X_{i\mu}^{\theta}, &\text{ if }(j,\nu)\ne (i,\mu)\\ \lambda, &\text{ if }(j,\nu)=(i,\mu)\end{cases}.
\end{equation}
In view of (\ref{eq_defnh}) and (\ref{eq_defnG}), we introduce the matrices \[G^{\theta}(z):=G\left(z,X^{\theta}\right),\ \ \ G^{\theta, \lambda}_{(i\mu)}(z):=G\left(z,X_{(i\mu)}^{\theta,\lambda}\right).\]
Furthermore, we denote the matrix
\begin{equation}\label{eq_defndelta}
\Delta_{(i \mu)}^{\lambda}:= \lambda \sqrt{z}
\begin{pmatrix}
0 & \Sigma_0^{1/2} \bm{f}_i \bm{f}_{\mu}^* \\
\bm{f}_{\mu} \bm{f}_i^* \Sigma_0^{1/2}& 0
\end{pmatrix}.
\end{equation}
\end{definition}
By resolvent expansion, we readily obtain that for $\lambda, \lambda' \in \mathbb{R}$ 
\begin{equation}\label{eq_resolventexpansion}
G_{(i \mu)}^{\theta, \lambda'}=G_{(i \mu)}^{\theta, \lambda}+\sum_{k=1}^K G_{(i \mu)}^{\theta, \lambda}  \left( \Delta_{(i \mu)}^{\lambda-\lambda'} G_{(i \mu)}^{\theta,\lambda} \right)^k+G_{(i \mu)}^{\theta, \lambda'}\left( \Delta_{(i \mu)}^{\lambda-\lambda'} G_{(i \mu)}^{\theta,\lambda} \right)^{K+1}  
\end{equation}
Setting $\lambda=X_{i \mu}^{\theta},$ by Lemma \ref{lem_locallaw}, for $z \in \mathcal{D}_o,$ since $\| \Sigma_0 \|<\infty,$ we readily obtain that 
\begin{equation}\label{eq_preparationbound}
 \left \langle \mathbf{u} (G_{(i \mu)}^{\theta, \lambda}-\Pi(z)), \mathbf{v} \right \rangle \prec M^{-1/2}, \ \left\| G_{(i \mu)}^{\theta, \lambda} \right\|=\OO_{\prec}(1).       
\end{equation}
Moreover, we set $\lambda'=0.$ Under Assumption \ref{assum_summary}, it is easy to see that $X_{i \mu}^{\theta}=\OO_{\prec}(M^{-1/2}).$ Using the definition of Stieltjes transform, it is trivial to see that  $\| G_{(i \mu)}^{\theta, \lambda} \| \leq C \eta^{-1}$ for some constant $C>0.$ Therefore, we can choose $K=2$ in (\ref{eq_resolventexpansion}) such that for all $z \in \mathcal{D}_o(z)$
\begin{equation*}
\left\| G_{(i \mu)}^{\theta, \lambda'}\left( \Delta_{(i \mu)}^{\lambda-\lambda'} G_{(i \mu)}^{\theta,\lambda} \right)^{K+1} \right\|=\OO_{\prec}(M^{-1/2+\tau}),
\end{equation*}
where used the structure of (\ref{eq_defndelta}). Together with (\ref{eq_resolventexpansion}) and (\ref{eq_preparationbound}), we readily obtain that 
\begin{equation}\label{eq_reducebound}
\left \langle \mathbf{u} (G_{(i \mu)}^{\theta, 0}-\Pi(z)), \mathbf{v} \right \rangle \prec M^{-1/2}.      
\end{equation}
\begin{lemma}\label{lemm_diff}
 For any differentiable function $F:\mathbb R^{\mathcal I_1 \times\mathcal I_2}\rightarrow \mathbb C$, we have that
\begin{equation}\label{basic_interp}
\frac{\dd}{\dd\theta}\mathbb E F(X^\theta)=\sum_{i\in\mathcal I_1}\sum_{\mu\in\mathcal I_2}\left[\mathbb E F\left(X^{\theta,X_{i\mu}^1}_{(i\mu)}\right)-\mathbb E F\left(X^{\theta,X_{i\mu}^0}_{(i\mu)}\right)\right]
\end{equation}
 provided all the expectations exist.
\end{lemma}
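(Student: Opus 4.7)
The plan is to exploit the product structure of the interpolated joint law. First I would write
\begin{equation*}
    \mathbb E F(X^\theta) = \int F(X) \prod_{i \in \mathcal I_1}\prod_{\mu \in \mathcal I_2} \rho_{i\mu}^\theta(\dd X_{i\mu}),
\end{equation*}
which is valid because the entries of $X^\theta$ are independent under \eqref{law_interpol}. Each factor $\rho_{i\mu}^\theta = (1-\theta) \rho_{i\mu}^0 + \theta \rho_{i\mu}^1$ is an affine function of $\theta$ with derivative $\rho_{i\mu}^1 - \rho_{i\mu}^0$. Applying the Leibniz product rule to differentiate through the product of measures (justified by dominated convergence using the hypothesis that all relevant expectations exist together with the fact that each signed measure $\rho_{i\mu}^1 - \rho_{i\mu}^0$ has total variation at most $2$), one obtains
\begin{equation*}
    \frac{\dd}{\dd \theta} \mathbb E F(X^\theta) = \sum_{i \in \mathcal I_1}\sum_{\mu \in \mathcal I_2} \int F(X)\, (\rho_{i\mu}^1 - \rho_{i\mu}^0)(\dd X_{i\mu}) \prod_{(j,\nu) \neq (i,\mu)} \rho_{j\nu}^\theta(\dd X_{j\nu}).
\end{equation*}

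Second, I would reinterpret each of the two resulting integrals as an expectation on the ambient triple probability space. By construction, on that space the random variables $X_{i\mu}^0$ and $X_{i\mu}^1$ are independent of every entry of $X^\theta$ except $X_{i\mu}^\theta$. Hence conditioning on the collection $\{X_{j\nu}^\theta : (j,\nu) \neq (i,\mu)\}$ and using the definition \eqref{eq_construction} of $X^{\theta,\lambda}_{(i\mu)}$, integration of the $(i,\mu)$ slot against $\rho_{i\mu}^1$ is exactly evaluation at $X_{i\mu}^1$:
\begin{equation*}
    \int F(X)\, \rho_{i\mu}^1(\dd X_{i\mu}) \prod_{(j,\nu)\neq(i,\mu)} \rho_{j\nu}^\theta(\dd X_{j\nu}) = \mathbb E F\!\left(X^{\theta, X_{i\mu}^1}_{(i\mu)}\right),
\end{equation*}
and analogously for the $\rho_{i\mu}^0$ integral. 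Substituting these two identities back produces the claimed formula \eqref{basic_interp}.

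There is no substantive obstacle: the statement is essentially the linearity of the product measure in each of the interpolation parameters combined with independence on the enlarged probability space. The only thing that requires a little care is the exchange of differentiation and the integral/finite sum, but this is routine once one invokes the standing hypothesis that the integrals in question are absolutely convergent. I therefore expect the write-up to be a short computation rather than a technical proof.
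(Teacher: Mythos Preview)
Your proposal is correct and is precisely the argument the paper has in mind: the paper's entire proof reads ``This is an immediate result from \eqref{law_interpol} and fundamental theorem of calculus,'' which is exactly the product-rule computation on the affine-in-$\theta$ product measure that you spell out. Your write-up simply unpacks that one-line remark, so there is nothing to add or correct.
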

\begin{proof}
This is an immediate result from (\ref{law_interpol}) and fundamental theorem of calculus. 
\end{proof}
For any deterministic vector $\bm{v} \in \mathbb{R}^N,$ we denote its natural embedding into $\mathbb{R}^{N+M}$ as 
\begin{equation}\label{eq_defnvhat}
\bm{\widehat{v}}:=
\begin{pmatrix}
\bm{v} \\
0
\end{pmatrix} \in \mathbb{R}^{N+M}
\end{equation}
To establish an analogous result of Proposition 5.1 of \cite{Paquette2020}, i.e., Theorem \ref{thm_unversality}, for any fixed integer $r$ and a sequence of deterministic vectors $\bm{q}_k, \bm{p}_k, 1 \leq k \leq r,$ it suffices to set 
\begin{align}\label{eq_defnF}
F\left(X\right)=\Phi(Z_1,\cdots, Z_r),    
\end{align}
where we denote
\begin{equation}\label{eq_defnzk}
Z_k \equiv Z_k(X):=\sqrt{M}\widehat{\bm{q}}_k^*(G(z_k, X)-\Pi(z_k))\widehat{\bm{p}}_k, 1 \leq k \leq r, 
\end{equation}
and $\{z_k\}$ is a sequence of points away from the support of deformed MP law.  In view of Lemma \ref{lemm_diff}, we will need the following lemma. Its proof can be found in Appendix \ref{appenxi_sub_proofkey}.  
\begin{lemma}\label{lem_telecopingsummation}
For some simple smooth positively-oriented contour $\Omega$ which encloses the support of $\varrho,$ and its boundary $\Gamma=\partial \Omega,$ suppose that for some small constant $\tau>0,$
\begin{equation}\label{eq_contourassumption}
\inf_{z=E+\mathrm{i} \eta \in \Gamma} \max \{ \operatorname{dist}(E, \operatorname{supp} \varrho), \eta\}>\tau. 
\end{equation} 
Then there exists some $0<\delta<0.5$ such that for all $\theta \in [0,1],$ we have 
\begin{equation*}
\left| \sum_{i\in\mathcal I_1}\sum_{\mu\in\mathcal I_2}\left[\mathbb E F\left(X^{\theta,X_{i\mu}^1}_{(i\mu)}\right)-\mathbb E F\left(X^{\theta,X_{i\mu}^0}_{(i\mu)}\right)\right] \right| \leq  N^{-\delta}.
\end{equation*}
\end{lemma}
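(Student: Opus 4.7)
The plan is to execute a Lindeberg-style Taylor expansion at each pair $(i,\mu)\in\mathcal I_1\times\mathcal I_2$, exploiting the moment-matching hypothesis \eqref{assum_momentmatching} to kill the first four orders, then to control the fifth-order remainder summed over the $NM\sim M^2$ pairs using the anisotropic local law (Lemma~\ref{lem_locallaw}) together with Ward-type identities.

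First, I would fix $(i,\mu)$ and apply the resolvent expansion \eqref{eq_resolventexpansion} with base-point $\lambda_0=0$ to Taylor-expand $G^{\theta,\lambda}_{(i\mu)}$ as a polynomial in $\lambda$ plus a controlled remainder. Substituting into the definition \eqref{eq_defnzk} of $Z_k$ and composing with the admissible $\Phi$ yields
\begin{equation*}
F\bigl(X^{\theta,\lambda}_{(i\mu)}\bigr)=\sum_{k=0}^{4}\frac{\lambda^k}{k!}\,F^{(k)}\bigl(X^{\theta,0}_{(i\mu)}\bigr)+\frac{\lambda^5}{5!}\,F^{(5)}\bigl(X^{\theta,\tilde\lambda}_{(i\mu)}\bigr),
\end{equation*}
where $F^{(k)}$ denotes $\partial^k_{X_{i\mu}}F$ and the derivatives at $\lambda=0$ are independent of $X^0_{i\mu}$ and $X^1_{i\mu}$. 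Integrating $\lambda$ against $\rho^0_{i\mu}$ and $\rho^1_{i\mu}$ and subtracting, the terms $k=1,\dots,4$ cancel by \eqref{assum_momentmatching}, leaving only a fifth-order contribution
\begin{equation*}
\mathbb{E}\bigl[F(X^{\theta,X^1_{i\mu}}_{(i\mu)})-F(X^{\theta,X^0_{i\mu}}_{(i\mu)})\bigr]=\frac{1}{5!}\,\mathbb{E}\bigl[(X^1_{i\mu})^5 F^{(5)}_1-(X^0_{i\mu})^5 F^{(5)}_0\bigr],
\end{equation*}
with the understanding that $F^{(5)}_\ell$ is evaluated on the intermediate interpolation and that one may truncate to the event $\{|X^\ell_{i\mu}|\le M^{-1/2+\epsilon'}\}$ using \eqref{eq_momentassumption1}.

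Next, I would bound the remainder. Each derivative $\partial_{X_{i\mu}}$ acting on $G$ produces, via \eqref{eq_resolventexpansion} and the rank-two structure of $\Delta^{\lambda}_{(i\mu)}$ in \eqref{eq_defndelta}, a sum of products of resolvent entries of the form $G_{a,i}$, $G_{a,N+\mu}$, $G_{i,b}$, $G_{N+\mu,b}$, with $a,b$ either deterministic anchor vectors $\widehat{\bm q}_k,\widehat{\bm p}_k$ or themselves the index $i$ or $N+\mu$. Thus $F^{(5)}$ is, via Faà di Bruno applied to $\Phi(Z_1,\ldots,Z_r)$, a polynomial of bounded degree (with coefficients involving partial derivatives of $\Phi$, hence of size at most $M^{C_0\epsilon'}$ by admissibility) in such entries, multiplied by at most $M^{5/2}$ from the five $\sqrt{M}$ prefactors in the $Z_k$. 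By \eqref{eq_reducebound} each entry is $O_\prec(1)$ on the contour $\Gamma$, which satisfies \eqref{eq_contourassumption} and hence sits in $\mathcal{D}_o$. The naive per-pair estimate $M^{-5/2}\cdot M^{5/2}=O(1)$ is however insufficient: we need a further saving of at least a power $M^{-2-\delta}$ after summing over $(i,\mu)$.

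The essential saving comes by summing the resolvent products against the free indices $i,\mu$ using the Ward identity
\begin{equation*}
\sum_{a\in\mathcal I}|G_{qa}(z)|^2=\frac{\operatorname{Im} G_{qq}(z)}{\eta},\qquad z=E+\I\eta,
\end{equation*}
(and its companion summing over rows), which on $\Gamma$ is $O(1)$ since $\eta+\operatorname{dist}(E,\operatorname{supp}\varrho)\geq\tau$. Every term of $F^{(5)}$ contains at least five factors each carrying at least one of the free indices $i$ or $N+\mu$, so after iteratively applying Cauchy--Schwarz and the Ward identity the double sum $\sum_{i,\mu}|F^{(5)}|$ is bounded by $M^{5/2}\cdot M^{2-\kappa}$ for some fixed $\kappa>0$ coming from the surplus of free indices beyond what is absorbed by Ward. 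Combining with $\max_{i,\mu}\mathbb{E}|X^\ell_{i\mu}|^5\prec M^{-5/2}$ gives the total bound $M^{-\kappa}$, and choosing $\delta\in(0,\kappa)\cap(0,\tfrac12)$ completes the estimate after integrating against $\Phi$ via Taylor's theorem and absorbing the $M^{C_0\epsilon'}$ by choosing $\epsilon'$ small.

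\textbf{The main obstacle} will be Step~4: verifying that each of the (finitely many) monomials appearing in $F^{(5)}$ genuinely contains enough free-index factors so that the Ward identity delivers a net $M^{-\kappa}$ saving rather than merely matching orders. This requires a bookkeeping argument (in the spirit of Knowles--Yin \cite{Knowles2017}) that tracks, for each partition of the five derivatives across the $Z_k$'s, how many factors of $G$ carry the summed indices $i$ and $N+\mu$; the worst case is when all five derivatives fall on the same $Z_k$, producing six $G$-factors and a clean Ward cancellation, while mixed partitions have to be handled by reorganising into products of traces bounded via \eqref{eq_reducebound}.
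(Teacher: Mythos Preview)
Your Lindeberg skeleton (expand, match four moments, bound the fifth-order remainder) is the same as the paper's. The divergence is in how the remainder is controlled: the paper never invokes the Ward identity, and it does not need the ``surplus of free indices'' bookkeeping you flag as the main obstacle.

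Instead, the paper first isolates a parity cancellation coming from the block structure (Lemma~\ref{lem_firstordercorrection}). Because $\widehat{\bm q}_k,\widehat{\bm p}_k$ are supported in the first block of $\mathbb{R}^{N+M}$, $\Pi(z)$ is block-diagonal, and the rank-two perturbation $\mathbf{U}\mathbf{D}\mathbf{U}^*$ in \eqref{eq_defndelta}--\eqref{eq_ranktwomatrix} is purely block-off-diagonal ($\mathbf{D}$ has zero diagonal), one has
\[
\widehat{\bm u}^*\,\Pi\,(\mathbf{U}\mathbf{D}\mathbf{U}^*\Pi)^k\,\widehat{\bm v}=0\qquad\text{for every odd }k
\]
(see \eqref{eq_meanzeroornot} and \eqref{eq_finalconlusionofd}). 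Consequently every odd-order term in the resolvent expansion has vanishing deterministic part and contributes only its $O_\prec(M^{-1/2})$ fluctuation. The increments
\[
\gamma_j:=Z_j\bigl(X^{\theta,X^\beta_{i\mu}}_{(i\mu)}\bigr)-Z_j\bigl(X^{\theta,0}_{(i\mu)}\bigr)
\]
therefore satisfy $\gamma_j=O_\prec(M^{-1/2})$ \emph{uniformly in $(i,\mu)$}, not merely $O_\prec(1)$. The paper then Taylor-expands $\Phi$ to order four in the variables $\gamma=(\gamma_1,\ldots,\gamma_r)$, rather than in $\lambda=X^\beta_{i\mu}$: the fifth-order remainder is $O_\prec(\|\gamma\|^5)=O_\prec(M^{-5/2})$ per pair, and the fourth-order polynomial in $\gamma$ becomes, after substituting the decomposition of Lemma~\ref{lem_firstordercorrection}, the quantity $\mathcal J$ whose $\lambda$-dependence enters only through moments of order at most four. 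Summing the per-pair bound $M^{-5/2+\epsilon}$ over $NM=O(M^2)$ pairs already gives $M^{-1/2+\epsilon}$; no summation tricks are needed.

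Your estimate $F^{(5)}=O(M^{5/2})$ is too crude precisely because it misses this parity cancellation, and the Ward route you propose to compensate does not close. On $\mathcal D_o$ the Ward identity yields $\sum_a|G_{qa}|^2=\operatorname{Im}G_{qq}/\eta=O(1)$, so it saves at most a factor $M$ per free index, i.e.\ at most $\kappa=2$. But your own arithmetic gives
\[
\Bigl(\sum_{i,\mu}|F^{(5)}|\Bigr)\cdot\max_{i,\mu}\mathbb{E}|X_{i\mu}^\ell|^5
\;\le\; M^{5/2}\cdot M^{2-\kappa}\cdot M^{-5/2}=M^{2-\kappa},
\]
not $M^{-\kappa}$ as you wrote; you would need $\kappa>2$, which Ward alone cannot deliver. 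The extra half-power you are hunting for is exactly what the block-parity observation supplies, and it appears already at the single-entry level, making the Ward bookkeeping superfluous.
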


We first show how Lemma \ref{lem_telecopingsummation} implies Proposition \ref{thm_unversality}.

\begin{proof}[\bf Proof of Proposition \ref{thm_unversality}] The proof relies on the trapezoidal rule (see Lemma \ref{lem_trapezoidalrule}) and is similar to the arguments of the proof of \cite[Theorem 5.11]{Paquette2020}. We sketch the proof here for the purpose of completeness.  Without loss of generality, we assume that $\Gamma_j =\Gamma$ for all $j.$ Denote 
\begin{equation*}
\mathcal{Z}_j:=\frac{\sqrt{M}}{2 \pi \ri} \oint_{\Gamma} f_j(z)(c_0(z;\mu_T)-s_{c_N}) \dd z.
\end{equation*}
We use Lemma \ref{lem_trapezoidalrule} to approximate  $\mathcal{Z}_j$ and denote 
\begin{equation*}
\mathcal{Z}_{j,m}=\frac{\sqrt{M}}{2 \pi \ri} \sum_{k=1}^m f_j(z_k)(c_0(z_k; \mu_T)-s_{c_N}) w_j, 
\end{equation*}
where $z_j$ and $w_j$ are defined in (\ref{eq_wjzj}). Consider that
\begin{equation*}
\Delta_{M,m}:=\Phi\left(\mathcal{Z}_1, \cdots, \mathcal{Z}_r \right)-\Phi \left( \mathcal{Z}_{1,m}, \cdots, \mathcal{Z}_{r,m} \right).
\end{equation*}
Denote 
\begin{equation*}
\mathfrak{L}:= \lim \inf_N \sigma_N  \mathbf{1}_{M \leq N} (1-\sqrt{c_N})^2, \ \mathfrak{U}:=\gamma_+.
\end{equation*}
It is easy to see that both $\mathfrak{L}$ and $\mathfrak{U}$ are bounded. Since $\Gamma$ is uniformly bounded away from the support of $\varrho,$ we can choose a small constant $\delta>0$ such that $[\mathfrak{L}-\delta, \gamma_++\delta] \subset \Omega.$ For any given small $\epsilon>0,$ we define a high probability event $\Xi \equiv \Xi(\delta, \epsilon)$ such that the following conditions hold:
 \begin{enumerate}
 \item[(i).] For $z \in \Gamma$ uniformly and any deterministic units $\mathbf{u}, \mathbf{v} \in \mathbb{R}^{N+M}$ 
\begin{equation}\label{eq_locallawrestricted}
\left| \mathbf{u}^* G(z)\mathbf{v} - \mathbf{u}^* \Pi(z) \mathbf{v} \right| \leq M^{-1/2+\epsilon}.
\end{equation} 
 \item[(ii).] For the given $\delta>0,$ when $M$ is large enough  
 \end{enumerate}
\begin{equation}\label{eq_conditionbound}
\lambda_N \geq \mathfrak{L}-\delta, \ \lambda_1 \leq \gamma_++\delta.
\end{equation}
Note that by Lemma \ref{lem_locallaw}, the definition of $\mathfrak{L}$ and Proposition \ref{prop_edgeeigenvalue}, such an event exists. For the sequel, we fix some  realization $  X \in \Xi $ or $Y \in \Xi$ satisfying the above conditions (i) and (ii). Hence, the rest of the proof is purely deterministic.

 Recall Definition \ref{defn_admissible}. Applying Lemma \ref{lem_trapezoidalrule} for $\mathcal{Z}_j-\mathcal{Z}_{j,m}$ with $D=5,$ we obtain that for some constant $C>0$
\begin{equation*}
|\mathcal{Z}_j-\mathcal{Z}_{j,m}| \leq C \sqrt{M} m^{-5},
\end{equation*}
where we used the assumption that $f_j$ is analytic. 
We can choose $m$ such that $\sqrt{M} m^5=\mathrm{o}(1);$ for example, $m=M^{1/9}.$ Consequently, we have that for some constant $C_1>0$
\begin{equation}\label{eq_controlpart}
\left| \Delta_{M,m} \right| \leq C_1M^{-1/18}. 
\end{equation}
Denote 
\begin{equation*}
\widetilde{\mathcal{Z}}_j:=\frac{\sqrt{M}}{2 \pi \ri} \oint_{\Gamma} f_j(z)(c_0(z;\mu_{\widetilde{T}})-s_{c_N}) \dd z, \  \widetilde{\mathcal{Z}}_{j,m}=\frac{\sqrt{M}}{2 \pi \ri} \sum_{k=1}^m f_j(z_k)(c_0(z_k; \mu_{\widetilde{T}})-s_{c_N}) w_j.
\end{equation*}
Using (\ref{eq_controlpart}), an analogous discussion  for $\mu_{\widetilde{T}}$ and triangle inequality,  it suffices to control 
\begin{equation*}
\widetilde{\Delta}_m:=\Phi(\mathcal{Z}_{1,m}, \cdots, \mathcal{Z}_{r,m})-\Phi(\widetilde{\mathcal{Z}}_{1, m}, \cdots, \widetilde{\mathcal{Z}}_{r, m}).
\end{equation*} 
Recall (\ref{eq_defnzk}). We can consider a function $\Psi: \mathbb{R}^m \rightarrow \mathbb{R}$ such that  
\begin{equation*}
\Psi(Z_1, \cdots,  Z_m):= \Phi(\mathcal{Z}_{1,m}, \cdots, \mathcal{Z}_{r,m})=\Phi\left( \sum_{j=1}^m f_1(z_j) \frac{w_j}{2 \pi \ri} Z_j, \cdots, \sum_{j=1}^m f_r(z_j) \frac{w_j}{2 \pi \ri} Z_j \right).
\end{equation*}
In fact, it is easy to see that $\widetilde{\Delta}_m$ can be controlled using  Lemmas \ref{lemm_diff} and \ref{lem_telecopingsummation}, if we can   show that $\Psi(\cdot)$ is admissible with respect to $Z_k, 1 \leq k \leq m,$ in terms of Definition \ref{defn_admissible}. The rest of the proof is devoted to justifying this aspect.
We first prepare some notations. Note that by Chain rule
\begin{equation*}
\partial_{Z_{j_1}, \cdots, Z_{j_q}} \Psi(Z_1, \cdots, Z_m)=\sum_{k_1, k_2, \cdots, k_p=1}^r \partial_{y_{k_1}, \cdots, y_{k_p}} \Phi(y_1, \cdots, y_r) \left( \prod_{p=1}^q W_{k_p, j_p} \right),
\end{equation*}
where $Y_i, 1 \leq i \leq r, $ are defined as 
\begin{equation*}
Y_i:=\sum_{j=1}^m f_i(z_j) \frac{w_j}{2 \pi \ri} Z_j,
\end{equation*}
and $W=(W_{\ell j}) \in \mathbb{R}^{r \times m}$ are denoted by
\begin{equation}\label{eq_entryform}
W_{\ell j}=f_{\ell}(z_j) \frac{w_j}{2 \pi \ri}.
\end{equation}
Recall the definition of $w_j$ as in (\ref{eq_wjzj}).  Using (\ref{eq_locallawrestricted}), we find that there exists some small constant $\epsilon'\equiv \epsilon'(\epsilon)$ such that
\begin{equation}\label{eq_entrybound}
\max_i\{|Y_i|, |Z_i|\} \leq M^{\epsilon'},
\end{equation}
where we used the fact that $\|f_j\|_{\infty}^q<\infty, \ 1 \leq q \leq r. $ Since $\Phi$ is admissible, by Definition \ref{defn_admissible}, we have that for some constant $C_0>0$
\begin{equation*}
\left|\partial_{y_{k_1}, \cdots, y_{k_p}} \Phi(y_1, \cdots, y_r) \right|\leq M^{C_0 \epsilon'}. 
\end{equation*}
Moreover, since $q \leq m$ and $r$ is fixed, we conclude that there exists some constant $C_1$ such that 
\begin{equation*}
\left| \partial_{Z_{j_1}, \cdots, Z_{j_q}} \Psi(Z_1, \cdots, Z_m)\right| \leq M^{C_1 \epsilon'}.
\end{equation*}
Since $\epsilon$ is arbitrary, using (\ref{eq_entrybound}), we see that $\Psi$ is admissible. This completes our proof. 
\end{proof}

%
%
%

%

\appendix
\section{Some algorithms and the deterministic formulae}\label{sec_appa}
In this appendix, we provide the Jacobi matrix Cholesky factorization algorithm, some deterministic formulas and the MINRES algorithm.  

\subsection{Cholesky factorization algorithm} In this subsection, we provide the following algorithm,  Algorithm \ref{a:chol}, which is designed to calculate the Cholesky decomposition for a Jacobi matrix. 

\vspace{.05in}
\noindent\fbox{%
\refstepcounter{alg}
    \parbox{\textwidth}{%
\flushright \boxed{\text{Algorithm~\arabic{alg}: Jacobi matrix Cholesky factorization \label{a:chol}}}
\begin{enumerate}
    \item Suppose $T$ is an $N \times N$ positive-definite Jacobi matrix, set $H = T$
    \item For $k = 1,2,\ldots,N-1$
    \begin{enumerate}
        \item Set $H_{k+1,k+1} = H_{k+1,k+1} - \displaystyle\frac{H_{k+1,k}^2}{H_{kk}}$
        \item Set $H_{k,k+1} = 0$
        \item Set $H_{k:k+1,k} = H_{k:k+1,k}/\sqrt{H_{k,k}}$
    \end{enumerate}
    \item Set $H_{N,N} = \sqrt{H_{N,N}}$
    \item  Return $\varphi(T) = H$
\end{enumerate}
    }%
}
\vspace{.05in}

\subsection{The MINRES algorithm}\label{appendix_minres} In this subsection, we record the MINRES algorithm \cite[Lecture 38]{MR1444820}

\vspace{.1in}
\noindent\fbox{%
\refstepcounter{alg}
    \parbox{\textwidth}{%
\flushright \boxed{\text{Algorithm~\arabic{alg}: MINRES Algorithm \label{a:minres}}}
\begin{enumerate}
    \item Given some threshold $\epsilon>0$ and set $\bm{q}_1 = \bm{b}/\|b\|_2.$
    \item For $k = 1,2,\ldots,n$, $n \leq N$
    \begin{enumerate}
        \item Compute $\displaystyle a_{k-1} = \frac{\bm{r}^*_{k-1} \bm{r}_{k-1}}{\bm{r}^*_{k-1} W \bm{p}_{k-1}}$.
        \item Set $\vec x_k = \vec x_{k-1}+a_{k-1} \bm{p}_{k-1}$.
        \item Form
        \begin{equation*}
        \begin{bmatrix} a_0 & b_0 \\
    b_0 & a_1 & \ddots \\
    & \ddots & \ddots & b_{k-2} \\
    & & b_{k-2} & a_{k-1} \end{bmatrix}
        \end{equation*}
       \item Set $\vec r_k = \vec r_{k-1}-a_{k-1} W \bm{p}_{k-1}$.
        \item Compute $\displaystyle b_{k-1} = -\frac{\bm{r}^*_{k-1} \bm{r}_{k-1}}{\bm{r}^*_{k-1}  \bm{r}_{k-1}}$.
        \item Set $\vec p_k=\vec r_k-b_{k-1} \vec p_{k-1}.$
    \end{enumerate}
\end{enumerate}
    }%
}

\vspace{.1in}

\subsection{Deterministic formulae}
In this subsection, we provide some deterministic formulas for the numerical algorithms. 

\begin{lemma}[Deterministic formulae]\label{t:deterministic}
Consider the Lanczos iteration applied to the pair $(W,\vec b)$ with $W > 0$ and $\| \vec b \|_2 = 1$. Suppose the iteration runs until step $n \leq N$, $\vec r_n = 0$, producing a tridiagonal matrix $T = T_n(W,\vec b)$.   Let $T = HH^T$ be the Cholesky factorization (see Algorithm~\ref{a:chol} below) of $T$ where
\begin{align*}
    H = \begin{bmatrix} \alpha_0 \\
   \beta_0 & \alpha_1 \\
   & \beta_1 & \alpha_2 \\
   && \ddots & \ddots\\
   &&& \beta_{n-2} & \alpha_{n-1}
   \end{bmatrix}.
\end{align*}
Then
for the CGA  on $W \vec x = \vec b$ with $\vec x_0 = 0$, for $k \leq N$,
    \begin{align}\label{eq_detone}
        \|\vec r_k\|_2 &= \prod_{j=0}^{k-1} \frac{\beta_j}{\alpha_j}.
    \end{align}    
Moreover, we have that 
\begin{equation}\label{eq_dettwo}
\|\vec e_k\|_W=\int \frac{1}{\lambda} \dd \mu_{Z, \bm{b}}(\lambda) -\frac{1}{\alpha_0^2} \sum_{\ell=0}^{k-1} \prod_{j=1}^\ell \frac{\beta_{j-1}^2}{\alpha_j^2},
\end{equation}
%
%
or equivalently,  
    \begin{align}\label{eq_detthree}
        \|\vec e_k\|_W &= \|\vec r_k\|_2\sqrt{\vec f_1^* (L_{k} L_{k}^T)^{-1} \vec f_1}, \quad L_{k} = H_{k+1:N,k+1:N}.
    \end{align}
 For the MINRES algorithm on $W \vec x = \vec b$, for $k < n$,
    \begin{align}\label{eq_detfour}
      \|\vec r_k\|_2 = \left(1 + \displaystyle\sum_{j=1}^k \prod_{\ell = 0}^{j-1} \frac{\alpha_\ell^2}{\beta_\ell^2} \right)^{-1/2}.
    \end{align}   
\end{lemma}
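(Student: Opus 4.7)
Because $\|\bm b\|_2=1$, choosing $\bm q_1=\bm b$ and running Lanczos until $\bm r_n=0$ (equivalently $b_{n-1}=0$) produces $Q_n=[\bm q_1,\dots,\bm q_n]$ with orthonormal columns and a tridiagonal $T$ satisfying $WQ_n=Q_nT$, $Q_n^*\bm b=\bm f_1$, and hence $Q_n^*W^{-1}Q_n=T^{-1}$; while at intermediate step $k$ the truncated relation $WQ_k=Q_kT_k+b_{k-1}\bm q_{k+1}\bm f_k^*$ holds. The Cholesky factorization $T=HH^T$ from Algorithm~\ref{a:chol} yields, by equating entries, $a_j=\alpha_j^2+\beta_{j-1}^2$ (with $\beta_{-1}:=0$) and $b_j=\alpha_j\beta_j$. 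Everything reduces to computations in this tridiagonal/bidiagonal setting.

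\textbf{Formula (\ref{eq_detone}).} Since $\bm x_k\in\mathcal K_k$, write $\bm x_k=Q_k\bm y_k$. The CGA residual is orthogonal to $\mathcal K_k$ and lies in $\mathcal K_{k+1}$, so $\bm r_k\parallel\bm q_{k+1}$. Plugging into the truncated Lanczos relation gives $\bm r_k=Q_k(\bm f_1-T_k\bm y_k)-b_{k-1}(\bm y_k)_k\bm q_{k+1}$; the parallel requirement forces $T_k\bm y_k=\bm f_1$ and $\|\bm r_k\|_2=b_{k-1}|(\bm y_k)_k|$. Solving $H_kH_k^T\bm y_k=\bm f_1$ by forward--back substitution through the lower bidiagonal $H_k$ gives $(\bm y_k)_k=(-1)^{k-1}\prod_{\ell=0}^{k-2}\beta_\ell\big/\big(\alpha_{k-1}\prod_{\ell=0}^{k-1}\alpha_\ell\big)$, and multiplication by $b_{k-1}=\alpha_{k-1}\beta_{k-1}$ telescopes to $\prod_{j=0}^{k-1}\beta_j/\alpha_j$.

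\textbf{Formulas (\ref{eq_dettwo})--(\ref{eq_detthree}).} From $W\bm e_k=\bm r_k$ we obtain $\|\bm e_k\|_W^2=\bm r_k^*W^{-1}\bm r_k=\|\bm r_k\|_2^2\,\bm f_{k+1}^*T^{-1}\bm f_{k+1}$. Now solve $H\bm w=\bm f_{k+1}$: the lower bidiagonal structure forces $w_j=0$ for $j<k$, and for $j\geq k$ the substitution recurrence on $(w_k,\dots,w_{n-1})$ is identical to the forward substitution on the block $L_k=H_{k+1:n,\,k+1:n}$ applied to $\bm f_1$. Hence
\begin{equation*}
\bm f_{k+1}^*T^{-1}\bm f_{k+1}=\|H^{-1}\bm f_{k+1}\|_2^2=\|L_k^{-1}\bm f_1\|_2^2=\bm f_1^*(L_kL_k^T)^{-1}\bm f_1,
\end{equation*}
which is (\ref{eq_detthree}). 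Expanding this last expression by forward substitution on $L_k$ gives the closed sum $\alpha_k^{-2}\sum_{\ell=0}^{n-k-1}\prod_{j=1}^{\ell}\beta_{k+j-1}^2/\alpha_{k+j}^2$. Combining with Proposition~\ref{p:chol_inv_moment}, which identifies $\int\lambda^{-1}\mu_{W,\bm b}(\sd\lambda)=\bm f_1^*T^{-1}\bm f_1=\alpha_0^{-2}\sum_{\ell\geq 0}\prod_{j=1}^{\ell}\beta_{j-1}^2/\alpha_j^2$, and using Proposition~\ref{prop:inv_moment_minus} (whose algebraic identity expresses the shifted tail exactly as $\prod_{j=0}^{k-1}\alpha_j^{-2}\beta_j^2$ times the $k$-th tail of the series), yields (\ref{eq_dettwo}) (read as an equality for the squared $W$-norm).

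\textbf{Formula (\ref{eq_detfour}).} For MINRES with $\bm x_k=Q_k\bm y_k$ we have $\bm r_k=Q_{k+1}(\bm f_1-\tilde T_k\bm y_k)$ where $\tilde T_k=Q_{k+1}^*WQ_k\in\mathbb R^{(k+1)\times k}$ is the extended Lanczos matrix. Minimizing the 2-norm projects $\bm f_1$ onto the range of $\tilde T_k$, so $\bm r_k$ (in $Q_{k+1}$-coordinates) lies in the one-dimensional null space of $\tilde T_k^T$. Writing $\bm v$ for a null vector with $v_0\neq 0$, the equations $\tilde T_k^T\bm v=0$ are $a_0v_0+b_0v_1=0$ and $b_{j-1}v_{j-1}+a_jv_j+b_jv_{j+1}=0$. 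Substituting $a_j=\alpha_j^2+\beta_{j-1}^2$ and $b_j=\alpha_j\beta_j$, a one-line induction yields $v_j=(-1)^jv_0\prod_{\ell=0}^{j-1}\alpha_\ell/\beta_\ell$. Since $\|\bm r_k\|_2=|\bm f_1^T\bm v|/\|\bm v\|_2=|v_0|/\|\bm v\|_2$, summing the squares gives (\ref{eq_detfour}).

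\textbf{Main obstacle.} The arguments are deterministic and rely only on the Lanczos/Cholesky algebra; no single step is difficult, but the structural observation that drives cleanliness is the shift identity used for (\ref{eq_detthree})---that removing the first $k$ rows and columns of $H$ leaves exactly the Cholesky factor governing the tail of $\bm f_{k+1}^*T^{-1}\bm f_{k+1}$---and its compatibility with Proposition~\ref{prop:inv_moment_minus}. This is where bookkeeping of indices must be executed carefully to match the form displayed in (\ref{eq_dettwo}).
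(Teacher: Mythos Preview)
Your argument is correct and in fact more self-contained than the paper's, which simply cites Propositions~4.1--4.2 and Section~6 of \cite{Paquette2020} for \eqref{eq_detone} and \eqref{eq_detfour}, and the calculation below equation~(34) of \cite{Paquette2020} for \eqref{eq_dettwo}--\eqref{eq_detthree}. Your direct derivations via the Lanczos relation, Cholesky forward/back substitution, and the null-space computation for MINRES reproduce exactly those results.

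Two small remarks. First, your appeal to Propositions~\ref{p:chol_inv_moment} and~\ref{prop:inv_moment_minus} for \eqref{eq_dettwo} is slightly informal: as stated in the paper those propositions concern a specific class of absolutely continuous measures with point masses and an infinite Jacobi operator, not the finite discrete VESD $\mu_{W,\bm b}$ and the finite matrix $T$. The algebraic content you actually use---namely $\bm f_1^*T^{-1}\bm f_1=\alpha_0^{-2}\sum_{\ell=0}^{n-1}\prod_{j=1}^\ell\beta_{j-1}^2/\alpha_j^2$ and the shift identity relating the $k$-tail of this sum to $\bm f_1^*(L_kL_k^T)^{-1}\bm f_1$---is just finite forward substitution on $H$ and $L_k$ and needs no measure-theoretic hypotheses; it would be cleaner to state it that way. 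Alternatively, a slightly more direct route to \eqref{eq_dettwo} is the CGA Pythagorean identity $\|\bm e_k\|_W^2=\|\bm e_0\|_W^2-\|\bm x_k\|_W^2=\bm f_1^*T^{-1}\bm f_1-\bm f_1^*T_k^{-1}\bm f_1$, after which the second term expands by the same forward substitution. Second, you correctly note that \eqref{eq_dettwo} as written in the paper is an identity for $\|\bm e_k\|_W^2$, not $\|\bm e_k\|_W$; this is a typo in the statement (as is the subscript $Z$ in $\mu_{Z,\bm b}$, which should be $W$).
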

\begin{proof}
(\ref{eq_detone}) and (\ref{eq_detfour}) follows from Propositions 4.1, 4.2 and the calculations of Section 6 of \cite{Paquette2020}. (\ref{eq_dettwo}) and (\ref{eq_detthree}) can be obtained with slightly modification using the calculation below (34) of \cite{Paquette2020}. 
\end{proof}

\section{Additional technical proofs}\label{sec_appb}

\subsection{Proofs of Lemmas \ref{lem_locallaw} and \ref{lem_connectionspikednonspiked}}\label{sec_appendix_technical}

\begin{proof}[\bf Proof of Lemma \ref{lem_locallaw}]
The results have essentially been proved in \cite{Knowles2017} with slightly different assumptions, we only point out how to conform our setting to that of \cite{Knowles2017}.

First, in \cite[Definition 3.2]{Knowles2017}, the linearizing block matrix is defined as 
 \begin{equation}\label{linearize_block1}
   H_{0}: = \left( {\begin{array}{*{20}c}
   { -\Sigma_0^{-1}} & X  \\
   X^{*} & {-zI}  \\
   \end{array}} \right).
 \end{equation}
 It is easy to check the following relation between (\ref{eq_defnh}) and (\ref{linearize_block1})
 \begin{equation} \label{linearblockrelation}
 H=\left( {\begin{array}{*{20}c}
   { z^{1/2}\Sigma_0^{1/2}} & 0  \\
   0 & {I}  \\
   \end{array}} \right) H_0 \left( {\begin{array}{*{20}c}
   { z^{1/2}\Sigma_0^{1/2}} & 0  \\
   0 & {I}  \\
   \end{array}} \right).
\end{equation} 
In \cite{Knowles2017}, the deterministic convergent limit of $H_0^{-1}$ is
\begin{equation} \label{convergentlimit1}
 \Pi_0(z)=\left( {\begin{array}{*{20}c}
   { -\Sigma_0 (1+m(z)\Sigma_0)^{-1}}& 0  \\
   0 & {m(z)}  \\
   \end{array}} \right).
\end{equation} 
Therefore, by (\ref{linearblockrelation}), we can get a similar relation between (\ref{eq_defnpi}) and (\ref{convergentlimit1})
\begin{equation}\label{pirelationship}
\Pi(z)=\left( {\begin{array}{*{20}c}
   { z^{-1/2}\Sigma_0^{-1/2}} & 0  \\
   0 & {I}  \\
   \end{array}} \right) \Pi_0(z) \left( {\begin{array}{*{20}c}
   { z^{-1/2}\Sigma_0^{-1/2}} & 0  \\
   0 & {I}  \\
   \end{array}} \right).
\end{equation}

Second, when $\operatorname{dist}(E, \operatorname{supp}(\varrho))\geq \tau,$ the results have been established for $(H_0^{-1}, \Pi_0)$ in \cite[Theorem 3.16]{Knowles2017} Since $|z|<\infty,$ together with (3) of Assumption \ref{assum_summary}, we can conclude that, the results should also hold for $(G, \Pi).$ Moreover, when $\eta \geq \tau,$ it is easy to see that for some constant $C>0,$
\begin{equation*}
\Im m(z)=\int \frac{\eta}{(x-E)^2+\eta^2} \varrho(x) \dd x \geq C \eta. 
\end{equation*}  
Consequently,  when $\eta \geq  \tau ,$ we have that for some constant $c>0$ 
\begin{equation}\label{eq_assumption}
\inf_{z \in \mathcal{D}_o}\min_i |1+m(z) \sigma_i | \geq c.
\end{equation}
According to (3.20) of \cite{Knowles2017}, once (\ref{eq_assumption}) holds, under (1)--(3) of Assumption \ref{assum_summary}, the results for $(H_0^{-1}, \Pi_0)$ can be obtained as stated in \cite[Theorem 3.6]{Knowles2017}. This completes the proof using $|z|<\infty$ and (3) of Assumption \ref{assum_summary}. 
\end{proof}

\begin{proof}[\bf Proof of Lemma \ref{lem_connectionspikednonspiked}]

We start with (\ref{eq_leftpertub}). For $\bm{v}_i, 1 \leq i \leq N,$ multiplying it on both sides of (\ref{eq_expansionformula}) yields that
\begin{equation}\label{eq_expansioncorrect}
\bm{v}_i^* \widetilde{G}_1(z) \bm{v}_i= \frac{\sigma_i}{\widetilde{\sigma}_i}\left( \bm{v}_i^* G_1(z) \bm{v}_i-z\bm{v}_i^* G_1 \Vb_r \left(\Db^{-1}+1+z\Vb_r^* G_1 \Vb_r \right)^{-1} \Vb_r^* G_1 \bm{v}_i \right).
\end{equation}
First, when $\bm{v}=\bm{v}_i,  i > r,$ since $\bm{v}_i^* \bm{v}_j=0, \ \bm{v}_j \in \Vb_r,$ by Lemma \ref{lem_locallaw}, we conclude that  for $z \in \widetilde{\mathcal{D}}_o,$
\begin{equation*}
\bm{v}_i^* \widetilde{G}_1(z) \bm{v}_i= \bm{v}_i^* G_1(z) \bm{v}_i+\OO_{\prec}(M^{-1/2}),
\end{equation*}
where we used the fact that $\sigma_i=\widetilde{\sigma}_i, i>r.$ Second, when $\bm{v}=\bm{v}_i, i \leq r,$ we obtain that 
\begin{align*}
\bm{v}_i^* \widetilde{G}_1(z) \bm{v}_i = \frac{1}{1+d_i} \left( \bm{v}_i^* G_1(z) \bm{v}_i -\mathcal{L}_i \right) +\OO_{\prec}(M^{-1/2}), 
\end{align*}
where we used Lemma \ref{lem_locallaw}.  
This completes our proof of (\ref{eq_leftpertub}) using the expansion $\bm{v}=\sum_{i=1}^N w_i \bm{v}_i.$

For (\ref{eq_rightpertub}), let $\Delta(z)=G(z)-\Pi(z),$ by Lemma \ref{lem_generalnonpertubed}, we have that 
\begin{align*}
{\bm{u}}^* \widetilde{G}_2(z) {\bm{u}}= \bm{u}^* G_2(z) \bm{u}+z \widetilde{\bm{u}}^* \Pi(z) \widehat{\Vb}_r \left(\Db^{-1}+1+z\widehat{\Vb}_r^* G(z) \widehat{\Vb}_r \right)^{-1} \widehat{\Vb}_r^* G(z) \widetilde{\bm{u}}
\\- z \widetilde{\bm{u}}^* \Delta(z) \widehat{\Vb}_r \left(\Db^{-1}+1+z\widehat{\Vb}_r^* G(z) \widehat{\Vb}_r \right)^{-1} \widehat{\Vb}_r^* G(z) \widetilde{\bm{u}}.
\end{align*}
Using the structure of (\ref{eq_defnpi}), (\ref{eq_embeddingone}) and (\ref{eq_embeddingtwo}), for the first term, we have that
\begin{equation*}
z \widetilde{\bm{u}}^* \Pi(z) \widehat{\Vb}_r \left(\Db^{-1}+1+z\widehat{\Vb}_r^* G(z) \widehat{\Vb}_r \right)^{-1} \widehat{\Vb}_r^* G(z) \widetilde{\bm{u}}=0.
\end{equation*}
By Lemma \ref{lem_locallaw}, we have that 
\begin{equation*}
\left \| \widetilde{\bm{u}}^* \Delta(z) \widehat{\Vb}_r \right \|=\OO_{\prec}(M^{-1/2}), \ \left\| \widehat{\Vb}_r^* G(z) \widetilde{\bm{u}} \right\|=\OO_{\prec}(1), 
\end{equation*}
and for some constant $C>0,$
\begin{align*}
\left\| \left(\Db^{-1}+1+z\widehat{\Vb}_r^* G(z) \widehat{\Vb}_r \right)^{-1} \right\| \leq \frac{C}{\tau-\Psi(z)}+\OO_{\prec}(M^{-1/2}),
\end{align*}
where we used the definition $\widetilde{\mathcal{D}}_o$ in (\ref{eq_refineset}). This completes our proof for  (\ref{eq_rightpertub}). 

\end{proof}

\subsection{Proof of Lemma \ref{lem_telecopingsummation}} \label{appenxi_sub_proofkey}
In this subsection, we proceed to the proof of Lemma \ref{lem_telecopingsummation}. Its proof relies on the following decomposition, which is an analog of Lemma 5.15 of \cite{Paquette2020}. Define
\begin{equation*}
S(X) \equiv S(z, X):= \sqrt{M} \left(G(z, X)-\Pi(z) \right).   
\end{equation*}
We use the shorthand notation $S(X) \equiv S(z,X)$ if there is no confusion on the spectral parameter.

For each pair $(i, \mu),$ since $\Sigma_0^{1/2} \bm{f}_i \bm{f}_\mu^*$ is a rank one matrix, we write
\begin{equation*}
\Sigma_0^{1/2} \bm{f}_i \bm{f}_\mu^*=\ell \bm{\xi} \bm{\zeta}^*.
\end{equation*}
Note that $\ell<\infty.$ Recall (\ref{eq_defndelta}). Note that 
\begin{equation}\label{eq_ranktwomatrix}
\begin{pmatrix}
0 & \Sigma_0^{1/2} \bm{f}_i \bm{f}_{\mu}^* \\
\bm{f}_{\mu} \bm{f}_i^* \Sigma_0^{1/2}& 0
\end{pmatrix}=\mathbf{U} \mathbf{D} \mathbf{U}^*,    
\end{equation}
where $\mathbf{D} \in \mathbb{R}^{2 \times 2}$ and $\mathbf{U} \in \mathbb{R}^{(N+M) \times 2}$ are defined as 
\begin{equation*}
\mathbf{D}:=
\begin{pmatrix}
0 & \ell \\
\ell & 0
\end{pmatrix}, \ 
\mathbf{U}:=
\begin{pmatrix}
\bm{\xi} & 0 \\
0 & \bm{\zeta} 
\end{pmatrix}.
\end{equation*}
\begin{lemma}\label{lem_firstordercorrection}
For $\beta=0,1,$ any deterministic unit vectors $\bm{u}, \bm{v} \in \mathbb{R}^N$ and any spectral parameter $z \in \mathcal{D}(z, \tau)$ in (\ref{eq_setmathcald}),  we have 
\begin{equation*}
\widehat{u}^*S(X_{i \mu}^{\theta, X_{i \mu}^\beta}) \widehat{\bm{v}}=\widehat{u}^*S(X_{i \mu}^{\theta, 0}) \widehat{\bm{v}}+J_0+\sum_{k=1}^4 M^{-k/2} J_k+\OO_{\prec}\left( M^{-5/2} \right).
\end{equation*}
where $J_0$ is defined as 
\begin{align*}
J_0:=\sqrt{M \eta} \sum_{k \in \{2, 4\}} (-\sqrt{z} X_{i \mu}^{\beta})^k \mathfrak{s}_k,
\end{align*}
and $\mathfrak{s}_k$ is independent of $\beta$ and defined as
\begin{equation}\label{eq_defnsk}
\mathsf{s}_k:=\widehat{\bm{u}}^* \Pi \left(  \mathbf{U} \mathbf{D} \mathbf{U}^* \Pi \right)^k  \widehat{\bm{v}}. 
\end{equation}
and $J_k, 1 \leq k \leq 4, $ has the following form
\begin{equation*}
J_k=(-\sqrt{M} X_{i \mu}^{\beta})^k g_k,
\end{equation*}
where $g_k$ only depends on $X_{(i \mu)}^{\theta, 0},$ i.e., independent of $X_{i \mu}^{\beta}$ satisfying that 
\begin{equation*}
g_k=\OO_{\prec}(1). 
\end{equation*}
\end{lemma}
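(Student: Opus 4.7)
The strategy is to expand $G_{(i\mu)}^{\theta, X_{i\mu}^\beta}$ around its value at the ``decoupled'' point $\lambda=0$ via the resolvent identity \eqref{eq_resolventexpansion}, and then separate the deterministic piece using the local law \eqref{eq_reducebound}. Setting $\lambda=0$ and $\lambda'=X_{i\mu}^\beta$ in \eqref{eq_resolventexpansion} with $K=5$, and using the rank--two decomposition \eqref{eq_ranktwomatrix} to write $\Delta_{(i\mu)}^{-X_{i\mu}^\beta}=-X_{i\mu}^\beta\sqrt{z}\,\mathbf{U}\mathbf{D}\mathbf{U}^*$, one obtains
\[
G_{(i\mu)}^{\theta,X_{i\mu}^\beta}=G_0+\sum_{k=1}^{5}(-X_{i\mu}^\beta\sqrt{z})^{k}G_0(\mathbf{U}\mathbf{D}\mathbf{U}^*G_0)^{k}+\mathcal{R}_6,
\]
where $G_0:=G_{(i\mu)}^{\theta,0}$. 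Since $X_{i\mu}^\beta\prec M^{-1/2}$ by Assumption \ref{assum_summary}(2), and $\|G_{(i\mu)}^{\theta,\lambda}\|\prec 1$ on $\mathcal D_o$ by \eqref{eq_preparationbound}, the remainder $\mathcal R_6$ contributes $\OO_\prec(M^{-3})$ to $G$, hence $\OO_\prec(M^{-5/2})$ once multiplied by $\sqrt{M}$ in the definition of $S$.

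Next, I split the deterministic from the fluctuating part of $G_0$. By \eqref{eq_reducebound}, $G_0=\Pi+M^{-1/2}S_0$ with $\widehat{\bm u}^*S_0\widehat{\bm v}=\OO_\prec(1)$ (and similarly for the other entries that appear). Inserting this decomposition into each factor of $G_0(\mathbf{U}\mathbf{D}\mathbf{U}^*G_0)^{k}$ and multiplying by $\sqrt{M}$, the pure-$\Pi$ contribution at order $k$ is exactly
\[
\sqrt{M}\,(-\sqrt{z}X_{i\mu}^\beta)^{k}\,\mathsf{s}_k,\qquad \mathsf{s}_k=\widehat{\bm u}^*\Pi(\mathbf{U}\mathbf{D}\mathbf{U}^*\Pi)^{k}\widehat{\bm v},
\]
and every other term carries at least one extra factor $M^{-1/2}$ from $S_0$. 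The parity observation is the decisive structural point: $\Pi$ is block-diagonal by \eqref{eq_defnpi} while $\mathbf{U}\mathbf{D}\mathbf{U}^*$ is block-off-diagonal by \eqref{eq_ranktwomatrix}, so $\Pi(\mathbf{U}\mathbf{D}\mathbf{U}^*\Pi)^{k}$ is block-off-diagonal for odd $k$ and block-diagonal for even $k$. Since $\widehat{\bm u}$ and $\widehat{\bm v}$ are both supported in the first block by \eqref{eq_defnvhat}, this forces $\mathsf{s}_1=\mathsf{s}_3=\mathsf{s}_5=0$, leaving only $k\in\{2,4\}$ in the deterministic piece $J_0$.

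Finally, I assemble the error terms. Every remaining contribution is a product of exactly $k$ copies of $-\sqrt{z}X_{i\mu}^\beta$ (for some $1\le k\le 5$), at least one copy of $M^{-1/2}S_0$, and boundedly many copies of $\Pi$ or $\mathbf{U}\mathbf{D}\mathbf{U}^*$. Collecting all such pieces with the same total power $k$ of $X_{i\mu}^\beta$ and factoring out $(-\sqrt{M}X_{i\mu}^\beta)^{k}$ defines $g_k$; the leftover size from the $S_0$ insertion(s) together with the $\sqrt{M}$ prefactor yields exactly the $M^{-k/2}$ factor stated in the lemma. The bound $g_k=\OO_\prec(1)$ follows by repeated application of the entrywise local law estimate \eqref{eq_preparationbound} together with $\|\Pi(z)\|\lesssim 1$ on $\mathcal D_o$ (using Assumption \ref{assum_summary}(3) and $|z|\ge\tau$). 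Crucially, $g_k$ is built from $\Pi(z)$, $\mathbf{U}\mathbf{D}\mathbf{U}^*$, and $G_0$, none of which depend on $X_{i\mu}^\beta$ by the construction \eqref{eq_construction}, so $g_k$ is indeed a function of $X_{(i\mu)}^{\theta,0}$ alone. The same is true of $\mathsf{s}_k$, so the formula for $J_0$ is genuinely independent of $\beta$ apart from the explicit scalar factors $(X_{i\mu}^\beta)^k$.

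The main technical obstacle will be the bookkeeping needed to show that after expanding $G_0=\Pi+M^{-1/2}S_0$ in each of the $k$ slots, every term not absorbed into $J_0$ does fit the form $M^{-k/2}J_k$ with $g_k=\OO_\prec(1)$; this requires tracking that each ``non-leading'' slot costs exactly one $M^{-1/2}$, and that quadratic forms such as $\widehat{\bm u}^*(\cdots)S_0(\cdots)\widehat{\bm v}$ can always be reduced via $\|\mathbf{U}\mathbf{D}\mathbf{U}^*\|=\ell=O(1)$ and \eqref{eq_preparationbound} to a product of $\OO_\prec(1)$ quantities. Once this accounting is done, the lemma follows.
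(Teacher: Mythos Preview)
Your proposal is correct and follows essentially the same route as the paper's proof: a resolvent expansion of $G_{(i\mu)}^{\theta,X_{i\mu}^\beta}$ around $G_{(i\mu)}^{\theta,0}$, followed by the decomposition $G_0=\Pi+M^{-1/2}S_0$ and the observation that the pure-$\Pi$ contributions $\mathsf{s}_k$ vanish for odd $k$. Two small differences are worth noting. First, the paper takes $K=4$ in \eqref{eq_resolventexpansion}, so that the remainder carries the full resolvent $G_{(i\mu)}^{\theta,X_{i\mu}^\beta}$ and the parity argument must be invoked once more (via \eqref{eq_newbound}) to extract the extra $M^{-1/2}$ at order~$5$; your choice $K=5$ pushes the remainder to order~$6$, where the crude bound $\sqrt{M}\cdot(X_{i\mu}^\beta)^6=\OO_\prec(M^{-5/2})$ suffices directly, and the order-$5$ term (whose $\mathsf{s}_5$ vanishes) is absorbed into the $\OO_\prec(M^{-5/2})$ error rather than into a $J_5$---you should state this absorption explicitly since the lemma only lists $J_1,\dots,J_4$. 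Second, your block-parity argument (that $\Pi$ is block-diagonal while $\mathbf{U}\mathbf{D}\mathbf{U}^*$ is block-off-diagonal, so $\Pi(\mathbf{U}\mathbf{D}\mathbf{U}^*\Pi)^k$ is block-off-diagonal for odd $k$) is a cleaner way to see $\mathsf{s}_k=0$ for odd $k$ than the paper's direct matrix computation \eqref{eq_meanzeroornot}; the paper instead writes out the $2\times 2$ block products explicitly for $k=1$ and then asserts the general pattern.
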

\begin{proof}
Using (\ref{eq_resolventexpansion}) with $K=4,$ we obtain 
\begin{align}\label{eq_realdecomposition}
S\left(X_{(i \mu)}^{\theta, X_{i \mu}^{\beta}}\right)= S\left(X_{(i \mu)}^{\theta, 0}\right)&+ \sqrt{M} \sum_{k=1}^4 G_{(i \mu)}^{\theta, 0}  \left( \Delta_{(i \mu)}^{-X_{i \mu}^{\beta}} G_{(i \mu)}^{\theta,0} \right)^k \\
&+\sqrt{M} G_{(i \mu)}^{\theta, X_{i \mu}^{\beta}}\left( \Delta_{(i \mu)}^{-X_{i \mu}^{\beta}} G_{(i \mu)}^{\theta,0} \right)^{5}    \nonumber
\end{align}
We now consider the terms on the right-hand side of (\ref{eq_realdecomposition}). When $k=1,$ using (\ref{eq_defndelta}) and (\ref{eq_ranktwomatrix}), we have that 
\begin{equation*}
\sqrt{M} \widehat{\bm{u}}^* G_{(i \mu)}^{\theta, 0}  \Delta_{(i \mu)}^{-X_{i \mu}^{\beta}} G_{(i \mu)}^{\theta,0} \widehat{\bm{v}}=-\sqrt{M}  X_{i \mu}^\beta \widehat{\bm{u}}^* G_{(i \mu)}^{\theta, 0} \mathbf{U} \mathbf{D} \mathbf{U}^* G_{(i \mu)}^{\theta, 0} \widehat{\bm{v}}.
\end{equation*}
By construction of (\ref{eq_construction}),  we have that $\widehat{\bm{u}}^* G_{(i \mu)}^{\theta, 0} \mathbf{U} \mathbf{D} \mathbf{U}^* G_{(i \mu)}^{\theta, 0} \widehat{\bm{v}}$ is independent of $X_{i \mu}.$
We decompose  
\begin{align}\label{eq_basicexpansion}
    \widehat{\bm{u}}^* G_{(i \mu)}^{\theta, 0} \mathbf{U} \mathbf{D} \mathbf{U}^* G_{(i \mu)}^{\theta, 0} \widehat{\bm{v}}=  \widehat{\bm{u}}^* \Pi \mathbf{U} \mathbf{D} \mathbf{U}^* \Pi \widehat{\bm{v}} + \mathcal{E}_{i \mu, 1}, 
\end{align}
where 
\begin{align*}
\mathcal{E}_{i \mu,1}:= M^{-1/2}\widehat{\bm{u}}^* \left[ S(X_{(i \mu)}^{\theta, 0}) \mathbf{U} \mathbf{D} \mathbf{U}^*   G_{(i \mu)}^{\theta, 0}+ \Pi \mathbf{U} \mathbf{D} \mathbf{U}^* S(X_{(i \mu)}^{\theta, 0}) \right] \widehat{\bm{v}}. 
\end{align*}
Since $X_{(i \mu)}^{\theta, 0}$ is independent of $X_{i \mu}^{\beta},$ we can see that $\mathcal{E}_{i \mu,1}$ is independent of $X_{i \mu}^{\beta}.$ We proceed to the analysis of (\ref{eq_basicexpansion}). First, invoking the structure of (\ref{eq_defnpi}) and (\ref{eq_defnvhat}), we find that 
\begin{align}\label{eq_meanzeroornot}
\widehat{\bm{u}}^* \Pi \mathbf{U} \mathbf{D} \mathbf{U}^* \Pi \widehat{\bm{v}}& =\left( \bm{u}^* \Pi_1(z), 0 \right)\mathbf{U} \mathbf{D} \begin{pmatrix}
\bm{\xi}^* \Pi_1 \bm{v} \\
0
\end{pmatrix}  \nonumber \\
& =(\bm{u} \Pi_1(z) \bm{\xi},0) \mathbf{D} \begin{pmatrix}
\bm{\xi}^* \Pi_1 \bm{v} \\
0
\end{pmatrix} \nonumber \\
& = 0. 
\end{align}
Second, by Lemma \ref{lem_locallaw} and the fact $\ell<\infty$, we have that
\begin{equation*}
\mathcal{E}_{i \mu,1}=\OO_{\prec}(M^{-1/2}).     
\end{equation*}
Combining the above arguments, it is easy to see that we have that $$ J_1= \left(-\sqrt{M} X^{\beta}_{i \mu} \right) g_1, \  g_1:= \sqrt{M} \sqrt{z} \mathcal{E}_{i \mu,1}.$$

The other terms when $k=2,3,4,$ can be analyzed in a similar fashion. We only point out the differences. In particular, on one hand, by an argument similar to (\ref{eq_meanzeroornot}), we have that 
\begin{equation}\label{eq_finalconlusionofd}
 \widehat{\bm{u}}^* \Pi (\mathbf{U} \mathbf{D} \mathbf{U}^* \Pi )^k \widehat{\bm{v}}=0, \ k \ \text{is an odd integer}. 
\end{equation}
Consequently, for $k=2, 4,$ we collect these two terms as $J_0.$ On the other hand, we define
\begin{equation*}
J_k=\left( -\sqrt{M} X_{i \mu}^{\beta} \right)^k  g_k, \ g_k:=z^{k/2} \sqrt{M} \mathcal{E}_{i \mu, k}, \ k=2,3,4,
\end{equation*}
where $\mathcal{E}_{i \mu, k},  2 \leq k \leq 4,$ are defined as 
\begin{align*}
\mathcal{E}_{i \mu, 2}:&=M^{-1/2} \widehat{\bm{u}}^* \left[  S\left(X_{(i \mu)}^{\theta, 0}\right) \left(\mathbf{U} \mathbf{D} \mathbf{U}^*   G_{(i \mu)}^{\theta, 0}\right)^2+ \Pi  \mathbf{U} \mathbf{D} \mathbf{U}^*   S\left(X_{(i \mu)}^{\theta, 0}\right) \mathbf{U} \mathbf{D} \mathbf{U}^*   G_{(i \mu)}^{\theta, 0} \right. \\
&+ \left. \left( \Pi  \mathbf{U} \mathbf{D} \mathbf{U}^* \right)^2 S\left(X_{(i \mu)}^{\theta, 0}\right) \right] \widehat{\bm{v}},
\end{align*}
\begin{align*}
\mathcal{E}_{i \mu, 3}:&=M^{-1/2}\widehat{\bm{u}}^* \left[  S\left(X_{(i \mu)}^{\theta, 0}\right) \left(\mathbf{U} \mathbf{D} \mathbf{U}^*   G_{(i \mu)}^{\theta, 0}\right)^3+ \Pi  \mathbf{U} \mathbf{D} \mathbf{U}^*   S\left(X_{(i \mu)}^{\theta, 0}\right) \left( \mathbf{U} \mathbf{D} \mathbf{U}^*   G_{(i \mu)}^{\theta, 0} \right)^2 \right. \\
&+ \left. \left( \Pi  \mathbf{U} \mathbf{D} \mathbf{U}^*\right)^2 S\left(X_{(i \mu)}^{\theta, 0} \right)  \mathbf{U} \mathbf{D} \mathbf{U}^*G_{(i \mu)}^{\theta, 0}+ \left( \Pi  \mathbf{U} \mathbf{D} \mathbf{U}^*\right)^3 S\left(X_{(i \mu)}^{\theta, 0} \right)  \right] \widehat{\bm{v}},
\end{align*}
\begin{align*}
\mathcal{E}_{i \mu, 4}:&=M^{-1/2}\widehat{\bm{u}}^* \left[  S\left(X_{(i \mu)}^{\theta, 0}\right) \left(\mathbf{U} \mathbf{D} \mathbf{U}^*   G_{(i \mu)}^{\theta, 0}\right)^4+ \Pi  \mathbf{U} \mathbf{D} \mathbf{U}^*   S\left(X_{(i \mu)}^{\theta, 0}\right) \left( \mathbf{U} \mathbf{D} \mathbf{U}^*   G_{(i \mu)}^{\theta, 0} \right)^3 \right. \\
&+ \left. \left( \Pi  \mathbf{U} \mathbf{D} \mathbf{U}^*\right)^2 S\left(X_{(i \mu)}^{\theta, 0} \right)  \left(\mathbf{U} \mathbf{D} \mathbf{U}^*G_{(i \mu)}^{\theta, 0} \right)^2+ \left( \Pi  \mathbf{U} \mathbf{D} \mathbf{U}^*\right)^3 S\left(X_{(i \mu)}^{\theta, 0} \right) \mathbf{U} \mathbf{D} \mathbf{U}^*G_{(i \mu)}^{\theta, 0} \right. \\
&+\left. \left( \Pi  \mathbf{U} \mathbf{D} \mathbf{U}^* \right)^4 S\left(X_{(i \mu)}^{\theta, 0}\right) \right] \widehat{\bm{v}}.
\end{align*}
Moreover, it is easy to see from Lemma \ref{lem_locallaw} that 
\begin{equation*}
\mathcal{E}_{i \mu, k}=\OO_{\prec}(M^{-1/2}), \ 2 \leq k \leq 4. 
\end{equation*}

Finally, for $k=5,$ by a discussion similar to (\ref{eq_reducebound}), we have that
\begin{equation}\label{eq_newbound}
\left \langle \widehat{\bm{u}} (G_{(i \mu)}^{\theta, X_{i \mu}^\beta}-\Pi(z)), \widehat{\bm{v}}  \right \rangle \prec M^{-1/2}.      
\end{equation}
Since $k$ is odd, using (\ref{eq_finalconlusionofd}) and a discussion similar to (\ref{eq_basicexpansion}), together with (\ref{eq_newbound}),  we obtain that
\begin{equation*}
 \widehat{\bm{u}}^* G_{(i \mu)}^{\theta, X_{i \mu}^{\beta}}\left(\mathbf{U} \mathbf{D} \mathbf{U}^* G_{(i \mu)}^{\theta, 0}\right)^5 \widehat{\bm{v}}=\mathcal{E}_{i \mu, 5},
\end{equation*}
where $\mathcal{E}_{i \mu, 5}$ is defined similarly as $\mathcal{E}_{i \mu, k}, 1 \leq k \leq 4,$ and satisfies $\mathcal{E}_{i \mu,5}=\OO_{\prec}(M^{-1/2}).$ Consequently, by Assumption \ref{assum_summary}, we conclude  
\begin{equation*}
\sqrt{M} \widehat{\bm{u}}^* G_{(i \mu)}^{\theta, X_{i \mu}^{\beta}}\left( \Delta_{(i \mu)}^{-X_{i \mu}^{\beta}} G_{(i \mu)}^{\theta,0} \right)^{5}  \widehat{\bm{v}}=\OO_{\prec}(M^{-5/2}).
\end{equation*}
This completes our proof. 
\end{proof}
Armed with the above lemma, we proceed to the proof of Lemma \ref{lem_telecopingsummation}.
\begin{proof}[\bf Proof of Lemma \ref{lem_telecopingsummation}] 
We claim that, for $\beta=0,1,$ any $\theta \in [0,1]$ and some small constant $\epsilon>0,$ the following holds
\begin{equation}\label{eq_claimfinal}
\left|\mathbb{E} F\left(X_{(i \mu)}^{\theta, X_{i \mu}^\beta}\right)-\mathbb{E} F\left(X_{(i \mu)}^{\theta,0}\right)- \mathcal{J} \right| \leq M^{-5/2+\epsilon},
\end{equation}
where $\mathcal{J}$ only depends on $X_{(i \mu)}^{\theta, 0},$ $\mathfrak{s}_k, k=2,4,$ defined in (\ref{eq_defnsk}) and the moments of $X_{i \mu}^\beta$ up to order of four. (\ref{eq_claimfinal}) implies Lemma \ref{lem_telecopingsummation}. In fact, since $\mathfrak{s}_k, k=2,4,$ are independent of $\beta=0,1,$ by (\ref{assum_momentmatching}) and (\ref{eq_claimfinal}), we readily obtain that 
\begin{equation*}
\left| \mathbb{E} F \left( X_{(i \mu)}^{\theta, X_{i \mu}^1} \right)-\mathbb{E} F \left( X_{(i \mu)}^{\theta, X_{i \mu}^0} \right) \right| \leq M^{-5/2+\epsilon}.
\end{equation*}
This completes the proof of Lemma \ref{lem_telecopingsummation}. 

The following arguments now lead to the proof of (\ref{eq_claimfinal}). These arguments are similar to those in Proposition 5.16 of \cite{Paquette2020} utilizing Lemma \ref{lem_firstordercorrection}  and we only point out the main differences.
Denote $\bm{\gamma}=(\gamma_1, \cdots, \gamma_r)$ such that
\begin{equation}\label{eq_gammadecomposition}
\gamma_i=J_{0,i}+\sum_{k=1}^4 M^{-k/2} J_{k,i}+\OO_{\prec}(M^{-5/2}),
\end{equation}
where this represents the term in Lemma \ref{lem_firstordercorrection} applied to $\widehat{\bm{q}}_i,\widehat{\bm{p}}_i, z_i$ and $X_{(i \mu)}^{\theta,0}.$ Applying a fifth order  Taylor expansion to  $F$ defined in (\ref{eq_defnF}), using the conventions (\ref{eq_convention1}) and (\ref{eq_convention2}), we have that for $\beta=0,1,$ 
\begin{align*}
F\left(X_{(i \mu)}^{\theta, X_{i \mu}^\beta}\right)& =\Phi\left(Z_1\left(X_{(i \mu)}^{\theta,0}\right)+\gamma_1, \cdots, Z_r\left(X_{(i \mu)}^{\theta, 0}\right)+\gamma_r\right) \\
& =F\left(X_{(i \mu)}^{\theta,0}\right)+\sum_{k=1}^4 \sum_{|\bm{\alpha}|=k} \frac{\partial^{\bm{\alpha}} \Phi\left(Z_1\left(X_{(i \mu)}^{\theta,0}\right), \cdots, Z_r\left(X_{(i \mu)}^{\theta, 0}\right)\right)}{\bm{\alpha}!}  \bm{\gamma}^{\bm{\alpha}} \\
&+\sum_{|\bm{\alpha}|=5}\frac{\partial^{\bm{\alpha}}\Phi \left(Z_1\left(X_{(i \mu)}^{\theta,0}+h\gamma_1\right), \cdots, Z_r\left(X_{(i \mu)}^{\theta, 0}+h \gamma_r\right)\right)}{\bm{\alpha}!}  \bm{\gamma}^{\bm{\alpha}},
\end{align*}
for some constant $0 \leq h \leq 1. $ Here $\bm{\alpha}  \in \mathbb{R}^m$ contains nonnegative integers. We first handle the error term when $|\bm{\alpha}|=5.$ Recall the definitions of $J_0$ and $J_k$ in Lemma \ref{lem_firstordercorrection}. We readily conclude that for all $1 \leq i \leq r$
\begin{equation*}
J_{0,i}=\OO_{\prec}(M^{-1/2}), \ J_{k,i}=\OO_{\prec}(1).
\end{equation*}
Consequently, according to (\ref{eq_gammadecomposition}), we find that 
\begin{equation*}
\sum_{|\bm{\alpha}|=5}\frac{\partial^{\bm{\alpha}}\Phi \left(Z_1\left(X_{(i \mu)}^{\theta,0}+h\gamma_1\right), \cdots, Z_r\left(X_{(i \mu)}^{\theta, 0}+h \gamma_r\right)\right)}{\bm{\alpha}!}  \bm{\gamma}^{\bm{\alpha}}=\OO_{\prec}(M^{-5/2}). 
\end{equation*}
Next, we can set
\begin{equation*}
\mathcal{J}:=\sum_{k=1}^4 \sum_{|\bm{\alpha}|=k} \frac{\partial^{\bm{\alpha}} \Phi\left(Z_1\left(X_{(i \mu)}^{\theta,0}\right), \cdots, Z_r\left(X_{(i \mu)}^{\theta, 0}\right)\right)}{\bm{\alpha}!}  \bm{\widehat{\gamma}}^{\bm{\alpha}},
\end{equation*}
where $\bm{\widehat{\gamma}}=(\widehat{\gamma}_{1}, \cdots, \widehat{\gamma}_r)$ and 
\begin{equation*}
\widehat{\gamma}_i=J_{0,i}+\sum_{k=1}^4 M^{-k/2} J_{k,i}, \ 1 \leq i \leq r. 
\end{equation*} 
It is clear that $\mathcal{J}$ only depends on  $X_{(i \mu)}^{\theta, 0},$ $\mathfrak{s}_k, k=2,4,$ defined in (\ref{eq_defnsk}) and the moments of $X_{i \mu}^\beta$ up to order of four. Moreover, by (\ref{eq_gammadecomposition}), we conclude that 
\begin{equation*}
\sum_{k=1}^4 \sum_{|\bm{\alpha}|=k} \frac{\partial^{\bm{\alpha}} \Phi\left(Z_1\left(X_{(i \mu)}^{\theta,0}\right), \cdots, Z_r\left(X_{(i \mu)}^{\theta, 0}\right)\right)}{\bm{\alpha}!}  \bm{\gamma}^{\bm{\alpha}}=\mathcal{J}+\OO_{\prec}(M^{-5/2}).
\end{equation*}
This concludes the proof of (\ref{eq_claimfinal}) and hence Lemma \ref{lem_telecopingsummation}.

\end{proof}

\section{Some extra lemmas}\label{sec_appc}

\subsection{Some technical lemmas}\label{appendix_techinicallemma}
In this subsection, we prove some  lemmas. These lemmas provide  key connections between the VESDs of the spiked and non-spiked sample covariance matrices.

\begin{lemma} Let $\Vb_r$ be the collection of the first $r$ spiked eigenvectors of $\Sigma$ and $\Db=\operatorname{diag}\{d_1, \cdots, d_r\}.$  We have that  
\begin{align}\label{eq_expansionformula}
\Sigma_0^{-1/2} \Sigma^{1/2} & \widetilde{G}_1(z) \Sigma^{1/2} \Sigma_0^{-1/2} \nonumber \\
&=-z G_1(z)\Vb_r \left(\Db^{-1}+1+z\Vb_r^* G_1(z) \Vb_r \right)^{-1} \Vb_r^* G_1(z)   +G_1(z).
\end{align}
\end{lemma}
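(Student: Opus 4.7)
The identity is purely algebraic (no probability is needed), so the plan is to reduce both sides to inverses of $XX^* - z\Sigma^{-1}$ and $XX^* - z\Sigma_0^{-1}$, then use Sherman--Morrison--Woodbury to relate them via the rank-$r$ difference $\Sigma^{-1} - \Sigma_0^{-1}$. I will exploit throughout that $\Sigma$ and $\Sigma_0$ share the eigenbasis $\{\bm v_i\}$, so that $\bm v_1,\ldots,\bm v_r$ are also eigenvectors of $\Sigma_0^{1/2}$ and $\Sigma_0^{-1}$; in particular $\Sigma_0^{1/2}\Vb_r = \Vb_r\,\Sigma_{0,r}^{1/2}$, where $\Sigma_{0,r}^{1/2} := \operatorname{diag}(\sqrt{\sigma_1},\ldots,\sqrt{\sigma_r})$.

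First I would pull out the square roots on each side. Using $\widetilde W = \Sigma^{1/2}(XX^* - z\Sigma^{-1})\Sigma^{1/2}/(\cdot) - z = \Sigma^{1/2}(XX^*-z\Sigma^{-1})\Sigma^{1/2}$ (so $\widetilde G_1 = \Sigma^{-1/2}(XX^*-z\Sigma^{-1})^{-1}\Sigma^{-1/2}$), the left-hand side collapses to
\begin{equation*}
\Sigma_0^{-1/2}\,(XX^* - z\Sigma^{-1})^{-1}\,\Sigma_0^{-1/2},
\end{equation*}
and analogously $G_1 = \Sigma_0^{-1/2}(XX^*-z\Sigma_0^{-1})^{-1}\Sigma_0^{-1/2}$. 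Writing $A := XX^* - z\Sigma_0^{-1}$, the task reduces to computing $\Sigma_0^{-1/2}(A + z(\Sigma_0^{-1}-\Sigma^{-1}))^{-1}\Sigma_0^{-1/2}$. From $\Sigma_0^{-1}\bm v_i = \sigma_i^{-1}\bm v_i$ and $\Sigma^{-1}\bm v_i = \widetilde\sigma_i^{-1}\bm v_i$ with $\widetilde\sigma_i = (1+d_i)\sigma_i$, one gets the clean rank-$r$ identity
\begin{equation*}
\Sigma_0^{-1} - \Sigma^{-1} = \Vb_r\,\Db'\,\Vb_r^*, \qquad \Db' := \operatorname{diag}\!\Bigl(\tfrac{d_i}{\sigma_i(1+d_i)}\Bigr) = \operatorname{diag}(d_i\widetilde\sigma_i^{-1}),
\end{equation*}
which is supported on the spike subspace. (For $i>r$ we have $d_i=0$, so no contribution.)

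Next I would apply the Woodbury identity to $(A + z\Vb_r\Db'\Vb_r^*)^{-1}$, giving
\begin{equation*}
A^{-1} - A^{-1}\Vb_r\bigl((z\Db')^{-1} + \Vb_r^* A^{-1}\Vb_r\bigr)^{-1}\Vb_r^* A^{-1}.
\end{equation*}
Conjugating by $\Sigma_0^{-1/2}$ and using $A^{-1} = \Sigma_0^{1/2}G_1\Sigma_0^{1/2}$, together with $\Sigma_0^{1/2}\Vb_r = \Vb_r\Sigma_{0,r}^{1/2}$, converts every occurrence of $\Sigma_0^{-1/2}A^{-1}\Vb_r$ into $G_1\Vb_r\Sigma_{0,r}^{1/2}$ (and symmetrically). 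Factoring the two $\Sigma_{0,r}^{1/2}$ factors into the bracketed inverse gives
\begin{equation*}
\Sigma_{0,r}^{1/2}\bigl((z\Db')^{-1} + \Sigma_{0,r}^{1/2}\Vb_r^*G_1\Vb_r\Sigma_{0,r}^{1/2}\bigr)^{-1}\Sigma_{0,r}^{1/2} = \bigl(\Sigma_{0,r}^{-1/2}(z\Db')^{-1}\Sigma_{0,r}^{-1/2} + \Vb_r^*G_1\Vb_r\bigr)^{-1}.
\end{equation*}

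The last step is the verification that $\Sigma_{0,r}^{-1/2}(z\Db')^{-1}\Sigma_{0,r}^{-1/2} = z^{-1}(\Db^{-1}+I_r)$. This is a direct diagonal computation: $\Sigma_{0,r}^{-1/2}\operatorname{diag}(\sigma_i(1+d_i)/(zd_i))\Sigma_{0,r}^{-1/2} = \operatorname{diag}((1+d_i)/(zd_i)) = z^{-1}(\Db^{-1}+I_r)$. Factoring out $z^{-1}$ from the inverse produces a factor $z$, which combines with the existing signs to yield exactly the claimed formula. The whole argument is just bookkeeping; the only place to be careful is tracking that the $\Sigma_{0,r}^{1/2}$ factors cancel against the Woodbury middle term to produce the dimensionless combination $\Db^{-1}+I+z\Vb_r^*G_1\Vb_r$, which is the mildly subtle step worth writing out explicitly in the final proof.
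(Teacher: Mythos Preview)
Your proposal is correct and follows essentially the same approach as the paper: both reduce the left-hand side to $\Sigma_0^{-1/2}(XX^*-z\Sigma^{-1})^{-1}\Sigma_0^{-1/2}$ and then apply the Woodbury identity to the rank-$r$ perturbation coming from the spikes. The only organizational difference is that the paper first rewrites $\Sigma_0^{-1/2}(XX^*-z\Sigma^{-1})^{-1}\Sigma_0^{-1/2}=\bigl([G_1(z)]^{-1}+z\Vb_r\Db(1+\Db)^{-1}\Vb_r^*\bigr)^{-1}$ and then applies Woodbury directly, which avoids your bookkeeping with the $\Sigma_{0,r}^{1/2}$ factors; your route arrives at the same place after the diagonal cancellation you identified.
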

\begin{proof}
Note that
\begin{align}
\Sigma_0^{-1/2} \Sigma^{1/2} \widetilde{G}_1(z) \Sigma^{1/2} \Sigma_0^{-1/2} &=\Sigma_0^{-1/2} \left( XX^\top-z \Sigma^{-1} \right)^{-1}  \Sigma_0^{-1/2} \nonumber \\
&=\left( W_0-z+z-z \Sigma_0^{1/2} \Sigma^{-1} \Sigma_0^{1/2} \right)^{-1} \nonumber \\
& =\left( [G_1(z)]^{-1}+z \Vb_r \Db (1+\Db)^{-1} \Vb^*_r \right)^{-1}. \label{spikedgeneralexpansion}
\end{align}
Using the Woodbury's identity 
\begin{equation}\label{eq_woodburyidentity}
(A+SBT)^{-1}=A^{-1}-A^{-1}S(B^{-1}+TA^{-1}S)^{-1}TA^{-1},
\end{equation}
we have that
\begin{align*}
& \left( [G_1(z)]^{-1}  +z \Vb_r \Db (1+\Db)^{-1} \Vb_r^* \right)^{-1} \\
&=G_1(z)-z G_1(z)\Vb_r \left(\Db^{-1}+1+z\Vb_r^* G_1(z) \Vb_r \right)^{-1} \Vb_r^* G_1(z).
\end{align*}
This completes our proof. 
\end{proof}

The second lemma provides the connection of the VESDs of the right singular vectors of the spiked and non-spiked covariance matrices.  
\begin{lemma}\label{lem_generalnonpertubed} For any deterministic vector $\bm{u} \in \mathbb{R}^M,$ denote $\widetilde{\bm{u}} \in \mathbb{R}^{N+M}$ as the natural embedding of $\bm{u}$ such that
\begin{equation}\label{eq_embeddingone}
\widetilde{\bm{u}}=
\begin{pmatrix}
0  \\
 \bm{u} 
\end{pmatrix}.
\end{equation}
Moreover, denote $\widehat{\Vb}_r \in \mathbb{R}^{(N+M) \times r}$ as the natural embedding of $\Vb_r$ such that that
\begin{equation}\label{eq_embeddingtwo}
\widehat{\Vb}_r=
\begin{pmatrix}
\Vb_r  \\
 0 
\end{pmatrix}.
\end{equation}
Then we have that 
\begin{equation*}
{\bm{u}}^* \widetilde{G}_2 {\bm{u}}= \bm{u}^* G_2(z) \bm{u}- z \widetilde{\bm{u}}^*G(z) \widehat{\Vb}_r \left(\Db^{-1}+1+z\widehat{\Vb}_r^* G(z) \widehat{\Vb}_r \right)^{-1} \widehat{\Vb}_r^* G(z) \widetilde{\bm{u}}.
\end{equation*}
\end{lemma}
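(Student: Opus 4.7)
The plan is to mirror, on the right (``$M \times M$'') block, the derivation of (\ref{eq_expansionformula}) used for $\widetilde{G}_1$: write $\widetilde{\mathcal W} = X^*\Sigma X$ as a rank-$r$ perturbation of $\mathcal W_0 = X^*\Sigma_0 X$, apply the Woodbury identity (\ref{eq_woodburyidentity}), and then recast everything in terms of the full linearized resolvent $G(z)$ from (\ref{eq_defnG}).

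First, since $\Sigma$ and $\Sigma_0$ share the eigenvectors $\{\bm v_i\}$, a direct computation gives $\Sigma_0^{-1/2}\Sigma\Sigma_0^{-1/2} = I + \Vb_r\Db\Vb_r^*$, so that
\[
\widetilde{\mathcal W} - z = (\mathcal W_0 - z) + X^*\Sigma_0^{1/2}\Vb_r\,\Db\,\Vb_r^*\Sigma_0^{1/2}X.
\]
Applying (\ref{eq_woodburyidentity}) with $A = \mathcal W_0 - z$, $S = X^*\Sigma_0^{1/2}\Vb_r$, $B = \Db$, $T = S^*$ yields
\[
\widetilde G_2 = G_2 - G_2 X^*\Sigma_0^{1/2}\Vb_r\bigl(\Db^{-1} + \Vb_r^*\Sigma_0^{1/2}XG_2X^*\Sigma_0^{1/2}\Vb_r\bigr)^{-1}\Vb_r^*\Sigma_0^{1/2}XG_2.
\]

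Second, I would simplify the inner bracket using the intertwining identity $G_1\Sigma_0^{1/2}X = \Sigma_0^{1/2}XG_2$, which is immediate from $(W_0 - z)\Sigma_0^{1/2}X = \Sigma_0^{1/2}X(\mathcal W_0 - z)$. This gives
\[
\Sigma_0^{1/2}XG_2X^*\Sigma_0^{1/2} = G_1 W_0 = I + z G_1,
\]
hence
\[
\Db^{-1} + \Vb_r^*\Sigma_0^{1/2}XG_2X^*\Sigma_0^{1/2}\Vb_r = \Db^{-1} + I + z\Vb_r^*G_1\Vb_r,
\]
which already matches the inner matrix in the desired formula once one notes, from the block structure in (\ref{eq_defnG}) and the embedding (\ref{eq_embeddingtwo}), that $\widehat\Vb_r^*G\widehat\Vb_r = \Vb_r^*G_1\Vb_r$.

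Third, I would read off from (\ref{eq_defnG}) together with (\ref{eq_embeddingone})--(\ref{eq_embeddingtwo}) the identities
\[
\widetilde{\bm u}^*G\widehat\Vb_r = z^{-1/2}\bm u^*G_2X^*\Sigma_0^{1/2}\Vb_r, \qquad \widehat\Vb_r^*G\widetilde{\bm u} = z^{-1/2}\Vb_r^*\Sigma_0^{1/2}XG_2\bm u,
\]
and substitute them into the sandwiched Woodbury expression $\bm u^*\widetilde G_2\bm u$. The two factors $z^{-1/2}$ combine with the overall $z$ to leave exactly the claimed formula. The entire argument is algebraic; the only point that needs care is the appearance of the $\sqrt z$ factors coming from the definition (\ref{eq_defnh}) of $H$, which is precisely what converts the ``product of two off-diagonal blocks'' in the Woodbury correction into a single factor of $z$ in front of $\widetilde{\bm u}^*G\widehat\Vb_r(\cdots)^{-1}\widehat\Vb_r^*G\widetilde{\bm u}$. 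I do not anticipate any real obstacle beyond this bookkeeping.
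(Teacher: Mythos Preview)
Your proof is correct. Both you and the paper use the same core idea---write the spiked object as a rank-$r$ additive perturbation of the non-spiked one and invoke the Woodbury identity (\ref{eq_woodburyidentity})---but the organization differs. You work directly on the $M\times M$ block: Woodbury is applied to $\widetilde G_2 = (\mathcal W_0 - z + X^*\Sigma_0^{1/2}\Vb_r\Db\Vb_r^*\Sigma_0^{1/2}X)^{-1}$, and then you need the intertwining identity $G_1\Sigma_0^{1/2}X=\Sigma_0^{1/2}XG_2$ and the off-diagonal blocks of (\ref{eq_defnG}) to translate the result into the $\widehat\Vb_r$, $\widetilde{\bm u}$, $G$ language. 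The paper instead stays at the $(N+M)\times(N+M)$ linearized level throughout: it shows $\widehat\Sigma_0^{-1}\widehat\Sigma\,\widetilde G\,\widehat\Sigma\,\widehat\Sigma_0^{-1} = \bigl([G]^{-1}+z\widehat\Vb_r\Db(1+\Db)^{-1}\widehat\Vb_r^*\bigr)^{-1}$, applies Woodbury once to this full object, and then simply reads off the lower-right block since $\widehat\Sigma_0$ and $\widehat\Sigma$ act as the identity there. Your route is a bit more hands-on and requires the extra intertwining step; the paper's is more uniform with the preceding lemma for $\widetilde G_1$ and avoids separately tracking the $\sqrt z$ factors from the off-diagonal blocks. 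Either way the algebra closes with no gaps.
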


\begin{proof}
Recall (\ref{eq_defnh}) and (\ref{eq_defnG}). We define the analogous quantities for the spiked model as 
\begin{equation*}
\widetilde{H} \equiv \widetilde{H}(z,X):=\sqrt{z}
\begin{pmatrix}
0 &   \Sigma^{1/2} X \\
 X^* \Sigma^{1/2} & 0
\end{pmatrix},
\end{equation*}
and $\widetilde{G}(z)=(\widetilde{H}-z)^{-1}.$ Denote $\widehat{\Sigma}_0 \in \mathbb{R}^{N+M}$ as 
\begin{equation*}
\widehat{\Sigma}_0:=
\begin{pmatrix}
\Sigma^{1/2}_0 &   0 \\
 0 & I
\end{pmatrix}.
\end{equation*}
Similarly, we can define $\widehat{\Sigma}.$ 
With a discussion similar to (\ref{spikedgeneralexpansion}), we find that 
\begin{align*}
\widehat{\Sigma}_0^{-1} \widehat{\Sigma} \widetilde{G} \widehat{\Sigma}  \widehat{\Sigma}_0^{-1}=\left(\left[ G(z) \right]^{-1}+z \widehat{\Vb}_r \Db(1+\Db)^{-1} \widehat{\Vb}_r^* \right)^{-1}
\end{align*}
Then by the Woodbury's identity (\ref{eq_woodburyidentity}), we have that 
\begin{equation}\label{eq_expressionafterwood}
\widehat{\Sigma}_0^{-1} \widehat{\Sigma} \widetilde{G} \widehat{\Sigma}  \widehat{\Sigma}_0^{-1}=G(z)-zG(z) \widehat{\Vb}_r \left(\Db^{-1}+1+z\widehat{\Vb}_r^* G(z) \widehat{\Vb}_r \right)^{-1} \widehat{\Vb}_r^* G(z)
\end{equation}
Recall (\ref{eq_defnG}). Similar expression holds for $\widetilde{G}.$  We have that 
\begin{equation*}
\bm{u}^* \widetilde{G}_2(z) \bm{u}=\widetilde{\bm{u}}^* \widetilde{G} \widetilde{\bm{u}}.
\end{equation*} 
Moreover, by (\ref{eq_expressionafterwood}), we have 
\begin{align*}
\widetilde{\bm{u}}^* \widetilde{G} \widetilde{\bm{u}}= \bm{u}^* G_2(z) \bm{u}- z \widetilde{\bm{u}}^*G(z) \widehat{\Vb}_r \left(\Db^{-1}+1+z\widehat{\Vb}_r^* G(z) \widehat{\Vb}_r \right)^{-1} \widehat{\Vb}_r^* G(z) \widetilde{\bm{u}}.
\end{align*}
This completes our proof. 
\end{proof}

\subsection{Some auxiliary lemmas} In this subsection, we collect some auxiliary lemmas. 
\begin{lemma}\label{lem_trapezoidalrule}
Suppose $\Gamma$ is a curve of length one with infinitely differentiable arc length parameterization $\ell:[0,1] \rightarrow \Gamma$ such that $\ell(0)<\ell(1/2).$ Given some large integer $m,$ denote $t_j=(2j+1)/2m, j=0,1,2,\cdots,m$ with the convention $s_m=s_0.$ Then for every $D>0,$ there exists some $C_D \equiv C_D(\Gamma)>0,$ such that 
\begin{equation*}
\left| \oint_{\Gamma} f(z) \dd z-\sum_{j=0}^{m-1} f(z_j) w_j \right|\leq C_D \| f^{(D)} \|_{\infty} m^{-D},
\end{equation*}
where $z_j$ are $w_j, j=0,1,2,\cdots,m,$ are defined as
\begin{equation}\label{eq_wjzj}
z_j=\ell(s_j), \ w_j=\frac{\ell'(s_j)}{m}.
\end{equation} 
\end{lemma}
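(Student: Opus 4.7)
The plan is to reduce the contour quadrature to a one-dimensional quadrature of a smooth periodic function and then invoke the classical spectral accuracy of the midpoint/trapezoidal rule on such functions. First, I would use the arc length parameterization to rewrite
\begin{equation*}
\oint_{\Gamma} f(z)\,\dd z \;=\; \int_0^1 f(\ell(s))\, \ell'(s)\,\dd s \;=:\; \int_0^1 g(s)\,\dd s,
\end{equation*}
and observe that with $z_j=\ell(s_j)$ and $w_j = \ell'(s_j)/m$ the quadrature sum on the right-hand side of the asserted estimate is exactly $\frac{1}{m}\sum_{j=0}^{m-1} g(s_j)$, the midpoint rule at the nodes $s_j=(2j+1)/(2m)$.

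Second, because $\Gamma$ is a closed curve, $\ell$ extends to a $C^\infty$ $1$-periodic map $\mathbb{R}\to\Gamma$, and by hypothesis $f$ is analytic (hence smooth) in a neighborhood of $\Gamma$. Therefore $g$ is a smooth $1$-periodic function on $\mathbb{R}$. The main analytic ingredient is the classical fact that for every $D\geq 1$ and every $g\in C^D(\mathbb{R}/\mathbb{Z})$,
\begin{equation*}
\Bigl|\int_0^1 g(s)\,\dd s - \frac{1}{m}\sum_{j=0}^{m-1} g(s_j)\Bigr| \;\leq\; \frac{\widetilde C_D}{m^D}\,\|g^{(D)}\|_\infty,
\end{equation*}
which follows either from repeated integration by parts (Euler--Maclaurin with vanishing boundary contributions due to periodicity) or, more cleanly, from Fourier series: the midpoint rule reproduces the zero mode exactly and annihilates all other Fourier modes $e^{2\pi i k\cdot}$ with $k\not\equiv 0 \pmod m$, so the quadrature error equals $\sum_{k\neq 0}\hat g(km)$ times a phase, and the Fourier coefficients satisfy $|\hat g(j)|\lesssim \|g^{(D)}\|_\infty /|j|^D$.

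Third, I would translate $\|g^{(D)}\|_\infty$ back to $\|f^{(D)}\|_\infty$. Using Faà di Bruno's formula and the Leibniz rule applied to $g(s)=f(\ell(s))\ell'(s)$, one expresses $g^{(D)}$ as a finite linear combination of products of $f^{(k)}(\ell(s))$ for $0\leq k\leq D$ with smooth coefficients determined by derivatives of $\ell$, which are uniformly bounded since $\ell\in C^\infty$ and $[0,1]$ is compact. Because $f$ is analytic in a neighborhood of the compact set $\Gamma$, the Cauchy integral formula gives $\|f^{(k)}\|_{L^\infty(\Gamma)} \lesssim \|f^{(D)}\|_\infty$ for $k\leq D$ up to a constant depending only on $\Gamma$ and $D$. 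Combining these, $\|g^{(D)}\|_\infty\leq C_D(\Gamma)\,\|f^{(D)}\|_\infty$, completing the proof.

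The main obstacle, if any, is pedestrian bookkeeping: verifying that the constant produced really depends only on $D$ and $\Gamma$ (through fixed quantities such as $\sup_k\|\ell^{(k)}\|_\infty$ for $k\leq D$ and the distance from $\Gamma$ to the boundary of the domain of analyticity of $f$), and not on $f$ itself. No new ideas are required beyond the periodic quadrature identity and the chain rule; the result is essentially a statement that smooth periodic quadrature is spectrally accurate.
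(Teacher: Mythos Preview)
Your reduction to periodic quadrature and invocation of Euler--Maclaurin (or the equivalent Fourier argument) is exactly the standard route, and it is what the paper does as well: the paper's proof simply cites the Euler--Maclaurin argument from \cite{Paquette2020}. So steps one and two are on target.

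Step three, however, contains a false claim. You write that the Cauchy integral formula gives $\|f^{(k)}\|_{L^\infty(\Gamma)} \lesssim \|f^{(D)}\|_\infty$ for $k\leq D$. Cauchy estimates go in the opposite direction: they bound \emph{higher} derivatives by the function values, i.e.\ $\|f^{(k)}\|_{L^\infty(\Gamma)} \lesssim \|f\|_{L^\infty(\Gamma_\delta)}$, not lower derivatives by higher ones. The constant function $f\equiv 1$ already shows your inequality fails. What Fa\`a di Bruno actually gives you is
\[
\|g^{(D)}\|_\infty \;\leq\; C_D(\Gamma)\,\max_{0\leq k\leq D}\|f^{(k)}\|_{L^\infty(\Gamma)},
\]
which is the natural form of the bound and is all that is actually used in the paper's application (Proposition~\ref{thm_unversality}), where $f$ comes from a fixed analytic family and all derivatives are absorbed into a single constant. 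If one insists on a bound depending on a single derivative order, the correct move via Cauchy is to replace $\|f^{(D)}\|_\infty$ by $\|f\|_\infty$ on a slightly enlarged contour, not the other way around. So your overall approach is fine and matches the paper; just drop the reversed Cauchy step and state the bound in terms of $\max_{k\leq D}\|f^{(k)}\|_\infty$ (or $\|f\|_\infty$), noting that this is what the lemma's sole application actually requires.
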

\begin{proof}
The proofs follows from a standard approximation argument using Euler-Maclaurin formula. For example, see the arguments above the proof of Theorem 5.11 in \cite{Paquette2020}. 
\end{proof}

\begin{lemma}\label{lem_expcilitformula}
Denote the standard Marchenko-Pastur law \cite{Marcenko1967} by $\mu_{\mathtt{MP}}$  with parameter $\mathfrak c$, i.e., 
\begin{equation}\label{eq_mpdensity}
\dd \mu_{\mathtt{MP}}(x)=\frac{1}{2 \pi \mathfrak c} \sqrt{\frac{[(x-\gamma_-)(\gamma_+-x)]_+}{x^2}} \dd x+(1-\mathfrak c^{-1})_+ \delta_0(\dd x), \ \text{where} \ \gamma_{\pm}=(1\pm \sqrt{\mathfrak c})^2,
\end{equation}
where $[\cdot]_+$ gives the positive part of $(\cdot)$. Suppose the spectrum of $\Sigma_0$ is given by the typical locations of $\mu_{\mathtt{MP}}$: 
\begin{equation*}
\int^{\gamma_+}_{\sigma_i} \dd \mu_{\mathtt{MP}}(x)=\frac{i-1/2}{N}, 1 \leq i \leq N.   
\end{equation*}
Set  $\alpha_2=\mathfrak c^{-1}$ and assume that $\alpha_1, \alpha_2>1,$ and $c_N^{-1} \rightarrow \alpha_1.$ Let $\rho(\lambda)$ as the asymptotic density function of the ESD of $W_0$  defined in (\ref{eq_definitioncovariance}). Then
\begin{align*}
& \rho(\lambda)=\frac{\sqrt{3}}{6 \pi 2^{1/3} \lambda } \left( \sqrt[3]{9 \alpha_1 (1+\alpha_1+\alpha_2)(\lambda-\xi_0)+6 \sqrt{3 \alpha_1^3(\lambda-\lambda_-)(\lambda_{+,2}-\lambda)(\lambda-\lambda_{+,1}) }} \right. \\
 &-\left.  \sqrt[3]{9 \alpha_1 (1+\alpha_1+\alpha_2)(\lambda-\xi_0)-6 \sqrt{3 \alpha_1^3(\lambda-\lambda_-)(\lambda_{+,2}-\lambda)(\lambda-\lambda_{+,1}) }} \right) \mathbf{1}\left( \lambda \in [\lambda_{+,1}, \lambda_{+,2}]\right),
\end{align*}
where $\xi_0 \equiv \xi_0(\alpha_1, \alpha_2),  \lambda_- \equiv \lambda_-(\alpha_1,\alpha_2)$ and $ \lambda_{+,k} \equiv \lambda_{+,k}(\alpha_1,\alpha_2), k=1,2,$ can be calculated explicitly and defined in \cite{2014arXiv1401.7802D}.  As a special case, if $\alpha_1=\alpha_2=\alpha, $ we have that $\lambda_{+,k}=\lambda_+, k=1,2,$ and 
\begin{equation*}
\lambda_{\pm}=\frac{-1+20\alpha+8 \alpha^2 \pm (1+8 \alpha)^{3/2}}{8 \alpha^2}, \ \xi_0=\frac{2(\alpha-1)^3}{9 \alpha (1+2\alpha)}.
\end{equation*}
\end{lemma}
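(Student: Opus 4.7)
The plan is to characterize $\rho$ through its Stieltjes transform $m = m_\rho$, which by \eqref{eq_inverserelation} satisfies $z = f(m)$ with $f$ as in \eqref{eq_defnstitlesjtransform}, and then to recover $\rho$ via the Stieltjes inversion formula \eqref{eq_inversion}. Since the $\sigma_i$ are defined as the $\mu_{\mathtt{MP}}$-quantiles with parameter $\mathfrak c = \alpha_2^{-1}$ and $c_N = N/M \to \alpha_1^{-1}$, the discrete average $\frac1M \sum_{k=1}^N (m+\sigma_k^{-1})^{-1}$ is a Riemann-type approximation to $\alpha_1^{-1} \int (m+\lambda^{-1})^{-1}\,\dd\mu_{\mathtt{MP}}(\lambda)$, uniformly on compact subsets of $\mathbb C_+$ bounded away from $\operatorname{supp}\rho$, so in the limit
\begin{equation*}
f(m) \;=\; -\frac{1}{m} + \frac{1}{\alpha_1}\int \frac{\dd\mu_{\mathtt{MP}}(\lambda)}{m+\lambda^{-1}}.
\end{equation*}

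First I would evaluate this integral in closed form. Writing $u = -1/m$ and invoking the identity $\int \lambda(\lambda-u)^{-1}\,\dd\mu(\lambda) = 1 + u\, s_\mu(u)$ together with the quadratic equation $\alpha_2^{-1}\, u\, s_{\mathtt{MP}}(u)^2 + (u+\alpha_2^{-1}-1)\, s_{\mathtt{MP}}(u) + 1 = 0$ defining the Stieltjes transform of $\mu_{\mathtt{MP}}$, the integral reduces to an explicit algebraic function of $m$. Substituting this back into $z = f(m)$ and clearing denominators via the MP quadratic yields a polynomial identity $P(m,z;\alpha_1,\alpha_2) = 0$ that is cubic in $m$ with coefficients polynomial in $z$. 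The three branches are given by Cardano's formula, and the physical branch is singled out by continuity together with the requirement $\Im m(z) \geq 0$ on $\mathbb C_+$.

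Second, by \eqref{eq_inversion} the density satisfies $\rho(\lambda) = \pi^{-1}\Im m(\lambda + \ri 0^+)$, which extracts the imaginary part of the chosen Cardano root. The bulk support $[\lambda_{+,1}, \lambda_{+,2}]$ coincides with the locus where the discriminant of the cubic is negative; the prefactor $(6\pi\,2^{1/3} \lambda)^{-1}$ arises from the Cardano normalization, and the radicand $3\alpha_1^3(\lambda - \lambda_-)(\lambda_{+,2}-\lambda)(\lambda - \lambda_{+,1})$ is precisely the rescaled, negated discriminant. Thus the quantities $\xi_0, \lambda_-, \lambda_{+,k}$ appearing in the statement are the standard invariants of this cubic. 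In the symmetric case $\alpha_1 = \alpha_2 = \alpha$, an additional reflection symmetry forces a double edge $\lambda_{+,1} = \lambda_{+,2}$, after which direct evaluation of the discriminant produces the stated closed forms $\lambda_\pm = (-1 + 20\alpha + 8\alpha^2 \pm (1+8\alpha)^{3/2})/(8\alpha^2)$ and $\xi_0 = 2(\alpha-1)^3/(9\alpha(1+2\alpha))$.

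The main obstacle is the bookkeeping of Cardano's computation and the correct identification of cube-root branches. This is precisely the calculation carried out in detail in \cite{2014arXiv1401.7802D} for the free multiplicative convolution of two Marchenko--Pastur laws, which agrees with our $\rho$ via the standard free-probabilistic identification of $\Sigma_0^{1/2} XX^* \Sigma_0^{1/2}$ with a product of two independent Wishart ensembles in the $M,N\to\infty$ limit. Consequently the explicit expressions for $\xi_0, \lambda_-$ and $\lambda_{+,k}$ can be imported directly; the remaining work amounts to matching the parametrization of \eqref{eq_inverserelation} with the $S$-transform characterization used there, and then specializing to $\alpha_1 = \alpha_2$ for the symmetric subcase.
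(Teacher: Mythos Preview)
Your proposal is correct and aligns with the paper's own treatment: the paper's proof consists entirely of the citation ``See \cite[Section 4.2]{2014arXiv1401.7802D}.'' You have simply sketched the Stieltjes-transform/Cardano computation that this reference carries out and then deferred to it for the bookkeeping, which is exactly the right posture here.
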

\begin{proof}
See \cite[Section 4.2]{2014arXiv1401.7802D}. 
\end{proof}

%

\bibliographystyle{abbrv}
\bibliography{references,rmtref}

\begin{thebibliography}{10}

\bibitem{MR2567175}
Z.~Bai and J.~W. Silverstein.
\newblock {\em Spectral analysis of large dimensional random matrices}.
\newblock Springer Series in Statistics. Springer, New York, second edition,
  2010.

\bibitem{BMP}
Z.~D. Bai, B.~Q. Miao, and G.~M. Pan.
\newblock {On asymptotics of eigenvectors of large sample covariance matrix}.
\newblock {\em The Annals of Probability}, 35(4):1532 -- 1572, 2007.

\bibitem{BDWW}
Z.~{Bao}, X.~{Ding}, J.~{Wang}, and K.~{Wang}.
\newblock {Statistical inference for principal components of spiked covariance
  matrices}.
\newblock {\em arXiv preprint arXiv:arXiv:2008.11903}, 2020.

\bibitem{Beckermann2001}
B.~Beckermann and A.~B.~J. Kuijlaars.
\newblock {Superlinear Convergence of Conjugate Gradients}.
\newblock {\em SIAM Journal on Numerical Analysis}, 39(1):300--329, 1 2001.

\bibitem{spikedinvariant}
S.~T. Belinschi, H.~Bercovici, M.~Capitaine, and M.~Février.
\newblock {Outliers in the spectrum of large deformed unitarily invariant
  models}.
\newblock {\em The Annals of Probability}, 45(6A):3571 -- 3625, 2017.

\bibitem{MR3449395}
A.~Bloemendal, A.~Knowles, H.-T. Yau, and J.~Yin.
\newblock On the principal components of sample covariance matrices.
\newblock {\em Probab. Theory Related Fields}, 164(1-2):459--552, 2016.

\bibitem{MR3078286}
A.~Bloemendal and B.~Vir\'{a}g.
\newblock Limits of spiked random matrices {I}.
\newblock {\em Probab. Theory Related Fields}, 156(3-4):795--825, 2013.

\bibitem{DeiftOrthogonalPolynomials}
P.~Deift.
\newblock {\em {Orthogonal Polynomials and Random Matrices: a Riemann-Hilbert
  Approach}}.
\newblock Amer. Math. Soc., Providence, RI, 2000.

\bibitem{MR1702716}
P.~Deift, T.~Kriecherbauer, K.~T.-R. McLaughlin, S.~Venakides, and X.~Zhou.
\newblock Uniform asymptotics for polynomials orthogonal with respect to
  varying exponential weights and applications to universality questions in
  random matrix theory.
\newblock {\em Comm. Pure Appl. Math.}, 52(11):1335--1425, 1999.

\bibitem{Deift2019a}
P.~Deift, S.~D. Miller, and T.~Trogdon.
\newblock {Stopping time signatures for some algorithms in cryptography}.
\newblock {\em arXiv preprint arXiv:1905.08408}, 5 2019.

\bibitem{DT17}
P.~Deift and T.~Trogdon.
\newblock Universality for eigenvalue algorithms on sample covariance matrices.
\newblock {\em SIAM Journal on Numerical Analysis}, 55(6):2835--2862, 2017.

\bibitem{DT18}
P.~Deift and T.~Trogdon.
\newblock Universality for the toda algorithm to compute the largest eigenvalue
  of a random matrix.
\newblock {\em Communications on Pure and Applied Mathematics}, 71(3):505--536,
  2018.

\bibitem{Deift2019b}
P.~Deift and T.~Trogdon.
\newblock {The conjugate gradient algorithm on well-conditioned Wishart
  matrices is almost deterministic}.
\newblock {\em Quarterly of Applied Mathematics}, 79(1):125--161, 7 2020.

\bibitem{MR1469319}
P.~A. Deift, A.~R. Its, and X.~Zhou.
\newblock A {R}iemann-{H}ilbert approach to asymptotic problems arising in the
  theory of random matrix models, and also in the theory of integrable
  statistical mechanics.
\newblock {\em Ann. of Math. (2)}, 146(1):149--235, 1997.

\bibitem{Deift2014}
P.~A. Deift, G.~Menon, S.~Olver, and T.~Trogdon.
\newblock {Universality in numerical computations with random data}.
\newblock {\em Proceedings of the National Academy of Sciences of the United
  States of America}, 111(42):14973--8, 10 2014.

\bibitem{DRMTA}
X.~Ding.
\newblock Spiked sample covariance matrices with possibly multiple bulk
  components.
\newblock {\em Random Matrices: Theory and Applications}, 10(01):2150014, 2021.

\bibitem{DJ}
X.~{Ding} and H.~C. {Ji}.
\newblock {Local laws for multiplication of random matrices and spiked
  invariant model}.
\newblock {\em arXiv preprint arXiv 2010.16083}, 2020.

\bibitem{DT2}
X.~Ding and T.~Trogdon.
\newblock A {R}iemann–{H}ilbert approach to {K}rylov subspace methods on
  general covariance matrix: Algorithmic asymptotics, fluctuation and
  universality.
\newblock {\em In preparation}, 2021+.

\bibitem{DW}
X.~{Ding} and H.-T. {Wu}.
\newblock {Phase transition of graph Laplacian of high dimensional noisy random
  point cloud}.
\newblock {\em arXiv preprint arXiv 2011.10725}, 2020.

\bibitem{DY}
X.~Ding and F.~Yang.
\newblock {A necessary and sufficient condition for edge universality at the
  largest singular values of covariance matrices}.
\newblock {\em The Annals of Applied Probability}, 28(3):1679 -- 1738, 2018.

\bibitem{DYAOS}
X.~Ding and F.~Yang.
\newblock {Spiked separable covariance matrices and principal components}.
\newblock {\em The Annals of Statistics}, 49(2):1113 -- 1138, 2021.

\bibitem{Dumitriu2002}
I.~Dumitriu and A.~Edelman.
\newblock {Matrix models for beta ensembles}.
\newblock {\em Journal of Mathematical Physics}, 43(11):5830, 10 2002.

\bibitem{MR1936554}
I.~Dumitriu and A.~Edelman.
\newblock Matrix models for beta ensembles.
\newblock {\em J. Math. Phys.}, 43(11):5830--5847, 2002.

\bibitem{2014arXiv1401.7802D}
T.~{Dupic} and I.~{P{\'e}rez Castillo}.
\newblock {Spectral density of products of Wishart dilute random matrices. Part
  I: the dense case}.
\newblock {\em arXiv preprint arXiv: 1401.7802}, 2014.

\bibitem{Edelman1988}
A.~Edelman.
\newblock {Eigenvalues and condition numbers of random matrices}.
\newblock {\em SIAM Journal on Matrix Analysis and Applications},
  9(4):543--560, 1988.

\bibitem{MR2308592}
N.~El~Karoui.
\newblock Tracy-{W}idom limit for the largest eigenvalue of a large class of
  complex sample covariance matrices.
\newblock {\em Ann. Probab.}, 35(2):663--714, 2007.

\bibitem{NEK}
N.~El~Karoui.
\newblock {The spectrum of kernel random matrices}.
\newblock {\em The Annals of Statistics}, 38(1):1 -- 50, 2010.

\bibitem{MR286814}
M.~Fiedler.
\newblock Bounds for the determinant of the sum of hermitian matrices.
\newblock {\em Proc. Amer. Math. Soc.}, 30:27--31, 1971.

\bibitem{Geronimo1994}
J.~S. Geronimo.
\newblock {Scattering Theory, Orthogonal Polynomials, and q -Series}.
\newblock {\em SIAM Journal on Mathematical Analysis}, 25(2):392--419, mar
  1994.

\bibitem{Goldstine1951}
H.~H. Goldstine and J.~von Neumann.
\newblock {Numerical inverting of matrices of high order. II}.
\newblock {\em Proceedings of the AMS}, 2(2):188--202, 2 1951.

\bibitem{MR3024913}
G.~H. Golub and C.~F. Van~Loan.
\newblock {\em Matrix computations}.
\newblock Johns Hopkins Studies in the Mathematical Sciences. Johns Hopkins
  University Press, Baltimore, MD, fourth edition, 2013.

\bibitem{Greenbaum1989}
A.~Greenbaum.
\newblock {Behavior of slightly perturbed Lanczos and conjugate-gradient
  recurrences}.
\newblock {\em Linear Algebra and its Applications}, 113:7--63, 1989.

\bibitem{MR2722294}
T.~Hastie, R.~Tibshirani, and J.~Friedman.
\newblock {\em The elements of statistical learning}.
\newblock Springer Series in Statistics. Springer, New York, second edition,
  2009.

\bibitem{Hestenes1952}
M.~Hestenes and E.~Steifel.
\newblock {Method of Conjugate Gradients for Solving Linear Systems}.
\newblock {\em J. Research Nat. Bur. Standards}, 20:409--436, 1952.

\bibitem{Its1994}
A.~R. Its, A.~S. Fokas, and A.~A. Kapaev.
\newblock {On the asymptotic analysis of the Painleve equations via the
  isomonodromy method}.
\newblock {\em Nonlinearity}, 7(5):1291--1325, 9 1994.

\bibitem{Johnstone2001}
I.~M. Johnstone.
\newblock {On the distribution of the largest eigenvalue in principal
  components analysis}.
\newblock {\em The Annals of Statistics}, 29(2):295--327, 4 2001.

\bibitem{KML}
Z.~T. Ke, Y.~Ma, and X.~Lin.
\newblock Estimation of the number of spiked eigenvalues in a covariance matrix
  by bulk eigenvalue matching analysis.
\newblock {\em Journal of the American Statistical Association (online)}, 2021.

\bibitem{Knowles2017}
A.~Knowles and J.~Yin.
\newblock {Anisotropic local laws for random matrices}.
\newblock {\em Probability Theory and Related Fields}, 169(1-2):257--352, 10
  2017.

\bibitem{Kuijlaars2003}
A.~B. Kuijlaars.
\newblock {\em Riemann-Hilbert Analysis for Orthogonal Polynomials}, pages
  167--210.
\newblock Springer Berlin Heidelberg, Berlin, Heidelberg, 2003.

\bibitem{MR2022855}
A.~B.~J. Kuijlaars.
\newblock Riemann-{H}ilbert analysis for orthogonal polynomials.
\newblock In {\em Orthogonal polynomials and special functions ({L}euven,
  2002)}, volume 1817 of {\em Lecture Notes in Math.}, pages 167--210.
  Springer, Berlin, 2003.

\bibitem{MR2087231}
A.~B.~J. Kuijlaars, K.~T.-R. McLaughlin, W.~Van~Assche, and M.~Vanlessen.
\newblock The {R}iemann-{H}ilbert approach to strong asymptotics for orthogonal
  polynomials on {$[-1,1]$}.
\newblock {\em Adv. Math.}, 188(2):337--398, 2004.

\bibitem{Marcenko1967}
V.~A. Mar{\v{c}}enko and L.~A. Pastur.
\newblock {Distribution of eigenvalues for some sets of random matrices}.
\newblock {\em Mathematics of the USSR-Sbornik}, 1(4):457--483, 1967.

\bibitem{PAIGE1976}
C.~C. Paige.
\newblock {Error Analysis of the Lanczos Algorithm for Tridiagonalizing a
  Symmetric Matrix}.
\newblock {\em IMA Journal of Applied Mathematics}, 18(3):341--349, 12 1976.

\bibitem{MR383715}
C.~C. Paige and M.~A. Saunders.
\newblock Solutions of sparse indefinite systems of linear equations.
\newblock {\em SIAM J. Numer. Anal.}, 12(4):617--629, 1975.

\bibitem{Paquette2020a}
C.~Paquette, B.~van Merri{\"{e}}nboer, and F.~Pedregosa.
\newblock {Halting Time is Predictable for Large Models: A Universality
  Property and Average-case Analysis}.
\newblock 6 2020.

\bibitem{Paquette2020}
E.~Paquette and T.~Trogdon.
\newblock {Universality for the conjugate gradient and MINRES algorithms on
  sample covariance matrices}.
\newblock {\em arXiv preprint arXiv:2007.00640}, 2020.

\bibitem{MR2399865}
D.~Paul.
\newblock Asymptotics of sample eigenstructure for a large dimensional spiked
  covariance model.
\newblock {\em Statist. Sinica}, 17(4):1617--1642, 2007.

\bibitem{PAUL20141}
D.~Paul and A.~Aue.
\newblock Random matrix theory in statistics: A review.
\newblock {\em Journal of Statistical Planning and Inference}, 150:1--29, 2014.

\bibitem{PEHERSTORFER2011814}
F.~Peherstorfer.
\newblock Orthogonal polynomials on several intervals: Accumulation points of
  recurrence coefficients and of zeros.
\newblock {\em Journal of Approximation Theory}, 163(7):814--837, 2011.

\bibitem{DiagonalRMT}
C.~W. Pfrang, P.~Deift, and G.~Menon.
\newblock {How long does it take to compute the eigenvalues of a random
  symmetric matrix?}
\newblock {\em Random matrix theory, interacting particle systems, and
  integrable systems, MSRI Publications}, 65:411--442, 2014.

\bibitem{SC}
J.~Silverstein and S.~Choi.
\newblock Analysis of the limiting spectral distribution of large dimensional
  random matrices.
\newblock {\em Journal of Multivariate Analysis}, 54(2):295--309, 1995.

\bibitem{Silverstein1985}
J.~W. Silverstein.
\newblock {The Smallest Eigenvalue of a Large Dimensional Wishart Matrix}.
\newblock {\em The Annals of Probability}, 13(4):1364--1368, 1985.

\bibitem{Spielman2004}
D.~A. Spielman and S.-H. Teng.
\newblock {Smoothed analysis of algorithms}.
\newblock {\em Journal of the ACM}, 51(3):385--463, 5 2004.

\bibitem{MR0372517}
G.~Szeg\H{o}.
\newblock {\em Orthogonal polynomials}.
\newblock American Mathematical Society, Providence, R.I., fourth edition,
  1975.

\bibitem{MR1444820}
L.~N. Trefethen and D.~Bau, III.
\newblock {\em Numerical linear algebra}.
\newblock Society for Industrial and Applied Mathematics (SIAM), Philadelphia,
  PA, 1997.

\bibitem{Trefethen1990}
L.~N. Trefethen and R.~S. Schreiber.
\newblock {Average-Case Stability of Gaussian Elimination}.
\newblock {\em SIAM Journal on Matrix Analysis and Applications},
  11(3):335--360, 7 1990.

\bibitem{TrogdonSOBook}
T.~Trogdon and S.~Olver.
\newblock {\em {Riemann--Hilbert Problems, Their Numerical Solution and the
  Computation of Nonlinear Special Functions}}.
\newblock SIAM, Philadelphia, PA, 2016.

\bibitem{Trotter1984}
H.~F. Trotter.
\newblock {Eigenvalue distributions of large Hermitian matrices; Wigner's
  semi-circle law and a theorem of Kac, Murdock, and Szeg??}
\newblock {\em Advances in Mathematics}, 54(1):67--82, 1984.

\bibitem{XYY}
H.~Xi, F.~Yang, and J.~Yin.
\newblock {Convergence of eigenvector empirical spectral distribution of sample
  covariance matrices}.
\newblock {\em The Annals of Statistics}, 48(2):953 -- 982, 2020.

\bibitem{XQB}
N.~Xia, Y.~Qin, and Z.~Bai.
\newblock {Convergence rates of eigenvector empirical spectral distribution of
  large dimensional sample covariance matrix}.
\newblock {\em The Annals of Statistics}, 41(5):2572 -- 2607, 2013.

\bibitem{Yang2020}
F.~{Yang}.
\newblock {Linear spectral statistics of eigenvectors of anisotropic sample
  covariance matrices}.
\newblock {\em arXiv preprint arXiv:2005.00999}, 2020.

\bibitem{MR3468554}
J.~Yao, S.~Zheng, and Z.~Bai.
\newblock {\em Large sample covariance matrices and high-dimensional data
  analysis}, volume~39 of {\em Cambridge Series in Statistical and
  Probabilistic Mathematics}.
\newblock Cambridge University Press, New York, 2015.

\bibitem{YATTSELEV201573}
M.~L. Yattselev.
\newblock Nuttall’s theorem with analytic weights on algebraic {S}-contours.
\newblock {\em Journal of Approximation Theory}, 190:73--90, 2015.

\end{thebibliography}

\end{document}